\documentclass[a4paper, 11pt, oneside]{amsbook}
\title{The Scott adjunction \\ \tiny{Towards formal model theory}}
\author{Ivan Di Liberti}

\numberwithin{section}{chapter}
\numberwithin{subsection}{section}
\numberwithin{subsubsection}{subsection}



\usepackage{blindtext}
\usepackage{quoting}
\usepackage{amsthm}
\usepackage[utf8]{inputenc}
\usepackage{verbatim}
\usepackage{amsmath}
\usepackage{amsfonts}
\usepackage{stackrel}
\usepackage{amssymb}
\usepackage{graphicx}
\usepackage[svgnames]{xcolor}
\usepackage{lipsum}
\usepackage{svg}
\usepackage{bbm}
\usepackage{caption}
\usepackage{tcolorbox}
\usepackage{epigraph}
\usepackage{ebproof}
\usepackage[cal=boondoxo]{mathalfa}
\usepackage[all,cmtip]{xy}
\usepackage{mathtools}
\usepackage{color}
\usepackage{mathrsfs}
\usepackage{subfig}
\usepackage{framed}
\usepackage[colorlinks]{hyperref}
\usepackage{tikz}
\usepackage{tikz-cd}
\usepackage{multirow}
\usepackage[percent]{overpic}

\setlength\parindent{0pt}

\hypersetup{
    colorlinks = true,
    linkbordercolor = {red},
   	linkcolor ={black},
	anchorcolor = {pink},
	citecolor =  {red},
	filecolor = {teal},
	menucolor = {teal},
	runcolor =  {teal},
	urlcolor = {teal},
}

\usepackage[all,cmtip]{xy}

\usetikzlibrary{calc}
\usetikzlibrary{matrix,arrows}
\usepackage{tikz-cd}
\usepackage[square, sort, numbers]{natbib}

\usepackage{calligra} 

    {\endMakeFramed}

    {\endMakeFramed}

\makeatletter
\g@addto@macro\bfseries{\boldmath}
\makeatother

\theoremstyle{definition}
\newtheorem{thm}{Theorem}[section]
\newtheorem{exa}[thm]{Example}
\newtheorem{rem}[thm]{Remark}
\newtheorem{con}[thm]{Construction}
\newtheorem{ach}[thm]{Achtung!}
\newtheorem*{structure*}{Structure}
\newtheorem*{thm*}{Theorem}

\newtheorem{lem}[thm]{Lemma}
\newtheorem{conj}[thm]{Conjecture}
\newtheorem{prop}[thm]{Proposition}
\newtheorem{defn}[thm]{Definition}
\newtheorem{cor}[thm]{Corollary}
\newtheorem{notation}[thm]{Notation}
\newtheorem{question}[thm]{Question}

\newcommand\Sat{\operatorname{Sat}}

\newcommand\op{\circ}

\newcommand\lan{\mathsf{lan}}
\newcommand\ran{\mathsf{ran}}
\newcommand\Set{\operatorname{\bf Set}}

\newcommand\colim{\operatorname{colim}}

\newcommand\ca{\mathcal {A}}
\newcommand\cb{\mathcal {B}}
\newcommand\cc{\mathcal {C}}

\newcommand\cf{\mathcal {F}}

\newcommand\cg{\mathcal {G}}

\newcommand\ce{\mathcal {E}}

\newcommand\ck{\mathcal {K}}
\newcommand\cl{\mathcal {L}}

\newcommand\cs{\mathcal {S}}
\newcommand\ct{\mathcal {T}}
\newcommand\cp{\mathcal {P}}

\newcommand\cx{\mathcal {X}}
\newcommand\cy{\mathcal {Y}}

\def\dist{.75em}

\DeclareFontFamily{U}{min}{}
\DeclareFontShape{U}{min}{m}{n}{<-> udmj30}{}
\newcommand\yo{\!\text{\usefont{U}{min}{m}{n}\symbol{'210}}\!}

%

\setcounter{tocdepth}{1}

\begin{document}




\begin{center}
\noindent
\makebox[\textwidth]{
\begin{tikzpicture}
\node {\Huge Masaryk University};
\node at (0,-.75) {\huge Faculty of Science};
\node at (0,-1.25) {------------------------------------------------------};

\node at (-5.75,-.65) {\includegraphics[scale=.26]{logo/mu/fithesis-base-color}};
\node at (5.75,-.65) {\includegraphics[scale=.26]{logo/mu/fithesis-sci-color}};
\end{tikzpicture}
}

\vspace{\fill}

\huge \textbf{The Scott adjunction} \\ \small{Towards formal model theory} \\[2mm]
  \Large  Ph.D thesis\\
   \Large Ivan Di Liberti\\
   \end{center}
  \vspace{\fill}
 Advisor: Prof. RNDr. Ji\v{r}í Rosick\'y, DrSc.      \\  Department of Mathematics and Statistics \hfill Brno 2020
\thispagestyle{empty}
\newpage

\frontmatter

\chapter*{Abstract}

We introduce and study the Scott adjunction, relating accessible categories with directed colimits to topoi. Our focus is twofold, we study both its applications to formal model theory and its geometric interpretation. From the geometric point of view, we introduce the categorified Isbell duality, relating bounded (possibly large) ionads to topoi. The categorified Isbell duality interacts with the Scott adjunction offering a categorification of the Scott topology over a poset (hence the name). We show that the categorified Isbell duality is idempotent, similarly to its uncategorified version. From the logical point of view, we use this machinery to provide candidate (geometric) axiomatizations of accessible categories with directed colimits. We discuss the connection between these adjunctions and the theory of classifying topoi. We relate our framework to the more classical theory of abstract elementary classes. From a more categorical perspective, we show that the $2$-category of topoi is enriched over accessible categories with directed colimits and we relate this result to the Scott adjunction.

\thispagestyle{empty}

  \chapter*{Acknowledgments}

  \section*{Scientific Acknowledgments}
During these years of work on this thesis several colleagues and more experienced researchers have contributed to my scientific growth and their mark has impacted on my style and interests in ways that I could not have imagined. In the following list I will try to convey all the people that  have been relevant in this process. Hopefully I will not forget anyone. I am indebted to:
\begin{itemize}
	\item  My advisor, \textbf{Ji\v{r}í Rosick\'y}, for the freedom and the trust he blessed me with during these years, not to mention his sharp and remarkably blunt wisdom.
	\item \textbf{Simon Henry}, for his collaboration in those days in which this thesis was nothing but an informal conversation at the whiteboard.
  \item The readers of this thesis, \textbf{Richard Garner} and \textbf{Tibor Beke}, for the time and the effort that they have dedicated to providing me their suggestions, corrections and comments. I am especially grateful to Richard for unveiling some inaccuracies in my original definition of generalized ionad. His keen comments have made the whole thesis sharper and more precise in several points that just cannot be listed.
	\item \textbf{Peter Arndt}, for his sincere interest in my research, and the hint of looking at the example of the geometric theory of fields.
	\item \textbf{Axel Osmond}, for sharing his energy and enthusiasm during my visit at the IRIF, and discussing about the non-idempotency of the Scott adjunction.
	\item \textbf{Fosco Loregian}, for introducing me to formal techniques in category theory.
	\item The \textbf{scientific committee} of the CT2019, for giving me the opportunity of presenting my work at such an important conference.
	\item \textbf{John Bourke}, for his reference storm on the topic of algebras for a $2$-monad.

\end{itemize}
The list above is  mentioning only those that have been relevant to the sole production of this thesis. Yet, my Ph.D thesis was not the only project on which I have worked during my studies, and indeed my studies have not been the only human activity that I have performed during my stay in Brno. A more complete list of acknowledgments will follow below in my mother-tongue.

\section*{Ringraziamenti}

Mi piacerebbe dedicare le primissime righe di questi ringraziamenti al formulare delle scuse: sono debitore a tutti i lettori che vorranno avventurarsi per il malfermo inglese che pervade la tesi. Questa è in effetti la ragione principale per cui questi ringraziamenti saranno scritti in una lingua che padroneggio meglio, la mia speranza è quella che almeno per questi non avrò l'impressione di non essermi saputo esprimere come avrei voluto. Sono altresì cosciente di costringere le persone nominate a fronteggiare una lingua che loro stesse non conoscono, d'altro canto, non vedo come il problema di una comunicazione efficace possa essere risolto senza spargimento di sangue alcuno.
 \subsection*{Ai maestri}

  \begin{itemize}
  	\item   ringrazio il mio relatore, \textbf{Ji\v{r}í Rosick\'y}, per l'occasione che mi ha concesso rispondendo alla mia \textit{email esplorativa} nel lontano Agosto 2016,  per il candore col quale ha puntualmente spento ognuna delle mie giovani angosce, per il sorriso colpevole con cui di tanto in tanto mi ha ripetuto \textit{it was good to make that mistake, one always has to learn}, per la fiducia paterna con cui mi ha lasciato commettere ogni errore possibile, e per l'inconfondibile accento con cui pronuncia \textit{somewhat}.
	 \item  \textbf{Wieslaw Kubiś}, per l'inspiegabile stima che mi riserva, la disponibilità con cui nel 2017 mi ha accolto nel suo gruppo di ricerca, e per la rassicurante mortificazione che provo a confrontare la mia conoscenza di topologia generale con la sua. La concretezza della sua matematica è una fonte inesauribile di (contro)esempi che a volte passano inosservati allo sguardo di un categorista.
  	\item \textbf{Simon Henry},  per la semplicità con cui ha ricoperto il ruolo di esempio. La fermezza con cui riporta la generalità più assoluta all'intuizione più concreta è stata la prova giornaliera che la matematica non è che una scienza sperimentale, in barba agli idealismi più sciocchi che la vogliono fatta d'una pasta diversa.
  \end{itemize}

  \subsection*{Ai compagni di viaggio e gli incontri felici}

  \begin{itemize}
  	\item ringrazio \textbf{la mia famiglia}, per aver sostenuto senza aver mai vacillato la mia discutibilissima volontà di inseguire la carriera accademica e per aver sopportato stoicamente i lunghi periodi in cui non ci siamo visti.
  	\item \textbf{Fosco Loregian}, per la cocciutaggine con cui difende i più arroccati idealismi, per la devozione con cui promuove la teoria dei profuntori, per le sue sconfinate conoscenze di \LaTeX \  e per avermi aspettato un po', quando ancora non conoscevo le estensioni di Kan.
  	\item \textbf{Christian Espíndola}, per i fallimentari tentativi di correggere il mio inglese, per il pessimismo che faceva da sfondo ad ogni nostra conversazione, per i caffé e i ristoranti. 
  	\item \textbf{Julia Ramos González}, per la gentilezza con cui ha risposto alla mia domanda alla fine del suo talk a \textit{Toposes in Como} e per la sinergia che accompagna la nostra collaborazione.
  	\item \textbf{Giulio Lo Monaco}, per i sensi di colpa che mi prendono quando lo sento parlare il suo ceco \textit{perfetto} e per essermi stato compagno nel nostro fallimentare tentativo di provare che ogni $\infty$-categoria bicompleta proviene da una categoria ABC.
  	\item \textbf{Sebastien Vasey}, per la cura e la disponibilità con cui mi ha aiutato durante le application per i postdoc, oltre che per la gentilezza con cui mi si è sempre rivolto.
  	\item \textbf{Axel Osmond}, per non farmi sentire troppo solo nell'avere degli interessi scientifici poco di moda, confido che in un futuro prossimo avremo modo di lavorare insieme.
  	\item \textbf{Edoardo Lanari}, per avermi rivalutato e per essersi fatto rivalutare. La stima reciproca che è inspiegabilmente germogliata fra noi è uno dei fiori più graditi del 2019.
  	\item \textbf{Peter Arndt}, per la semplicità, quasi ingenua, e l'entusiasmo con cui discute di matematica. 
  	\item \textbf{Alessio Santamaria}, per la pacatezza con cui bilancia e smorza i toni, per l'equilibrio e per le conversazioni pacificate.
	\item le segretarie del dipartimento di matematica, \textbf{Milada, Jitka, Radka e Vlaďka}, per la pazienza materna con cui hanno sopportato le mie continue goffagini burocratiche.
  	\item \textbf{Beata Kubiś}, per essere stata il mio principale punto di riferimento all'Accademia delle Scienze di Praga. Confido non me ne voglia in alcun modo Wielsaw, il senso pratico di Beata la rendeva a dir poco imbattibile. 
  	\item \textbf{Sam van Gool}, per la grande occasione che mi ha concesso invitandomi a Parigi. Confido che avremo modo di proseguire su questa strada.
  	\item  \textbf{Paul-André Melliès}, che sfortunatamente non conosco a sufficenza da avere l'ardire d'annoverarlo fra i maestri. 
	\item \textbf{Sandra Mantovani,  Beppe Metere, Davide Di Micco, \newline Emanuele Pavia, Junior Cioffo e Andrea Montoli}, per avermi fatto respirare un po' d'aria di casa durante le mie visite nel sempre più familiare dipartimento di Matematica dell'Università di Milano.
		\item  \textbf{Soichiro Fujii, Ingo Blechschmidt, Sina Hazratpour}, tre colleghi che hanno costantemente assicurato una compagnia gradita durante le conferenze.
	\item \textbf{Tim Campion}, per esserci tenuti compagnia su MathOverflow.
	\item \textbf{Ji\v{r}í Ad\'amek}, per essere stato uno \textit{zio} lontano, e per il suo irriducibile sorriso.
	\item \textbf{C\&P}, per le ciarle.
	\item le conoscenze fatte a Brno: \textbf{Betta, Jomar, Roberto, Elisa e Cristiano}.
	\item la crew del ristorante \textbf{Franz} e del caffé \textbf{Tivoli}.
	\item un ultimo ringraziamento va ad un firmamento di stelle fisse che mi hanno fatto compagnia durante questi anni moravi, ognuno dalla località in cui l'ho lasciato, o in cui s'è trasferito. Questi non me ne vogliano se non sono stati espressamente citati.

  \end{itemize}

\thispagestyle{empty}

\chapter*{Statement of originality}

\thispagestyle{empty}

Hereby I declare that this thesis is my original authorial work, which I have worked out on my own. All sources, references, and literature used or excerpted during elaboration of this work are properly cited and listed in complete reference to the due source.
\vfill
 \hfill  \\ 
Brno, 30 March 2020. \hfill \begin{overpic}[width=0.15\textwidth]{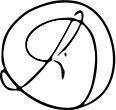}
 \put (-10,0) {Ivan Di Liberti}
\end{overpic}

\newpage

\thispagestyle{empty}
\vspace*{\fill}
\parbox{110mm}{Twas brillig, and the slithy toves \\
Did gyre and gimble in the wabe; \\
All mimsy were the borogoves, \\
And the mome raths outgrabe. \\

Jabberwocky, Lewis Carroll.}
\vfill

\thispagestyle{empty}

\newpage
\thispagestyle{empty}
\vspace*{\fill}

\hfill \textit{Ai miei nonni}, \\

\hfill \textit{da un nipote strafalario}.

\vfill
\tableofcontents

\chapter*{Introduction}

The scientific placing of this thesis is a point of contact between logic, geometry and category theory. For this reason, we have decided to provide two introductions. The  reader is free to choose which one to read, depending on their interests and background. Every introduction covers the whole content of the thesis, but it is shaped according to the expected sensibility of the reader. Also, they offer a \textit{sketch of the elephant}. After the two introductions, we proceed to an analysis of the content of the thesis.

\begin{structure*}
\begin{enumerate}
	\item[]
	\item to the geometer;
	\item to the logician;
	\item main results and structure of the thesis;
	\item intended readers;
	\item the rôle of the Toolbox.
	\end{enumerate}
\end{structure*}

\section*{To the geometer}

\subsection*{General setting}
Posets with directed suprema, topological spaces and locales provide three different approaches to geometry. Topological spaces are probably the most widespread concept and need no introduction, posets with directed suprema belong mostly to domain theory \cite{abramsky1994domain}, while locales are the main concept in formal topology and constructive approaches to geometry \cite{Vickers2007}.
\begin{center}
\begin{tikzcd}
                                                                      & \text{Loc} \arrow[lddd, "\mathbbm{pt}" description, bend left=12] \arrow[rddd, "\mathsf{pt}" description, bend left=12] &                                                                                                                 \\
                                                                      &                                                                                                                  &                                                                                                                 \\
                                                                      &                                                                                                                  &                                                                                                                 \\
\text{Top} \arrow[ruuu, "\mathcal{O}" description, dashed, bend left=12] &                                                                                                                  & \text{Pos}_{\omega} \arrow[luuu, "\mathsf{S}" description, dashed, bend left=12] \arrow[ll, "\mathsf{ST}", dashed]
\end{tikzcd}
\end{center}
These three incarnations of geometry have been studied and related (as hinted by the diagram above) in the literature, but they are far from representing the only existing approaches to geometry. Grothendieck introduced topoi as a generalization of locales of open sets of a topological space \cite{bourbaki2006theorie}. The geometric intuition has played a central rôle  since the very early days of this theory and has reached its highest peaks in the 90's. Since its introduction, topos theory has gained more and more consensus and has shaped the language in which modern algebraic geometry has been written. Garner has introduced the notion of ionad much more recently \cite{ionads}. Whereas a topos is a categorified locale, a ionad is a categorified topological space. This thesis studies the relationship between these two different approaches to higher topology and uses this geometric intuition to analyze accessible categories with directed colimits. 
We build on the analogy (and the existing literature on the topic) between \textit{plain} and higher topology and offer a generalization of the diagram above to \textbf{accessible categories with directed colimits, ionads and topoi}. 

\begin{center}
\begin{tikzcd}
                                                                      & \text{Topoi} \arrow[lddd, "\mathbbm{pt}" description, bend left=12] \arrow[rddd, "\mathsf{pt}" description, bend left=12] &                                                                                                                 \\
                                                                      &                                                                                                                  &                                                                                                                 \\
                                                                      &                                                                                                                  &                                                                                                                 \\
\text{BIon} \arrow[ruuu, "\mathbb{O}" description, dashed, bend left=12] &                                                                                                                  & \text{Acc}_{\omega} \arrow[luuu, "\mathsf{S}" description, dashed, bend left=12] \arrow[ll, "\mathsf{ST}", dashed]
\end{tikzcd}
\end{center}

The functors that regulate the interaction between these different approaches to geometry are the main characters of the thesis. These are the \textbf{Scott adjunction}, which relates accessible categories with directed colimits to topoi and the \textbf{categorified Isbell adjunction}, which relates bounded ionads to topoi. Since we have a quite good understanding of the topological case, we use this intuition to guess and infer the behavior of its categorification. In the diagram above, the only functor that has already appeared in the literature is $\mathbb{O}$, in \cite{ionads}. The adjunction $\mathsf{S} \dashv \mathsf{pt}$ was achieved in collaboration with Simon Henry and has appeared in \cite{simon} as a \textit{prequel} of this thesis. In the two following subsections, we discuss the basic characteristics of the above-mentioned adjunctions.

\subsection*{Idempotency}
The adjunction between topological spaces and locales is idempotent, and it restricts to the well known equivalence of categories between locales with enough points and sober spaces. We obtain an analogous result for the adjunction between bounded ionads and topoi, inducing an equivalence between sober (bounded) ionads and topoi with enough points. Studying the Scott adjunction is more complicated, and in general it does not behave as well as the categorified Isbell duality does, yet we manage to describe the class of those topoi which are fixed by the Scott-comonad.

\subsection*{Stone-like dualities $\&$ higher semantics}
 This topological picture fits the pattern of Stone-like dualities. As the latter is related to completeness results for propositional logic, its categorification is related to syntax-semantics dualities between categories of models and theories. Indeed a logical intuition on topoi and accessible categories has been available since many years \cite{sheavesingeometry}. A topos can be seen as a placeholder for a geometric theory, while an accessible category can be seen as a category of models of some infinitary theory. In the thesis we introduce a new point of view on ionads, defining \textbf{ionads of models} of a geometric sketch. This approach allows us to entangle the theory of classifying topoi with the Scott and Isbell adjunctions providing several comparison results in this direction.

\section*{To the logician}

\subsection*{General setting}
This thesis is mainly devoted to providing a categorical approach to reconstruction results for semantics. That means, given a category of models of some theory $\ca$, we provide a naturally associated (geometric) theory that is a candidate axiomatization of $\ca$, \[ \ca \mapsto \mathsf{S}(\ca).\] This approach has many different purposes, some practical and some very abstract. On the one hand, it provides a syntactic tool to study structural properties of $\ca$, on the other it contributes to the general study of \textbf{dualities of syntax-semantics type}. Given an accessible category $\ca$, that we would like to imagine as the category of models of some (first order) theory, we associate to it a geometric theory that is in a precise sense a best approximation of $\ca$ among geometric theories. The rôle of geometric theories is played in this thesis by Grothendieck topoi. 

\begin{center}
\begin{tikzcd}
\text{Acc}_\omega \arrow[rr, "\mathsf{S}" description, bend right=10] &                                                                                      & \text{Topoi} \arrow[ll, "\mathsf{pt}" description, bend right=10] \\
                                                   &                                                                                      &                                                \\
                                                   &                                                                                      &                                                \\
                                                   & \mathsf{Theories} \arrow[dotted, luuu, "\mathsf{Mod}(-)" description, bend left=20] \arrow[dotted, ruuu, "\gimel(-)" description, bend right=20] &                                               
\end{tikzcd}
\end{center}

Indeed (geometric) logic and topos theory has been related in the past \cite{caramello2010unification} \cite{sheavesingeometry}, and the diagram above displays the connection that we will describe in the thesis. Given an accessible category $\ca$ we build a topos $\mathsf{S}(\ca)$ (the Scott topos of $\ca$). Building on the constructions that connect topoi to geometric logic, we can use the tools of topos theory to study accessible categories. We also relate \textbf{abstract elementary classes} (AECs) to this general pattern, integrating the classical approach to axiomatic model theory into our framework.

\begin{tcolorbox}
\begin{thm*}[Thm. \ref{scottadj}, The Scott adjunction]
There is  a $2$-adjunction, $$\mathsf{S} : \text{Acc}_{\omega} \leftrightarrows \text{Topoi}: \mathsf{pt}. $$
\end{thm*}

\begin{thm*}[Thm. \ref{LDCAECs}] The Scott adjunction restricts to locally decidable topoi and AECs.
\[\mathsf{S}: \text{AECs} \leftrightarrows \text{LDTopoi}: \mathsf{pt}\]
\end{thm*}
\end{tcolorbox}

\subsection*{A Stone-like framework}
As \textbf{Stone duality} offers a valid approach to the study of semantics for propositional logic, we study the Scott adjunction from a topological perspective in order to infer logical consequences. This amounts to the categorification of the adjunction between locales and topological spaces, together with its interaction with Scott domains. 

\begin{center}
\begin{tikzcd}
                                                                      & \text{Loc} \arrow[lddd, "\mathbbm{pt}" description, bend left=12] \arrow[rddd, "\mathsf{pt}" description, bend left=12] &                                                                                                                 \\
                                                                      &                                                                                                                  &                                                                                                                 \\
                                                                      &                                                                                                                  &                                                                                                                 \\
\text{Top} \arrow[ruuu, "\mathcal{O}" description, dashed, bend left=12] &                                                                                                                  & \text{Pos}_{\omega} \arrow[luuu, "\mathsf{S}" description, dashed, bend left=12] \arrow[ll, "\mathsf{ST}", dashed]
\end{tikzcd}
 \qquad
\begin{tikzcd}
                                                                      & \text{Topoi} \arrow[lddd, "\mathbbm{pt}" description, bend left=12] \arrow[rddd, "\mathsf{pt}" description, bend left=12] &                                                                                                                 \\
                                                                      &                                                                                                                  &                                                                                                                 \\
                                                                      &                                                                                                                  &                                                                                                                 \\
\text{BIon} \arrow[ruuu, "\mathbb{O}" description, dashed, bend left=12] &                                                                                                                  & \text{Acc}_{\omega} \arrow[luuu, "\mathsf{S}" description, dashed, bend left=12] \arrow[ll, "\mathsf{ST}", dashed]
\end{tikzcd}
\end{center}

The categorical counterparts of these characters are topoi, accessible categories with directed colimits and bounded ionads, which were recently introduced by Garner. The diagram above describes the analogy between the two approaches. We call \textbf{categorified Isbell duality} the adjunction $\mathbb{O} \dashv \mathbbm{pt}$, while the adjunction $\mathsf{S}\dashv \mathsf{pt}$ is named after \textbf{Scott}. Both appear for the first time in the thesis and their study allows us to infer completeness-like results on the side of geometric logic. On the side of semantics, we try to contribute to the study of abstract elementary classes, offering a possible strategy to attack Shelah's categoricity conjecture.

\section*{Main results and structure of the thesis}
Each of the following subsections describes one of the chapters of the thesis. We briefly describe its content, recalling our main contributions.

\subsection*{Background}
The first chapter is devoted to introducing the main definitions and providing the proper references for the most relevant gadgets that will be used in the rest of the thesis. Here is the list of the treated topics:

\begin{enumerate}
	\item (general) category theory;
	\item accessible and locally presentable categories;
	\item sketches;
	\item topoi;
	\item ionads.
	\end{enumerate}

We stress that this chapter cannot give a complete presentation of the notions that it introduces and cannot be considered exhaustive even for the purpose of reading this thesis. Instead it should be seen as an \textit{auxilium} to navigate the existing literature. The chapter is mostly expository and contains new results and definition only in the section about ionads.

\subsection*{Promenade}
This chapter presents the Scott adjunction and prepares the reader for the other chapters, showing some features of the Scott adjunction.

\begin{tcolorbox}
\begin{thm*}[Thm. \ref{scottadj}, The Scott adjunction]
There is  a $2$-adjunction, $$\mathsf{S} : \text{Acc}_{\omega} \leftrightarrows \text{Topoi}: \mathsf{pt}. $$
\end{thm*}
\end{tcolorbox}

The first connections with logic and geometry are hinted. The chapter is intended to be a first encounter with the main character of the thesis.
\subsection*{Geometry}
This chapter discusses the Scott adjunction from a geometric perspective. We start with a section in general topology dealing with posets, topological spaces and locales. Then we proceed to categorify the topological constructions. We introduce the categorified Isbell adjunction and show that it is idempotent. 

\begin{tcolorbox}
\begin{thm*}[Thm. \ref{categorified isbell adj thm}, Categorified Isbell adjunction]
There is  a $2$-adjunction, $$ \mathbb{O}: \text{BIon} \leftrightarrows \text{Topoi}: \mathbbm{pt}. $$
\end{thm*}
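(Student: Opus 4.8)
The plan is to build the two 2-functors by hand and then produce the adjunction through an explicit unit and counit, following the template of the classical Isbell adjunction $\mathcal{O} \dashv \mathbbm{pt}$ between $\mathrm{Top}$ and $\mathrm{Loc}$, with $\Set$ playing the r\^ole of the dualizing (Sierpi\'nski) object. First I would fix $\mathbb{O}$: it sends a bounded ionad $\mathbb{X} = (X, \operatorname{Int}_X)$ to its topos of opens, namely the category of $\operatorname{Int}_X$-coalgebras inside $\Set^X$. Since $\operatorname{Int}_X$ is a finite-limit-preserving comonad this is an elementary topos, and boundedness is exactly the hypothesis that upgrades it to a Grothendieck topos. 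On morphisms, a map of ionads is by definition a functor $f \colon X \to Y$ together with an interior-preserving comparison, i.e. a comonad morphism lying over the restriction functor $\Set^{f} \colon \Set^{Y} \to \Set^{X}$; this descends to coalgebras and supplies the inverse image of a geometric morphism $\mathbb{O}(\mathbb{X}) \to \mathbb{O}(\mathbb{Y})$, so $\mathbb{O}$ is 2-functorial essentially by construction.

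Next I would set up $\mathbbm{pt}$. It sends a topos $\ce$ to the ionad whose category of points is $\mathbbm{pt}(\ce)$ and whose interior is the comonad $\operatorname{ev} \circ R$ on $\Set^{\mathbbm{pt}(\ce)}$, where $\operatorname{ev} \colon \ce \to \Set^{\mathbbm{pt}(\ce)}$ is the evaluation functor $E \mapsto (p \mapsto p^{*}E)$ and $R$ is its right adjoint. Here $\operatorname{ev}$ is left exact and cocontinuous because each $p^{*}$ is and (co)limits in $\Set^{\mathbbm{pt}(\ce)}$ are pointwise, so the induced comonad is left exact and $\mathbbm{pt}(\ce)$ is genuinely a ionad, with a basis furnished by the generators of $\ce$. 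The unit $\eta_{\mathbb{X}} \colon \mathbb{X} \to \mathbbm{pt}(\mathbb{O}(\mathbb{X}))$ is then the categorified sobriety map: a point $x \in X$ yields the evaluation $\Set^{X} \to \Set$, which restricts to a point $\mathbb{O}(\mathbb{X}) \to \Set$, functorially and compatibly with interiors. The counit $\epsilon_{\ce} \colon \mathbb{O}(\mathbbm{pt}(\ce)) \to \ce$ is the comparison geometric morphism whose inverse image is $\operatorname{ev}$ corestricted to the coalgebras $\mathbb{O}(\mathbbm{pt}(\ce))$; the corestriction is legitimate precisely because $\operatorname{ev}$ generates the interior comonad, and it is lex and cocontinuous by the previous remark.

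It then remains to verify the two triangle identities up to coherent invertible 2-cell, which I would repackage as a pseudonatural equivalence $\text{Topoi}(\mathbb{O}(\mathbb{X}), \ce) \simeq \text{BIon}(\mathbb{X}, \mathbbm{pt}(\ce))$: one direction sends a ionad morphism $\psi$ to $\epsilon_{\ce} \circ \mathbb{O}(\psi)$, the other sends a geometric morphism $g$ to the ionad morphism acting on points by $x \mapsto g \circ \eta_{\mathbb{X}}(x)$, with interior comparison induced by $g^{*}$. The main obstacle is not the triangle bookkeeping, which is laborious but routine, but rather the well-definedness of $\mathbbm{pt}$: since $\mathbbm{pt}(\ce)$ is in general a large category, one must genuinely check that $\operatorname{ev}$ admits a right adjoint and that the resulting left-exact comonad carries a small basis, so that $\mathbbm{pt}(\ce)$ lands in \emph{bounded} ionads. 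This is where the Grothendieck (hence bounded) hypothesis on $\ce$ is indispensable, and I expect it to be the crux of the argument.
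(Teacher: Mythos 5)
Your architecture is the same as the paper's---$\mathbb{O}$ as coalgebras, $\mathbbm{pt}$ via an evaluation comonad, the unit as the categorified sobriety map $\lambda(x)(s) = (\mathsf{U}(s))(x)$, the counit as the lift of the evaluation along the forgetful functor from coalgebras---but there is a genuine gap at exactly the step you defer to your last paragraph, and it is not routine. You place the interior comonad on $\Set^{\mathsf{pt}(\ce)}$. Since $\mathsf{pt}(\ce)$ is in general a large (locally small) category, $\Set^{\mathsf{pt}(\ce)}$ is locally large, and no totality or adjoint-functor-theorem argument produces the right adjoint $R$ there; Kan-extension formulas fail for the same size reasons. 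This is precisely why the thesis defines generalized ionads as lex comonads on the category $\P(X)$ of \emph{small} copresheaves, which is locally small (and an infinitary pretopos under the relevant smallness hypotheses); the Background chapter explicitly warns that the choice of $\Set^X$ would have generated size issues. Note also that your parenthetical claim that coalgebras of a lex comonad form an elementary topos presupposes the ambient category is itself a topos, which $\Set^X$ (or $\P(X)$) is not when $X$ is large; in the paper, boundedness of a ionad is \emph{by definition} the statement that $\mathbb{O}(\cx)$ is a Grothendieck topos, so nothing is automatically upgraded---it must be verified for $\mathbbm{pt}(\ce)$, and your gesture that a basis is ``furnished by the generators of $\ce$'' does not by itself do this.

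The missing argument is Con. \ref{fromtopoitobion}: each $ev^*(e)$ preserves directed colimits, because directed colimits of points are computed pointwise; since $\mathsf{pt}(\ce)$ is accessible, say $\lambda$-accessible, $ev^*(e)$ is determined by its restriction to the small category $\mathsf{pt}(\ce)_\lambda$ and is therefore a small copresheaf. Thus $ev^*$ corestricts to a cocontinuous lex functor $\ce \to \P(\mathsf{pt}(\ce))$ from a total category into a locally small cocomplete one, and only then does totality yield $ev_*$ and the lex comonad $ev^*ev_*$. Boundedness is then verified through the criterion of Prop. \ref{critboundedionad}(3), by factoring a small site of presentation $S \to \ce$ through $ev^*$ and the lex-coreflective subcategory $\Set^{\mathsf{pt}(\ce)_\lambda}$ of $\P(\mathsf{pt}(\ce))$. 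So the engine that makes $\mathbbm{pt}$ well defined is the accessibility of the category of points, not the Grothendieck hypothesis on $\ce$ in the abstract, as you conjectured. Once this is in place, your unit, counit and hom-equivalence go through essentially as in the paper, with the morphism-level comparisons supplied by Prop. \ref{morphism of ionads generator} rather than checked by hand.
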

\end{tcolorbox}

The left adjoint of this adjunction was found by Garner; in order to find a right adjoint, we must allow for large ionads. Building on the \textbf{idempotency} of the categorified Isbell adjunction \textbf{(Thm. \ref{idempotencyisbellcategorified})}, we describe those topoi for which the counit of the Scott adjunction is an equivalence of categories (\textbf{Thm. \ref{scottidempotency}}).

\subsection*{Logic}
We discuss the connection between \textbf{classifying topoi}, Scott topoi and Isbell topoi (\textbf{Thm. \ref{classificatore}} and \textbf{Thm. \ref{isbellclassificatore}}). We specialize the Scott adjunction to abstract elementary classes and locally decidable topoi.

\begin{tcolorbox}
\begin{thm*}[Thm. \ref{LDCAECs}] The Scott adjunction restricts to locally decidable topoi and AECs.
\[\mathsf{S}: \text{AECs} \leftrightarrows \text{LDTopoi}: \mathsf{pt}\]
\end{thm*}
\end{tcolorbox}
We introduce \textit{categories of saturated objects} and relate them to atomic topoi and categoricity.

\begin{tcolorbox}
\begin{thm*}[Thm. \ref{thmcategoriesofsaturated objects}] 
\begin{enumerate}
	\item[]
	\item Let $\ca$ be a category of saturated objects, then $\mathsf{S}(\ca)$ is an atomic topos.
	\item If in addition $\ca$ has  the joint embedding property, then $\mathsf{S}(\ca)$ is boolean and two valued.
	\item If in addition $\eta_\ca$ is iso-full, faithful and surjective on objects, then $\ca$ is categorical in some presentability rank.
\end{enumerate}
\end{thm*}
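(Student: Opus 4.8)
The plan is to read off the three properties of $\mathsf{S}(\ca)$ from the logical presentation of the Scott topos as a classifying topos (Thm. \ref{classificatore}), translating each hypothesis on $\ca$ into the corresponding feature of the site it classifies, and then transferring the resulting rigidity back to $\ca$ through the unit $\eta_\ca$. For (1) I would first distill the combinatorial content of ``category of saturated objects'': saturation should provide a strong homogeneity/amalgamation property, so that any cospan of morphisms between sufficiently presentable objects completes to a commuting square. This amalgamation is precisely the (co)filteredness / right Ore condition needed to put the atomic topology on a generating site of $\mathsf{S}(\ca)$ — built from a dense subcategory of presentable objects — forcing every covering sieve to be generated by a single map. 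Atomicity of $\mathsf{S}(\ca)$ follows, and as a byproduct every object becomes a coproduct of atoms, which I will reuse below.

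For (2), two-valuedness is the connectedness statement $\mathrm{Sub}_{\mathsf{S}(\ca)}(1)=\{0,1\}$. I would argue that a nontrivial complemented subobject of $1$ would partition $\ca$ into two classes of objects admitting no common embedding, contradicting JEP, and conversely that JEP forbids any such splitting; hence $1$ is an atom and $\mathsf{S}(\ca)$ is two-valued. For booleanness I would either invoke the atomic-site presentation of (1) (an atomic site yields a boolean topos) or, staying inside the framework of the thesis, observe that a category of saturated objects is AEC-like, so that $\mathsf{S}(\ca)$ is locally decidable by Thm. \ref{LDCAECs}; local decidability together with atomicity then forces the internal logic to be classical.

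For (3) the strategy is to transfer the rigidity of the points back to $\ca$. By (1)--(2) the topos $\mathsf{S}(\ca)$ is atomic and two-valued, so its category of points $\mathsf{pt}(\mathsf{S}(\ca))$ is a connected groupoid: every morphism of points is invertible and any two points are isomorphic. Since the target is a groupoid, $\eta_\ca f$ is invertible for every $f$ in $\ca$; iso-fullness then produces $g$ with $\eta_\ca g=(\eta_\ca f)^{-1}$, and faithfulness yields $gf=\mathrm{id}$ and $fg=\mathrm{id}$, so every $\ca$-morphism is an isomorphism. Surjectivity on objects together with connectedness of the target forces $\eta_\ca A\cong \eta_\ca A'$ for all $A,A'$, and a further application of iso-fullness lifts this to an isomorphism $A\cong A'$ in $\ca$. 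Thus $\ca$ has a single isomorphism class of objects, and is in particular categorical in the presentability rank of that object.

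The step I expect to be the main obstacle is the first one in (1): identifying explicitly which site presents $\mathsf{S}(\ca)$ and proving that the homogeneity extracted from saturation is exactly the Ore/amalgamation condition making the induced topology atomic — once atomicity is secured, the remainder is a fairly mechanical translation. A secondary delicate point is the topos-theoretic lemma ``atomic $+$ two-valued $\Rightarrow$ connected groupoid of points'', where one must genuinely upgrade the triviality of $\mathrm{Sub}(1)$ to connectedness of the groupoid of points; this is where the homogeneity of $\ca$ and the hypotheses on $\eta_\ca$ in (3) have to interlock.
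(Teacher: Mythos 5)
Your parts (1) and (2) are close in spirit to the paper's proof, but the ``main obstacle'' you flag in (1) is not something to be extracted from homogeneity: it is resolved by the definition of \emph{category of saturated objects} itself, which you do not seem to be using. That definition hands you a \emph{topological embedding} $j: \ca \to \ck$ into a finitely accessible category (Def.\ in Sec.\ \ref{topological}), so $\mathsf{S}(j)$ is by definition a geometric embedding $\mathsf{S}(\ca) \hookrightarrow \mathsf{S}(\ck) \simeq \Set^{\ck_\omega}$ (Rem.\ \ref{trivial}); the site is $\ck_\omega$, the finitely presentables of the \emph{ambient} category $\ck$. This matters because $\ca$ itself typically has no presentable objects at all (e.g.\ $\Set_{\geq\kappa}$), so ``a generating site built from a dense subcategory of presentable objects'' of $\ca$ does not exist. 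Likewise, the Ore/amalgamation condition is not derived from saturation: amalgamation of $\ck_\omega$ is axiom (3) of the definition, needed only so that the atomic topology is a topology. What saturation (injectivity of each $ja$ with respect to maps of finitely presentables) actually buys is that for every $k \to k'$ in $\ck_\omega$ the induced map between the $j^*$-images of representables is an epimorphism, i.e.\ that the topology induced by the geometric embedding is exactly the atomic one. For (2) your JEP argument for two-valuedness is sound and matches the paper's route (Cor.\ \ref{JEPconnected} plus ``atomic and connected implies boolean two-valued''); the detour through Thm.\ \ref{LDCAECs} is unnecessary, since atomic topoi are boolean outright.

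The genuine gap is in (3): the lemma ``atomic $+$ two-valued $\Rightarrow$ the category of points is a connected groupoid'' is false. The Schanuel topos (sheaves for the atomic topology on finite sets and injections) is atomic and two-valued, yet its category of points is that of infinite sets and \emph{injections}: morphisms of points are not invertible, and points of different cardinalities are non-isomorphic. Indeed, your conclusion that $\ca$ has a single isomorphism class of objects would contradict the theorem's motivating example $\Set_{\geq\kappa}$, which satisfies all the hypotheses but has exactly one object in \emph{each} presentability rank rather than one object overall --- which is precisely why the statement claims only categoricity \emph{in some presentability rank}. What is actually available, and what the paper uses, is Caramello's countable categoricity result for atomic two-valued topoi: $\mathsf{pt}\mathsf{S}(\ca)$ has, up to isomorphism, exactly one countable point, and that point is saturated; iso-fullness, faithfulness and surjectivity on objects of $\eta_\ca$ (cf.\ Prop.\ \ref{ff}) then let you reflect the essential uniqueness of that single object back into $\ca$, and nothing more. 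Your final transfer step (using iso-fullness and faithfulness to lift isomorphisms of points to isomorphisms in $\ca$) is fine as far as it goes, but it must be applied only to the unique saturated point, not to all points at once.
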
 
\end{tcolorbox}

\subsection*{Category Theory}
We show that the $2$-category of accessible categories with directed colimits is monoidal closed and that the $2$-category of \textbf{topoi is enriched over accessible categories with directed colimits}. We show that it admits tensors and connect this fact with the Scott adjunction.

\subsection*{Toolbox}
This chapter provides technical results that are used in the previous ones. We introduce the notion of \textbf{topological embedding}, which plays an important rôle in the thesis.

\subsection*{Final remarks and open questions}
This chapter summarizes and organizes our final thoughts, offering an overview of the possible continuations of this project. Among the other things, we discuss a possible approach to \textbf{Shelah's categoricity conjecture}.

\section*{Intended readers}
The process of writing this thesis has raised a quite big issue. On the one hand we wanted to provide a mostly original text, which goes directly to the point and did not recall too many well established results. This style would have been perfect for the category theorist that is aware of the scientific literature in categorical logic, who is of course a natural candidate reader for this thesis. On the other hand one of the main purposes of this project was to provide a categorical framework to accommodate the classical theory of abstract elementary classes, and thus to use a language which can be accessible to a more classical logician (model theorist). Unfortunately, due to the scientific inclination of the author, the thesis has taken a more and more categorical shift and too many notions should have been introduced to make the thesis completely self contained. It was eventually evident that there is no satisfactory solution to this problem, and we hope that eventually a \textit{book} on categorical logic that gathers all the relevant topics will come.
Our practical solution to the issue of providing a gentle introduction to the main contents of the thesis was to start with a chapter of background that provides the most relevant definitions and guides the reader through their meaning and common uses. That chapter is thus mostly written for a reader that is a bit far from category theory and introduces many topics redirecting to the most natural references. The intention of the chapter is to introduce notions, concepts and ideas, not to explore them. The rest of the thesis is then written mostly having in mind someone that has a good understanding of the ideas exposed in the background chapter. We hope to have found a good equilibrium between these two souls of the thesis.

\section*{The rôle of the Toolbox}
One of the most important chapters of this thesis is without any doubt the Toolbox. It is designed with two main targets in mind. The first one is somehow to be the carpet under which we sweep the dust of the thesis, and thus contains all the technical results that underlie conceptual ideas. The second one is to leave room in all the other chapters for concepts and relevant proofs. The main intention behind this targets is that of providing a clean and easy to read (and browse) text. This chapter is cited everywhere in the chapters that precede it as a black-box.

\mainmatter

\chapter{Background}\label{background}

This chapter is devoted to introducing the main definitions and providing the proper references for the most relevant gadgets that will be used in the rest of the thesis. Hence, our intentions are the following:
\begin{itemize}
	\item provide the reader with an easy-to-check list of definitions of the objects that we use;
	\item provide the reader that has never met these definitions with enough material and references to familiarize themselves with the content of the thesis;
	\item meet the different necessities of the \textit{intended readers}\footnote{See the Introduction.};
	\item fix the notation.
\end{itemize}
For the reasons above, every section contained in the chapter starts with a crude list of definitions.  Then, a list of remarks and subsections will follow and is aimed to contextualize the given definitions and provide references. We stress that this chapter cannot give a complete presentation of the notions that it introduces and cannot be considered exhaustive even for the purpose of reading this thesis. Instead it should be seen as an \textit{auxilium} to navigate the existing literature. The end of the chapter fixes the notation.

\begin{ach}
If an important definition depends on some others, we proceed top-down instead of bottom-up. This means that the important definition is given first, then we proceed to define the atoms that participate in it.
\end{ach}

\section{Category theory} \label{backgroundct}

This is a thesis in category theory. We have decide not to introduce the most basic definitions of category, functor, adjoint functor and so on. Nor do we introduce those auxiliary notions that would distract the reader from the main content of the thesis. We assume complete fluency in the basic notions of category theory and some confidence with more advanced notions. Let us list a collection of references that might be useful during the reading. Several books contain a good exposition of the material that we need; our list does not have the ambition of picking the best reference for each topic.

\begin{enumerate}
 \item Basic category theory, \cite{leinster_2014}[Chap. 1-4];
\item Monads, \cite{BOR2}[Chap. 4];
\item Kan extensions, \cite{borceux_1994}[Chap. 3.7] and \cite{liberti2019codensity}[App. A];
\item (Symmetric) Monoidal (closed) categories, \cite{BOR2}[Chap. 6];
\item $2$-categories and bicategories, \cite{borceux_1994}[Chap. 7].
\end{enumerate}

\section{Accessible and locally presentable categories} \label{backgroundLPAC}
\begin{ach} In this section $\lambda$ is a regular cardinal.
\end{ach}

\begin{defn}[$\lambda$-accessible category]
A $\lambda$-accessible category $\ca$ is a category with $\lambda$-directed colimits with a set of $\lambda$-presentable objects that generate by $\lambda$-directed colimits. An accessible category is a category that is $\lambda$-accessible for some $\lambda$.
\end{defn}

\begin{defn}[Locally $\lambda$-presentable category]
A locally $\lambda$-presentable category is a cocomplete $\lambda$-accessible category. A locally presentable category is a category that is locally $\lambda$-presentable for some $\lambda$.
\end{defn}

\begin{defn}[$\lambda$-presentable object]
An object $a \in \ca$ is $\lambda$-presentable if its covariant hom-functor $\ca(a, -): \ca \to \Set$ preserves $\lambda$-directed colimits.
\end{defn}

\begin{defn}[$\lambda$-directed posets and $\lambda$-directed colimits]
A poset $P$ is $\lambda$-directed if it is non empty and for every $\lambda$-small\footnote{This means that its cardinality is strictly less then $\lambda$. For example $\aleph_0$-small means finite.} family of elements $\{p_i\} \subset P$, there exists an upper bound. A $\lambda$-directed colimit is the colimit of a diagram over a $\lambda$-directed poset (seen as a category).
\end{defn}

\begin{notation} For a category $\ca$, we will call $\ca_\lambda$ its full subcategory of $\lambda$-presentable objects.
\end{notation}

\subsection{Literature}

There are two main references for the theory of accessible and locally presentable categories, namely \cite{adamekrosicky94} and \cite{Makkaipare}. The first one is intended for a broader audience and appeared few years after the second one. The second one is mainly concerned with the logical aspects of this theory. We mainly recommend \cite{adamekrosicky94} because it appears a bit more fresh in style and definitely less demanding in general knowledge of category theory. A more experienced reader (in category theory) that is mainly interested in logic could choose \cite{Makkaipare}. Even though \cite{adamekrosicky94}  treats some $2$-categorical aspects of this topic, \cite{Makkaipare}'s exposition is much more complete in this direction.   Another good general exposition is  \cite{BOR2}[Chap. 5].

\subsection{A short comment on these definitions}

\begin{rem}
The theory of accessible and locally presentable categories has gained quite some popularity along the years because of its natural ubiquity. Most of the categories of the \textit{working mathematician} are accessible, with a few (but still extremely important) exceptions. For example, the category $\mathsf{Top}$ of topological spaces is not accessible. In general, categories of algebraic structures are locally  $\aleph_0$-presentable and many relevant categories of geometric nature are $\aleph_1$-accessible. A sound rule of thumb is that locally finitely presentable categories correspond to categories of models essentially algebraic theories, in fact this is even a theorem in a proper sense \cite{adamekrosicky94}[Chap. 3]. A similar intuition is available for accessible categories too, but some technical price must be paid \cite{adamekrosicky94}[Chap. 5]. Accessible and locally presentable categories (especially the latter) are \textit{tame} enough to make many categorical wishes come true; that's the case for example of the adjoint functor theorem, that has a very easy to check version for locally presentable categories.
\end{rem}

\begin{rem} All in all, an accessible category should be seen as a category equipped with a small set of \textit{small} objects such that every object can be obtained as a kind of directed union of them. In the category of topological spaces, these small objects are not enough to recover any other object from them.
\end{rem}

\begin{exa}
To clarify the previous remark, we give list of locally $\aleph_0$-presentable categories. On the right column we indicate the full subcategory of finitely presentable objects.

\begin{table}[!htbp]
\begin{tabular}{lllll}
\cline{2-3}
\multicolumn{1}{l|}{} & \multicolumn{1}{c|}{$\ck$} & \multicolumn{1}{c|}{$\ck_\omega$} &  &  \\ \cline{2-3}
\multicolumn{1}{l|}{} & \multicolumn{1}{c|}{$\Set$} & \multicolumn{1}{c|}{finite sets} &  &  \\ \cline{2-3}
\multicolumn{1}{l|}{} & \multicolumn{1}{c|}{$\mathsf{Grp}$} & \multicolumn{1}{c|}{finitely presentable groups} &  &  \\ \cline{2-3}
\multicolumn{1}{l|}{} & \multicolumn{1}{c|}{$\mathsf{Mod(R)}$} & \multicolumn{1}{c|}{finitely presentable modules} &  &  \\ \cline{2-3}
\end{tabular}
\end{table}

It is not surprising at all that a set $X$ is the directed union of its \textit{finite} subsets.
\end{exa}

\begin{rem} 
Accessible and locally presentable categories have a \textit{canonical representation}, in terms of \textit{free completions under $\lambda$-directed of colimits}. This theory is studied in \cite{adamekrosicky94}[Chap. 2.C]. The free completion of a category $C$ under $\lambda$-directed colimits is always indicated by $ \mathsf{Ind}_\lambda(C) $ in this thesis. 
\begin{thm}
A $\lambda$-accessible category $\ca$ is equivalent to   the free completion of $\ca_\lambda$ under $\lambda$-directed colimits, $$ \ca \simeq \mathsf{Ind}_\lambda(\ca_\lambda). $$
\end{thm}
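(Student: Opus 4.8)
The plan is to produce the equivalence as the canonical $\lambda$-directed-colimit-preserving extension of the inclusion $\iota \colon \ca_\lambda \hookrightarrow \ca$, and then to verify that this extension is fully faithful and essentially surjective. First I would invoke the defining universal property of the free completion: since $\ca$ has $\lambda$-directed colimits, restriction along the canonical embedding $y \colon \ca_\lambda \to \mathsf{Ind}_\lambda(\ca_\lambda)$ induces an equivalence between the $\lambda$-directed-colimit-preserving functors $\mathsf{Ind}_\lambda(\ca_\lambda) \to \ca$ and the arbitrary functors $\ca_\lambda \to \ca$. Applying this to $\iota$ yields a functor $E \colon \mathsf{Ind}_\lambda(\ca_\lambda) \to \ca$ preserving $\lambda$-directed colimits, with $E \circ y \cong \iota$; this $E$ is the candidate equivalence. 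I would also record at the outset that $\ca_\lambda$ is essentially small, so that $\mathsf{Ind}_\lambda(\ca_\lambda)$ is legitimately defined; this essential smallness is part of what accessibility buys us.

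Essential surjectivity is where the generation hypothesis enters directly. By definition of $\lambda$-accessibility, every object $a \in \ca$ is a $\lambda$-directed colimit of $\lambda$-presentable objects, say $a \cong \colim_i d_i$ with each $d_i \in \ca_\lambda$. The formal colimit $\colim_i y(d_i)$ is an object of $\mathsf{Ind}_\lambda(\ca_\lambda)$, and since $E$ preserves $\lambda$-directed colimits and restricts to $\iota$ on $\ca_\lambda$, we obtain $E(\colim_i y(d_i)) \cong \colim_i d_i \cong a$. Hence $a$ lies in the essential image of $E$.

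For full faithfulness I would compute hom-sets on both sides and compare. Every object of $\mathsf{Ind}_\lambda(\ca_\lambda)$ is a $\lambda$-filtered formal colimit $\colim_i y(d_i)$, and the standard formula for morphisms in a free $\lambda$-directed cocompletion gives
$$\mathsf{Ind}_\lambda(\ca_\lambda)\big(\colim_i y(d_i), \colim_j y(e_j)\big) \;\cong\; \lim_i \colim_j \, \ca_\lambda(d_i, e_j).$$
On the other side, using that hom turns the colimit in the first variable into a limit,
$$\ca\big(\colim_i d_i, \colim_j e_j\big) \;\cong\; \lim_i \, \ca\big(d_i, \colim_j e_j\big).$$
The crucial step is the isomorphism $\ca(d_i, \colim_j e_j) \cong \colim_j \ca(d_i, e_j)$, which holds precisely because each $d_i$ is $\lambda$-presentable and $\colim_j e_j$ is a $\lambda$-directed colimit. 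Substituting, the two double expressions coincide, and one checks that this identification is exactly the map induced by $E$; thus $E$ is fully faithful.

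The main obstacle is the bookkeeping underlying these hom computations: one must know that a general object of $\ca$ admits a presentation as a $\lambda$-\emph{filtered} colimit over a canonical diagram---typically the comma category $\ca_\lambda \downarrow a$---and that this diagram is cofinal and filtered enough for the limit-colimit formula to be valid and for morphisms to factor through stages. Establishing the filteredness of the canonical diagram, and that its colimit really is $a$, is the technical heart, and it is precisely here that the generation hypothesis must be upgraded from ``every object is some $\lambda$-directed colimit of presentables'' to ``the canonical cocone on $\ca_\lambda \downarrow a$ exhibits $a$ as such a colimit''. Once that is in hand, the coincidence of the two double formulas, and hence the equivalence, follow formally.
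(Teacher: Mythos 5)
The paper states this theorem as background without giving a proof, citing \cite{adamekrosicky94}[Chap. 2.C], and your argument is exactly the standard one found there: extend the inclusion $\ca_\lambda \hookrightarrow \ca$ along the universal property of $\mathsf{Ind}_\lambda$, obtain essential surjectivity from the generation axiom, and obtain full faithfulness from the $\lim$-$\colim$ hom formula together with $\lambda$-presentability of the objects of $\ca_\lambda$ --- all of which is correct. One remark: the worry in your final paragraph is unnecessary for the proof as you structured it, since both essential surjectivity and the hom computation work with \emph{arbitrary} $\lambda$-directed presentations (available by the definition of accessibility and by the construction of $\mathsf{Ind}_\lambda$), so the $\lambda$-filteredness of the canonical diagram $\ca_\lambda \downarrow a$ --- while important elsewhere in the theory, e.g.\ for density of $\ca_\lambda$ --- is not needed to close your argument.
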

\end{rem}

\begin{rem}
Explicit descriptions of the free completion of a category under $\lambda$-directed colimits are indeed available. To be more precise, given a category $C$ one can describe $\mathsf{Ind}(C)$ as the category of flat functors $\text{Flat}(C^\circ, \Set)$. In the special case of a category with finite colimits, we have a simpler description of flat functors. Let us state the theorem in this simpler case for the sake of simplicity.

\begin{thm}
Let $C$ be a small category with finite colimits. Its free completion under directed colimits is given by the category of functors preserving finite limits from $C^\circ$ into sets:

 $$\mathsf{Ind}(C) \simeq \mathsf{Lex}(C^\circ, \Set).$$ 
\end{thm}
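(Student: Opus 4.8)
The plan is to take the flat-functor presentation recorded in the previous remark as the starting point and then identify flatness with left exactness over a finitely complete domain. Recall that $\mathsf{Ind}(C)\simeq\mathsf{Flat}(C^\circ,\Set)$, the full subcategory of $[C^\circ,\Set]$ spanned by the flat functors, i.e.\ the filtered colimits of representables. Writing $\cd:=C^\circ$, the hypothesis that $C$ has finite colimits means exactly that $\cd$ has finite limits. Since $\mathsf{Flat}(\cd,\Set)$ and $\mathsf{Lex}(\cd,\Set)$ are both \emph{full} subcategories of the same functor category $[\cd,\Set]$, it is enough to show that they have the same objects: for $F\colon\cd\to\Set$, I would prove $F$ flat $\iff$ $F$ preserves finite limits. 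This reduces the theorem to a single lemma about functors into $\Set$.

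The easy implication is flat $\Rightarrow$ left exact, and it does not really use the finite limits beyond their existence. If $F\cong\colim_i\cd(d_i,-)$ is a filtered colimit of representables, then for any finite diagram $x_\bullet$ in $\cd$ I would chain the continuity of representables with the fact that filtered colimits commute with finite limits in $\Set$ (and that evaluation preserves colimits):
\[ F(\lim_j x_j)\cong\colim_i\cd(d_i,\lim_j x_j)\cong\colim_i\lim_j\cd(d_i,x_j)\cong\lim_j\colim_i\cd(d_i,x_j)\cong\lim_j F(x_j). \]
Hence $F$ is left exact.

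The substantial direction is left exact $\Rightarrow$ flat, and this is where I expect the work to lie. Given a left exact $F$, I would show that its category of elements $\operatorname{el}(F)$ — objects $(d,x)$ with $x\in Fd$, and a morphism $(d,x)\to(d',x')$ being an arrow $u\colon d\to d'$ of $\cd$ with $F(u)(x)=x'$ — is cofiltered; equivalently $\operatorname{el}(F)^\circ$ is filtered, which is precisely the condition making the canonical density presentation $F\cong\colim_{(d,x)\in\operatorname{el}(F)^\circ}\cd(d,-)$ a filtered colimit of representables, i.e.\ a witness of flatness. The three cofiltering conditions are verified by translating the three generating finite limits, each preserved by $F$: the terminal object of $\cd$ gives, via $F(1)\cong 1\neq\emptyset$, a nonempty $\operatorname{el}(F)$; the binary product $d_1\times d_2$ together with $F(d_1\times d_2)\cong Fd_1\times Fd_2$ produces from $x_i\in Fd_i$ a common object $(d_1\times d_2,x)$ equipped with arrows to $(d_1,x_1)$ and $(d_2,x_2)$; and the equalizer of a parallel pair, preserved by $F$, produces the arrow coequalizing two parallel morphisms of $\operatorname{el}(F)$. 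Since every finite limit is built from these, left exactness supplies all three conditions.

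The main obstacle is this last step, and the delicate points are bookkeeping rather than conceptual: one must keep the variance straight (the direction of morphisms in $\operatorname{el}(F)$, hence which way the cofiltering cones point), and one must justify the reduction of arbitrary finite limits to terminal object, binary products and equalizers. Smallness of $C$ guarantees that $\operatorname{el}(F)$ is small, so the colimit presentation genuinely lands in $\mathsf{Ind}(C)$. Combining the two implications gives the equality of objects of the two full subcategories, and therefore the asserted equivalence $\mathsf{Ind}(C)\simeq\mathsf{Lex}(C^\circ,\Set)$.
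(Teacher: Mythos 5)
Your proposal is correct. The paper gives no proof of this statement: it is recorded in the Background chapter as a classical recollection, with the preceding remark already reducing it to the general description $\mathsf{Ind}(C)\simeq \mathsf{Flat}(C^\circ,\Set)$. Your argument — taking that flat-functor presentation as the starting point and proving flat $\iff$ lex when $C^\circ$ has finite limits, with the easy direction by commutation of filtered colimits with finite limits in $\Set$ and the substantive direction by checking cofilteredness of $\operatorname{el}(F)$ against the terminal object, binary products and equalizers — is precisely the standard proof in the literature the paper is implicitly citing, and your variance bookkeeping (morphisms of $\operatorname{el}(F)$, colimit indexed by $\operatorname{el}(F)^\circ$, smallness of $\operatorname{el}(F)$) is accurate throughout.
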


\end{rem}

\subsection{Locally presentable categories and essentially algebraic theories}
The connection between locally presentable categories and essentially algebraic theories is made precise in \cite{adamekrosicky94}[Chap. 3]. While algebraic theories axiomatize operational theories, essentially algebraic theories axiomatize operational theories whose operations are only partially defined. Category theorists have an equivalent approach to essentially algebraic theories via categories with finite limits. This approach was initially due to Freyd \cite{FreydCartesianLogic}, though a seminal work of Coste \cite{coste1976approche} should be mentioned too.

\subsection{Accessible categories and (infinitary) logic}
Accessible categories have been connected to (infinitary) logic in several (partially independent) ways. This story is recounted in Chapter 5 of \cite{adamekrosicky94}. Let us recall two of the most important results of that chapter.
\begin{enumerate}
	\item As locally presentable categories, accessible categories are categories of models of theories, namely \textit{basic} theories \cite{adamekrosicky94}[Def. 5.31, Thm. 5.35].
	\item Given a theory $T$ in $L_\lambda$ the category $\mathsf{Elem}_\lambda(T)$ of models and $\lambda$-elementary embeddings is accessible \cite{adamekrosicky94}[Thm. 5.42].
\end{enumerate} 
Unfortunately, it is not true in general that the whole category of models and homomorphisms of a theory in $L_\lambda$ is accessible. It was later shown by Lieberman \cite{Lthesis} and independently by Rosický and Beke \cite{aec}  that abstract elementary classes are accessible too. We will say more about these recent developments later in the thesis. The reader that is interested in this connection might find interesting \cite{vasey2019accessible}, whose language is probably the closest to that of a model theorist.

\section{Sketches} \label{backgroundsketches}

\begin{defn}[Sketch]
A sketch is a quadruple $\mathcal{S}=(S, L,C, \sigma)$ where
\begin{enumerate}
	\item[$S$] is a small category;
	\item[$L$] is a class of diagrams in $S$, called \textit{limit} diagrams;
	\item[$C$] is a class of diagrams in $S$, called \textit{colimit} diagrams;
	\item[$\sigma$] is a function assigning to each diagram in $L$ a cone and to each diagram in $C$ a cocone.
\end{enumerate}
\end{defn}

\begin{defn}
A sketch is
\begin{itemize}

	\item \textit{limit} if $C$ is empty;
	\item \textit{colimit} if $L$ is empty;
	\item \textit{mixed} (used only in  emphatic sense) if it's not limit, nor colimit;
	\item \textit{geometric}  if each cone is finite;
	\item \textit{coherent} if it is geometric and and every cocone is either finite or discrete, or it is a regular-epi specification\footnote{See \cite{elephant2}[D2.1.2].}.
	
\end{itemize}
\end{defn}

\begin{defn}[Morphism of Sketches]
Given two sketches $\cs$ and $\ct$, a morphism of sketches $f: \cs \to \ct$ is a functor $f: S \to T$ mapping (co)limit diagrams into (co)limits diagrams and proper (co)cones into (co)cones.
\end{defn}

\begin{defn}[$2$-category of Sketches]
The $2$-category of sketches has sketches as objects, morphism of sketches as $1$-cells and natural transformations as $2$-cells.
\end{defn}

\begin{defn}[Category of models of a sketch]
For a sketch $\cs$ and a (bicomplete) category $\cc$, the category $\mathsf{Mod}_{\cc}(\cs)$ of $\cc$-models of the sketch is the full subcategory of $\cc^\cs$ of those functors that are models. If it's not specified, by $\mathsf{Mod}(\cs)$ we mean $\mathsf{Mod}_{\Set}(\cs)$.
\end{defn}

\begin{defn}[Model of a sketch]
A model of a sketch $\cs$ in a category $\cc$ is a functor $f: \cs \to \cc$ mapping each specified (co)cone to a (co)limit (co)cone.  If it's not specified a model is a $\Set$-model.
\end{defn}

\subsection{Literature}
There exists a plethora of different and yet completely equivalent approaches to the theory of sketches. We stick to the one that suits best our setting, following mainly \cite{BOR2}[Chap. 5.6] or \cite{adamekrosicky94}[Chap. 2.F]. Other authors, such as \cite{Makkaipare} and \cite{elephant2} use a different (and more classical) definition involving graphs. Sketches are normally used as generalized notion of theory. From this perspective these approaches are completely equivalent, because the underlying categories of models are the same. \cite{Makkaipare}[page 40] stresses that the graph-definition is a bit more flexible in \textit{daily practice}. Sketches were introduced by C. Ehresmann. Guitart, Lair and Burroni should definitely be mentioned among the relevant contributors. This list of references does not do justice to the French school, which has been extremely prolific on this topic, yet, for the purpose of this thesis the literature above will be more then sufficient.

\subsection{Sketches: logic and sketchable categories}
Sketches became quite common among category theorists because of their expressiveness. In fact, they can be used as a categorical analog of those theories that can be axiomatized by (co)limit properties. For example, in the previous section, essentially algebraic theories are precisely those axiomatizable by finite limits.

\subsubsection{From theories to sketches}
We have mentioned that a sketch can be seen as a kind of theory. This is much more than a motto, or a motivational presentation of sketches. In fact, given a (infinitary) first order theory $\mathbb{T}$, one can always construct in a more or less canonical way a sketch $\mathcal S_{\mathbb{T}}$ whose models are precisely the models of $\mathbb{T}$. This is very well explained in \cite{elephant2}[D2.2]; for the sake of exemplification, let us state the theorem which is most relevant to our context.

\begin{thm}
If $\mathbb{T}$ is a (geometric) (coherent) theory, there there exists a (geometric) (coherent) sketch having the same category of models of $\mathbb{T}$.
\end{thm}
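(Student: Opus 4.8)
The plan is to go through the syntactic category of $\mathbb{T}$ and then read off the sketch from its geometric structure; this is essentially the strategy of \cite{elephant2}[D2.2], repackaged so that the output is literally a quadruple $(S,L,C,\sigma)$ in the sense of our definition. First I would build the syntactic category $\mathcal{C}_\mathbb{T}$: its objects are geometric formulas-in-context $\{\vec{x}.\varphi\}$ taken up to $\alpha$-equivalence and $\mathbb{T}$-provable equivalence, and its morphisms are $\mathbb{T}$-provably functional formulas. By the standard analysis of geometric logic, $\mathcal{C}_\mathbb{T}$ is a (small) geometric category, and it is coherent whenever $\mathbb{T}$ is coherent. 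The key input I would quote is the representability of models: for any bicomplete $\cc$ with enough exactness (in particular for $\cc=\Set$), a $\mathbb{T}$-model in $\cc$ is the same datum as a functor $\mathcal{C}_\mathbb{T}\to\cc$ preserving finite limits and sending the syntactic covering families to effective-epimorphic families, naturally in the homomorphisms. In symbols $\Mod(\mathbb{T})\simeq\mathsf{Geom}(\mathcal{C}_\mathbb{T},\Set)$.

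Next I would define $\mathcal{S}_\mathbb{T}=(S,L,C,\sigma)$ by taking $S$ to be a small skeleton of $\mathcal{C}_\mathbb{T}$ and transcribing its geometric structure into (co)cone specifications. For $L$ I would list finite diagrams witnessing the finite limits that carry the geometric structure — the terminal object, binary products and equalizers (equivalently, all finite limits) — with $\sigma$ assigning to each its limiting cone. Since all of these cones are finite, the resulting sketch is automatically geometric. For $C$ I would record, for every covering family $\{A_i\to A\}$ of the syntactic coverage, the cocone exhibiting $A$ as the colimit of the associated diagram (the coproduct of the $A_i$ together with their pullbacks, i.e. the effective-epi/\v{C}ech data), with $\sigma$ assigning that cocone. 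When $\mathbb{T}$ is coherent the syntactic coverage is finitary, so each such cocone is finite (or a regular-epi specification), which is exactly the clause in the definition of a coherent sketch.

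It then remains to verify that a functor $M\colon S\to\Set$ is a model of $\mathcal{S}_\mathbb{T}$ if and only if it is a model of $\mathbb{T}$. Unwinding the definition, $M$ is a sketch-model precisely when it sends the specified finite cones to limit cones and the specified cocones to colimit cocones; by construction this says exactly that $M$ preserves finite limits and sends covers to effective-epimorphic families, i.e. that $M$ is a geometric functor $\mathcal{C}_\mathbb{T}\to\Set$. Composing with the representability equivalence of the first step identifies sketch-models with $\mathbb{T}$-models, and since in both pictures the maps of models are just natural transformations, the equivalence $\Mod(\mathcal{S}_\mathbb{T})\simeq\Mod(\mathbb{T})$ is one of categories, not merely of objects. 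The same argument run over an arbitrary bicomplete $\cc$ gives $\Mod_\cc(\mathcal{S}_\mathbb{T})\simeq\Mod_\cc(\mathbb{T})$.

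The delicate step — and the one I would treat most carefully — is the encoding of the coverage into $C$. One must arrange the colimit cocones so that the abstract condition that $M$ preserve a given colimit coincides on the nose with the logical condition that $M$ send the corresponding cover to an effective-epimorphic family; this is where the stability of the relevant colimits in the geometric category $\mathcal{C}_\mathbb{T}$, and the choice of \v{C}ech/kernel-pair data in each cocone, actually matters. By contrast, the finite-limit side (building $L$) and the bookkeeping that natural transformations match model homomorphisms are routine. A more hands-on alternative, avoiding $\mathcal{C}_\mathbb{T}$ altogether, would build $S$ directly from the signature — one object per sort, finite products for contexts, subobjects for the relation and formula symbols — and encode each geometric axiom $\varphi\vdash_{\vec{x}}\psi$ as an inclusion of the corresponding subobjects; but this reproduces the same obstacle in a less transparent form, so I would prefer the syntactic-category route.
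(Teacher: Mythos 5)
Your proof is correct, and it is essentially the argument the paper itself defers to: the thesis states this theorem without proof, pointing to \cite{elephant2}[D2.2], and your route --- the syntactic category $\mathcal{C}_{\mathbb{T}}$, the identification of $\mathbb{T}$-models with finite-limit-preserving functors sending syntactic covers to effective-epimorphic families, and the transcription of that structure into finite limit cones in $L$ and \v{C}ech-style cocones in $C$ (finitary, hence coherent, when $\mathbb{T}$ is) --- is precisely the standard construction from that reference. You also correctly isolate the one genuinely delicate point, namely that preservation of the chosen cocones by a lex functor must coincide with the cover-to-effective-epi condition, which is exactly where the kernel-pair data in each cocone earns its keep.
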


Some readers might be unfamiliar with geometric and coherent theories; these are just very specific fragments of first order (infinitary) logic. For a very detailed and clean treatment we suggest \cite{elephant2}[D1.1]. Sketches are quite a handy notion of theory because we can use morphisms of sketches as a notion of translation between theories. 

\begin{prop}[{\cite{BOR2}[Ex. 5.7.14]}]
If $f: \mathcal{S} \to\mathcal{T} $ is a morphism of sketches, then composition with $f$ yields an (accessible) functor $\mathsf{Mod}(\cs) \to \mathsf{Mod}(\ct)$.
\end{prop}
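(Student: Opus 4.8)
The plan is to exhibit the asserted functor explicitly as precomposition with $f$, check that it is well defined at the level of models, verify functoriality, and finally establish accessibility by locating a single regular cardinal for which all the relevant (co)limits are computed pointwise.

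First I would set up the functor. A morphism of sketches $f \colon \cs \to \ct$ is in particular a functor $f \colon S \to T$ between the underlying small categories, so precomposition $(-)\op f \colon \cc^{T} \to \cc^{S}$ is defined for any target category $\cc$. The first step is to check that this operation carries models to models: whenever $M \colon \ct \to \cc$ sends each specified (co)cone of $\ct$ to a (co)limit (co)cone, the composite $M \op f$ does the same for $\cs$. This is immediate from the definition of a morphism of sketches, since $f$ takes each specified limit diagram of $\cs$, together with its distinguished cone, to a specified limit diagram of $\ct$ with its distinguished cone (and dually for colimits). Hence a distinguished cone of $\cs$ is sent by $M \op f$ to the $M$-image of a distinguished cone of $\ct$, which is a limit cone because $M$ is a model; the colimit clauses are dual. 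Thus $M \op f$ is a model of $\cs$, and $(-)\op f$ restricts to a functor between the categories of models (contravariantly in $f$).

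Functoriality on morphisms is then free: a natural transformation $\alpha \colon M \Rightarrow M'$ between models whiskers to $\alpha f \colon M \op f \Rightarrow M' \op f$, and whiskering preserves identities and vertical composition, so we obtain an honest functor. The crux is accessibility. Both $\mathsf{Mod}(\cs)$ and $\mathsf{Mod}(\ct)$ are accessible, being categories of models of sketches, so it suffices to show that the induced functor preserves $\lambda$-directed colimits for some regular cardinal $\lambda$. Here I would choose $\lambda$ exceeding the sizes of all diagrams declared as limit or colimit diagrams in $\cs$ and $\ct$. For such $\lambda$, $\lambda$-directed colimits of models exist and are computed pointwise, i.e.\ as in the functor categories $\cc^{S}$ and $\cc^{T}$: the limit clauses survive because the specified cones are $\lambda$-small and $\lambda$-small limits commute with $\lambda$-directed colimits, while the colimit clauses survive because colimits commute with colimits. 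Since $(-)\op f$ on functor categories is computed pointwise and hence preserves all colimits, its restriction to models preserves $\lambda$-directed colimits; being a functor between accessible categories with this property, it is accessible.

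The main obstacle is precisely this last step: the bookkeeping that secures a single $\lambda$ working simultaneously for $\cs$ and $\ct$ and for which the model conditions remain stable under $\lambda$-directed colimits. The well-definedness at the level of objects, by contrast, is a direct unwinding of the definition of a morphism of sketches and presents no real difficulty.
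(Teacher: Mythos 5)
Your proof is correct and is precisely the standard argument behind this statement, which the paper itself does not prove but delegates to the cited exercise in Borceux: restrict precomposition with $f$ to models, then secure accessibility by choosing a regular cardinal $\lambda$ bounding all specified diagrams, so that the model categories are closed under pointwise $\lambda$-directed colimits ($\lambda$-small limits commute with $\lambda$-directed colimits in $\Set$, and colimits commute with colimits), whence the pointwise-defined precomposition preserves them. One remark on direction: as your parenthetical ``contravariantly in $f$'' correctly observes, precomposition carries models of $\ct$ to models of $\cs$, so the induced functor runs $\mathsf{Mod}(\ct) \to \mathsf{Mod}(\cs)$; the arrow as printed in the paper's statement is reversed, and your reading is the right one. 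A minor caveat: your bookkeeping step, choosing a single $\lambda$ exceeding the sizes of all specified (co)limit diagrams, tacitly requires the specified diagrams to form a set (or at least to have bounded size), which is the standard definition of sketch even though this paper's definition loosely says ``class''; with a genuinely unbounded class of cones no such $\lambda$ need exist.
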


\subsubsection{Sketchability}
It should not be surprising that sketches can be used to \textit{axiomatize} accessible and locally presentable categories too. The two following results appear, for example, in \cite{adamekrosicky94}[2.F].

\begin{thm}
A category is locally presentable if and only if it's equivalent to the category of models of a limit sketch.
\end{thm}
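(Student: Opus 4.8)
The plan is to prove the two implications separately. Throughout I assume, as is standard, that the class $L$ of specified limit diagrams of a sketch is a \emph{set}; since the underlying category $S$ is small this is harmless, and it is what makes the orthogonality argument below go through.

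\emph{Locally presentable $\Rightarrow$ limit sketch.} Suppose $\ca$ is locally $\lambda$-presentable. The plan is to read a limit sketch directly off the canonical representation recalled above. First I would observe that $\ca_\lambda$ is essentially small and closed under $\lambda$-small colimits, so that $S := \ca_\lambda^\circ$ is a small category possessing all $\lambda$-small limits. The representation theorem, in the form $\ca \simeq \mathsf{Ind}_\lambda(\ca_\lambda) \simeq \mathsf{Lex}_\lambda(\ca_\lambda^\circ, \Set)$ (of which the excerpt records the case $\lambda = \aleph_0$, the general case being \cite{adamekrosicky94}), identifies $\ca$ with the category of functors $S \to \Set$ preserving $\lambda$-small limits. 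I would then define a limit sketch $\cs = (S, L, \varnothing, \sigma)$ by letting $L$ be the collection of all $\lambda$-small limit diagrams in $S$ and letting $\sigma$ pick out their canonical limit cones. By construction a $\Set$-model of $\cs$ is exactly a functor $S \to \Set$ sending each specified ($\lambda$-small) cone to a limit cone, i.e. a $\lambda$-continuous functor; hence $\mathsf{Mod}(\cs) \simeq \mathsf{Lex}_\lambda(\ca_\lambda^\circ, \Set) \simeq \ca$.

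\emph{Limit sketch $\Rightarrow$ locally presentable.} Let $\cs = (S, L, \varnothing, \sigma)$ be a limit sketch. The plan is to exhibit $\mathsf{Mod}(\cs)$ as an orthogonality class inside the functor category $\Set^S$, which is itself locally presentable as a presheaf category on a small category. For each specified cone of $L$ — say a cone with vertex $s$ over a diagram $D \colon J \to S$ — the universal property of the colimit together with the Yoneda lemma repackage the comparison morphism into a single map $\varphi_D \colon \colim_{j} S(D(j), -) \to S(s, -)$ in $\Set^S$, with the property that a functor $f \colon S \to \Set$ sends $\sigma(D)$ to a limit cone if and only if $f$ is orthogonal to $\varphi_D$ (indeed $\Set^S(\varphi_D, f)$ is the comparison $f(s) \to \lim_j f(D(j))$). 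Setting $\Phi = \{\varphi_D : D \in L\}$, which is a set, I obtain $\mathsf{Mod}(\cs) = \Phi^{\perp}$. I would then invoke the standard fact that the orthogonality class of a set of morphisms in a locally presentable category is a reflective, locally presentable subcategory, concluding that $\mathsf{Mod}(\cs)$ is locally presentable.

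\textbf{Main obstacle.} The substantive content lives entirely in the second implication, and concretely in the two facts cited as black boxes: that $\Set^S$ is locally presentable (routine), and — the real engine — that a small-orthogonality class in a locally presentable category is again locally presentable and reflective \cite{adamekrosicky94}. The only delicate bookkeeping is the translation, via Yoneda, of the clause \emph{``$f$ preserves the specified cone''} into a single orthogonality condition $f \perp \varphi_D$, and the verification that $L$ (hence $\Phi$) may be taken to be a set; once these are settled the conclusion is immediate. On the other direction, the sole nontrivial input is the $\lambda$-indexed representation theorem, which is available to us from the background material.
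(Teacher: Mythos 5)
The paper itself gives no proof of this statement — it is quoted as background from Adámek--Rosický \cite{adamekrosicky94}[2.F] — and your argument is precisely the standard one from that source: the representation theorem $\ca \simeq \mathsf{Ind}_\lambda(\ca_\lambda) \simeq \lambda\text{-}\mathsf{Lex}(\ca_\lambda^\circ, \Set)$ for one direction, and the exhibition of $\mathsf{Mod}(\cs)$ as a small-orthogonality class $\Phi^\perp$ in the locally presentable category $\Set^S$ (hence reflective and locally presentable, the same facts the paper invokes elsewhere as \cite{adamekrosicky94}[1.37, 1.38]) for the other. Both directions are correct, including the Yoneda translation of cone-preservation into orthogonality against $\varphi_D\colon \colim_j S(D(j),-) \to S(s,-)$; your flagged convention that $L$ is a \emph{set} is indeed the convention of the cited reference (the paper's ``class of diagrams'' is looser, and the orthogonality argument genuinely requires a set), so nothing essential is missing.
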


\begin{thm}
A category is accessible if and only if it's equivalent to the category of models of a mixed sketch.
\end{thm}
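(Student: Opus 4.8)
The plan is to prove the two implications separately, leaning on the canonical representation $\ca \simeq \mathsf{Ind}_\lambda(\ca_\lambda)$ and on the description of $\mathsf{Ind}$ as a category of flat functors recalled above.

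For the direction ``models of a mixed sketch $\Rightarrow$ accessible'', I would start from a sketch $\cs = (S, L, C, \sigma)$ and recall that the presheaf category $\Set^S$ is locally presentable. The category $\mathsf{Mod}(\cs)$ is the full subcategory of those $F \colon S \to \Set$ that send each specified cone in $L$ to a limit cone and each specified cocone in $C$ to a colimit cocone. First I would check that, for a single specification, the corresponding full subcategory of $\Set^S$ is accessible and accessibly embedded, and is closed under $\lambda$-directed colimits once $\lambda$ is chosen larger than the size of all diagrams appearing in $L$ and $C$: this uses that in $\Set$ such $\lambda$-directed colimits commute with the ($\lambda$-small) limits specified by $L$, while colimit specifications are automatically stable under all colimits, hence under directed ones. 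The category $\mathsf{Mod}(\cs)$ is then the intersection of a \emph{set} of such subcategories, and I would realise it as a bilimit in the $2$-category of accessible categories and invoke the closure of accessible categories under (bi)limits in $\mathsf{CAT}$ to conclude that $\mathsf{Mod}(\cs)$ is accessible.

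For the direction ``accessible $\Rightarrow$ models of a mixed sketch'', fix $\lambda$ with $\ca \simeq \mathsf{Ind}_\lambda(\ca_\lambda)$ and use the flat-functor description to identify $\ca$ with the full subcategory of $\lambda$-flat functors inside $[\ca_\lambda^\circ, \Set]$. The goal is a sketch on $S = \ca_\lambda^\circ$ whose $\Set$-models are exactly these flat functors. The limit part $L$ records that a model must preserve all the $\lambda$-small limits already present in $\ca_\lambda^\circ$; since this alone would only cut out the $\lambda$-lex functors (a locally presentable, hence too large, category), the colimit part $C$ is genuinely needed and must be chosen so as to impose the missing $\lambda$-filteredness of the category of elements of $F$. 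Once the specifications are in place, I would verify that a functor is a model if and only if it is $\lambda$-flat, giving $\mathsf{Mod}(\cs) \simeq \ca$.

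The main obstacle splits according to the direction. In the forward direction it is the faithful encoding of $\lambda$-flatness: flatness is strictly stronger than preservation of the limits that happen to exist in $\ca_\lambda^\circ$, so the colimit cocones of the sketch must be calibrated to force exactly $\lambda$-filteredness of the category of elements, no more and no less, and the cardinal bookkeeping (that one and the same $\lambda$ governs presentability, the diagrams, and flatness) must be kept consistent. In the reverse direction the real weight sits in the closure of accessible categories under bilimits in $\mathsf{CAT}$, which is what upgrades the elementary ``one specification at a time'' analysis to the simultaneous intersection defining $\mathsf{Mod}(\cs)$; this is the deep external input on which the argument ultimately rests.
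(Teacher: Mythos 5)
The thesis itself does not prove this statement: it records it with a pointer to \cite{adamekrosicky94}[2.F], so I am comparing your plan with the standard proofs there and in Makkai--Par\'e. Your ``models $\Rightarrow$ accessible'' half is essentially sound, and is in fact closer to the Makkai--Par\'e route (pseudolimits of accessible categories) than to Ad\'amek--Rosick\'y's, who instead sit $\mathsf{Mod}(\cs)$ inside the locally presentable category of models of the limit part and invoke closure under $\lambda$-directed colimits and $\lambda$-pure subobjects. Two small debts remain: a single \emph{colimit} specification is not an orthogonality class, so accessibility of that one subcategory itself needs the limit theorem, e.g.\ realizing it as the pseudopullback, along the accessible functor $F\mapsto\bigl(\colim F\circ D\to F(v)\bigr)$, of the full subcategory of isomorphisms in $\Set^{\to}$; and the strict intersection agrees with the bilimit because the inclusions are replete full, hence isofibrations. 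Both are routine.

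The genuine gap is in ``accessible $\Rightarrow$ sketchable''. You fix the underlying category of the sketch to be $S=\ca_\lambda^\circ$ itself and hope to calibrate cocone specifications inside $S$ so that the models are \emph{exactly} the $\lambda$-flat functors. No such calibration exists, for a concrete reason: every specification whose underlying diagram is nonempty is automatically satisfied by the constant-empty functor (limits and colimits of empty-set-valued diagrams over nonempty index categories are empty), and the empty functor is never flat, its category of elements being empty and hence not $\lambda$-filtered. To exclude it you would need a specification over the \emph{empty} diagram, which forces $F(v)\cong 1$ or $F(v)\cong\emptyset$ at some fixed $v\in S$ --- and in general some representable, hence flat, functor violates every such condition. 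For instance, let $\ca$ be the idempotent-complete two-object category $a\rightrightarrows b$: it is finitely accessible with $\ca_\omega=\ca$, its flat presheaves are exactly the two representables, whose value-sets at $(a,b)$ have sizes $(1,0)$ and $(2,1)$, so no empty-diagram specification on $\ca^\circ$ is compatible with both; consequently every sketch structure on $\ca^\circ$ whose models contain the flats also admits the empty model, and your intended verification ``model $\Leftrightarrow$ $\lambda$-flat'' is unachievable. The structural reason is that flatness says that for each $\lambda$-small diagram $D$ in $S$ the canonical comparison $\colim_{\mathrm{Cone}(D)}F(\mathrm{vertex})\to\lim F\circ D$ is surjective, and neither side is a value of $F$ at an object of $S$. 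The construction in the cited source therefore \emph{enlarges} the underlying category: for each such $D$ one freely adjoins a vertex carrying a limit cone over $D$, a vertex carrying a colimit cocone over the diagram of cones on $D$, the comparison arrow between them, and a kernel-pair cone plus coequalizer cocone exhibiting that arrow as an epimorphism; models of the resulting mixed sketch restrict along $\ca_\lambda^\circ$ to precisely the $\lambda$-flat functors, giving $\mathsf{Mod}(\cs)\simeq\mathsf{Ind}_\lambda(\ca_\lambda)\simeq\ca$. So the theorem is really about a sketch on a larger category than $\ca_\lambda^\circ$, and with $S=\ca_\lambda^\circ$ fixed, this direction of your argument cannot be completed.
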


\newpage
\section{Topoi} \label{backgroundtopoi}

\begin{ach}
In this section by \textit{topos} we mean Grothendieck topos.
\end{ach}

\begin{defn}[Topos]
A topos $\ce$ is  lex-reflective\footnote{This means that it is a reflective subcategory and that the left adjoint preserves finite limits. Lex stands for \textit{left exact}, and was originally motivated by homological algebra.} subcategory\footnote{Up to equivalence of categories.} of a category of presheaves over a small category, $$i^*: \Set^{C^\circ} \leftrightarrows \ce : i_*.  $$
\end{defn}

\begin{defn}[Geometric morphism]
A geometric morphism of topoi $f: \ce \to \cf$ is an adjunction $f^*: \cf \leftrightarrows \ce: f_*$\footnote{Notice that $f_*$ is the right adjoint.} whose left adjoint preserves finite limits (is left exact). We will make extensive use of the following terminology:
\begin{enumerate}
	\item[$f^*$] is the inverse image functor;
	\item[$f_*$] is the direct image functor.
	\end{enumerate}
\end{defn}

\begin{defn}[$2$-category of Topoi]
The $2$-category of topoi has topoi as objects, geometric morphisms as $1$-cells and natural transformations between left adjoints as $2$-cells.
\end{defn}

\subsection{Literature}
There are several standard references for the theory of topoi. To the absolute beginner and even the experienced category theorist that does not have much confidence with the topic, we recommend \cite{leinster2010informal}. Most of the technical content of the thesis can be understood via \cite{sheavesingeometry}, a reference that we strongly suggest to start and learn topos theory. Unfortunately, the approach of \cite{sheavesingeometry} is a bit different from ours, and even though its content is sufficient for this thesis, the intuition that is provided is not $2$-categorical enough for our purposes. The reader might have to integrate with the encyclopedic \cite{elephant1,elephant2}.  A couple of constructions that are quite relevant to us are contained only in \cite{borceux_19943}, that is otherwise very much equivalent to \cite{sheavesingeometry}.

\subsection{A comment on these definitions}
Topoi were defined by Grothendieck as a \textit{natural} generalization of the category of sheaves $\mathsf{Sh}(X)$ over a topological space $X$. Their geometric nature was thus the first to be explored and exploited. Yet, with time, many other properties and facets of them have emerged, making them one of the main concepts in category theory between the 80's and 90's. Johnstone, in the preface of \cite{elephant1} gives 9 different interpretations of what a topos \textit{can be}. In fact, this multi-faced nature of the concept of topos motivates the title of his book. In this thesis we will concentrate on three main aspects of topos theory.

\begin{itemize}
	\item A topos is a (categorification of the concept of) locale;
	\item A topos is a (family of Morita-equivalent) geometric theory;
	\item A topos is an object in the $2$-category of topoi.
\end{itemize}

The first and the second aspects will be conceptual, and will allow us to infer qualitative results in geometry and logic, the last one will be our \textit{methodological point of view} on topoi, and ultimately the main reason for which \cite{sheavesingeometry} might not be a sufficient reference for this thesis.

\subsection{Site descriptions of topoi}

The first definition of topos that has been given was quite different from the one that we have introduced. As we have mentioned, topoi were introduced as category of sheaves over a space, thus the first definition was based on a generalization of this presentation. This is the theory of sites, and the reader of \cite{sheavesingeometry} will recognize this approach in \cite{sheavesingeometry}[Chap. 3]. In a nutshell, a site $(C,J)$ is the data of a category $C$ together a notion of covering families. For example, in the case of a topological space, $C$ is the locale of open sets of $X$, and $J$ is given by the open covers. Thus, a topos can be defined to be a category of sheaves over a small site, $$ \ce \simeq \mathsf{Sh}(C,J). $$
$\mathsf{Sh}(C,J)$ is defined as a full subcategory of $\Set^{C^{\circ}}$, which turn out to be lex-reflective. That's the technical bridge between the site-theoretic description of a topos and the one at the beginning of the section.
Site theory is extremely useful in order to study topoi as \textit{categories}, while our approach is much more useful in order to study them as \textit{objects}. We will never use explicitly site theory in the thesis, with the exception of a couple of proofs and a couple of examples.

\subsection{Topoi and Geometry}
It's a bit hard to convey the relationship between topos theory and geometry in a short subsection. We mainly address the reader to \cite{leinster2010informal}. Let us just mention that to every topological space $X$, one can associate its category of sheaves $\mathsf{Sh}(X)$ (and this category is a topos), moreover, this assignment is a very strong topological invariant. For this reason, the study of $\mathsf{Sh}(X)$ is equivalent to the study of $X$ from the perspective of the topologist, and is very convenient in algebraic geometry and algebraic topology. For example, the category of sets is the topos of sheaves over the one-point-space, $$ \Set \cong \mathsf{Sh}(\bullet) $$ for this reason, category-theorists sometime call $\Set$ \textit{the point}. This intuition is consistent with the fact that $\Set$ is the terminal object in the category of topoi. Moreover, as a point $p \in X$ of a topological space  $X$ is a continuous function $p: \bullet \to X$, a point of a topos $\cg$ is a geometric morphism $p: \Set \to \cg$. Parallelisms of this kind have motivated most of the definitions of topos theory and most have led to results very similar to those that were achieved in formal topology (namely the theory of locales). The class of points of a topos $\ce$ has a structure of category $\mathsf{pt}(\ce)$ in a natural way, the arrows being natural transformations between the inverse images of the geometric morphisms.

\subsection{Topoi and Logic}
Geometric logic and topos theory are tightly bound together. Indeed, for a geometric theory $\mathbb{T}$ it is possible to build a topos $\Set[\mathbb{T}]$ (the classifying topos of $\mathbb{T}$) whose category of points is precisely the category of models of $\mathbb{T}$,
$$ \mathsf{Mod}(\mathbb{T}) \cong \mathsf{pt}(\Set[\mathbb T]). $$
This amounts to the theory of classifying topoi \cite{sheavesingeometry}[Chap. X] and each topos classifies a geometric theory. This gives us a logical interpretation of a topos. Each topos is a \textit{geometric theory}, which in fact can be recovered from any of its sites of definition. Obviously, for each site that describes the same topos we obtain a different theory. Yet, these theories have the same category of models (in any topos). In this thesis we will exploit the construction of \cite{borceux_19943} to show that to each \textit{geometric} sketch (a kind of theory), one can associate a topos whose points are precisely the models of the sketch. This is another way to say that the category of topoi can internalize a geometric logic.

\subsubsection{A couple of words on elementary topoi}
Grothendieck topoi are often treated in parallel with their cousins elementary topoi. The origins of the theory of elementary topoi dates back to Lawvere's \textit{elementary theory of the category of sets} \cite{L1964}. The general idea behind his program was find those axioms that make a category a good place to found mathematics. Eventually, elementary topoi bloomed from the collaboration of Lawvere and Tirney. In a nutshell, an elementary topos is a cartesian closed category with finite limits, and a subobject classifier. It turns out that every Grothendieck topos is an elementary topos, while every cocomplete elementary topos with a generator is a Grothendieck topos. These two results establish a tight connection between the two concepts. The intuition that we have on them is yet quite different. An elementary topos is \textit{a universe of sets}, while a Grothendieck topos is \textit{a geometric theory} or a \textit{categorified locale}. The theory of elementary topoi is extremely rich, but in this thesis the elementary-topos perspective will never play any rôle; we are mentioning elementary topoi precisely to stress that this notion will not be used in the thesis. 

\subsection{Special classes of topoi}
In the thesis we will study some relevant classes of topoi. In this subsection we recall all of them and give a good reference to check further details. These references will be repeated in the relevant chapters.

\begin{table}[!htbp]
\begin{tabular}{lllll}
\cline{2-3}
\multicolumn{1}{l|}{} & \multicolumn{1}{c|}{Topoi} & \multicolumn{1}{c|}{Reference} &  &  \\ \cline{2-3}
\multicolumn{1}{l|}{} & \multicolumn{1}{c|}{connected} & \multicolumn{1}{c|}{{\cite{elephant2}[C1.5.7]}} &  &  \\ \cline{2-3}
\multicolumn{1}{l|}{} & \multicolumn{1}{c|}{compact} & \multicolumn{1}{c|}{{\cite{elephant2}[C3.2]}} &  &  \\ \cline{2-3}
\multicolumn{1}{l|}{} & \multicolumn{1}{c|}{atomic} & \multicolumn{1}{c|}{{\cite{elephant2}[C3.5]}} &  &  \\ \cline{2-3}
\multicolumn{1}{l|}{} & \multicolumn{1}{c|}{locally decidable} & \multicolumn{1}{c|}{{\cite{elephant2}[C5.4]}} &  &  \\ \cline{2-3}
\multicolumn{1}{l|}{} & \multicolumn{1}{c|}{coherent} & \multicolumn{1}{c|}{{\cite{elephant2}[D3.3]}} &  &  \\ \cline{2-3}
\multicolumn{1}{l|}{} & \multicolumn{1}{c|}{boolean} & \multicolumn{1}{c|}{{\cite{elephant2}[D3.4, D4.5], \cite{elephant1}[A4.5.22]}} &  &  \\ \cline{2-3}

\end{tabular}
\end{table}

\newpage

\section{Ionads} \label{backgroundionads}

\subsection{Garner's definitions}
\begin{defn}[Ionad]
An ionad $\cx = (X, \text{Int})$ is a set $X$ together with a comonad $\text{Int}: \Set^X \to \Set^X$ preserving finite limits.
\end{defn}

\begin{defn}[Category of opens of a ionad]
The category of opens $\mathbb{O}(\cx)$ of a ionad $\cx = (X, \text{Int})$ is the category of coalgebras of $\text{Int}$. We shall denote by $U_{\cx}$ the forgetful functor $U_\cx: \mathbb{O}(\cx) \to \Set^X$.  
\end{defn}

\begin{defn}[Morphism of Ionads]
A morphism of ionads $f: \cx \to \cy$ is a couple $(f, f^\sharp)$ where $f: X \to Y$ is a set function and $f^\sharp$ is a lift of $f^*$,
\begin{center}
\begin{tikzcd}
\mathbb{O}(\cy) \arrow[rr, "f^\sharp" description] \arrow[dd, "U_\cy" description] &  & \mathbb{O}(\cx) \arrow[dd, "U_\cx" description] \\
                                                                               &  &                                                  \\
\Set^Y \arrow[rr, "f^*" description]                                           &  & \Set^X                                          
\end{tikzcd}
\end{center}
\end{defn}

\begin{defn}[Specialization of morphism of ionads]
Given two morphism of ionads $f,g: \cx \to \cy$, a specialization of morphism of ionads $\alpha: f \Rightarrow g$ is a natural transformation between $f^\sharp$ and $g^\sharp$,
\begin{center}
\begin{tikzcd}
\mathbb{O}(\cy) \arrow[r, "f^\sharp" description, bend left=35, ""{name=U, below}]
\arrow[r,"g^\sharp" description, bend right=35, ""{name=D}]
& \mathbb{O}(\cx)
\arrow[Rightarrow, "\alpha" description, from=U, to=D] \end{tikzcd}
\end{center}
\end{defn}

\begin{defn}[$2$-category of Ionads]
The $2$-category of ionads has ionads as objects, morphism of ionads as $1$-cells and specializations as $2$-cells.
\end{defn}

\begin{defn}[Bounded Ionads]
A ionad $\cx$ is bounded if $\mathbb{O}(\cx)$ is a topos.
\end{defn}

\subsection{Ionads and topological spaces}
Ionads were defined by Garner in \cite{ionads}, and to our knowledge that's all the literature available on the topic. His definition is designed to generalize the definition of topological space. Indeed a topological space $\cx$ is the data of a set (of points) and an interior operator, $$\text{Int}: 2^X \to 2^X.$$ Garner builds on the well known analogy between powerset and presheaf categories and extends the notion of interior operator to a presheaf category. The whole theory is extremely consistent with the expectations: while the poset of (co)algebras for the interior operator is the locale of open sets of a topological space, the category of coalgebras of a ionad is a topos, a natural categorification of the concept of locale. 

\subsection{A generalization and two related propositions}
In his paper Garner mentions that in giving the definition of ionad he could have chosen a category instead of a set \cite{ionads}[Rem. 2.4], let us quote his own comment on the definition.

\begin{quote}[\cite{ionads}, Rem. 2.4]
In the definition of ionad, we have chosen to have a mere \emph{set} of points, rather
than a category of them. We do so for a number of reasons. The first is that this choice 
mirrors most closely the definition of topological space, where we have a set, and not a 
poset, of points. The second is that we would in fact obtain no extra generality by allowing 
a category of points. We may see this analogy with the topological case, where to give an 
interior operator on a poset of points $(X,\leq)$ is equally well to give a topology 
$\mathcal{O}(X)$ on $X$ such that every open set is upwards-closed with respect to $\leq$.
Similarly, to equip a small category $C$ with an interior comonad is equally well 
to give an interior comonad on $X:=\mathrm{ob}C$ together with a factorization 
of the forgetful functor $\mathbb{O}(X)\to\mathsf{Set}^{X}$ through the presheaf category 
$\mathsf{Set}^C$; this is an easy consequence of Example 2.7 below. However, the 
most compelling reason for not admitting a category of points is that, if we were to do so, 
then adjunctions such as that between the category of ionads and the category of topological 
spaces would no longer exist. Note that, although we do not allow a category of points, the 
points of any (well-behaved) ionad bear nonetheless a canonical category structure -- 
described in Definition 5.7 and Remark 5.9 below -- which may be understood as a generalization 
of the specialization ordering on the points of a space.  
\end{quote}

We have decided to allow ionads over a category, even a locally small (but possibly large) one. We will need this definition later in the text to establish a connection between ionads and topoi. While the structure of category is somewhat accessory, as Garner observes, the one of proper class will be absolutely needed.

\begin{defn}[Generalized Ionads]
A generalized ionad $\cx = (X, \text{Int})$ is a locally small (but possibly large) pre-finitely cocomplete category $X$ together with a lex comonad $\text{Int}: \P(X) \to \P(X)$.
\end{defn}

\begin{ach}
We will always omit the adjective \textit{generalized}.
\end{ach}

\begin{rem} We are well aware that the notion of generalized ionad seems a bit puzzling at first sight. \textit{Why isn't it just the data of a locally small category $X$ together with a lex comonad on $\Set^X$?} The answer to this question is a bit delicate, having both a technical and a conceptual aspect. Let us first make precise the notion above, introducing all the concepts that are mentioned, then we will discuss in what sense this is the \textit{correct} notion of generalized ionad.
\end{rem}

\begin{rem}
In a nutshell, $\P(X)$ is a well-behaved full subcategory of $\Set^X$, while the existence of finite pre-colimits will ensure us that $\P(X)$ has finite limits. Let us dedicate some remarks to make these hints more precise.
\end{rem}

\begin{rem}[On small (co)presheaves]
By $\P(X)$ we mean the full subcategory of $\Set^X$  made by small copresheaves over $X$, namely those functors $X \to \Set$ that are small colimits of corepresentables (in $\Set^X$). This is a locally small category, as opposed to $\Set^X$ which might be locally large. The study of small presheaves $X^\circ \to \Set$ over a category $X$ is quite important with respect to the topic of free completions under limits and under colimits. Obviously, when $X$ is small, every presheaf is small. Given a category $X$, its category of small presheaves is usually indicated by $\cp(X)$, while $\cp^\sharp(X)$ is $\cp(X^\circ)^\circ$. The most updated account on the property of $\cp(X)$ is given by \cite{presheaves} and \cite{DAY2007651}. $\cp(X)$ is the free completion of $X$ under colimits, while $\cp^\sharp(X)$ is the free completion of $X$ under limits. The following equation clarifies the relationship between $\cp, \P$ and $\cp^\sharp$, \[\cp^\sharp(X)^\circ = \P(X) = \cp(X^\circ).\]
This means that $\P(X)$ is the free completion of $X^\circ$ under colimits.
\end{rem}

\begin{rem}
The category of small presheaves $\cp(X)$ over a (locally small) large category $X$ is a bit pathological, especially if we keep the intuition that we have when $X$ is small. In full generality $\cp(X)$ is not complete, nor it has any limit whatsoever. Yet, under some smallness condition most of the relevant properties of $\cp(X)$ remain true. Below we recall a good example of this behavior, and we address the reader to \cite{presheaves} for a for complete account.
\end{rem}

\begin{prop}[{\cite{presheaves}[Cor. 3.8]}] \label{smallpreshcomplete}
$\cp(X)$ is (finitely) complete if and only if $X$ is (finitely) pre-complete\footnote{See {\cite{presheaves}[Def. 3.3]}.}.
\end{prop}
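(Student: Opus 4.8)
The plan is to work inside the ambient functor category $\Set^{X^\circ}$, which is complete, and to recognise that $\cp(X)$ is closed in $\Set^{X^\circ}$ under \emph{all} small colimits; hence the only thing at stake is whether the pointwise limit of a (finite) diagram of small presheaves is again \emph{small}. For the finite case I would reduce finite completeness of $\cp(X)$ to the existence of a terminal object and of pullbacks (equivalently, finite products and equalizers). The bridge between smallness and pre-limits is the following criterion, which I would isolate as a lemma: a presheaf $P \colon X^\circ \to \Set$ is small if and only if its category of elements admits a small weakly terminal family, i.e. a small set of elements $\{(x_i,p_i)\}$ through which every element factors. One direction is immediate from the density formula $P \cong \colim_{(x,p)} y(x)$; the other follows because such a family induces a pointwise-surjective map $\coprod_i y(x_i) \twoheadrightarrow P$ from a small presheaf, exhibiting $P$ as a small colimit of representables.

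For the direction ``$\cp(X)$ finitely complete $\Rightarrow$ $X$ finitely pre-complete'', I would start from a finite diagram $D \colon J \to X$, postcompose with the Yoneda embedding $y \colon X \to \cp(X)$, and form $L := \lim yD$ in $\cp(X)$. Since $L$ is small it decomposes as $L \cong \colim_i y(x_i)$ over a small family, and the limit cone yields, for each $i$, a cone $x_i \Rightarrow D$ in $X$ by fully faithfulness of $y$. I claim this family is a pre-limit of $D$. Indeed, any cone $\mu \colon x \Rightarrow D$ in $X$ gives $y\mu \colon y(x) \to yD$, which factors uniquely through $L$; the resulting map $y(x) \to L$ is an element of $L(x) = (\colim_i y(x_i))(x) = \colim_i X(x,x_i)$, a colimit computed in $\Set$, so it is represented by some morphism $x \to x_i$, through which $\mu$ factors. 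This is exactly the defining property of a pre-limit.

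The converse ``$X$ finitely pre-complete $\Rightarrow$ $\cp(X)$ finitely complete'' is the substantial direction, and I expect the bookkeeping here to be the main obstacle. Given a finite diagram of small presheaves, its pointwise limit $P$ has, over each $x$, elements that are precisely compatible families $(p_j \in F_j(x))_j$, i.e. cones in $X$ over an associated finite diagram assembled from the categories of elements of the $F_j$. Finite pre-completeness of $X$ supplies, for each such cone-shape, a small weakly terminal family of cones; collecting these across the (small, since the $F_j$ are small) indexing data produces a small weakly terminal family in the category of elements of $P$. By the lemma above, $P$ is small, so the pointwise limit lies in $\cp(X)$ and computes the limit there. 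The delicate point — and where the argument must be handled with care rather than formally — is that a map out of a representable into a colimit of representables need not factor through a single component, so the whole reduction has to be carried out at the level of elements and of the category of elements, never by naively factoring presheaf maps. The arbitrary-size statement is proved by the same scheme, replacing finite diagrams and finite pre-limits by small ones throughout.
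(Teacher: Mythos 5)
The paper offers no proof of this proposition: it is quoted directly from the cited reference (\cite{presheaves}[Cor.\ 3.8]), so your argument has to stand on its own. Your framing is fine (limits in $\cp(X)$, when they exist, are automatically pointwise, since homming out of representables computes evaluations), and your direction ``$\cp(X)$ (finitely) complete $\Rightarrow$ $X$ (finitely) pre-complete'' is correct as written. But your key lemma is false, and with it the substantial direction collapses. You claim $P$ is small iff $\mathsf{el}(P)$ has a small weakly terminal family, justified by saying that such a family yields a pointwise-surjective map $\coprod_i y(x_i)\twoheadrightarrow P$ which ``exhibits $P$ as a small colimit of representables.'' It does not: a pointwise quotient of a small presheaf need not be small, because presenting $P$ as a colimit would require coequalizing the kernel data, and smallness of that data (e.g.\ of pullbacks of small presheaves) is precisely the kind of thing pre-completeness is supposed to deliver, not a given. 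Concretely, let $X$ be the large poset of ordinals with one extra object $t$, with exactly two morphisms $u_\alpha, v_\alpha\colon \alpha\to t$ for each ordinal $\alpha$ (compatibly: $u_\beta\circ\iota_{\alpha\beta}=u_\alpha$, $v_\beta\circ\iota_{\alpha\beta}=v_\alpha$) and only the identity on $t$. Then $(t,\ast)$ is weakly terminal in the category of elements of the terminal presheaf $1$, and $y(t)\twoheadrightarrow 1$ is a pointwise epi from a representable; yet $1$ is not small: in any small colimit of representables the ordinal vertices are bounded by some $\gamma$, and for $\alpha>\gamma$ the elements $u_\alpha\neq v_\alpha$ in the copies of $y(t)$ can never be identified (the only available transition maps over such $\alpha$ are identities of $t$), so the colimit has at least two elements at $\alpha$, or none. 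The correct smallness criterion requires a small \emph{final} subcategory of $\mathsf{el}(P)$ (nonempty \emph{and connected} comma categories), equivalently that $P$ be a left Kan extension of its restriction to a small full subcategory; weak terminality supplies only the nonemptiness half.

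This is exactly where your direction ``$X$ finitely pre-complete $\Rightarrow$ $\cp(X)$ finitely complete'' breaks: collecting weakly terminal families of cones produces a weakly terminal family in $\mathsf{el}(P)$, and the inference from there to smallness is the false lemma. The failure is diagramwise, not merely expository: in the example above the empty diagram has a pre-limit (namely $\{t\}$), yet the pointwise terminal presheaf --- which is what the terminal object of $\cp(X)$ would have to be --- is not small, so ``$D$ has a pre-limit $\Rightarrow \lim yD$ is small'' is simply false. (The cited theorem survives because this $X$ is not finitely pre-complete: the discrete diagram $(t,t)$ has no pre-limit.) Any correct proof of this direction must therefore invoke pre-limits of auxiliary diagrams to control the identifications between \emph{different} factorizations --- the connectedness issue you flag in your last sentence but do not actually resolve.
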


\begin{cor}
If $X$ is finitely pre-cocomplete, then $\P(X)$ has finite limits.
\end{cor}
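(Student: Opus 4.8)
The plan is to deduce this immediately from Proposition \ref{smallpreshcomplete} by passing to the opposite category. The only genuine ingredient is the identification recorded in the remark above, namely $\P(X) = \cp(X^\circ)$, which exhibits the small copresheaf category on $X$ as the ordinary small presheaf category on $X^\circ$. Since Proposition \ref{smallpreshcomplete} is a statement about $\cp(-)$, rewriting $\P(X)$ in this form is precisely what lets us invoke it.

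First I would unwind the two "pre" conditions. The notion of finite pre-cocompleteness is by design the formal dual of finite pre-completeness: a finite pre-colimit of a finite diagram $D \colon I \to X$ is exactly a finite pre-limit of $D^\circ \colon I^\circ \to X^\circ$ in the opposite category. Consequently $X$ is finitely pre-cocomplete if and only if $X^\circ$ is finitely pre-complete; no computation is involved beyond dualizing cones into cocones.

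With this in hand I would apply the finite half of Proposition \ref{smallpreshcomplete} to the category $X^\circ$ in place of $X$. The proposition then asserts that $\cp(X^\circ)$ is finitely complete precisely when $X^\circ$ is finitely pre-complete. Combining the two observations, finite pre-cocompleteness of $X$ yields finite pre-completeness of $X^\circ$, hence finite completeness of $\cp(X^\circ) = \P(X)$, which is exactly the claim that $\P(X)$ has finite limits.

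The proof carries essentially no obstacle, since the substantive work is hidden in the cited Proposition \ref{smallpreshcomplete}. The only points deserving care are bookkeeping: one must check that $X^\circ$ remains a legitimate input for the proposition (it is again locally small and possibly large, which is all that is required), and that the qualifier \emph{finitely} is transported correctly under dualization, so that the finite --- rather than the unbounded --- version of the proposition is the one being applied.
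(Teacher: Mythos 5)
Your proof is correct and is exactly the argument the paper intends: the corollary is stated immediately after Prop.~\ref{smallpreshcomplete} with no separate proof, the implicit step being precisely the identification $\P(X) = \cp(X^\circ)$ together with the observation that finite pre-cocompleteness of $X$ is, by definition, finite pre-completeness of $X^\circ$. Your attention to transporting the \emph{finitely} qualifier under dualization is sound bookkeeping, and nothing further is needed.
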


A precise understanding of the notion of pre-cocomplete category is actually not needed for our purposes, the following sufficient condition will be more than enough through the thesis.

\begin{cor}[{\cite{presheaves}[Exa. 3.5 (b) and (c)]}]
If $X$ is small or it is accessible, then $\P(X)$ is complete.
\end{cor}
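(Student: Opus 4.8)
The plan is to reduce completeness of $\P(X)$ to a pre-completeness condition on $X$ via the identity $\P(X) = \cp(X^\circ)$ recorded above, and then to verify that condition separately in the two cases. The engine is Proposition \ref{smallpreshcomplete}: applying it with $X^\circ$ in place of $X$, the category $\cp(X^\circ) = \P(X)$ is complete precisely when $X^\circ$ is pre-complete. Since pre-completeness of $X^\circ$ is by definition the same as pre-cocompleteness of $X$ (the notion is dualized by passing to the opposite), the whole statement collapses to showing that small categories and accessible categories are pre-cocomplete.

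The small case is essentially formal. If $X$ is small then so is $X^\circ$, and in a small category the totality of cones over any fixed small diagram $D \colon I \to X^\circ$ is itself a set: a cone is the choice of a vertex among the (few) objects of $X^\circ$ together with a leg in each of the (few) hom-sets, all drawn from small data. This set of all cones is then trivially a weakly terminal family, so $X^\circ$ is pre-complete; this is the content of the cited Example 3.5(b).

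For the accessible case I would argue that $X$ is pre-cocomplete by exhibiting, for each small diagram $D \colon I \to X$, a small weakly initial family of cocones. Choose a regular cardinal $\mu$ such that $X$ is $\mu$-accessible, every $D(i)$ is $\mu$-presentable, and $I$ is $\mu$-small; this is possible because $I$ and the image of $D$ are small. Any cocone $D \Rightarrow c$ then factors through one whose vertex is $\mu$-presentable: writing $c$ as a $\mu$-directed colimit of $\mu$-presentable objects, $\mu$-presentability of each $D(i)$ lets every leg factor through some stage, and $\mu$-directedness together with $\mu$-smallness of $I$ lets these factorizations be taken compatibly through a single stage. Since the $\mu$-presentable objects form an essentially small set, so do the cocones on $D$ with $\mu$-presentable vertex, and this produces the desired small weakly initial family, giving Example 3.5(c).

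The main obstacle is exactly this last step: upgrading the individual factorizations of the legs $D(i) \to c$ through the $\mu$-directed colimit to a single cocone that all of them factor through simultaneously. This is where the interplay between $\mu$-presentability of the objects $D(i)$, $\mu$-smallness of the shape $I$, and $\mu$-directedness of the colimit diagram for $c$ must be orchestrated carefully; once this compatibility is secured, the reduction to Proposition \ref{smallpreshcomplete} makes the rest of the argument routine.
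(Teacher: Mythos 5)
The paper gives this corollary no internal proof at all: it is quoted wholesale from \cite{presheaves}[Exa.\ 3.5(b),(c)], with Prop.\ \ref{smallpreshcomplete} doing the reduction, and the thesis deliberately black-boxes the notion of pre-(co)completeness (``a precise understanding\dots is actually not needed''). Your skeleton coincides with the implicit route --- dualize via $\P(X) = \cp(X^\circ)$, apply Prop.\ \ref{smallpreshcomplete}, and verify the condition for small and for accessible $X$ --- and you then reconstruct the two cited examples from scratch. The small case is fine under any reasonable reading. In the accessible case your factorization-and-uniformization argument is the standard, correct device: the choice of $\mu$ exists (note this silently invokes the raising-of-accessibility theorem --- arbitrarily large $\mu$ with $X$ $\mu$-accessible exist only for $\mu$ sharply larger than the accessibility index, cf.\ \cite{adamekrosicky94}), each leg factors through a stage, essential uniqueness of such factorizations together with $\mu$-smallness of $I$ yields a single stage carrying an honest cocone, and choosing isomorphism-class representatives of cocones with $\mu$-presentable vertex gives a small family.

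The one genuine soft spot is definitional. You have fixed ``pre-cocomplete'' to mean ``every small diagram admits a small weakly initial family of cocones,'' but the thesis never states \cite{presheaves}[Def.\ 3.3], and what actually drives Prop.\ \ref{smallpreshcomplete} is a smallness condition: the copresheaf of cocones $\mathrm{Cocone}(D,-) \cong \lim_{i \in I} X(D(i),-)$ must be a \emph{small} functor. A small weakly initial family only gives an epimorphism onto this functor from a small coproduct of corepresentables, which is not by itself smallness (the relations need taming too). Fortunately your own ingredients close this gap in one line: with your choice of $\mu$, each $X(D(i),-)$ preserves $\mu$-directed colimits, and a $\mu$-small limit of such functors again preserves them (since $\mu$-small limits commute with $\mu$-directed colimits in $\Set$); hence $\mathrm{Cocone}(D,-)$ is $\mu$-accessible, therefore isomorphic to the left Kan extension of its restriction to the essentially small category of $\mu$-presentables, and therefore a small colimit of corepresentables. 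I would record that observation explicitly; with it, your argument is a complete and definition-robust replacement for the paper's citation, rather than a proof of a possibly weaker surrogate condition.
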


What must be understood is that being pre-complete, or pre-cocomplete should not be seen as a completeness-like property, instead it is much more like a smallness assumption.

\begin{exa}[Ionads are generalized ionads]
It is obvious from the previous discussion that a ionad is a generalized ionad.
\end{exa}

\begin{rem}[Small copresheaves vs copresheaves]
When $X$ is a finitely pre-cocomplete category, $\P(X)$ is an infinitary pretopos and finite limits are \textit{nice} in the sense that they can be computed in $\Set^X$. Being an infinitary pretopos, together with being the free completion under (small) colimits makes the conceptual analogy between $\P(X)$ and $2^X$ nice and tight, but there is also a technical reason to prefer small copresheaves to copresheaves.
\end{rem}

\begin{prop}
If $f^*: \cg \to \P(X)$ is a cocontinuous functor from a total category, then it has a right adjoint $f_*$.
\end{prop}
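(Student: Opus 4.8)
The plan is to realise the statement as an instance of the adjoint functor theorem for total categories: a functor out of a total category into a locally small category has a right adjoint as soon as it preserves all small colimits. The three hypotheses are exactly what we have at our disposal --- $\cg$ is total by assumption, $f^*$ is cocontinuous by assumption, and the codomain $\P(X)$ is locally small, as recorded in the remark preceding this proposition (it is the category of \emph{small} copresheaves on $X$, in contrast with $\Set^X$). So at the top level the proof consists in checking these hypotheses and invoking that theorem; the substance lies in understanding why totality of the domain upgrades cocontinuity into the existence of $f_*$.

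Concretely, I would construct $f_*$ pointwise via representability. Recall that totality of $\cg$ means that the Yoneda embedding $y_\cg \colon \cg \to \cp(\cg)$ admits a left adjoint $L \dashv y_\cg$, so that $\cg$ is cocomplete and is reflective in its presheaf category. For each $M \in \P(X)$ consider the presheaf $R_M \colon \cg^\circ \to \Set$ defined by $R_M(c) = \P(X)(f^*c, M)$, which is genuinely $\Set$-valued precisely because $\P(X)$ is locally small. Since $f^*$ preserves colimits, $R_M$ carries colimits in $\cg$ to limits in $\Set$:
\[ R_M(\colim_i c_i) \cong \P(X)(\colim_i f^*c_i, M) \cong \lim_i \P(X)(f^*c_i, M) = \lim_i R_M(c_i). \]
Totality then lets one reflect this continuous presheaf to a representing object $f_*M := L(R_M)$, the representability being the assertion that the unit $R_M \to y_\cg L(R_M)$ is invertible for continuous $R_M$. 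This yields natural isomorphisms $\cg(c, f_*M) \cong \P(X)(f^*c, M)$, i.e.\ the adjunction $f^* \dashv f_*$, with functoriality of $f_*$ and naturality of the bijection following formally from $L \dashv y_\cg$.

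The step I expect to be the genuine obstacle is the size bookkeeping that licenses the appeal to totality, rather than any of the formal manipulations above. Cocontinuity by itself never produces a right adjoint, so the argument must use totality in an essential way, and the role of the hypothesis on $\P(X)$ is exactly to keep the relevant presheaves small enough for the reflector $L$ to apply: one must verify that each $R_M$ lies in the domain of $L$, i.e.\ that it is a small presheaf on $\cg$, and here the good behaviour of $\P(X)$ recorded earlier (valid when $X$ is small or accessible) is what guarantees that $\P(X)(f^*-, M)$ is controlled. Making this precise --- matching the chosen notion of totality with the presheaf $2$-category in which $R_M$ lives --- is the only point requiring care; once it is settled, the construction of $f_*$ and the verification of the triangle identities are routine.
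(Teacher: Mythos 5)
At the top level your route coincides with the paper's: the proposition is stated there without proof, accompanied only by a remark that it ``follows from the general theory of total categories, but needs $\P(X)$ to be locally small to stay in place''. So invoking the adjoint functor theorem for total categories and isolating local smallness of $\P(X)$ as the operative size condition is exactly the intended argument. The trouble starts when you open that black box, and it is concentrated in your final paragraph. Totality of $\cg$ means that the Yoneda embedding into the category of \emph{all} presheaves $[\cg^\circ, \Set]$ admits a left adjoint, not the corestricted Yoneda $y_\cg \colon \cg \to \cp(\cg)$ into \emph{small} presheaves: a left adjoint to the latter exists for every cocomplete category, so under your reading the hypothesis ``total'' degenerates to cocompleteness and the statement would be false. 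Consequently the size check you single out as ``the genuine obstacle'' --- that each $R_M$ be a small presheaf on $\cg$ --- is both unobtainable and unnecessary: nothing about $\P(X)$ makes $\P(X)(f^*-,M)$ a small colimit of representables, and nothing needs to, because the reflector supplied by genuine totality is defined on all $\Set$-valued presheaves. The only legitimacy check required is the one you already performed in your second paragraph: local smallness of $\P(X)$ makes $R_M$ $\Set$-valued. That is precisely the content of the paper's remark following the proposition; your last paragraph replaces the correct check with a wrong one, and if one really restricted $L$ to $\cp(\cg)$ the construction of $f_*M = L(R_M)$ would simply not parse.

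There is a second, mathematically substantive gap: the step you defer to totality --- ``the unit $R_M \to y_\cg L(R_M)$ is invertible for continuous $R_M$'' --- is the entire content of the cited theorem, and it is false in the generality in which you quote it (``preserves all small colimits''). The complete class-lattice $\mathrm{Ord} \cup \{\infty\}$ is total, and the monotone map to the poset $\{0<1\}$ sending every ordinal to $0$ and $\infty$ to $1$ preserves all small colimits, yet has no right adjoint: here $R_0$ is the downward-closed class $\mathrm{Ord}$, which sends small colimits to limits but is not representable, its reflection being $\infty$ with non-invertible unit $\mathrm{Ord} \to {\downarrow}\infty$. The correct general statement is that total categories are compact in Isbell's sense: cocontinuity must include the large colimits $L(F) = F \star 1_\cg$ that totality provides, and the proof of representability uses exactly that $f^*$ preserves $L(R_M)$, giving $f^*L(R_M) \cong R_M \star f^*$ and thence the counit --- none of which is ``routine''. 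In the paper's actual applications $\cg$ is a Grothendieck topos, hence locally presentable, so you could also sidestep totality altogether and quote the adjoint functor theorem for locally presentable categories, for which preservation of small colimits genuinely suffices.
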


\begin{rem}
The result above allows to produce comonads on $\P(X)$ (just compose $f^*f_*$) and follows from the general theory of total categories, but needs $\P(X)$ to be locally small to stay in place. Thus the choice of $\Set^X$ would have generated size issues. A similar issue would arise with Kan extensions.
\end{rem}

\begin{ach}
$\P(X)$ is a (Grothendieck) topos if and only if $X$ is an essentially small category, thus in most of the examples of our interest $\P(X)$ will not be a Grothendieck topos. Yet, we feel free to use a part of the terminology from topos theory (geometric morphism, geometric surjection, geometric embedding), because it is an infinitary pretopos (and thus only lacks a generator to be a topos).
\end{ach}

\begin{rem} In analogy with the notion of base for a topology, Garner defines the notion of base of a ionad \cite{ionads}[Def. 3.1, Rem. 3.2]. This notion will be a handy technical tool in the thesis. Our definition is pretty much equivalent to Garner's one (up to the fact that we keep flexibility on the size of the base) and is designed to be easier to handle in our setting.
\end{rem}

\begin{defn}[Base of a ionad]
Let $\cx = (X, \text{Int})$ be a ionad. We say that a flat functor $e: B \to \P(X)$ generates\footnote{This definition is just a bit different from Garner's original definition \cite{ionads}[Def. 3.1, Rem. 3.2]. We stress that in this definition, we allow for large basis.} the ionad if $\text{Int}$ is naturally isomorphic to the density comonad of $e$, \[\text{Int} \cong \lan_e e.\]
\end{defn}

\begin{exa}
The forgetful functor $U_\cx: \mathbb{O}(\cx) \to \P(X)$ is always a basis for the ionad $\cx$. This follows from the basic theory about density comonads: when $U_\cx$ is a left adjoint, its density comonad coincides with the comonad induced by its adjunction. This observation does not appear in \cite{ionads} because he only defined small bases, and it almost never happens that $\mathbb{O}(\cx)$ is a small category.
\end{exa}

In \cite{ionads}[3.6, 3.7], the author lists three equivalent conditions for a ionad to be bounded. The conceptual one is obviously that the category of opens is a Grothendieck topos, while the other ones are more or less technical. In our treatment the equivalence between the three conditions would be false. But we have the following characterization.

\begin{prop}\label{critboundedionad}
A ionad $\cx = (X, \text{Int})$ is bounded if any of the following equivalent conditions is verified:
\begin{enumerate}
  \item $\mathbb{O}(\cx)$ is a topos.
  \item there exist a Grothendieck topos $\cg$ and a geometric surjection $f : \P(X) \twoheadrightarrow \cg$ such that $\text{Int} \cong f^*f_*$.
	\item there exist a Grothendieck topos $\cg$, a geometric surjection $f : \P(X) \twoheadrightarrow \cg$ and a flat functor $e: B \to \cg$ such that $f^*e$ generates the ionad.
\end{enumerate}
\end{prop}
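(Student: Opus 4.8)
Since $(1)$ is by definition the assertion that $\mathbb{O}(\cx)$ is a topos, the task is to show that the two technical reformulations are equivalent to it; the plan is to establish the cycle $(1)\Rightarrow(2)\Rightarrow(3)\Rightarrow(2)\Rightarrow(1)$, with $(1)\Leftrightarrow(2)$ resting on comonadicity and $(2)\Leftrightarrow(3)$ on a coend computation. For $(1)\Rightarrow(2)$, recall that the forgetful functor $U_\cx\colon\mathbb{O}(\cx)\to\P(X)$ out of the coalgebras of the lex comonad $\text{Int}$ is a left adjoint (its right adjoint being the cofree coalgebra functor), creates finite limits because $\text{Int}$ is lex, and is faithful. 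Setting $f^*:=U_\cx$ and $f_*$ to be the cofree functor therefore yields a geometric morphism $f\colon\P(X)\to\cg$ with $\cg=\mathbb{O}(\cx)$, whose inverse image is left exact and faithful, i.e.\ a geometric surjection, and the Eilenberg--Moore identity gives $\text{Int}\cong f^*f_*$. For $(2)\Rightarrow(1)$, a geometric surjection has comonadic inverse image, so the comparison functor identifies $\cg$ with the category of $f^*f_*$-coalgebras; as $f^*f_*\cong\text{Int}$, this is exactly $\mathbb{O}(\cx)$, which is thus a topos.

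The hinge between $(2)$ and $(3)$ is the natural isomorphism, valid for any functor $e\colon B\to\cg$ and the adjunction $f^*\dashv f_*$,
\[\lan_{f^*e}(f^*e)\;\cong\;f^*\circ(\lan_e e)\circ f_*,\]
obtained by expanding the defining coend $\lan_{f^*e}(f^*e)(P)=\int^{b}\P(X)(f^*e(b),P)\cdot f^*e(b)$, rewriting $\P(X)(f^*e(b),P)\cong\cg(e(b),f_*P)$ through the adjunction, and commuting the cocontinuous $f^*$ past the coend. For $(2)\Rightarrow(3)$ I take $e$ to be a small dense flat functor into $\cg$ --- for instance the sheafification of the Yoneda embedding of a site of $\cg$, available because $\cg$ is a Grothendieck topos --- so that $\lan_e e\cong\id_\cg$ and the formula collapses to $\lan_{f^*e}(f^*e)\cong f^*f_*\cong\text{Int}$. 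Since $f^*e$ is flat (a flat functor postcomposed with the lex, cocontinuous $f^*$), the pair $(f,e)$ witnesses $(3)$.

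For $(3)\Rightarrow(2)$ I discard the given $f$ and work with the flat functor $h:=f^*e\colon B\to\P(X)$, which by hypothesis satisfies $\text{Int}\cong\lan_h h$, taking $B$ to be small as befits a genuine (bounded) base. Flatness makes the left Kan extension $\lan_y h\colon\Set^{B^\circ}\to\P(X)$ along the Yoneda embedding both left exact and cocontinuous, hence the inverse image of a geometric morphism $k\colon\P(X)\to\Set^{B^\circ}$ whose induced comonad is $k^*k_*\cong\lan_h h\cong\text{Int}$. Factoring $k$ as a geometric surjection followed by an embedding, $\P(X)\xrightarrow{p}\cg'\xrightarrow{\,i\,}\Set^{B^\circ}$, and using $i^*i_*\cong\id$ for the embedding, I obtain $\text{Int}\cong k^*k_*\cong p^*p_*$ with $p\colon\P(X)\twoheadrightarrow\cg'$ a geometric surjection onto the Grothendieck topos $\cg'$ --- precisely $(2)$.

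The step I expect to be most delicate is not the diagrammatics but the size bookkeeping imposed by the possibly large $X$. The right adjoints $f_*$ and $k_*$ used above must genuinely exist, which is exactly what the totality of $\P(X)$ (the Proposition on cocontinuous functors out of total categories) and its local smallness --- as against the locally large $\Set^X$ --- are there to guarantee; this is why the whole theory is run with small copresheaves. The comonadicity invoked in $(2)\Rightarrow(1)$ and the surjection--embedding factorization invoked in $(3)\Rightarrow(2)$ are classical for Grothendieck topoi, and the real work is to confirm that they survive when the domain is only the infinitary pretopos $\P(X)$; the exactness of infinitary pretoposes, together with the assumption that the base $B$ is small, are what make both available.
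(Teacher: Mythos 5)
Your $(1)\Leftrightarrow(2)$ and your $(2)\Rightarrow(3)$ are essentially the paper's: the paper also obtains $(2)\Rightarrow(3)$ by choosing $e$ to be (the inclusion of) a small dense generator of $\cg$, and your coend identity $\lan_{f^*e}(f^*e)\cong f^*\circ(\lan_e e)\circ f_*$ is precisely the Kan-extension computation at the heart of the paper's argument. The genuine gap is in your $(3)\Rightarrow(2)$. You invoke the surjection--embedding factorization of $k\colon \P(X)\to\Set^{B^\circ}$ as ``classical'', but the classical construction (Mac Lane--Moerdijk VII.4, the very result the paper cites) builds the intermediate topos as the category of coalgebras of the lex comonad $k^*k_*$ on the \emph{domain}, and its topos-hood rests on the theorem that coalgebras for a lex comonad on a Grothendieck topos again form a Grothendieck topos. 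Here the domain is the large infinitary pretopos $\P(X)$, where that theorem is not available --- and in fact $\mathsf{coAlg}(k^*k_*)\simeq\mathsf{coAlg}(\text{Int})=\mathbb{O}(\cx)$, so the intermediate category of your factorization \emph{is} the category whose topos-hood is statement $(1)$, the conclusion your cycle is supposed to deliver. As written, $(3)\Rightarrow(2)\Rightarrow(1)$ is circular. You flag the problem yourself (``the real work is to confirm that they survive when the domain is only the infinitary pretopos $\P(X)$'') but never do that work. A non-circular repair would construct the embedding leg from the codomain side --- the Grothendieck topology on $B$ generated by the sieves inverted by the lex cocontinuous $k^*$ --- and then verify directly (Beck's theorem, using that the codomain topos is balanced so that faithful lex implies conservative) that the first leg $\P(X)\to\mathsf{Sh}(B,J)$ is a surjection; none of this appears in your text.

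The paper sidesteps the issue entirely by proving $(3)\Rightarrow(1)$ directly: it forms the category $\ce$ of coalgebras for the density comonad $\lan_e e$ on the Grothendieck topos $\cg$ --- where the lex-comonad theorem legitimately applies --- getting a surjection $g\colon\cg\to\ce$ with $g^*g_*\cong\lan_e e$, and then identifies $\mathbb{O}(\cx)\simeq\ce$ via comonadicity of the composite surjection $\P(X)\to\cg\to\ce$ and the computation $\text{Int}\cong f^*g^*g_*f_*$. You could have run your own argument this way: keep the given $f$ instead of discarding it, and take coalgebras over $\cg$ rather than factoring a morphism out of $\P(X)$. A secondary problem: your smallness assumption on $B$ (``as befits a genuine (bounded) base'') is not granted by $(3)$ --- the paper's definition of base explicitly allows large bases (the canonical example $U_\cx\colon\mathbb{O}(\cx)\to\P(X)$ has large domain) --- and with $B$ large neither $\Set^{B^\circ}$ nor your morphism $k$ exists as described; your coend over $b\in B$ also silently uses this smallness.
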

\begin{proof}
Clearly $(1)$ implies $(2)$. For the implication $(2) \Rightarrow (3)$, it's enough to choose $e: B \to \cg$ to be the inclusion of any generator of $\cg$. Let us discuss the implication $(3) \Rightarrow (1)$. Let $\ce$ be the category of coalgebras for the density comonad of $e$ and call $g: \cg \to \ce$ the geometric surjection induced by the comonad, (in particular $\lan_ee \cong g^*g_*$). We claim that $\ce \simeq \mathbb{O}(\cx)$. Invoking \cite{sheavesingeometry}[VII.4 Prop. 4] and because geometric surjections compose, we have $\ce \simeq \mathsf{coAlg}(f^*g^*g_*f_*)$. The thesis follows from the observation that \[\text{Int} \cong \lan_{f^*e} (f^*e) \cong \lan_{f^*}( \lan_e (f^*e)) \cong \lan_{f^*}(f^* \lan_e e) \cong f^*g^*g_*f_*.\] 
\end{proof}

\begin{rem} Later in the thesis, we will need a practical way to induce morphism of ionads. The following proposition does not appear in \cite{ionads} and will be our main \textit{morphism generator}. From the perspective of developing technical tool in the theory of ionads, this proposition has an interest in its own right.
\end{rem}

\begin{rem} The proposition below categorifies a basic lemma in general topology: let $f: X \to Y$ be a function between topological spaces, and let $B_X$ and $B_Y$ be bases for the respective topologies. If $f^{-1}(B_Y) \subset B_X$, then $f$ is continuous. Our original proof has been simplified by Richard Garner during the reviewing process of the thesis.
\end{rem}

\begin{prop}[Generator of morphism of ionads] \label{morphism of ionads generator}
Let $\cx$ and $\cy$ be ionads, respectively generated by bases $e_X: B \to \P(X)$ and $e_Y: C \to \P(Y)$. Let $f: X \to Y$ a functor admitting a lift as in the diagram below.
\begin{center}
\begin{tikzcd}
C \arrow[dd, "e_Y" description] \arrow[rr, "f^\diamond" description, dashed] &  & B \arrow[dd, "e_X" description] \\
                                                                             &  &                                 \\
\P(Y) \arrow[rr, "f^*" description]                                         &  & \P(X)                         
\end{tikzcd}
\end{center}
 If one of the two following conditions holds,  then $f$ induces a morphism of ionads $(f, f^\sharp)$:
\end{prop}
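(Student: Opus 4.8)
The plan is to produce the lift $f^\sharp$ by exhibiting $f^*$ as a morphism of the two interior comonads and then invoking the standard lifting of functors to categories of coalgebras. By the very definition of a morphism of ionads, a lift over $f$ is precisely a functor $f^\sharp \colon \mathbb{O}(\cy) \to \mathbb{O}(\cx)$ making the square with $U_\cx$, $U_\cy$ and $f^*$ commute; since $\mathbb{O}(\cx)$ and $\mathbb{O}(\cy)$ are the categories of coalgebras for $\text{Int}_\cx$ and $\text{Int}_\cy$, such a lift of $f^* \colon \P(Y) \to \P(X)$ is the same datum as a natural transformation $\lambda \colon f^* \text{Int}_\cy \Rightarrow \text{Int}_\cx f^*$ compatible with the counits and comultiplications. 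Indeed, given such a $\lambda$, one sends a coalgebra $(P,\gamma)$ to $f^*P$ equipped with the structure map $f^*P \xrightarrow{f^*\gamma} f^*\text{Int}_\cy P \xrightarrow{\lambda_P} \text{Int}_\cx f^*P$. So the whole problem reduces to constructing $\lambda$ and checking its two coherence laws.

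To build $\lambda$ I would exploit that the bases present the interiors as density comonads, $\text{Int}_\cx \cong \lan_{e_X} e_X$ and $\text{Int}_\cy \cong \lan_{e_Y} e_Y$, together with the commuting square $f^* e_Y \cong e_X f^\diamond$. Assuming that $f^*$ restricts to a cocontinuous functor $\P(Y) \to \P(X)$ — which is exactly what each of the two hypotheses is meant to guarantee — one may slide $f^*$ through the defining left Kan extension,
\[ f^* \text{Int}_\cy \;\cong\; f^* \lan_{e_Y} e_Y \;\cong\; \lan_{e_Y}(f^* e_Y) \;\cong\; \lan_{e_Y}(e_X f^\diamond), \]
the middle isomorphism being the fact that a cocontinuous functor commutes with pointwise left Kan extensions. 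Now the universal property of $\lan_{e_Y}(-)$ turns the construction of a cell $\lan_{e_Y}(e_X f^\diamond) \Rightarrow \text{Int}_\cx f^*$ into that of a cell $e_X f^\diamond \Rightarrow \text{Int}_\cx f^* e_Y \cong \text{Int}_\cx e_X f^\diamond$, and for the latter I would simply whisker by $f^\diamond$ the unit $e_X \Rightarrow (\lan_{e_X} e_X)\, e_X = \text{Int}_\cx e_X$ of the left Kan extension $\lan_{e_X} e_X$. This pins down $\lambda$.

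The coherence of $\lambda$ I would then verify by transporting the counit and comultiplication laws across the universal property of $\lan_{e_Y}$: each law becomes an equality of $2$-cells whose components are determined on the generating objects $e_Y(c)$, where they collapse to the triangle identities relating the unit and counit of the density comonad, flatness of $e_X$ ensuring along the way that $\text{Int}_\cx$ is left exact and that the colimits compute as expected. The main obstacle is the size bookkeeping concealed in the clause \emph{``$f^*$ is cocontinuous on small copresheaves''}: a priori $f^* = (-)\circ f$ need neither send small copresheaves to small copresheaves when $Y$ is large, nor commute with the potentially large colimit defining the density comonad. The two hypotheses are precisely the two natural ways of defusing this — for instance requiring that $f$ admit a left adjoint, so that $f^*$ preserves corepresentables and hence smallness, or imposing the relevant smallness on the data — and once either is in force the colimit manipulation above, and with it the coherence verification, goes through.
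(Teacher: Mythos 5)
Your proposal is correct in substance, but it takes a genuinely different route from the thesis. The paper's proof outsources everything to Garner's \cite{ionads}[Exa.\ 4.6]: a morphism of ionads $(f,f^\sharp)\colon \cx \to \cy$ is already determined by a lift of the composite $C \to \P(Y) \to \P(X)$ through $U_\cx\colon \mathbb{O}(\cx) \to \P(X)$; applying this criterion to the identity of $\cx$ yields a canonical lift of $e_X\colon B \to \P(X)$ through $\mathbb{O}(\cx)$ (every basis object carries a canonical $\mathrm{Int}_\cx$-coalgebra structure), and composing that lift with $f^\diamond$, using $f^*e_Y \cong e_X f^\diamond$, gives the required square. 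You instead rebuild the lifting mechanism by hand: you construct a comonad-functor $2$-cell $\lambda\colon f^*\,\mathrm{Int}_\cy \Rightarrow \mathrm{Int}_\cx\, f^*$ from the Kan-extension presentations of the two density comonads and then invoke the standard correspondence between such cells and lifts to categories of coalgebras --- which is essentially a proof of the very lemma the paper cites. Your route is self-contained where the paper's is two lines; what the paper's route buys is that the counit and comultiplication coherences, which you only sketch, are absorbed into Garner's result. Two points in your argument need care. First, the isomorphism $f^*\lan_{e_Y}e_Y \cong \lan_{e_Y}(f^*e_Y)$ requires $f^*$ to be well defined and cocontinuous on small copresheaves \emph{and} the density comonad to be a pointwise Kan extension; with possibly large bases none of this is automatic, and it is exactly what the proposition's side conditions are meant to secure --- conditions which, as you evidently noticed, were lost in the typesetting of the statement (it announces ``two conditions'' that never appear), so your reconstruction of them is reasonable. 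Second, your verification ``on the generating objects $e_Y(c)$'' should be phrased via the universal property: two cells out of $\lan_{e_Y}(e_X f^\diamond)$ agree iff their transposes along the unit agree; stated that way, your coherence sketch does go through (flatness of $e_X$ is not actually needed for it).
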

\begin{proof}
By the discussion in \cite{ionads}[Exa. 4.6, diagram (6)], it is enough to provide a morphism as described in the diagram below.
	\begin{center}
	\begin{tikzcd}
	C \arrow[dd, "e_Y" description] \arrow[rr, "f'" description, dashed] &  & \mathbb{O}(\lan_{e_X} e_X) \arrow[dd, "\mathsf{U}_\cx" description] \\
                                                                             &  &                                 \\
	\P(Y) \arrow[rr, "f^*" description]                                         &  & \P(X)                        
	\end{tikzcd}
	\end{center}
Also, \cite{ionads}[Exa. 4.6] shows that giving a map of ionads $\cx \to \cy$ is the same of giving $f: X \to Y$ and a lift of $C \to \P(Y) {\to} \P(X)$ through $\mathbb{O}(X)$. Applying this to the identity map $\cx \to \cx$ we get a lift of $B \to \P(X)$ trough $\mathbb{O}(\lan_{e_X} e_X)$.  Now composing that with $C \to B$ gives the desired square.
  \begin{center}
  \begin{tikzcd}
  C \arrow[dd, "e_Y" description] \arrow[rr, "f^\diamond" description] \arrow[rrrr, "f'" description, dashed, bend left=15] &  & B \arrow[rr] \arrow[dd, "e_X"] &  & \mathbb{O}(\lan_{e_X} e_X) \arrow[lldd, "\mathsf{U}_\cx" description] \\
                                                                                                                       &  &                                &  &                                                                       \\
  \P(Y) \arrow[rr, "f^*" description]                                                                                   &  & \P(X)                          &  &                                                                      
  \end{tikzcd}
  \end{center}
\end{proof}

\section{Notations and conventions} \label{backgroundnotations}

Most of the notation will be introduced when needed and we will try to make it as natural and intuitive as possible, but we would like to settle some notation.
\begin{enumerate}
\item $\ca, \cb$ will always be accessible categories, very possibly with directed colimits.
\item $\cx, \cy$ will always be ionads.
\item When it appears, $\ck$ is a finitely accessible category.
\item $\mathsf{Ind}_\lambda$ is the free completion under $\lambda$-directed colimits.
\item $\ca_{\kappa}$ is the full subcategory of $\kappa$-presentable objects of $\ca$.
\item $\cg, \ct, \cf, \ce$ will be Grothendieck topoi.
\item In general, $C$ is used to indicate small categories.
\item $\eta$ is the unit of the Scott adjunction.
\item $\epsilon$ is the counit of the Scott adjunction.
\item A Scott topos is a topos of the form $\mathsf{S}(\ca)$.
\item  An Isbell topos is a topos of the form $\mathbb{O}(\cx)$.
\item  $\P(X)$ is the category of small copresheaves of $X$.
\end{enumerate}

\begin{notation}[Presentation of a topos]\label{presentation of topos}
A presentation of a topos $\cg$ is the data of a  geometric embedding into a presheaf topos $f^*: \Set^{C} \leftrightarrows \cg :  f_*$. This means precisely that there is a suitable topology $\tau_f$ on $C$ that turns $\cg$ into the category of sheaves over $\tau$; in this sense $f$ \textit{presents} the topos as the category of sheaves over the site $(C, \tau_f)$. 
\end{notation}


\chapter{Promenade}\label{promenade}

The Scott adjunction will be the main character of this thesis. This chapter introduces the reader to the essential aspects of the adjunction and hints at those features that will be developed in the following chapters. From a technical point of view we establish an adjunction between accessible categories with directed colimits and Grothendieck topoi. The qualitative content of the adjunction is twofold. On one hand it has a very clean geometric interpretation, whose roots belong to Stone-like dualities and Scott domains. On the other, it can be seen as a syntax-semantics duality between formal model theory and geometric logic. In this chapter we provide enough information to understand the crude statement of the adjunction and we touch on these contextualizations. One could say that this chapter, together with a couple of results that appear in the Toolbox, is a report of our collaboration with Simon Henry \cite{simon}.

\begin{structure*}
The exposition is organized as follows:
 \begin{enumerate}
 \item[Sec. \ref{promedadescottadj}] we introduce the constructions involved in the Scott adjunction;
  \item[Sec. \ref{promenadecommentsandsuggestions}]  we provide some comments and insights on the first section;
 \item[Sec. \ref{promenadegeneralizations}] we give a quick generalization of the adjunction and discuss its interaction with the standard theorem;
 \item[Sec. \ref{promenadeproofofscott}] we prove the Scott adjunction. 
 \end{enumerate} 
 \end{structure*}

\section{The Scott adjunction: definitions and constructions} \label{promedadescottadj}


We begin by giving the crude statement of the adjunction, then we proceed to construct and describe all the objects involved in the theorem. The actual proof of \ref{scottadj} will close the chapter.

\begin{thm}[{\citep{simon}[Prop. 2.3]} The Scott adjunction]\label{scottadj}
The $2$-functor of points $\mathsf{pt} :\text{Topoi} \to \text{Acc}_{\omega} $ has a left biadjoint $\mathsf{S}$, yielding the Scott biadjunction, $$\mathsf{S} : \text{Acc}_{\omega} \leftrightarrows \text{Topoi}: \mathsf{pt}. $$
\end{thm}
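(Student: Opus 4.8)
The plan is to build the left biadjoint explicitly as a category of directed-colimit-preserving functors. For $\ca\in\text{Acc}_\omega$ I would set $\mathsf{S}(\ca)$ to be the full subcategory of $\Set^{\ca}$ spanned by the functors preserving directed colimits (equivalently $\text{Acc}_\omega(\ca,\Set)$). Fixing $\lambda$ with $\ca\simeq\mathsf{Ind}_\lambda(\ca_\lambda)$, restriction along $\ca_\lambda\hookrightarrow\ca$ identifies the functors preserving $\lambda$-directed colimits with the presheaf topos $\Set^{\ca_\lambda}$, so that $\mathsf{S}(\ca)$ appears as the full subcategory of $\Set^{\ca_\lambda}$ of those presheaves whose canonical $\lambda$-directed-colimit-preserving extension preserves \emph{all} directed colimits. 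That this functor category is the right object is sanity-checked against the object classifier: testing the desired adjunction on $\Set[\mathbb{O}]$, whose points are $\Set$, forces $\mathsf{S}(\ca)\simeq\text{Acc}_\omega(\ca,\Set)$.

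First I would show that $\mathsf{S}(\ca)$ is a Grothendieck topos. The one elementary input used throughout is that finite limits commute with directed colimits in $\Set$: it forces $\mathsf{S}(\ca)$ to be closed in $\Set^{\ca_\lambda}$ under finite limits, computed pointwise, while commutation of colimits with colimits makes it closed under all small colimits. Thus the inclusion $\mathsf{S}(\ca)\hookrightarrow\Set^{\ca_\lambda}$ creates finite limits and small colimits. After checking that $\mathsf{S}(\ca)$ is accessible — functors preserving directed colimits on an accessible category form an accessible category — and hence, being cocomplete, locally presentable, I would invoke Giraud's theorem: the exactness axioms (universality of colimits, disjointness of coproducts, effectivity of equivalence relations) concern only finite limits and colimits, and therefore transfer verbatim from the topos $\Set^{\ca_\lambda}$, since the inclusion creates both.

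Next I would construct the unit $\eta_\ca\colon\ca\to\mathsf{pt}(\mathsf{S}(\ca))$ sending $a$ to the evaluation $\mathrm{ev}_a$, which is a point because evaluations preserve all limits and colimits, and which preserves directed colimits precisely because the objects of $\mathsf{S}(\ca)$ do. The core is to prove that for every topos $\ce$ the functor
\[\text{Topoi}(\mathsf{S}(\ca),\ce)\longrightarrow\text{Acc}_\omega(\ca,\mathsf{pt}(\ce)),\qquad f\longmapsto\mathsf{pt}(f)\circ\eta_\ca,\]
is an equivalence, pseudonaturally in $\ce$. For essential surjectivity, from $G\in\text{Acc}_\omega(\ca,\mathsf{pt}(\ce))$ I define $f^*\colon\ce\to\mathsf{S}(\ca)$ by $f^*(E)(a):=G(a)^*(E)$; one checks that $f^*(E)$ lands in $\mathsf{S}(\ca)$ (directed colimits of points are computed pointwise, again by the commutation with finite limits) and that $f^*$ is lex and cocontinuous. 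The adjoint functor theorem for locally presentable categories supplies a right adjoint $f_*$, so $f$ is a geometric morphism with $\mathsf{pt}(f)\circ\eta_\ca\cong G$; fullness and faithfulness on $2$-cells are analogous pointwise verifications. Packaging these equivalences, a unit with this universal property determines $\mathsf{S}$ on $1$- and $2$-cells up to coherent isomorphism and certifies the biadjunction $\mathsf{S}\dashv\mathsf{pt}$.

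The step I expect to be most delicate is proving that $\mathsf{S}(\ca)$ is genuinely a Grothendieck topos — concretely, its accessibility rather than the formal exactness. The subtlety is that $\mathsf{S}(\ca)$ is \emph{coreflective} in $\Set^{\ca_\lambda}$ (it is closed under all colimits but, since infinite products do not commute with directed colimits in $\Set$, not under all limits), so it is not a lex-reflective localization and one cannot read off the topos structure from sheafification. This is exactly why I would route the argument through Giraud's theorem together with a sketch- or cardinality-bound on $\ca_\lambda$ to secure local presentability.
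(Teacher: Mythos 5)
Your proposal is correct, and the objects you construct are exactly the paper's: $\mathsf{S}(\ca)=\text{Acc}_\omega(\ca,\Set)$ embedded in $\Set^{\ca_\lambda}$, the evaluation unit $\eta_\ca(a)(-)=(-)(a)$, and your transpose $f^*(E)(a)=G(a)^*(E)$, which is precisely the paper's counit formula $\epsilon_\ce^*(e)(-)=(-)^*(e)$ composed with $\mathsf{S}(G)$. Where you diverge is in the two key verifications. For toposness (Rem. \ref{scottconstructionwelldefined}) the paper exploits exactly the coreflectivity you set aside: the inclusion $i\colon\mathsf{S}(\ca)\hookrightarrow\Set^{\ca_\lambda}$ is cocontinuous and lex, the adjoint functor theorem supplies a coreflection $r$, the comonad $ir$ is left exact, and \cite{sheavesingeometry}[V.8 Thm. 4] (coalgebras for a lex comonad on a topos form a topos) concludes, the generator coming from the identification of $\mathsf{S}(\ca)$ with the functors on $\ca_\lambda$ preserving $\lambda$-small directed colimits (Rem. \ref{generatorscotttopos}). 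So your parenthetical claim that coreflectivity cannot yield the topos structure is too pessimistic: it cannot via sheafification, but it can via lex-comonadicity. Your alternative route --- sketchability gives accessibility, cocompleteness upgrades it to local presentability, and Giraud's axioms transfer because the inclusion creates finite limits and small colimits --- is sound and somewhat more self-contained, at the price of doing the cardinality bookkeeping explicitly; note that both routes ultimately rest on the same two facts, commutation of finite limits with directed colimits in $\Set$ and the smallness of $\ca_\lambda$. For the adjunction itself (Sec. \ref{promenadeproofofscott}) the paper argues by a currying chain, $\text{Topoi}(\mathsf{S}(\ca),\cf)\cong\text{Cocontlex}(\cf,\text{Acc}_\omega(\ca,\Set))\cong\text{Acc}_\omega(\ca,\text{Cocontlex}(\cf,\Set))\cong\text{Acc}_\omega(\ca,\mathsf{pt}(\cf))$, with the (co)unit only extracted afterwards, whereas you verify the universal property of the unit by hand. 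The paper's version makes pseudonaturality manifest in one stroke; yours produces the explicit unit and counit directly (including the pointwise-directed-colimits-of-points argument that the paper also uses, in Con. \ref{fromtopoitobion}) but leaves the fullness and faithfulness of the comparison on $2$-cells as a routine check. The two arguments are interconvertible, and neither has a gap.
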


\begin{rem}[Characters on the stage] 
$\text{Acc}_{\omega}$ is the $2$-category of accessible categories with directed colimits, a $1$-cell is a functor preserving directed colimits, $2$-cells are natural transformations. $\text{Topoi}$ is the $2$-category of Grothendieck topoi. A $1$-cell is a geometric morphism and has the direction of the right adjoint. $2$-cells are natural transformation between left adjoints.
 \end{rem}
 
 \begin{rem}[$2$-categorical warnings]\label{ignorepseudo}
Both $\text{Acc}_{\omega}$ and Topoi are  $2$-categories, but most of the time our intuition and our treatment of them will be $1$-categorical, we will essentially downgrade the adjunction to a $1$-adjunction where everything works \textit{up to equivalence of categories}. We feel free to use any classical result about $1$-adjunction, paying the price of decorating any statement with the correct use of the word \textit{pseudo}. For example, right adjoints preserve pseudo-limits, and pseudo-monomorphisms.
 \end{rem}

\begin{rem}[The functor $\mathsf{pt}$]\label{pt}
The functor of points $\mathsf{pt}$ belongs to the literature since quite some time, $\mathsf{pt}$ is the covariant hom functor $\text{Topoi}(\Set, - )$. It maps a Grothendieck topos $\cg$ to its category of points (see Sec. \ref{backgroundtopoi} of the Background), \[\cg \mapsto \text{Cocontlex}(\cg, \Set).\]
Of course given a geometric morphism $f: \cg \to \ce$, we get an induced morphism $\mathsf{pt}(f): \mathsf{pt}(\cg) \to \mathsf{pt}{(\ce)}$ mapping $p^* \mapsto p^* \circ f^*$. The fact that $\text{Topoi}(\Set, \cg)$ is an accessible category with directed colimits appears in the classical reference by Borceux as \citep{borceux_19943}[Cor. 4.3.2], while the fact that $\mathsf{pt}(f)$ preserves directed colimits follows trivially from the definition.
\end{rem}

\subsection{The Scott construction}

 \begin{con}[The Scott construction]\label{defnS}
We recall the construction of $\mathsf{S}$ from \cite{simon}. Let $\ca$ be an accessible category with directed colimits.  $\mathsf{S}(\ca)$ is defined as the category the category of functors preserving directed colimits into sets. \[\mathsf{S}(\ca) = \text{Acc}_{\omega}(\ca, \Set).\]
For a functor $f: \ca \to \cb$ be a $1$-cell in $\text{Acc}_{\omega}$, the geometric morphism $\mathsf{S}f$ is defined by precomposition as described below.

\begin{center}
\begin{tikzcd}
\ca \arrow[dd, "f"] &  & \mathsf{S}\ca \arrow[dd, "f_*"{name=lower, description}, bend left=49] \\
                    &  &                                                \\
\cb                 &  & \mathsf{S}\cb \arrow[uu, "f^*"{name=upper, description}, bend left=49] 

 \ar[phantom, from=lower, to=upper, shorten >=1pc, "\dashv", shorten <=1pc]
\end{tikzcd}
\end{center}
$\mathsf{S}f = (f^* \dashv f_*)$ is defined as follows: $f^*$ is the precomposition functor $f^*(g) = g \circ f$. This is well defined because $f$ preserve directed colimits. $f^*$ is a functor preserving all colimits between locally presentable categories\footnote{This is shown in \ref{scottconstructionwelldefined}.} and thus has a right adjoint by the adjoint functor theorem\footnote{Apply the dual of \cite{borceux_1994}[Thm. 3.3.4] in combination with \cite{adamekrosicky94}[Thm 1.58].}, that we indicate with $f_*$. Observe that $f^*$ preserves finite limits because finite limits commute with directed colimits in $\Set$.
\end{con}

\begin{rem}[$\mathsf{S}(\ca)$ is a topos]\label{scottconstructionwelldefined}
 Together with \ref{defnS} this shows that the Scott construction provides a $2$-functor $\mathsf{S}: \text{Acc}_{\omega} \to \text{Topoi}$. A proof has already appeared in \citep{simon}[2.2] with a practically identical idea. The proof relies on the fact that, since finite limits commute with directed colimits, the category $\mathsf{S}(\ca)$ inherits from its inclusion in the category of all functors $\ca \to \Set$ all the relevant exactness condition prescribed by Giraud axioms. The rest of the proof is devoted to provide a generator for $\mathsf{S}(\ca)$. In the proof below we pack in categorical technology the proof-line above. 
\end{rem}
\begin{proof}[Proof of \ref{scottconstructionwelldefined}]
By definition $\ca$ must be $\lambda$-accessible for some $\lambda$. Obviously $\text{Acc}_\omega(\ca, \Set)$ sits inside $\lambda\text{-Acc}(\ca, \Set)$. Recall that $\lambda\text{-Acc}(\ca, \Set)$ is equivalent to $\Set^{\ca_\lambda}$ by the restriction-Kan extension paradigm and the universal property of $\mathsf{Ind}_\lambda$-completion.  This inclusion $i: \text{Acc}_\omega(\ca, \Set) \hookrightarrow \Set^{\ca_\lambda}$, preserves all colimits and finite limits, this is easy to show and depends on one hand on how colimits are computed in this category of functors, and on the other hand on the fact that in $\Set$ directed colimits commute with finite limits. By the adjoint functor theorem, $\text{Acc}_\omega(\ca, \Set)$ amounts to a coreflective subcategory of a topos whose associated comonad is left exact. By \cite{sheavesingeometry}[V.8 Thm.4], it is a topos.
\end{proof}

\begin{rem}[A description of $f_*$]\label{f_*}
In order to have a better understanding of the right adjoint $f_*$, which in the previous remark was shown to exist via a special version of the adjoint functor theorem, we shall fit the adjunction $(f^* \dashv f_*)$ into a more general picture. We start by introducing the diagram below.

\begin{center}
\begin{tikzcd}
\mathsf{S}\ca \arrow[ddd, "\iota_\ca"] \arrow[rr, "f_*" description, bend right] &  & \mathsf{S}\cb \arrow[ddd, "\iota_\cb"] \arrow[ll, "f^*"'] \\
 &  &  \\
 &  &  \\
{\P(\ca)} \arrow[rr, "\ran_f" description, bend right] \arrow[rr, "\lan_f" description, bend left] &  & {\P(\cb)} \arrow[ll, "f^*" description]
\end{tikzcd}
\end{center}

\begin{enumerate}
\item Recall that by $\P(\ca)$ we mean the category of small copresheaves over $\ca$. We have discussed its properties in Sec. \ref{backgroundionads} of the Background. Observe that the natural inclusion $\iota_\ca$ of $\mathsf{S}\ca$ in $\P(\ca)$ has a right adjoint\footnote{This will be shown in \ref{coreflective}.} $r_{\ca}$, namely $\mathsf{S}\ca$ is coreflective and it coincides with the algebras for the comonad $\iota_\ca \circ r_\ca$. If we ignore the evident size issue for which $\P(\ca)$ is not a topos, the adjunction $\iota_\ca \dashv r_{\ca}$ amounts to a geometric surjection $r: \P(\ca) \to \mathsf{S}\ca$.

 \item The left adjoint $\lan_f$ to $f^*$ does exist because $f$ preserve directed colimits, while in principle $\ran_f$ may not exists because it is not possible to cut down the size of the limit in the ran-limit-formula. Yet, for those functors for which it is defined, it provides a right adjoint for $f^*$. Observe that since the $f^*$ on the top is the restriction of the $f^*$ on the bottom, and $\iota_{\ca, \cb}$ are fully faithful, $f_*$ has to match with $r_\cb \circ \ran_f \circ \iota_\ca$, when this composition is well defined, \[f_* \cong r_\cb \circ \ran_f \circ \iota_\ca,\]indeed the left adjoint $f^*$ on the top coincides with $f^* \circ \iota_\cb$ and by uniqueness of the right adjoint one obtains precisely the equation above. Later in the text this formula will prove to be useful. We can already use it to have some intuition on the behavior  $f_*$, indeed $f_*(p)$ is the best approximation of $\ran_f(p)$ preserving directed colimits. In particular if it happens for some reason that $\ran_f(p)$ preserves directed colimits, then this is precisely the value of $f_*(p)$.
\end{enumerate}
\end{rem}

\begin{rem}\label{generatorscotttopos}
Let $\ca$ be a $\lambda$-accessible category, then $\mathsf{S}(\ca)$ can be described as the full subcategory of $\Set^{\ca_\lambda}$ of those functors preserving $\lambda$-small $\omega$-filtered colimits. A proof of this observation can be found in \citep{simon}[2.2], and in fact shows that $\mathsf{S}(\ca)$ has a generator.
\end{rem}

\section{The Scott adjunction: comments and suggestions} \label{promenadecommentsandsuggestions}

 \begin{rem}[Cameos in the existing literature]\label{scottname}
Despite the name, neither the adjunction nor the construction is due to Scott and was presented for the first time in \cite{simon}. It implicitly appears in special cases both in the classical literature \cite{elephant2} and in some recent developments \cite{anel}. Karazeris introduces the notion of Scott topos of a finitely accessible category $\ck$ in \cite{Karazeris2001}, this notion coincides with $\mathsf{S}(\ck)$, as the name suggests. In Chap. \ref{geometric} we will make the connection with some seminal works of Scott and clarify the reason for which this is the correct categorification of a construction which is due to him. As observed in \citep{simon}[2.4], the Scott construction is the categorification of the usual Scott topology on a poset with directed joins. This will help us to develop a geometric intuition on accessible categories with directed colimits; they will look like the underlying set of some topological space. We cannot say to be satisfied with this choice of name for the adjunction, but we did not manage to come up with a better name.
\end{rem}

\begin{rem}[The duality-pattern] \label{duality}
A duality-pattern is an adjunction that is contravariantly induced by a dualizing object. For example, the famous dual adjunction between frames and topological spaces \cite{sheavesingeometry}[Chap. IX],
\[  \mathcal{O} : \mathsf{Top}   \leftrightarrows  \mathsf{Frm}^\circ:  \mathsf{pt} \]
is induced by the Sierpinski space $\mathbb{T}$. Indeed, since it admits a natural structure of frame, and a natural structure of topological space the adjunction above can be recovered in the form \[  \mathsf{Top}(-,\mathbb{T}): \mathsf{Top} \leftrightarrows  \mathsf{Frm}^\circ   : \mathsf{Frm}(-,\mathbb{T}).\] Most of the known topological dualities are induced in this way. The interested reader might want to consult \cite{porst1991concrete}. Makkai has shown (\cite{Makkai-Pitts}, \cite{makkai88}) that relevant families of syntax-semantics dualities can be induced in this way using the category of sets as a dualizing object.
In this fashion, the content of  Rem. \ref{defnS} together with Rem. \ref{pt} acknowledges that $\mathsf{S} \dashv \mathsf{pt}$ is essentially induced by the object $\Set$ that inhabits both the $2$-categories.
\end{rem}

\begin{rem}[Generalized axiomatizations] \label{Free geometric theory}
As was suggested by Joyal, the category $\text{Logoi} = \text{Topoi}^{\op}$ can be seen as the category of geometric theories. Caramello \cite{caramello2010unification} pushes on the same idea stressing the fact that a topos is a Morita-equivalence class of geometric theories. In this perspective the Scott adjunction, which in this case is a dual adjunction \[\text{Acc}_{\omega} \leftrightarrows \text{Logoi}^{\op}, \] presents $\mathsf{S}(\ca)$ as a free geometric theory attached to the accessible category $\ca$ that is willing to axiomatize $\ca$. When $\ca$ has a faithful functor preserving directed colimits into the category of sets, $\mathsf{S}(\ca)$ axiomatizes an envelope of $\ca$, as proved in \ref{ff}. A logical understanding of the adjunction will be developed in Chap. \ref{logical}, where we connect the Scott adjunction to the theory of classifying topoi and to the seminal works of Lawvere and Linton in categorical logic. This intuition will be used also to give a topos theoretic approach to abstract elementary classes.
\end{rem}

\begin{rem}[Trivial behaviors and Diaconescu]\label{trivial}
 If $\ck$ is a finitely accessible category, $\mathsf{S}(\ck)$ coincides with the presheaf topos $\Set^{\ck_{\omega}}$, where we indicated with $\ck_{\omega}$ the full subcategory of finitely presentable objects. This follows directly from the following chain of equivalences, \[\mathsf{S}(\ck) = \text{Acc}_{\omega}(\ck, \Set)  \simeq  \text{Acc}_{\omega}(\mathsf{Ind}(\ck_{\omega}), \Set)  \simeq \Set^{\ck_{\omega}}. \] As a consequence of Diaconescu theorem \cite{elephant1}[B3.2.7] and the characterization of the $\mathsf{Ind}$-completion via flat functors, when restricted  to finitely accessible categories, the Scott adjunction yields a biequivalence of $2$-categories $\omega\text{-Acc} \simeq \text{Presh}$, with Presh the full $2$-subcategory of presheaf topoi. 
 \begin{center}
\begin{tikzcd}
\text{Acc}_{\omega} \arrow[rr, "\mathsf{S}" description, bend right] &  & \text{Topoi} \arrow[ll, "\mathsf{pt}" description, bend right] \\
 &  &  \\
\omega\text{-Acc} \arrow[uu, hook] \arrow[rr, bend right] &  & \text{Presh} \arrow[uu, hook] \arrow[ll, bend right]
\end{tikzcd}
\end{center}  
This observation is not new to literature, the proof of \citep{elephant2}[C4.3.6] gives this special case of the Scott adjunction. It is very surprising that the book does not investigate, or even mention the existence of the Scott adjunction, since it gets very close to defining it explicitly.
\end{rem}

\begin{thm}
The Scott adjunction restricts to a biequivalence of $2$-categories between the $2$-category of finitely accessible categories\footnote{With finitely accessible functors and natural transformation.} and the $2$-category of presheaf topoi\footnote{With geometric morphisms and natural transformations between left adjoints.}.

$$\mathsf{S} : \omega\text{-Acc} \leftrightarrows \text{Presh}: \mathsf{pt}. $$
\end{thm}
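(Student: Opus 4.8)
The plan is to leverage the general Scott biadjunction $\mathsf{S} \dashv \mathsf{pt}$ (Thm. \ref{scottadj}) together with Remark \ref{trivial}, which already does the heavy lifting. The final theorem asserts that restricting the biadjunction to the indicated full $2$-subcategories yields a biequivalence, so the whole task reduces to three verifications: first, that $\mathsf{S}$ lands in $\text{Presh}$ when restricted to $\omega\text{-Acc}$; second, that $\mathsf{pt}$ lands in $\omega\text{-Acc}$ when restricted to $\text{Presh}$; and third, that the unit and counit of the restricted biadjunction are equivalences.

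First I would observe that for a finitely accessible $\ck$, Remark \ref{trivial} gives $\mathsf{S}(\ck) \simeq \Set^{\ck_\omega}$, which is by definition a presheaf topos; this establishes that $\mathsf{S}$ restricts as claimed. For the other direction, I would use Diaconescu's theorem (\cite{elephant1}[B3.2.7]): the category of points of a presheaf topos $\Set^{C}$ is the category of flat functors $\text{Flat}(C, \Set)$, which by the characterization of the $\mathsf{Ind}$-completion via flat functors (recalled in the Background) is precisely $\mathsf{Ind}(C^\circ)$, a finitely accessible category. Hence $\mathsf{pt}$ restricts to a functor $\text{Presh} \to \omega\text{-Acc}$.

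Next I would check that the unit and counit become equivalences on these subcategories. For a finitely accessible $\ck \simeq \mathsf{Ind}(\ck_\omega)$, the composite $\mathsf{pt}(\mathsf{S}(\ck)) \simeq \mathsf{pt}(\Set^{\ck_\omega}) \simeq \text{Flat}(\ck_\omega, \Set) \simeq \mathsf{Ind}(\ck_\omega) \simeq \ck$ shows the unit $\eta_\ck$ is an equivalence. Dually, for a presheaf topos $\cg \simeq \Set^{C}$ with $C$ idempotent-complete (equivalently $C \simeq (\mathsf{pt}(\cg))_\omega$), one computes $\mathsf{S}(\mathsf{pt}(\cg)) \simeq \mathsf{S}(\mathsf{Ind}(C)) \simeq \Set^{C} \simeq \cg$, so the counit $\epsilon_\cg$ is an equivalence. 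I would assemble these naturally, so that the $2$-functors $\mathsf{S}$ and $\mathsf{pt}$ are pseudo-inverse to each other on the restricted $2$-categories, upgrading the two-sided inverse-up-to-equivalence to a genuine biequivalence.

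The main subtlety I anticipate is bookkeeping around idempotent completion and essential smallness of the defining category: a presheaf topos $\Set^{C}$ determines $C$ only up to Cauchy (idempotent) completion, so to get the counit right one must identify $C$ with $(\mathsf{pt}(\cg))_\omega$ rather than with an arbitrary presenting category, and verify that $\mathsf{Ind}$ is insensitive to this completion (which it is, since $\mathsf{Ind}(C) \simeq \mathsf{Ind}(\bar C)$ for the Cauchy completion $\bar C$). A second point requiring care is that everything must be checked at the level of $2$-cells as well, so that the equivalences of hom-categories are witnessed — but since both restricted biadjoints are full-and-faithful on the relevant hom-categories by the flat-functor correspondence, this is routine once the object-level equivalences are in place. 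The genuinely load-bearing input is simply Remark \ref{trivial} combined with Diaconescu's theorem; no new hard estimate is needed.
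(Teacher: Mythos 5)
Your proposal is correct and follows essentially the same route as the paper's proof: both arguments restrict the Scott adjunction using Rem.~\ref{trivial} together with Diaconescu's theorem, and then verify that the unit and counit are equivalences via exactly the two computations $(\mathsf{pt}\mathsf{S})(\mathsf{Ind}(C)) \simeq \mathsf{pt}(\Set^C) \simeq \mathsf{Ind}(C)$ and $(\mathsf{S}\mathsf{pt})(\Set^C) \simeq \mathsf{S}(\mathsf{Ind}(C)) \simeq \Set^C$. Your explicit bookkeeping about Cauchy completion and $2$-cells makes precise points the paper leaves implicit, but does not constitute a different argument.
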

\begin{proof}
The previous remark has shown that  when $\ca$ is finitely accessible, $\mathsf{S}(\ca)$ is a presheaf topos and that, when $\ce$ is a presheaf topos, $\mathsf{pt}(\ce)$ is finitely accessible. To finish, we show that in this case the unit and the counit of the Scott adjunction are equivalence of categories. This is in fact essentially shown by the previous considerations. $$(\mathsf{pt}\mathsf{S}) (\mathsf{Ind}(C)) \simeq \mathsf{pt}(\Set^C) \stackrel{\text{Diac}}{\simeq} \mathsf{Ind}(C). $$

$$(\mathsf{S}\mathsf{pt}) (\Set^C) \stackrel{\text{Diac}}{\simeq} \mathsf{S}(\mathsf{Ind}(C)) \simeq \Set^C. $$
Notice that the equivalences above are precisely the unit and the counit of the Scott adjunction as described in Sec \ref{promenadeproofofscott} of this Chapter.
\end{proof}

\begin{rem} \label{trivial1}
Thus, the Scott adjunction must induce an equivalence of categories between the Cauchy completions\footnote{The free completions that adds splittings of \textit{pseudo}-idempotents.} of $\omega$-Acc and Presh. The Cauchy completion of $\omega$-Acc is the full subcategory of $\text{Acc}_\omega$ of \textit{continuous categories} \citep{cont}. Continuous categories are the categorification of the notion of continuous poset and can be characterized as split subobjects of finitely accessible categories in  $\text{Acc}_\omega$. In \citep{elephant2}[C4.3.6] Johnstone observe that if a continuous category is cocomplete, then the corresponding Scott topos is injective (with respect to geometric embeddings) and vice-versa.
\end{rem}

\begin{exa}
As a direct consequence of Rem. \ref{trivial}, we can calculate the Scott topos of $\Set$. $\mathsf{S}(\Set)$ is  $\Set^{\text{FinSet}}$. This topos is very often indicated as $\Set[\mathbb{O}]$, being the classifying topos of the theory of objects, i.e. the functor: $\text{Topoi}(-, \Set[\mathbb{O}]): \text{Topoi}^{\circ} \to \text{Cat}$ coincides with the forgetful functor. As a reminder for the reader, we state clearly the equivalence: \[\mathsf{S}(\Set) \simeq \Set[\mathbb{O}]. \]
\end{exa}

\begin{rem}[The Scott adjunction is not a biequivalence: Fields] \label{fields}
Whether the Scott adjunction is a biequivalence is a very natural question to ask. Up to this point we noticed that on the subcategory of topoi of presheaf type the counit of the adjunction is in fact an equivalence of categories. Since presheaf topoi are a quite relevant class of topoi one might think that the whole bi-adjunction amounts to a biequivalence. That's not the case: in this remark we provide a topos $\cf$ such that the counit \[\epsilon_\cf : \mathsf{Spt} \cf \to \cf\] is not an equivalence of categories. Let $\cf$ be the classifying topos of the theory of geometric fields \cite{elephant2}[D3.1.11(b)]. Its category of points is the category of fields $\mathsf{Fld}$, since this category is finitely accessible the Scott topos $\mathsf{Spt}(\cf)$ is of presheaf type by Rem. \ref{trivial}, \[\mathsf{Spt}(\cf) = \mathsf{S} (\mathsf{Fld} ) \stackrel{\ref{trivial}}{\cong} \Set^{\mathsf{Fld}_\omega}.\]
It was shown in \cite{10.2307/30041767}[Cor 2.2] that $\cf$ cannot be of presheaf type, and thus $\epsilon_\cf$ cannot be an equivalence of categories.

\end{rem}

\begin{rem}[Classifying topoi for categories of diagrams]
Let us give a proof in our framework of a well known fact in topos theory, namely that a the category of diagrams over the category of points of a topos can be axiomatized by a geometric theory. This means that there exists a topos $\cf$  such that 
$$\mathsf{pt}(\ce)^C \simeq  \mathsf{pt}(\cf).$$
The proof follows from the following chain of equivalences. 
\begin{align*} 
\mathsf{pt}(\ce)^C  = &   \text{Cat}(C,\mathsf{pt}(\ce))  \\ 
\simeq & \text{Acc}_\omega(\mathsf{Ind}(C), \mathsf{pt}(\ce)) \\
\simeq & \text{Topoi}(\mathsf{S}\mathsf{Ind}(C), \ce) \\
\simeq & \text{Topoi}(\Set^C, \ce) \\
\simeq & \text{Topoi}(\Set \times \Set^C, \ce) \\
\simeq & \text{Topoi}(\Set, \ce^{\Set^C}) \\
\simeq & \mathsf{pt}(\ce^{\Set^C}).
\end{align*}

\end{rem}

\section{The $\kappa$-Scott adjunction}\label{kappa} \label{promenadegeneralizations}

The most natural generalization of the Scott adjunction is the one in which directed colimits are replaced with $\kappa$-filtered colimits and finite limits ($\omega$-small) are replaced with $\kappa$-small limits. This unveils the deepest reason for which the Scott adjunction exists: namely $\kappa$-directed colimits commute with $\kappa$-small limits in the category of sets.

\begin{thm}\citep{simon}[Prop 3.4]
There is  an $2$-adjunction $$\mathsf{S}_{\kappa} : \text{Acc}_{\kappa} \leftrightarrows \kappa\text{-Topoi}: \mathsf{pt}_{\kappa}. $$
\end{thm}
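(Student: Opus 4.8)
The plan is to follow the blueprint of the $\omega$-case verbatim, performing the substitution $\omega \rightsquigarrow \kappa$ (so that directed colimits become $\kappa$-filtered colimits and finite limits become $\kappa$-small limits) at every step. The engine that made Theorem \ref{scottadj} run was the commutation of directed colimits with finite limits in $\Set$; its exact analogue, the commutation of $\kappa$-filtered colimits with $\kappa$-small limits, is what keeps the $\kappa$-version in place. Concretely, I would set $\mathsf{S}_\kappa(\ca) := \text{Acc}_\kappa(\ca, \Set)$, the category of functors preserving $\kappa$-filtered colimits, with $\mathsf{S}_\kappa f$ defined by precomposition exactly as in Construction \ref{defnS}, and I would let $\mathsf{pt}_\kappa(\cg) := \kappa\text{-Topoi}(\Set, \cg)$ be the category of $\kappa$-points of the $\kappa$-topos $\cg$, i.e.\ the functors $\cg \to \Set$ preserving all colimits and $\kappa$-small limits.

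The first substantial step is to verify that $\mathsf{S}_\kappa(\ca)$ is a $\kappa$-topos, mirroring Remark \ref{scottconstructionwelldefined}. Choosing $\lambda \geq \kappa$ with $\ca$ being $\lambda$-accessible, the inclusion $\text{Acc}_\kappa(\ca, \Set) \hookrightarrow \Set^{\ca_\lambda}$ preserves all colimits and all $\kappa$-small limits: the former because colimits are computed pointwise, the latter because $\kappa$-small limits commute with the $\lambda$-small $\kappa$-filtered colimits that cut out the subcategory. The adjoint functor theorem then exhibits $\mathsf{S}_\kappa(\ca)$ as a coreflective subcategory of a presheaf category whose associated comonad preserves $\kappa$-small limits, and the appropriate $\kappa$-ary analogue of \cite{sheavesingeometry}[V.8 Thm.4] identifies such a subcategory as a $\kappa$-topos. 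Dually, one must check that $\mathsf{pt}_\kappa(\cg)$ is accessible with $\kappa$-filtered colimits, which is the $\kappa$-version of the computation recorded in Remark \ref{pt} via \citep{borceux_19943}[Cor. 4.3.2].

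Having established that both $\mathsf{S}_\kappa$ and $\mathsf{pt}_\kappa$ are well-defined $2$-functors, the adjunction itself is produced exactly as in Section \ref{promenadeproofofscott}, exhibiting $\Set$ as the dualizing object inhabiting both $2$-categories in the sense of Remark \ref{duality}. One constructs the hom-equivalence
$$\kappa\text{-Topoi}(\mathsf{S}_\kappa(\ca), \cg) \simeq \text{Acc}_\kappa(\ca, \mathsf{pt}_\kappa(\cg))$$
pseudonaturally in $\ca$ and $\cg$, with unit $\eta_\ca : \ca \to \mathsf{pt}_\kappa \mathsf{S}_\kappa(\ca)$ the evaluation map $a \mapsto \mathrm{ev}_a$ (which lands in $\kappa$-points precisely because evaluation preserves colimits and $\kappa$-small limits pointwise), and the counit read off by uniqueness of adjoints.

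The main obstacle is not conceptual but one of bookkeeping: one must pin down the correct definition of $\kappa$-topos (a $\kappa$-lex-reflective subcategory of a presheaf category, equivalently its infinitary-pretopos incarnation with $\kappa$-ary exactness) so that both the $\kappa$-analogue of the Giraud-style recognition theorem and of Borceux's accessibility result hold. Once the ambient $2$-category $\kappa\text{-Topoi}$ is fixed, every lemma from the $\omega$-proof transcribes by replacing the commutation of directed colimits with finite limits by that of $\kappa$-filtered colimits with $\kappa$-small limits. I would flag the verification that the coreflection comonad genuinely preserves $\kappa$-small limits as the one point deserving care, since it is exactly where the cardinal arithmetic $\lambda \geq \kappa$ enters.
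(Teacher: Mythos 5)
Your proposal is correct and matches the paper's treatment: the paper proves this theorem (citing \citep{simon}[Prop 3.4]) precisely by observing that the entire $\omega$-case transcribes under the substitution of directed colimits by $\kappa$-filtered colimits and finite limits by $\kappa$-small limits, the engine being exactly the commutation of $\kappa$-filtered colimits with $\kappa$-small limits in $\Set$ that you identify. Your extra care about where the cardinal bookkeeping ($\lambda \geq \kappa$, the coreflection comonad preserving $\kappa$-small limits) enters is a faithful elaboration of what the paper leaves implicit in Rem.~\ref{generalize}, not a departure from it.
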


\begin{rem}
$\text{Acc}_{\kappa}$ is the $2$-category of accessible categories with $\kappa$-directed colimits, a $1$-cell is a functor preserving $\kappa$-filtered colimits, $2$-cells are natural transformations. $\text{Topoi}_{\kappa}$ is the $2$-category of Groethendieck $\kappa$-topoi. A $1$-cell is a $\kappa$-geometric morphism and has the direction of the right adjoint. $2$-cells are natural transformation between left adjoints. A $\kappa$-topos is a $\kappa$-exact localization of a presheaf category. These creatures are not completely new to the literature but they appear sporadically and a systematic study is still missing. We should reference, though, the works of Espindola \cite{ESPINDOLA2019137}. We briefly recall the relevant definitions.
 \end{rem}

 \begin{defn}
 A $\kappa$-topos is a $\kappa$-exact localization of a presheaf category.
 \end{defn}

  \begin{defn}
 A $\kappa$-geometric morphism $f: \ce \to \cf$ between $\kappa$-topoi is an adjunction $f^*: \cf \leftrightarrows \ce: f_*$ whose left adjoint preserve $\kappa$-small limits.
 \end{defn}
 
 \begin{rem}\label{generalize}
It is quite evident that every remark until this point  finds its direct $\kappa$-generalization substituting every occurrence of \textit{directed colimits }with \textit{$\kappa$-directed colimits.}
 \end{rem}

\begin{rem}\label{kdiac}
Let $\ca$ be a category in $\text{Acc}_{\omega}$.  For a large enough $\kappa$ the Scott adjunction axiomatizes $\ca$ (in the sense of Rem. \ref{Free geometric theory}), in fact if $\ca$ is $\kappa$-accessible $\mathsf{pt}_\kappa \mathsf{S}_\kappa \ca \cong \ca$, for the $\kappa$-version of Diaconescu Theorem, that in this text appears in Rem. \ref{trivial}.
\end{rem} 
 
 \begin{rem}
 It pretty evident that $\lambda$-Topoi is a locally fully faithful sub $2$-category of $\kappa$-Topoi when $\lambda \geq \kappa$. The same is true for $\text{Acc}_\lambda$ and $\text{Acc}_\kappa$. This observation leads to a filtration of the categories Topoi and $\text{Acc}_\omega$ as shown in the following diagram,

 \begin{center}
\begin{tikzcd}
\kappa             &  & \text{Acc}_{\kappa} \arrow[dd, "\iota_\kappa^\lambda" description] \arrow[rr, "\mathsf{S}_\kappa" description, bend right=15]   &  & \kappa \text{-Topoi} \arrow[dd, "i_\kappa^\lambda" description] \arrow[ll, "\mathsf{pt}_\kappa" description, bend right=15]   \\
                   &  &                                                                                                                              &  &                                                                                                                            \\
\lambda \arrow[uu] &  & \text{Acc}_{\lambda} \arrow[dd, "\iota_\lambda^\omega" description] \arrow[rr, "\mathsf{S}_\lambda" description, bend right=15] &  & \lambda \text{-Topoi} \arrow[dd, "i_\lambda^\omega" description] \arrow[ll, "\mathsf{pt}_\lambda" description, bend right=15] \\
                   &  &                                                                                                                              &  &                                                                                                                            \\
\omega \arrow[uu]  &  & \text{Acc}_{\omega} \arrow[rr, "\mathsf{S}" description, bend right=15]                                                         &  & (\omega \text{-})\text{Topoi} \arrow[ll, "\mathsf{pt}" description, bend right=15]                                           
\end{tikzcd}
\end{center} 
 \end{rem}

 \begin{rem}[The diagram does \textbf{not} commute]
 We depicted the previous diagram in order to trigger the reader's pattern recognition and conjecture its commutativity. In this remark we stress that the diagram does \textbf{not} commute, meaning that \[\mathsf{S}_\lambda \circ \iota_\kappa^\lambda \not \simeq i_\kappa^\lambda \circ \mathsf{S}_\kappa,\] at least not in general. In fact, once the definitions are spelled out, there is absolutely no reasons why one should have commutativity in general. The same is true for the right adjoint $\mathsf{pt}$.
 \end{rem}

\begin{rem}
In the following diagram we show the interaction between Rem. \ref{kdiac} and the previous remark. Recall that presheaf categories belong to $\kappa$-topoi for every $\kappa$.
\begin{center}
\begin{tikzcd}
\kappa\text{-Acc} \arrow[rr]  &  & \text{Acc}_{\kappa} \arrow[dd, "\iota_\kappa^\lambda" description] \arrow[rr, "\mathsf{S}_\kappa" description, bend right=15]   &  & \kappa \text{-Topoi} \arrow[dd, "i_\kappa^\lambda" description] \arrow[ll, "\mathsf{pt}_\kappa" description, bend right=15]   &  & \text{Presh} \arrow[ll] \\
                              &  &                                                                                                                              &  &                                                                                                                            &  &                         \\
\lambda\text{-Acc} \arrow[rr] &  & \text{Acc}_{\lambda} \arrow[dd, "\iota_\lambda^\omega" description] \arrow[rr, "\mathsf{S}_\lambda" description, bend right=15] &  & \lambda \text{-Topoi} \arrow[dd, "i_\lambda^\omega" description] \arrow[ll, "\mathsf{pt}_\lambda" description, bend right=15] &  & \text{Presh} \arrow[ll] \\
                              &  &                                                                                                                              &  &                                                                                                                            &  &                         \\
\omega\text{-Acc} \arrow[rr]  &  & \text{Acc}_{\omega} \arrow[rr, "\mathsf{S}" description, bend right=15]                                                         &  & (\omega \text{-})\text{Topoi} \arrow[ll, "\mathsf{pt}" description, bend right=15]                                            &  & \text{Presh} \arrow[ll]
\end{tikzcd}
\end{center}
\end{rem}

\begin{rem} It might be natural to conjecture that Presh happens to be $\bigcap_\kappa \kappa \text{-Topoi}$.  Simon Henry, has provided a counterexample to this superficial conjecture, namely $\mathsf{Sh}([0,1])$. \cite{kelly1989complete}[Rem. 4.4, 4.5 and 4.6] gives a theoretical reason for which many other counterexamples do exist and then provides a collection of them in Sec. 5 of the same paper.
\end{rem}

\section{Proof of the Scott Adjunction} \label{promenadeproofofscott}

We end this chapter including a full proof of the Scott adjunction.

\begin{proof}[Proof of Thm. \ref{scottadj}] 
We prove that there exists an equivalence of categories, $$\text{Topoi}(\mathsf{S}(\ca), \cf) \cong \text{Acc}_\omega(\ca, \mathsf{pt}(\cf)). $$ The proof makes this equivalence evidently natural. This proof strategy is similar to that appearing in \cite{simon}, even thought it might look different at first sight.

\begin{align*} 
\text{Topoi}(\mathsf{S}(\ca), \cf) \cong &  \text{Cocontlex}(\cf, \mathsf{S}(\ca))  \\ 
 \cong & \text{Cocontlex}(\cf, \text{Acc}_\omega(\ca, \Set))  \\
\cong & \text{Cat}_{\text{cocontlex,}\text{acc}_\omega}(\cf \times \ca, \Set) \\
\cong &  \text{Acc}_\omega(\ca, \text{Cocontlex}(\cf,\Set))  \\
 \cong &  \text{Acc}_\omega(\ca, \text{Topoi}(\Set,\cf)). \\
  \cong & \text{Acc}_\omega(\ca, \mathsf{pt}(\cf)).
\end{align*}

\end{proof}

\begin{proof}[A description of the (co)unit]
We spell out the unit and the counit of the adjunction. 
\begin{itemize}
  \item[$\eta$] For an accessible category with directed colimits $\ca$ we must provide a functor $\eta_\ca: \ca \to \mathsf{pt}\mathsf{S}(\ca)$. Define, $$\eta_\ca(a)(-) := (-)(a).$$  $\eta_\ca(a): \mathsf{S}(\ca) \to \Set$ defined in this way  is a functor preserving colimits and finite limits and thus defines a point of $\mathsf{S}(\ca)$.
  \item[$\epsilon$] The idea is very similar, for a topos $\ce$, we must provide a geometric morphism $\epsilon_\ce: \mathsf{S}\mathsf{pt}(\ce) \to \ce$. Being a geometric morphism, it's equivalent to provide a cocontinuous and finite limits preserving functor $\epsilon_\ce^*: \ce \to \mathsf{S}\mathsf{pt}(\ce)$. Define, $$\epsilon_\ce^*(e)(-) = (-)^*(e). $$
\end{itemize}
\end{proof}

Our first encounter with the Scott adjunction is over, the following two chapters will try to give a more precise intuition to the reader, depending on the background.

\chapter{Geometry}\label{geometric}

This chapter is dedicated to unveiling the geometric flavor of the Scott adjunction. We build on a well understood analogy between topoi and locales to bring the geometric intuition on the Scott adjunction. We show that this intuition is well founded and fruitful both in formulating the correct guesses and directing the line of thought. This perspective is not new; as anticipated, the analogy between the notion of locale and that of topos was known since the very introduction of the latter. Our contribution is thus a step towards the categorification process of poset theory into actual category theory. The relation with the existing literature will be discussed during the chapter.

\begin{structure*}
The chapter is divided in six sections,
\begin{enumerate}
	\item[Sec. \ref{topology}] we recall the concrete topology on which the analogy is built: the Isbell duality, relating topological spaces to locales. We also relate the Isbell duality to Scott's seminal work on the Scott topology, this first part is completely motivational and expository. This section contains the posetal version of sections \ref{categorification}, \ref{soberenough} and \ref{isbelltoscottcategorified}.
	\item[Sec. \ref{categorification}]
	We introduce the higher dimensional analogs of topological spaces, locales and posets with directed colimits: ionads, topoi and accessible categories with directed colimits. We categorify the Isbell duality building on Garner's work on ionads, and we relate the Scott adjunction to its posetal analog.
	\item[Sec. \ref{soberenough}] We study the the categorified version of Isbell duality. This amounts to the notion of sober ionad and topos with enough points. We show that the categorified Isbell adjunction is idempotent.
	\item[Sec. \ref{isbelltoscottcategorified}] We use the previous section to derive properties of the Scott adjunction.
	\item[Sec. \ref{interaction}] We show that the analogy on which the chapter is built is deeply motivated and we show how to recover the content of the first section from the following ones.
	\item[Sec. \ref{kappaIsbell}] In the last section we provide an expected generalization of the second one to $\kappa$-topoi and $\kappa$-ionads.
	\end{enumerate}
\end{structure*}

\section{Spaces, locales and posets} \label{topology}

Our topological safari will start from the celebrated adjunction between locales and topological spaces. This was first observed by Isbell, whence the name Isbell adjunction/duality. Unfortunately this name is already taken by the dual adjunction between presheaves and co-presheaves; this sometimes leads to some terminological confusion. The two adjunctions are similar in spirit, but do not generalize, at least not apparently, in any common framework. This first subsection is mainly expository and we encourage the interested reader to consult \cite{johnstone1986stone} and \cite{sheavesingeometry}[Chap. IX] for a proper introduction. The aim of the subsection is not to introduce the reader to these results and objects, it is instead to organize them in a way that is useful to our purposes. More precise references will be given within the section.

\subsection{Spaces, Locales and Posets} \label{topologyintro}
This subsection tells the story of the diagram below. Let us bring every character to the stage.

\begin{center}
\begin{tikzcd}
                                                              & \text{Loc} &                                                                                                 \\
                                                              &                                                                                                                  &                                                                                                 \\
                                                              &                                                                                                                  &                                                                                                 \\
\text{Top} \arrow[ruuu, "\mathcal{O}" description, ] &                                                                                                                  & \text{Pos}_{\omega} \arrow[luuu, "\mathsf{S}" description, ] \arrow[ll, "\mathsf{ST}"]
\end{tikzcd}
\end{center}

\begin{rem}[The categories] 
\begin{enumerate}
\item[]
\item[Loc] is the category of locales. It is defined to be the opposite category of frames, where objects are frames and morphisms are morphisms of frames.
\item[Top] is the category of topological spaces and continuous mappings between them.
\item[$\text{Pos}_{\omega}$] is the category of posets with directed suprema and functions preserving directed suprema.
\end{enumerate}
\end{rem}

\begin{rem}[The functors]
The functors in the diagram above are well known to the general topologist; we proceed to a short description of them.
\begin{enumerate}
	\item[$\mathcal{O}$] associates to a topological space its frame of open sets and to each continuous function its inverse image.
	\item[$\mathsf{ST}$] equips a poset with directed suprema with the Scott topology \cite{johnstone1986stone}[Chap. II, 1.9].  This functor is fully faithful, i.e. a function is continuous with respect to the Scott topologies if and only if preserves suprema.
	\item[$\mathsf{S}$] is the composition of the previous two; in particular the diagram above commutes.
\end{enumerate}
\end{rem}

\begin{rem}[The Isbell duality and a posetal version of the Scott adjunction] \label{scottisbell}
Both the functors $\mathcal{O}$ and $\mathsf{S}$ have a right adjoint; we indicate them by $\mathsf{pt}$ and $\mathbbm{pt}$, which in both cases stands for \textit{points}. 

\begin{center}
\begin{tikzcd}
                                                                      & \text{Loc} \arrow[lddd, "\mathbbm{pt}" description, bend left=12] \arrow[rddd, "\mathsf{pt}" description, bend left=12] &                                                                                                                 \\
                                                                      &                                                                                                                  &                                                                                                                 \\
                                                                      &                                                                                                                  &                                                                                                                 \\
\text{Top} \arrow[ruuu, "\mathcal{O}" description, dashed, bend left=12] &                                                                                                                  & \text{Pos}_{\omega} \arrow[luuu, "\mathsf{S}" description, dashed, bend left=12] \arrow[ll, "\mathsf{ST}", dashed]
\end{tikzcd}
\end{center}
In the forthcoming remarks the reason for this clash of names will be motivated; indeed the two functors look alike. The adjunction on the left is \textit{Isbell duality}, while the one on the right was not named yet to our knowledge and we will refer to it as the \textit{(posetal) Scott adjunction}. Let us mention that there exists a natural transformation, $$\iota: \mathsf{ST} \circ \mathsf{pt} \Rightarrow \mathbbm{pt}$$ which will be completely evident from the description of the functors. We will say more about $\iota$; for the moment we just want to clarify that there is no reason to believe (and indeed it would be a false belief) that $\iota$ is an isomorphism.
\end{rem}

\begin{rem}[The Isbell duality] \label{isbellposetal}
An account of the adjunction $\mathcal{O} \dashv \mathbbm{pt}$ can be found in the very classical reference \citep{sheavesingeometry}[IX.1-3]. While the description of $\mathcal{O}$ is relatively simple, (it associates to a topological space $X$ its frame of opens $\mathcal{O}(X))$, $\mathsf{pt}$ is more complicated to define. It associates to a locale $L$ its sets of \textit{formal points} $\text{Loc}(\mathbb{T}, L)$\footnote{$\mathbb{T}$ is the boolean algebra $\{0 < 1\}$.} equipped with the topology whose open sets are defined as follows: for every $l$ in $L$ we pose, $$V(l):=\{p \in \text{Frm}(L,\mathbb{T}): p(l)=1\}.$$ Further details may be found in \cite{sheavesingeometry}[IX.2]. In the literature, such an adjunction that is induced by a contravariant hom-functor is called schizophrenic. The special object in this case is $2$ that inhabits both the categories under the alias of Sierpiński space in the case of Top and the terminal object in the case of Frm. A detailed account on this topic can be found in \citep{sheavesingeometry}[IX.3, last paragraph].
\end{rem}

\begin{rem}[The (posetal) Scott adjunction] \label{scottposetal}
To our knowledge, the adjunction $\mathsf{S} \dashv \mathsf{pt}$ does not appear explicitly in the literature. Let us give a description of both the functors involved.
\begin{itemize}
	\item[$\mathsf{pt}$] The underlying set of $\mathsf{pt}(L)$ is the same as for Isbell duality. Its posetal structure is inherited from $\mathbb{T}$; in fact $\text{Frm}(L,\mathbb{T})$ has a natural poset structure with directed unions given by pointwise evaluation \cite{Vickers2007}[1.11].
	\item[$\mathsf{S}$] Given a poset $P$, its Scott locale $\mathsf{S}(P)$ is defined by the frame $\text{Pos}_{\omega}(P, \mathbb{T})$. It's quite easy to show that this poset is a locale.
	\end{itemize}
Observe that also this adjunction is a dual one, and is induced by precisely the same object as for Isbell duality.	
\end{rem}


\begin{rem}[An unfortunate naming]
There are several reasons for which we are not satisfied with the naming choices in this thesis. Already in the topological case, Isbell duality (or adjunction) is a very overloaded name, and later in the text we will categorify this adjunction, calling it \textit{categorified Isbell duality}, which propagates an unfortunate choice. Also the name of Scott for the Scott adjunction is not completely proper, because he introduced the Scott topology on a set, providing the functor $\mathsf{ST}$. In the previous chapter, we called Scott adjunction the categorification of the posetal Scott adjunction in this section, propagating this incorrect attribution. Yet, we did not manage to find better options, and thus we will stick to these choices.
\end{rem}

\subsection{Sober spaces and spatial locales}\label{topologysoberandspatial}

The Isbell adjunction is a fascinating construction that in principle could be an equivalence of categories. \textit{Shouldn't a space be precisely the space of formal points of its locale of open sets?} It turns out that the answer is in general negative; this subsection describes how far this adjunction is from being an equivalence of categories.

\begin{rem}[Unit and counit]\label{topologyisbellunitcounit}
Given a space $X$ the unit of the Isbell adjunction $\eta_X: X \to (\mathsf{pt} \circ \mathcal{O})(X)$ might not be injective. This is due to the fact that if two points $x,y$ in $X$ are such that $\mathsf{cl}(x) = \mathsf{cl}(y)$, then $\eta_X$ will confuse them.
\end{rem}

\begin{rem}[Sober spaces and spatial locales] 
In the classical literature about this adjunction people have introduced the notion of sober space and locale with enough points, these are precisely those objects on which the (co)unit is an iso. It turns out that even if $\eta_X$ is not always an iso $\eta_{\mathsf{pt}(L)}$ is always an iso and this characterizes those $\eta$ that are isomorphisms. An analogue result is true for the counit.
\end{rem}

\begin{rem}[Idempotency]\label{topologyidempotency}
The technical content of the previous remark is summarized in the fact that the Isbell adjunction $\mathcal{O} \dashv \mathsf{pt}$ is idempotent; this is proved in \citep{sheavesingeometry}[IX.3][Prop. 2, Prop. 3 and Cor. 4.]. It might look like this result has no qualitative meaning. Instead it means that given a local $L$, the locale of opens sets of its points $\mathcal{O}\mathsf{pt}(L)$ is the best approximation of $L$ among spatial locales, namely those that are the locale of opens of a space. The same observation is true for a space $X$ and the formal points of its locale of open sets $\mathsf{pt}\mathcal{O}(X)$. In the next two proposition we give a more categorical and more concrete incarnation of this remark.
\end{rem}

\begin{thm}[{\citep{sheavesingeometry}[IX.3.3]}] The following are equivalent:
\begin{enumerate}
	\item $L$ has enough points;
	\item the counit $\epsilon_L : (\mathcal{O} \circ \mathsf{pt})(L) \to L$ is an isomorphism of locales;
	\item $L$ is the locale of open sets $\mathcal{O}(X)$ of some topological space $X$.
\end{enumerate}
\end{thm}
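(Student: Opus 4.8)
The plan is to route the three conditions through the explicit description of the counit recalled in Rem.~\ref{isbellposetal}, together with the idempotency of the adjunction (Rem.~\ref{topologyidempotency}). Since $\text{Loc} = \text{Frm}^{\circ}$, the counit $\epsilon_L \colon (\mathcal{O} \circ \mathsf{pt})(L) \to L$ in $\text{Loc}$ corresponds to a \emph{frame} homomorphism $L \to \mathcal{O}(\mathsf{pt}(L))$, and by the construction of the topology on $\mathsf{pt}(L)$ this is exactly the assignment $l \mapsto V(l)$, where $V(l) = \{p \in \text{Frm}(L,\mathbb{T}) : p(l)=1\}$. So the first bookkeeping step I would do is check that this really is a frame map — it sends $0 \mapsto \emptyset$, $1 \mapsto \mathsf{pt}(L)$, satisfies $V(l \wedge l') = V(l) \cap V(l')$ and $V(\bigvee_i l_i) = \bigcup_i V(l_i)$ because frame homomorphisms preserve finite meets and arbitrary joins and $\mathbb{T}$ is the two-element chain — and, crucially, that it is always \emph{surjective}, since by definition the opens of $\mathsf{pt}(L)$ are precisely the sets $V(l)$.

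Granting this, the equivalence $(1) \Leftrightarrow (2)$ becomes essentially definitional. Being already surjective, $\epsilon_L$ is an isomorphism if and only if $l \mapsto V(l)$ is injective, i.e. whenever $l \neq l'$ there is a point $p$ with $p(l) \neq p(l')$; and this is exactly the statement that the points of $L$ separate its elements, which is what it means for $L$ to have enough points. The implication $(2) \Rightarrow (3)$ is then immediate: if $\epsilon_L$ is an isomorphism, then $L \cong \mathcal{O}(\mathsf{pt}(L))$, so $L$ is the locale of opens of the topological space $X = \mathsf{pt}(L)$.

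To close the cycle I would prove $(3) \Rightarrow (2)$ using idempotency. Suppose $L \cong \mathcal{O}(X)$. By Rem.~\ref{topologyidempotency} the adjunction $\mathcal{O} \dashv \mathsf{pt}$ is idempotent, and one of the standard equivalent formulations of idempotency is that the whiskered counit $\epsilon \mathcal{O}$ is a natural isomorphism; in particular $\epsilon_{\mathcal{O}(X)}$ is an isomorphism, and by naturality of $\epsilon$ along $L \cong \mathcal{O}(X)$ so is $\epsilon_L$. (If one prefers to avoid invoking idempotency here, $(3) \Rightarrow (1)$ can be done by hand: each $x \in X$ yields a point $\mathcal{O}(X) \to \mathbb{T}$ sending $U \mapsto 1$ iff $x \in U$, and two distinct opens are separated by any point in their symmetric difference, so $\mathcal{O}(X)$ has enough points.)

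The main obstacle is entirely the one delicate point of bookkeeping: correctly translating ``$\epsilon_L$ is an isomorphism in $\text{Loc}$'' into ``the frame map $l \mapsto V(l)$ is bijective,'' and isolating the fact that this map is \emph{automatically} surjective so that the real content of the counit being iso is injectivity, i.e. separation by points. Once the contravariance between $\text{Loc}$ and $\text{Frm}$ is handled cleanly, the remaining steps are either formal (idempotency, cited from \cite{sheavesingeometry}[IX.3]) or a direct unwinding of the definition of $\mathsf{pt}$.
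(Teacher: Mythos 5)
Your proof is correct. There is, however, no in-paper argument to compare it against: the thesis states this result verbatim as a citation to \citep{sheavesingeometry}[IX.3.3] and gives no proof, and your write-up is essentially the standard argument from that reference — identify the counit with the frame map $l \mapsto V(l)$, observe it is surjective by the very definition of the topology on $\mathsf{pt}(L)$ (your meet/join computations also confirm that the $V(l)$ form a topology rather than a mere subbasis), and reduce ``iso'' to injectivity, which is separation by points. One caution on your route for $(3) \Rightarrow (2)$: the idempotency invoked in Rem.~\ref{topologyidempotency} is itself derived in the paper from \citep{sheavesingeometry}[IX.3, Prop.~2, Prop.~3 and Cor.~4], i.e.\ from the same cluster of results containing this theorem, so leaning on it risks circularity if one later wants idempotency \emph{as a consequence}; your parenthetical direct argument for $(3) \Rightarrow (1)$ — the points $p_x(U) = 1$ iff $x \in U$ separate distinct opens, and enough points transfers along the isomorphism $L \cong \mathcal{O}(X)$ — avoids this entirely and should be taken as the primary argument, closing the cycle $(1) \Rightarrow (2) \Rightarrow (3) \Rightarrow (1)$ cleanly.
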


\begin{thm}
The subcategory of locales with enough points is coreflective in the category of locales.
\end{thm}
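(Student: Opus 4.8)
The plan is to derive coreflectivity directly from the idempotency of the Isbell adjunction (Rem. \ref{topologyidempotency}) together with the characterization of locales with enough points established just above. Write $T = \mathcal{O} \circ \mathsf{pt} : \text{Loc} \to \text{Loc}$ for the comonad induced by $\mathcal{O} \dashv \mathsf{pt}$, with counit $\epsilon$. By the previous theorem a locale $L$ has enough points precisely when $\epsilon_L \colon TL \to L$ is an isomorphism, so the subcategory I must show is coreflective is exactly the full subcategory $\text{Loc}_{\mathrm{ep}} \subseteq \text{Loc}$ spanned by the objects on which the counit is invertible.

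First I would observe that $T$ itself is the candidate coreflector. Idempotency of the adjunction says precisely that the comonad $T$ is idempotent, i.e. $\epsilon_{TL}$ (equivalently $T\epsilon_L$) is an isomorphism for every $L$. Hence for every locale $L$ the locale $TL = \mathcal{O}\mathsf{pt}(L)$ already has enough points, so $T$ restricts to a functor $\text{Loc} \to \text{Loc}_{\mathrm{ep}}$.

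Next I would verify the universal property of the coreflection: for $M \in \text{Loc}_{\mathrm{ep}}$ and $L \in \text{Loc}$, the assignment $f \mapsto \epsilon_L \circ f$ is a bijection $\text{Loc}_{\mathrm{ep}}(M, TL) \to \text{Loc}(M, L)$, natural in both variables. Its inverse sends $g \colon M \to L$ to $Tg \circ \epsilon_M^{-1}$, which is legitimate because $\epsilon_M$ is invertible (as $M$ has enough points). That the two composites are identities follows from naturality of $\epsilon$ and the idempotency relation $\epsilon_{TL} = T\epsilon_L$: indeed $\epsilon_L \circ (Tg \circ \epsilon_M^{-1}) = g \circ \epsilon_M \circ \epsilon_M^{-1} = g$, and symmetrically in the other direction. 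This exhibits $T$ as right adjoint to the inclusion $\text{Loc}_{\mathrm{ep}} \hookrightarrow \text{Loc}$, which is the definition of coreflectivity.

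The genuine content --- and the only place idempotency is indispensable --- is the second step: without $\epsilon_{TL}$ being an isomorphism there is no reason for $TL$ to lie in $\text{Loc}_{\mathrm{ep}}$, and the argument collapses. Everything else is the formal mechanism by which an idempotent comonad coreflects onto its subcategory of invertible-counit objects. At this level of generality I would in fact prefer to isolate the statement ``the objects inverted by the counit of an idempotent adjunction $F \dashv G$ form a coreflective subcategory, with coreflector $FG$'' as a standalone categorical lemma, and then simply instantiate it with $F = \mathcal{O}$ and $G = \mathsf{pt}$.
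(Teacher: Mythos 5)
Your proof is correct and follows exactly the route the paper intends: the theorem is stated there without an explicit argument, as a consequence of the idempotency of the Isbell adjunction (Rem.~3.1.9, citing \cite{sheavesingeometry}[IX.3]) together with the characterization of locales with enough points as those where the counit is invertible. Your spelled-out verification of the universal property, and your suggestion to isolate the general lemma that an idempotent comonad coreflects onto its invertible-counit objects, is precisely the standard mechanism the paper is implicitly invoking (compare Rem.~3.1.12, which appeals to the same general fact for the monad side).
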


\begin{rem}\label{onesideoftheadj}
Similarly, the subcategory of sober spaces is reflective in the category of spaces. This is not surprising, it's far form being true that any adjunction is idempotent, but it is easy to check that given an adjunction whose induced comonad is idempotent, so must be the induced monad (and vice-versa).
\end{rem}

\subsection{From Isbell to Scott: topology}\label{topologyscottfromisbell}

\begin{rem}[Relating the Scott construction to the Isbell duality]
Going back to the (non-)commutativity of the diagram in Rem. \ref{scottisbell}, we observe that there exists a natural transformation $\iota: \mathsf{ST} \circ \mathsf{pt} \Rightarrow \mathbbm{pt}$.

\begin{center}
\begin{tikzcd}
           & \text{Loc} \arrow[lddd, "\mathbbm{pt}" description] \arrow[rddd, "\mathsf{pt}"{name=pt}, description] &                                               \\
           &                                                                                             &                                               \\
           &                                                                                             &                                               \\
\text{Top} \ar[Rightarrow, from=pt, shorten >=1pc, "\iota", shorten <=1pc]&                                                                                             & \text{Pos}_{\omega} \arrow[ll, "\mathsf{ST}"]
\end{tikzcd}
\end{center}
The natural transformation is pointwise given by the identity (for the underlying set is indeed the same), and witnessed by the fact that every Isbell-open (Rem. \ref{isbellposetal}) is a Scott-open (Rem. \ref{scottposetal}). This observation is implicitly written in \cite{johnstone1986stone}[II, 1.8].
\end{rem}

\begin{rem}[Scott is not always sober] \label{topologyscottnotsober}
In principle $\iota$ might be an isomorphism. Unfortunately it was shown by Johnstone in \cite{10.1007/BFb0089911} that some Scott-spaces are not sober. Since every space in the image of $\mathbbm{pt}$ is sober, $\iota$ cannot be an isomorphism at least in those cases. Yet, Johnstone says in \cite{10.1007/BFb0089911} that he does not know any example of a complete lattice whose Scott topology is not sober. Thus it is natural to conjecture that when $\mathsf{pt}(L)$ is complete, then $\iota_L$ is an isomorphism. We will not only show that this is true, but even provide a generalization of this result later.
\end{rem}

\begin{rem}[Scott from Isbell] \label{topologyscottfromisbellthm}
Let us conclude with a version of {\citep{sheavesingeometry}[IX.3.3]} for the Scott adjunction. This has guided us in understanding the correct idempotency of the Scott adjunction.
\begin{thm}[Consequence of {\citep{sheavesingeometry}[IX.3.3]}] The following are equivalent:
\begin{enumerate}
	\item $L$ has enough points and $\iota$ is an isomorphism at $L$;
	\item the counit of the Scott adjunction is an isomorphism of locales at $L$.
\end{enumerate}
\end{thm}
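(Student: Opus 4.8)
The plan is to deduce everything from Isbell duality by factoring the counit of the Scott adjunction through that of Isbell, exploiting the commutation $\mathsf{S} = \mathcal{O}\circ\mathsf{ST}$ of the three left adjoints. The single lemma that makes the whole argument run is the factorization
\[ \epsilon^{\mathsf{S}}_L \;=\; \epsilon^{\mathcal{O}}_L \circ \mathcal{O}(\iota_L), \]
where $\epsilon^{\mathsf{S}}$ and $\epsilon^{\mathcal{O}}$ are the counits of $\mathsf{S}\dashv\mathsf{pt}$ and $\mathcal{O}\dashv\mathbbm{pt}$. This typechecks because $\mathsf{S}\mathsf{pt}(L)=\mathcal{O}(\mathsf{ST}\mathsf{pt}(L))$ and $\iota_L\colon \mathsf{ST}\mathsf{pt}(L)\to\mathbbm{pt}(L)$, so $\mathcal{O}(\iota_L)\colon \mathsf{S}\mathsf{pt}(L)\to\mathcal{O}\mathbbm{pt}(L)$ composes with $\epsilon^{\mathcal{O}}_L$ to land in $L$. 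The identity is precisely the assertion that $\iota_L$ is the $\mathcal{O}\dashv\mathbbm{pt}$-transpose of $\epsilon^{\mathsf{S}}_L$, which is forced by $\mathsf{S}=\mathcal{O}\mathsf{ST}$; I would verify it either through the mate calculus or, more safely, by computing on basic opens: both sides send $l\in L$ to the subset $V(l)=\{p : p(l)=1\}$, read as an Isbell-open on the right of $\mathcal{O}(\iota_L)$ and as the associated Scott-open after composition.

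Granting the factorization, the implication $(1)\Rightarrow(2)$ is immediate. If $L$ has enough points then $\epsilon^{\mathcal{O}}_L$ is an isomorphism by {\citep{sheavesingeometry}[IX.3.3]}, and if $\iota_L$ is an isomorphism of spaces then $\mathcal{O}(\iota_L)$ is an isomorphism of locales; hence $\epsilon^{\mathsf{S}}_L$ is a composite of isomorphisms, proving $(2)$.

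For $(2)\Rightarrow(1)$ I would argue in two moves. First, $\mathsf{S}\mathsf{pt}(L)=\mathcal{O}(\mathsf{ST}\mathsf{pt}(L))$ is by construction the locale of opens of a space, hence spatial; if $\epsilon^{\mathsf{S}}_L$ is an isomorphism then $L$ is isomorphic to such a locale, so by $(3)\Leftrightarrow(1)$ of {\citep{sheavesingeometry}[IX.3.3]} the locale $L$ has enough points, and then $(1)\Leftrightarrow(2)$ of the same theorem makes $\epsilon^{\mathcal{O}}_L$ an isomorphism. Second, with both $\epsilon^{\mathsf{S}}_L$ and $\epsilon^{\mathcal{O}}_L$ invertible the factorization gives $\mathcal{O}(\iota_L)=(\epsilon^{\mathcal{O}}_L)^{-1}\circ\epsilon^{\mathsf{S}}_L$, an isomorphism of frames. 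Now $\iota_L$ is the identity on the underlying set of formal points, so $\mathcal{O}(\iota_L)$ is literally the inclusion of the frame of Isbell-opens into the frame of Scott-opens (two comparable collections of subsets of one fixed point-set, with Isbell $\subseteq$ Scott); an inclusion that happens to be a frame isomorphism must be an equality of collections. Hence the two topologies coincide, $\iota_L$ is a homeomorphism, and $(1)$ holds.

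The main obstacle is the factorization lemma itself, and specifically matching the transpose description of $\iota_L$ with the pointwise-identity definition given in Rem.~\ref{topologyscottfromisbellthm}; the explicit computation on the opens $V(l)$ is the reliable fallback if the abstract mate argument feels slippery. Everything else is bookkeeping with {\citep{sheavesingeometry}[IX.3.3]} together with the elementary observation that a frame isomorphism between two comparable topologies on a common point-set is an equality of topologies.
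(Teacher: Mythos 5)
Your proof is correct and takes essentially the same route as the paper's one-line argument, which deduces the statement from \citep{sheavesingeometry}[IX.3.3] and the full faithfulness of $\mathsf{ST}$: you have simply made explicit the factorization $\epsilon^{\mathsf{S}}_L = \epsilon^{\mathcal{O}}_L \circ \mathcal{O}(\iota_L)$ that the commutation $\mathsf{S} = \mathcal{O}\circ\mathsf{ST}$ compresses, and then applied IX.3.3 in both directions. Your closing observation --- that an inclusion of two comparable topologies on a fixed point-set which is a frame isomorphism must be an equality, so that $\mathcal{O}(\iota_L)$ invertible forces $\iota_L$ to be a homeomorphism --- is a sound concrete substitute for the paper's appeal to full faithfulness of $\mathsf{ST}$, and is in fact the safer move, since $\mathcal{O}$ does not reflect isomorphisms for non-sober spaces (cf. Rem.~\ref{topologyscottnotsober}) and your argument sidesteps any sobriety hypothesis.
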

\begin{proof}
It follows directly from {\citep{sheavesingeometry}[IX.3.3]} and the fact that $\mathsf{ST}$ is fully faithful.
\end{proof}
\end{rem}

\section{Ionads, Topoi and Accessible categories } \label{categorification}
Now we come to the $2$-dimensional counterpart of the previous section. As in the previous one, this section is dedicated to describing the properties of a diagram.
\begin{center}
\begin{tikzcd}
                                                              & \text{Topoi} &                                                                                                 \\
                                                              &                                                                                                                  &                                                                                                 \\
                                                              &                                                                                                                  &                                                                                                 \\
\text{BIon} \arrow[ruuu, "\mathbb{O}" description, ] &                                                                                                                  & \text{Acc}_{\omega} \arrow[luuu, "\mathsf{S}" description, ] \arrow[ll, "\mathsf{ST}"]
\end{tikzcd}
\end{center}

\subsection{Motivations} \label{categoriesmotivations}

There are no doubts that we drew a triangle which is quite similar to the one in the previous section, but \textit{in what sense are these two triangles related?} There is a long tradition behind this question and too many papers should be cited. In this very short subsection we provide an intuitive account regarding this question.

\begin{rem}[Replacing posets with categories]
 There is a well known analogy\footnote{It is much more than an analogy, but this remark is designed to be short, motivational and inspirational. Being more precise and mentioning enriched categories over truth values would not give a more accessible description to the generic reader.} between the category of posets $\mathsf{Pos}$ and the category of categories $\mathsf{Cat}$. A part of this analogy is very natural: joins and colimits, meets and limits, monotone functions and functors. Another part might appear a bit counter intuitive at first sight. The poset of truth values $\mathbb{T}=\{0 < 1\}$ plays the rôle of the category of sets. The inclusion of a poset $i: P \to \mathbb{T}^{P^\circ}$ in its poset of ideals plays the same rôle of the Yoneda embedding.
\end{rem}

\begin{table}[!htbp]
\begin{tabular}{lllll}
\cline{2-3}
\multicolumn{1}{l|}{} & \multicolumn{1}{c|}{Pos} & \multicolumn{1}{c|}{Cat} &  &  \\ \cline{2-3}
\multicolumn{1}{l|}{} & \multicolumn{1}{c|}{$P \to \mathbb{T}^{P^\circ}$} & \multicolumn{1}{c|}{$C \to \Set^{C^\circ}$} &  &  \\ \cline{2-3}
\multicolumn{1}{l|}{} & \multicolumn{1}{c|}{joins} & \multicolumn{1}{c|}{colimits} &  &  \\ \cline{2-3}
\multicolumn{1}{l|}{} & \multicolumn{1}{c|}{meets} & \multicolumn{1}{c|}{limits} &  &  \\ \cline{2-3}
\multicolumn{1}{l|}{} & \multicolumn{1}{c|}{monotone function} & \multicolumn{1}{c|}{functor} &  &  \\ \cline{2-3}
\end{tabular}
\end{table}

\begin{rem}[$\text{Pos}_{\omega}$ and $\text{Acc}_{\omega}$]
Following the previous remark one would be tempted to say that posets with directed colimits correspond to categories with directed colimits. Thus, \textit{why put the accessibility condition on categories?} The reason is that in the case of posets the accessibility condition is even stronger, even if hidden. In fact a poset is a small (!) poclass, and poclasses with directed joins would be the correct analog of categories with directed colimits. Being accessible is a way to have control on the category without requesting smallness, which would be too strong an assumption.
\end{rem}

\begin{rem}[Locales and Topoi]
 Quite surprisingly the infinitary distributivity rules which characterizes locales has a description in term of the posetal \textit{Yoneda} inclusion. Locales can be described as those posets whose (Yoneda) inclusion has a left adjoint preserving finite joins, $$L: \mathbb{T}^{P^\circ} \leftrightarrows P: i.$$ In the same fashion, Street \cite{street_1981} proved that Topoi can be described as those categories (with a generator) whose Yoneda embedding $\cg \to \Set^{\cg^\circ}$ has a left adjoint preserving finite limits. In analogy with the case of locales this property is reflected by a kind of interaction been limits and colimits which is called \textit{descent}. In this sense a topos is a kind of $\Set$-locale, while a locale is a $\mathbb{T}$-locales. In the next section we will show that there is an interplay between these two \textit{notions of locale}. 
\end{rem}

\begin{rem}[Spaces and Ionads]
While topoi are the categorification of locales, ionads are the categorification of topological spaces. Recall that a topological space, after all, is just a set $X$ together with the data of its interior operator \[ \mathsf{int}: 2^X \to 2^X .\] This is an idempotent operator preserving finite meets. We will see that ionads are defined in a very similar way, following the pattern of the previous remarks.
\end{rem}

\subsection{Categorification} \label{subsectioncategorification}

Now that we have given some motivation for this to be the correct categorification of the Isbell duality, we have to present in more mathematical detail all the ingredients that are involved. A part of the triangle in this section is just the Scott adjunction, that we understood quite well in the previous section. Here we have to introduce ionads and all the functors in which they are involved.

\begin{center}
\begin{tikzcd}
                                                              & \text{Topoi} &                                                                                                 \\
                                                              &                                                                                                                  &                                                                                                 \\
                                                              &                                                                                                                  &                                                                                                 \\
\text{BIon} \arrow[ruuu, "\mathbb{O}" description, ] &                                                                                                                  & \text{Acc}_{\omega} \arrow[luuu, "\mathsf{S}" description, ] \arrow[ll, "\mathsf{ST}"]
\end{tikzcd}
\end{center}

\begin{rem}[The ($2$-)categories] 
\begin{enumerate}
\item[]
\item[Topoi] is the $2$-category of topoi and geometric morphisms.
\item[BIon] is the $2$-category of (generalized) bounded ionads.
\item[$\text{Acc}_{\omega}$] is the $2$-category of accessible categories with directed colimits and functors preserving them.
\end{enumerate}
\end{rem}

\subsubsection{The functors}

\begin{rem}[$\mathbb{O}$] \label{opensionad}
 Let us briefly recall the relevant definitions of Chap. \ref{background}. A generalized bounded ionad $\cx = (C, \text{Int})$ is a (possibly large, locally small and finitely pre-cocomplete) category $C$ together with a comonad $\text{Int} : \P(C) \to \P(C)$ preserving finite limits whose category of coalgebras is a Grothendieck topos. $\mathbb{O}$ was described by Garner in \cite{ionads}[Rem. 5.2], it maps a bounded ionad to its category of opens, that is the category of coalgebras for the interior operator.
\end{rem}

\begin{con}[$\mathsf{ST}$] \label{categorifiedscotttopology}
 The construction is based on the Scott adjunction, we map $\ca$ to the bounded ionad $(\ca, r_\ca i_\ca)$, as described in Rem. \ref{f_*}. Unfortunately, we still have to show that $\mathsf{S}(\ca)$ is coreflective in $\P(\ca)$, this will be done in the remark below. A functor $f: \ca \to \cb$ is sent to the morphism of ionads $(f,f^\sharp)$  below, where $f^\sharp$ coincides with the inverse image of $\mathsf{S}(f)$.

 \begin{center}
\begin{tikzcd}
\mathsf{S}(\cb) \arrow[dd] \arrow[rr, "f^\sharp" description, dashed] &  & \mathsf{S}(\ca) \arrow[dd] \\
                                                                      &  &                            \\
\P(\cb) \arrow[rr, "f^*" description]                                &  & \P(\ca)                  
\end{tikzcd}
 \end{center}
\end{con}

\begin{rem}[$\mathsf{S}(\ca)$ is coreflective in $\P(\ca)$]\label{coreflective} We would have liked to have a one-line-motivation of the fact that the inclusion $i_\ca: \mathsf{S}(\ca) \to \P(\ca)$ has a right adjoint $r_\ca$, unfortunately this result is true for a rather technical argument. By a general result of Kelly, $i_\ca$ has a right adjoint if and only if $\lan_{i_\ca}(1_{\mathsf{S}(\ca)})$ exists and $i_\ca$ preserves it. Since $\mathsf{S}(\ca)$ is small cocomplete, if $\lan_{i_\ca}(1_{\mathsf{S}(\ca)})$ exists, it must be pointwise and thus $i$ will preserve it because it is a cocontinuous functor. Thus it is enough to prove that $\lan_{i_\ca}(1_{\mathsf{S}(\ca)})$ exists. Anyone would be tempted to apply \citep{borceux_1994}[3.7.2], unfortunately $\mathsf{S}(\ca)$ is not a small category. In order to cut down this size issue, we use the fact that $\mathsf{S}(\ca)$ is a topos and thus have a dense generator $j: G \to \mathsf{S}(\ca)$. Now, we observe that \[\lan_{i_\ca}(1_{\mathsf{S}(\ca)}) = \lan_{i_\ca}(\lan_{j}(j))= \lan_{i_\ca \circ j} j.\] Finally, on the latter left Kan extension we can apply \citep{borceux_1994}[3.7.2], because $G$ is a small category.
\end{rem}

\begin{rem}[$\mathsf{S}$]
This was introduced in the previous chapter in detail. Coherently with the previous section, it is quite easy to notice that $\mathsf{S} \cong  \mathbb{O} \circ \mathsf{ST}$. Let us show it, \[  \mathbb{O} \circ \mathsf{ST}(\ca) = \mathbb{O} ( \mathsf{ST}(\ca)) \stackrel{\ref{f_*}}{=} \mathsf{coAlg}(r_\ca i_\ca)   \cong \mathsf{S}(\ca).  \]
\end{rem}

\subsubsection{Points} \label{categorifiedpoints}

\begin{rem}[Categorified Isbell duality and the Scott Adjunction] \label{categorifiedisbellscott}
As in the previous section, both the functors $\mathbb{O}$ and $\mathsf{S}$ have a right (bi)adjoint. We indicate them by $\mathsf{pt}$ and $\mathbbm{pt}$, which in both cases stands for points. $\mathsf{pt}$ has of course been introduced in the previous chapter and correspond to the right adjoint in the Scott adjunction. The other one will be more delicate to describe.

\begin{center}
\begin{tikzcd}
                                                                      & \text{Topoi} \arrow[lddd, "\mathbbm{pt}" description, bend left=12] \arrow[rddd, "\mathsf{pt}" description, bend left=12] &                                                                                                                 \\
                                                                      &                                                                                                                  &                                                                                                                 \\
                                                                      &                                                                                                                  &                                                                                                                 \\
\text{BIon} \arrow[ruuu, "\mathbb{O}" description, dashed, bend left=12] &                                                                                                                  & \text{Acc}_{\omega} \arrow[luuu, "\mathsf{S}" description, dashed, bend left=12] \arrow[ll, "\mathsf{ST}", dashed]
\end{tikzcd}
\end{center}
This sub-subsection will be mostly dedicated to the construction of $\mathbbm{pt}$ and to show that it is a right (bi)adjoint for $\mathbb{O}$. Let us mention though that there exists a natural functor $$ \iota: \mathsf{ST} \circ \mathsf{pt} \Rightarrow \mathbbm{pt} $$ which is not in general an equivalence of categories.
\end{rem}

\begin{con}[$\text{Topoi} \rightsquigarrow \text{Bion}$: every topos induces a generalized bounded ionad over its points]\label{categorified isbell construction of pt} \label{fromtopoitobion}
For a topos $\ce$, there exists a natural evaluation pairing $$\mathsf{ev}: \ce \times \mathsf{pt}(\ce) \to \Set,$$ mapping the couple $(e, p)$ to its evaluation $p^*(e)$. This construction preserves colimits and finite limits in the first coordinate, because $p^*$ is an inverse image functor. This means that its mate functor $ ev^*: \ce  \to \Set^{\mathsf{pt}(\ce)},$ preserves colimits and finite limits. Moreover $ev^*(e)$ preserves directed colimits for every $e \in \ce$. Indeed, \[ ev^*(e)(\colim p_i^*) \cong (\colim p_i^*)(e) \stackrel{(\star)}{\cong} \colim ((p_i^*)(e)) \cong  \colim (ev^*(e)(p_i^*)). \] where $(\star)$ is true because directed colimits of points are computed pointwise. Thus, since the category of points of a topos is always accessible (say $\lambda$-accessible) and $ev^*(e)$ preserves directed colimits, the value of $ev^*(e)$ is uniquely individuated by its restriction to $\mathsf{pt}(\ce)_\lambda$. Thus, $ev^*$ takes values in $\P(\mathsf{pt}(\ce))$.  Since a topos is a total category, $ev^*$ must have a right adjoint $ev_*$\footnote{For a total category the adjoint functor theorem reduces to check that the candidate left adjoint preserves colimits.}, and we get an adjunction, $$ev^* :\ce \leftrightarrows \P(\mathsf{pt}(\ce)): ev_*.$$ Since the left adjoint preserves finite limits, the comonad $ev^*ev_*$ is lex and thus induces a ionad over $\mathsf{pt}(\ce)$. This ionad is bounded, this follows from a careful analysis of the discussion above. Indeed $\Set^{\mathsf{pt}(\ce)_\lambda}$ is lex-coreflective $\P(\mathsf{pt}(\ce))$ and  $S \to \ce \to \Set^{\mathsf{pt}(\ce)_\lambda} \leftrightarrows \P(\mathsf{pt}(\ce))$, where $S$ is a site of presentation of $\ce$, satisfies the hypotheses of Prop. \ref{critboundedionad}(3).
\end{con}

\begin{con}[$\text{Topoi} \rightsquigarrow \text{Bion}$: every geometric morphism induces a morphism of ionads]\label{categorified isbell construction of pt} 

Observe that given a geometric morphism $f: \ce \to \cf$, $\mathsf{pt}(f): \mathsf{pt}(\ce) \to \mathsf{pt}(\cf) $ induces a morphism of ionads $(\mathsf{pt}(f), \mathsf{pt}(f)^\sharp)$ between $\mathbbm{pt}(\ce)$ and $\mathbbm{pt}(\cf)$. In order to describe $\mathsf{pt}(f)^\sharp$, we invoke Prop. \ref{morphism of ionads generator}[(a)]. Thus, it is enough to provide  a functor making the diagram below commutative (up to natural isomorphism).
\begin{center}
\begin{tikzcd}
\cf \arrow[dd, "ev^*_{\cf}" description] \arrow[rr, "f^\diamond" description, dashed] &  & \ce \arrow[dd, "ev^*_{\ce}" description] \\
                                                                             &  &                                 \\
\P({\mathsf{pt}(\cf)}) \arrow[rr, "\mathsf{pt}(f)^*" description]                                         &  & \P({\mathsf{pt}(\ce))}                         
\end{tikzcd}
\end{center}
Indeed such a functor exists and coincides with the inverse image $f^*$ of the geometric morphism $f$.
\end{con}

\begin{rem}[The $2$-functor $\mathbbm{pt}$] \label{categorified isbell defn of pt}
$\mathbbm{pt}(\ce)$ is defined to be the ionad $(\mathsf{pt}(\ce), ev^*ev_*)$, as described in the two previous remarks. 
\end{rem}

\begin{thm}[Categorified Isbell adjunction, $\mathbb{O} \dashv \mathbbm{pt} $]\label{categorified isbell adj thm}

$$ \mathbb{O}: \text{BIon} \leftrightarrows \text{Topoi}: \mathbbm{pt} $$
\end{thm}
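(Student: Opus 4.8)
The strategy is the one standard for a biadjunction: exhibit, naturally in $\cx \in \text{BIon}$ and $\ce \in \text{Topoi}$, an equivalence of hom-categories
$$\text{Topoi}(\mathbb{O}(\cx), \ce) \;\simeq\; \text{BIon}(\cx, \mathbbm{pt}(\ce)),$$
and then read off the unit and counit. Throughout I would identify a geometric morphism $g : \mathbb{O}(\cx) \to \ce$ with its inverse image $g^* : \ce \to \mathbb{O}(\cx)$, a cocontinuous and finite-limit-preserving functor; conversely any such functor has a right adjoint, since both $\ce$ and $\mathbb{O}(\cx)$ are topoi and hence total, so it determines a geometric morphism. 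The task therefore reduces to matching cocontinuous lex functors $\ce \to \mathbb{O}(\cx)$ with morphisms of ionads $\cx \to \mathbbm{pt}(\ce)$.

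First I would build the unit $\eta_\cx : \cx \to \mathbbm{pt}(\mathbb{O}(\cx))$. Its point-map $C \to \mathsf{pt}(\mathbb{O}(\cx))$ sends $c \in C$ to the point obtained by evaluating at $c$, namely $\mathbb{O}(\cx) \xrightarrow{U_{\cx}} \P(C) \xrightarrow{(-)(c)} \Set$, which is cocontinuous and lex and hence a genuine point of $\mathbb{O}(\cx)$. Since $U_{\cx}$ is itself a basis for $\cx$ and $ev^*$ is a basis for $\mathbbm{pt}(\mathbb{O}(\cx))$ (Construction \ref{fromtopoitobion}), the required lift $\eta_\cx^\sharp$ is produced by Prop. \ref{morphism of ionads generator}, taking the auxiliary lift to be the identity of $\mathbb{O}(\cx)$ and using that pullback along the point-map carries $ev^*$ back to $U_{\cx}$. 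The forward comparison then sends $g : \mathbb{O}(\cx) \to \ce$ to $\mathbbm{pt}(g) \circ \eta_\cx : \cx \to \mathbbm{pt}(\ce)$, exactly as in Construction \ref{categorified isbell construction of pt}.

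For the inverse direction I would start from a morphism of ionads $(f, f^\sharp) : \cx \to \mathbbm{pt}(\ce)$. By definition $f^\sharp : \mathbb{O}(\mathbbm{pt}(\ce)) \to \mathbb{O}(\cx)$ lifts the presheaf pullback $f^* : \P(\mathsf{pt}(\ce)) \to \P(C)$ along the two forgetful functors; since $f^*$ is cocontinuous and lex and $U_{\cx}$ is comonadic over a lex comonad — so creates colimits and finite limits — the lift $f^\sharp$ is itself cocontinuous and lex. Composing it with the comparison functor $\widetilde{ev} : \ce \to \mathbb{O}(\mathbbm{pt}(\ce))$, the corestriction to coalgebras of the basis $ev^*_\ce$ of Construction \ref{fromtopoitobion} (again cocontinuous and lex), yields $g^* := f^\sharp \circ \widetilde{ev} : \ce \to \mathbb{O}(\cx)$, the inverse image of the geometric morphism $g$ we seek. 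I would then verify that the two assignments are mutually inverse: because $U_{\cx}$ is conservative, equality of morphisms of ionads can be tested after applying $U_{\cx}$, where it collapses to an identity of functors between presheaf categories that follows from the density-comonad presentation $\text{Int} \cong \lan_{U_{\cx}} U_{\cx}$ and the universal property packaged in Prop. \ref{morphism of ionads generator}. Finally the object-level bijection must be upgraded to an equivalence of hom-categories and shown natural, i.e. extended to $2$-cells (specializations on one side, natural transformations between inverse images on the other), which is routine once the correspondence of $1$-cells is in place.

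The delicate point — and the step I expect to cause the most trouble — is the interplay with the comparison functor $\widetilde{ev}$. Because $\widetilde{ev}$ is in general \emph{not} an equivalence (that is precisely the content of idempotency, Thm. \ref{idempotencyisbellcategorified}, proved later), I must argue that precomposition with it nonetheless loses no information: a morphism of ionads into $\mathbbm{pt}(\ce)$ probes $\ce$ only through its points, via the basis $ev^*_\ce$, so the factorization of $g^*$ through $\mathbb{O}(\mathbbm{pt}(\ce))$ is forced and the assignment $g \mapsto \mathbbm{pt}(g)\circ\eta_\cx$ remains bijective. Keeping the $2$-categorical and size bookkeeping honest for large (generalized) ionads, where $\P(C)$ replaces $\Set^{C}$ and it is totality that supplies the otherwise-missing right adjoints, is the other place where genuine care is needed.
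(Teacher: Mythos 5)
Your proposal is correct and takes essentially the same route as the paper: the paper also proves the biadjunction by exhibiting the counit inverse image $\rho^*:\ce\to\mathbb{O}\mathbbm{pt}(\ce)$ as the lift of $ev^*$ through the coalgebra forgetful functor $\mathsf{U}$ (your $\widetilde{ev}$) and the unit $\lambda:\cx\to\mathbbm{pt}\mathbb{O}\cx$ given on points by $\lambda(x)(s)=(\mathsf{U}(s))(x)$ with the lift $\lambda^\sharp$ supplied by Prop.~\ref{morphism of ionads generator} --- exactly your $\eta_\cx$. Your hom-equivalence packaging is the standard reformulation of that unit/counit data, and if anything you make the mutual-inverseness verification (which the paper leaves implicit) more explicit.
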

\begin{proof}
We provide the unit and the counit of this adjunction. This means that we need to provide geometric morphisms $\rho: \mathbb{O}\mathbbm{pt}(\ce) \to \ce$ and morphisms of ionads $\lambda: \cx \to \mathbbm{pt}\mathbb{O} \cx $. Let's study the two problems separately.
\begin{itemize}
	\item[($\rho$)] As in the case of any geometric morphism, it is enough to provide the inverse image functor $\rho^*: \ce \to \mathbb{O}\mathbbm{pt}(\ce)$. Now, recall that the interior operator of $\mathbbm{pt}(\ce)$ is induced by the adjunction $ev^*: \ce \leftrightarrows \P({\mathsf{pt}(\ce)}): ev_*$ as described in the remark above. By the universal property of the category of coalgebras, the adjunction $\mathsf{U}: \mathbb{O}\mathbbm{pt}(\ce) \leftrightarrows \P({\mathsf{pt}}(\ce)): \mathsf{F}$ is terminal among those adjunctions that induce the comonad $ev^*ev_*$. This means that there exists a functor $\rho^*$ lifting $e^*$ along $\mathsf{U}$ as in the diagram below.

	\begin{center}
	\begin{tikzcd}
	\ce \arrow[rrdd, "ev^*" description] \arrow[rr, "\rho^*" description] &  & \mathbb{O}\mathbbm{pt}(\ce) \arrow[dd, "\mathsf{U}" description] \\
                                                                        &  &                                                                  \\
                                                                        &  & \P({\mathbbm{pt}(\ce)})                                        
	\end{tikzcd}
	\end{center}
	It is easy to show that $\rho^*$ is cocontinuous and preserves finite limits and is thus the inverse image functor of a geometric morphism $\rho: \mathbb{O}\mathbbm{pt}(\ce) \to \ce$ as desired.

	\item[($\lambda$)] Recall that a morphism of ionads $\lambda: \cx \to \mathbbm{pt}\mathbb{O} \cx $ is the data of a functor $\lambda: X \to \mathsf{pt}\mathbb{O}\cx$ together with a lifting $\lambda^\sharp: \mathbb{O}\cx\to \mathbb{O}\mathsf{pt}\mathbb{O}\cx.$ We only provide  $\lambda: X \to \mathsf{pt}\mathbb{O}\cx$, $\lambda^{\sharp}$ is induced by Prop. \ref{morphism of ionads generator}. Indeed such a functor is the same of a functor $\lambda: X \to \text{Cocontlex}(\mathbb{O}\cx, \Set)$. Define, $$\lambda(x)(s) = (\mathsf{U}(s))(x).$$ From a more conceptual point of view, $\lambda$ is just given by the composition of the  functors, $$X \stackrel{\mathsf{eval}}{\longrightarrow} \text{Cocontlex}(\P(X), \Set) \stackrel{- \circ \mathsf{U}}{\longrightarrow} \text{Cocontlex}(\mathbb{O}\cx, \Set).$$

	\end{itemize}
\end{proof}

 \section{Sober ionads and topoi with enough points} \label{soberenough}

In this section we show that the categorified Isbell adjunction is idempotent, providing a categorification of Subsec. \ref{topologysoberandspatial}. The notion of sober ionad is a bit unsatisfactory and lacks an intrinsic description. Topoi with enough points have been studied very much in the literature. Let us give (or recall) the two definitions.

\begin{defn}[Sober ionad]
A ionad is sober if $\lambda$ is an equivalence of ionads.
\end{defn}

\begin{defn}[Topos with enough points]
A topos has enough points if the inverse image functors from all of its points are jointly conservative.
\end{defn}

 \begin{thm}[Idempotency of the categorified Isbell duality] \label{idempotencyisbellcategorified}
 The following are equivalent:
 \begin{enumerate}
 	\item $\ce$ has enough points;
 	\item $\rho: \mathbb{O}\mathbbm{pt}(\ce) \to \ce$ is an equivalence of categories;
 	\item $\ce$ is equivalent to a topos of the form $\mathbb{O}(\cx)$ for some bounded ionad $\cx$.
 \end{enumerate}
 \end{thm}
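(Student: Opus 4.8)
The plan is to establish the cycle $(1)\Rightarrow(2)\Rightarrow(3)\Rightarrow(1)$, with the heart of the matter a comonadicity argument for $(1)\Rightarrow(2)$. First I would set up a dictionary translating the three conditions into statements about the functor $ev^*$. By Construction~\ref{categorified isbell construction of pt} the functor $ev^*\colon\ce\to\P(\mathsf{pt}(\ce))$ has right adjoint $ev_*$, the induced comonad $ev^*ev_*$ has coalgebra category $\mathbb{O}\mathbbm{pt}(\ce)$, and by the proof of Theorem~\ref{categorified isbell adj thm} the inverse image $\rho^*$ satisfies $\mathsf{U}\circ\rho^*=ev^*$, where $\mathsf{U}$ is the forgetful functor. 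In other words $\rho^*$ is exactly the comparison functor of the adjunction $ev^*\dashv ev_*$ into its coalgebras, so $\rho$ is an equivalence precisely when $ev^*$ is comonadic. On the other side, since $ev^*(e)(p)=p^*(e)$ and isomorphisms of small copresheaves are detected objectwise, $ev^*$ reflects isomorphisms if and only if the family $\{p^*\}_{p\in\mathsf{pt}(\ce)}$ is jointly conservative; that is, condition $(1)$ is exactly the conservativity of $ev^*$, and condition $(2)$ is exactly the comonadicity of $ev^*$.

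With this reformulation, $(1)\Rightarrow(2)$ becomes an instance of the precise (dual) Beck monadicity theorem: a left adjoint $ev^*$ is comonadic as soon as it reflects isomorphisms and its domain has, and it preserves, equalizers of coreflexive $ev^*$-split pairs. Both conditions beyond conservativity are automatic here: $\ce$ is a topos, hence complete, so it has all equalizers, and $ev^*$ is the inverse image of a geometric morphism, hence left exact, so it preserves them. Thus conservativity alone upgrades to comonadicity, yielding $(1)\Rightarrow(2)$. (The converse $(2)\Rightarrow(1)$ is immediate since an equivalence is conservative, so in fact $(1)\Leftrightarrow(2)$.)

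The implication $(2)\Rightarrow(3)$ is formal: taking $\cx:=\mathbbm{pt}(\ce)$ exhibits $\ce\simeq\mathbb{O}\mathbbm{pt}(\ce)=\mathbb{O}(\cx)$ with $\cx$ a bounded ionad. For $(3)\Rightarrow(1)$ I would prove the categorified analogue of ``spatial locales come from spaces'': the category of opens of \emph{any} bounded ionad $\cx=(X,\text{Int})$ is a topos with enough points. For each object $x$ of $X$ the evaluation $ev_x\colon\P(X)\to\Set$ is cocontinuous and left exact, hence a point of $\P(X)$; postcomposing the forgetful functor $\mathsf{U}_\cx\colon\mathbb{O}(\cx)\to\P(X)$ yields points $ev_x\circ\mathsf{U}_\cx$ of $\mathbb{O}(\cx)$. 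Now $\mathsf{U}_\cx$ is comonadic, hence conservative, and $\{ev_x\}_{x\in X}$ is jointly conservative because isomorphisms of small copresheaves are pointwise; therefore the composites $\{ev_x\circ\mathsf{U}_\cx\}$ are jointly conservative, so $\mathbb{O}(\cx)$ has enough points.

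The main obstacle I anticipate is bookkeeping rather than conceptual: ensuring the comonadicity theorem applies cleanly even though $\P(\mathsf{pt}(\ce))$ is only a (possibly large) infinitary pretopos rather than a genuine topos. This is harmless, since Beck's criterion only invokes the adjunction $ev^*\dashv ev_*$, the completeness of the \emph{source} $\ce$, and left exactness of $ev^*$, none of which constrain the target; and the boundedness of $\mathbbm{pt}(\ce)$ established in Construction~\ref{categorified isbell construction of pt} guarantees that $\mathsf{coAlg}(ev^*ev_*)=\mathbb{O}\mathbbm{pt}(\ce)$ is an honest topos, so that ``$\rho$ is an equivalence of topoi'' is genuinely the conclusion we obtain.
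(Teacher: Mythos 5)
Your proposal is correct and takes essentially the same route as the paper: for $(1)\Rightarrow(2)$ the paper likewise identifies $\rho^*$ as the Beck comparison functor and deduces comonadicity of $ev^*$ from conservativity (which is exactly ``enough points'') together with left exactness, and $(2)\Rightarrow(3)$ is trivial in both. The only difference is that for $(3)\Rightarrow(1)$ the paper simply cites Garner (Rem.~2.5 of the ionads paper), whereas you inline the underlying argument --- evaluation functors $ev_x$ composed with the conservative comonadic forgetful functor $\mathsf{U}_\cx$ give a jointly conservative family of points --- which is precisely the content of the cited remark.
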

 \begin{proof}
  \begin{enumerate}
  	\item[] 
 	\item[$(1) \Rightarrow (2)$] Going back to the definition of $\rho$ in Thm. \ref{categorified isbell adj thm}, it's enough to show that $ev^*$ is comonadic. Since it preserves finite limits, it's enough to show that it is conservative to apply Beck's (co)monadicity theorem. Yet, that is just a reformulation of having \textit{enough points}.
 	\item[$(2) \Rightarrow (3)$] Trivial.
 	\item[$(3) \Rightarrow (1)$] \cite{ionads}[Rem. 2.5].
 \end{enumerate}
 \end{proof}

\begin{thm}
 The following are equivalent:
 \begin{enumerate}
 	\item $\cx$ is sober;
 	\item $\cx$ is of the form $\mathbbm{pt}(\ce)$ for some topos $\ce$.
 \end{enumerate}
 \end{thm}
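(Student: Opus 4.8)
The plan is to obtain this characterization of sober ionads as a formal consequence of the idempotency already established in Thm \ref{idempotencyisbellcategorified}, exactly mirroring the relationship between that theorem and the notion of \emph{enough points}. This statement is the monad-side dual of Thm \ref{idempotencyisbellcategorified}: where that theorem identifies the topoi fixed by the comonad $\mathbb{O}\mathbbm{pt}$ (those in the essential image of $\mathbb{O}$), here we must identify the bounded ionads fixed by the unit $\lambda$, and the claim is that these are precisely the ones in the essential image of $\mathbbm{pt}$.

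The implication $(1) \Rightarrow (2)$ is immediate from the definition: if $\cx$ is sober then $\lambda_\cx : \cx \to \mathbbm{pt}\mathbb{O}\cx$ is an equivalence of ionads, so setting $\ce := \mathbb{O}\cx$, which is a topos by the very construction of $\mathbb{O}$, exhibits $\cx \simeq \mathbbm{pt}(\ce)$.

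For $(2) \Rightarrow (1)$ I would first record that the whole adjunction $\mathbb{O}\dashv \mathbbm{pt}$ is idempotent. Thm \ref{idempotencyisbellcategorified} shows that the counit $\rho_\ce$ is an equivalence exactly when $\ce$ lies in the essential image of $\mathbb{O}$; applying this to $\ce = \mathbb{O}\cx$ gives that $\rho\mathbb{O}$ is invertible, which is precisely the assertion that the comonad $\mathbb{O}\mathbbm{pt}$ is idempotent. By the general principle recalled in Rem \ref{onesideoftheadj}, the induced monad $\mathbbm{pt}\mathbb{O}$ is then idempotent as well, and one of the equivalent forms of monadic idempotency is exactly that $\lambda\mathbbm{pt}$ is invertible (equivalently, one reads it off the triangle identity $\mathbbm{pt}(\rho_\ce) \circ \lambda_{\mathbbm{pt}(\ce)} \cong \mathrm{id}$). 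Hence $\lambda_{\mathbbm{pt}(\ce)}$ is an equivalence of ionads for every topos $\ce$. Since $\lambda$ is (pseudo)natural, soberness is invariant under equivalence of ionads, so any $\cx \simeq \mathbbm{pt}(\ce)$ is sober.

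The one genuinely delicate point, and the step I expect to be the main obstacle, is the passage from the comonadic idempotency of Thm \ref{idempotencyisbellcategorified} to the monadic idempotency in the $2$-categorical (biadjunction) setting, where all the triangle identities and the equivalences of the various idempotency conditions hold only up to coherent isomorphism. The clean way is to verify that $\rho\mathbb{O}$ being an equivalence forces $\mathbbm{pt}\rho$ to be an equivalence, and then transport this through the triangle identity; invoking Rem \ref{ignorepseudo} we may run the standard idempotent-adjunction argument $1$-categorically up to equivalence, decorating each step with the appropriate \emph{pseudo}.
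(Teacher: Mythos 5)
Your proposal is correct and follows exactly the paper's route: the paper's own proof is the one-line observation (already recorded in Rem.~\ref{onesideoftheadj}) that for any adjunction idempotency of the comonad, here supplied by Thm.~\ref{idempotencyisbellcategorified}, transfers to the monad, whence $\lambda_{\mathbbm{pt}(\ce)}$ is an equivalence. You have merely spelled out the details (the triangle identity and the pseudo-decorations of Rem.~\ref{ignorepseudo}) that the paper leaves implicit.
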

 \begin{proof}
 For any adjunction, it is enough to show that either the monad or the comonad is idempotent, to obtain the same result for the other one.
 \end{proof}

 \section{From Isbell to Scott: categorified} \label{isbelltoscottcategorified}
 
 This section is a categorification of its analog Subsec. \ref{topologyscottfromisbell} and shows how to infer results about the tightness of the Scott adjunction from the Isbell adjunction. We mentioned in Rem. \ref{categorifiedisbellscott} that there exists a natural transformation as described by the diagram below.

 \begin{center}
\begin{tikzcd}
           & \text{Topoi} \arrow[lddd, "\mathbbm{pt}" description] \arrow[rddd, "\mathsf{pt}"{name=pt}, description] &                                               \\
           &                                                                                             &                                               \\
           &                                                                                             &                                               \\
\text{BIon} \ar[Rightarrow, from=pt, shorten >=1pc, "\iota", shorten <=1pc]&                                                                                             & \text{Acc}_{\omega} \arrow[ll, "\mathsf{ST}"]
\end{tikzcd}
\end{center}

Let us describe it. Spelling out the content of the diagram, $\iota$ should be a morphism of ionads $$\iota:  \mathsf{ST} \mathsf{pt} (\ce) \to \mathbbm{pt}(\ce). $$
Recall that the underling category of these two ionads is $ \mathsf{pt} (\ce)$ in both cases. 
We define $\iota$ to be the identity on the underlying categories, $\iota = (1_{\mathbbm{pt}(\ce)}, \iota^\sharp)$ while $\iota^\sharp$ is induced by the following assignment defined on the basis of the ionad $\ce \to \mathsf{Spt}(\ce)$, $$\iota^\sharp(x)(p) = p^*(x). $$

\begin{rem}[ $\iota^\sharp$ is the counit of the Scott adjunction]
The reader might have noticed that $\iota^\sharp$ is precisely the counit of the Scott adjunction.
\end{rem}

\begin{thm}[From Isbell to Scott, cheap version] \label{cheapidempotencyscott} The following are equivalent:
\begin{enumerate}
\item $\ce$ has enough points and $\iota$ is an equivalence of ionads.
\item The counit of the Scott adjunction is an equivalence of categories.
\end{enumerate}
\end{thm}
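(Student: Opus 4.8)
The plan is to reduce the statement to the idempotency of the categorified Isbell adjunction (Thm.~\ref{idempotencyisbellcategorified}) by establishing a single factorization of the Scott counit. Write $\rho_\ce : \mathbb{O}\mathbbm{pt}(\ce) \to \ce$ for the counit of the categorified Isbell adjunction (Thm.~\ref{categorified isbell adj thm}) and $\epsilon_\ce : \mathsf{S}\mathsf{pt}(\ce) \to \ce$ for the counit of the Scott adjunction. Since $\mathsf{S} \cong \mathbb{O} \circ \mathsf{ST}$, we have $\mathsf{S}\mathsf{pt}(\ce) = \mathbb{O}(\mathsf{ST}\mathsf{pt}(\ce))$, and applying $\mathbb{O}$ to the comparison morphism of ionads $\iota_\ce : \mathsf{ST}\mathsf{pt}(\ce) \to \mathbbm{pt}(\ce)$ yields a geometric morphism $\mathbb{O}(\iota_\ce) : \mathsf{S}\mathsf{pt}(\ce) \to \mathbb{O}\mathbbm{pt}(\ce)$. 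The claim driving the whole argument is the factorization
\[ \epsilon_\ce \;\cong\; \rho_\ce \circ \mathbb{O}(\iota_\ce). \]

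First I would prove this factorization. On inverse images it reads $\epsilon_\ce^* \cong \iota_\ce^\sharp \circ \rho_\ce^*$, where $\iota_\ce^\sharp = \mathbb{O}(\iota_\ce)^*$. By the construction in Thm.~\ref{categorified isbell adj thm}, $\rho_\ce^* : \ce \to \mathbb{O}\mathbbm{pt}(\ce)$ is the canonical lift of the basis $ev^* : \ce \to \P(\mathsf{pt}(\ce))$ through the forgetful functor $\mathsf{U}$, while $\iota_\ce^\sharp$ is, by Prop.~\ref{morphism of ionads generator}, the geometric morphism induced by the assignment $e \mapsto (p \mapsto p^*(e))$ defined on that same basis $\ce$ of $\mathbbm{pt}(\ce)$. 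This assignment is exactly $\epsilon_\ce^*$, since $\epsilon_\ce^*(e)(-) = (-)^*(e)$, as noted in the remark preceding the statement. Both $\epsilon_\ce^*$ and $\iota_\ce^\sharp \circ \rho_\ce^*$ are inverse images of geometric morphisms and agree on the generating basis, so they coincide by cocontinuity, giving the factorization.

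Next I record the translation between the two notions of equivalence. Because $\iota_\ce = (1_{\mathsf{pt}(\ce)}, \iota_\ce^\sharp)$ has identity underlying functor, $\iota_\ce$ is an equivalence of ionads if and only if $\iota_\ce^\sharp$ is an equivalence of categories, which in turn holds if and only if the geometric morphism $\mathbb{O}(\iota_\ce)$ is an equivalence of topoi. With this in hand both implications are formal. For $(1)\Rightarrow(2)$: enough points makes $\rho_\ce$ an equivalence by Thm.~\ref{idempotencyisbellcategorified}$(1\Rightarrow 2)$, and $\iota_\ce$ being an equivalence of ionads makes $\mathbb{O}(\iota_\ce)$ an equivalence, so $\epsilon_\ce \cong \rho_\ce \circ \mathbb{O}(\iota_\ce)$ is an equivalence. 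For $(2)\Rightarrow(1)$: if $\epsilon_\ce$ is an equivalence then $\ce \simeq \mathsf{S}\mathsf{pt}(\ce) = \mathbb{O}(\mathsf{ST}\mathsf{pt}(\ce))$; here $\mathsf{ST}\mathsf{pt}(\ce)$ is a bounded ionad because $\mathsf{S}\mathsf{pt}(\ce)$ is a topos by Rem.~\ref{scottconstructionwelldefined}, so by Thm.~\ref{idempotencyisbellcategorified}$(3\Rightarrow 1)$ the topos $\ce$ has enough points; consequently $\rho_\ce$ is an equivalence, and from the factorization $\mathbb{O}(\iota_\ce)$ is a composite of $\epsilon_\ce$ with a quasi-inverse of $\rho_\ce$, hence an equivalence, whence $\iota_\ce$ is an equivalence of ionads.

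The only genuinely delicate step is the factorization $\epsilon_\ce \cong \rho_\ce \circ \mathbb{O}(\iota_\ce)$: everything else is bookkeeping with the already-established idempotency of $\mathbb{O} \dashv \mathbbm{pt}$. The subtlety there is entirely about matching the two generating bases — the Scott basis $\mathsf{S}\mathsf{pt}(\ce) \hookrightarrow \P(\mathsf{pt}(\ce))$ and the Isbell basis $ev^* : \ce \to \P(\mathsf{pt}(\ce))$ — sitting over the common underlying category $\mathsf{pt}(\ce)$, and in checking that Prop.~\ref{morphism of ionads generator} produces precisely the Scott counit on inverse images. Once this identification is made carefully, the equivalence of $(1)$ and $(2)$ is immediate.
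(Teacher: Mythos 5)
Your proposal is correct and takes essentially the same route as the paper: the paper's proof is simply ``completely obvious from the previous discussion,'' and that discussion consists exactly of your key step, namely that $\iota = (1,\iota^\sharp)$ has the inverse image of the Scott counit as its sharp part, so that $\epsilon_\ce \cong \rho_\ce \circ \mathbb{O}(\iota_\ce)$ and everything reduces to the idempotency theorem (Thm.~\ref{idempotencyisbellcategorified}). Your write-up just makes explicit the factorization and the translation between equivalences of ionads and of topoi that the paper leaves implicit.
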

\begin{proof}
This is completely obvious from the previous discussion.
\end{proof}

This result is quite disappointing in practice and we cannot accept it as it is. Yet, having understood that the Scott adjunction is not  the same as Isbell one was very important conceptual progress in order to guess the correct statement for the Scott adjunction. In the next section we provide a more useful version of the previous theorem. 

\begin{rem} Going back to the case of fields (Rem. \ref{fields}), we now understand why the Scott adjunction could not recover the classifying topos of the geometric theory of fields. 
\end{rem}

 \subsection{Covers}
 In order to provide a satisfying version of Thm. \ref{cheapidempotencyscott}, we need to introduce a tiny bit of technology, namely finitely accessible covers. For an accessible category with directed colimits $\ca$ a (finitely accessible) \textit{cover} $$\cl : \mathsf{Ind}(C) \to \ca $$ will be one of a class of cocontinuous (pseudo)epimorphisms (in Cat) having many good properties. They will be helpful for us in the discussion. Covers were introduced for the first time in \cite{aec}[4.5] and later used in \cite{LB2014}[2.5].

 \begin{rem}[Generating covers] 
 Every object $\ca$ in $\text{Acc}_{\omega}$ has a (proper class of) finitely accessible cover. Let $\ca$ be $\kappa$-accessible. Let us focus on the following diagram.
 \begin{center}

\begin{tikzcd}
 &  \ca_\kappa \arrow[ld, "\alpha" description] \arrow[rd, "\iota" description] &  \\
\text{Ind } \ca_\kappa \arrow[rr, "\lan_{\alpha}\iota" description, dotted, bend right=10] &  & \ca \arrow[ll, "{\ca(\iota-, -)}" description, dotted, bend right=10]
\end{tikzcd}
 \end{center}

$\ca(\iota-,-)$, also known as \textit{the nerve of f}, is fully faithful because $ \ca_\kappa$ is a (dense) generator in $\ca$. This functor does not lie in $\text{Acc}_{\omega}$ because it is just $\kappa$-accessible in general.  $\lan_{\alpha}(\iota)$ exists by the universal property of of the Ind completion, indeed $\ca$ has directed colimits by definition. For a concrete perspective $\lan_{\alpha}(\iota)$ is evaluating a formal directed colimit on the actual directed colimit in $\ca$. These two maps yield an adjunction \[\lan_{\alpha}(\iota) \dashv \ca(\iota-,-)\] that establishes $\ca$ as a reflective embedded subcategory of $\text{Ind } \ca_\kappa$. Since $\lan_{\alpha}(\iota)$ is a left adjoint, it lies in $\text{Acc}_{\omega}$.
 \end{rem}

\begin{defn}\label{cover} When $\ca$ is a $\kappa$-accessible category we will indicate with $\mathcal{L}_{\ca}^{\kappa}$ the map that here we indicated with  $\lan_{\alpha}(\iota)$ in the previous remark and we call it a \textit{cover} of $\ca$. \[\mathcal{L}_{\ca}^{\kappa} : \text{Ind } \ca_\kappa\to \ca .\]
\end{defn}
 
 \begin{notation}
 When it's evident from the context, or it is not relevant at all we will not specify the cardinal on the top, thus we will just write $\mathcal{L}_{\ca}$ instead of $\mathcal{L}^{\kappa}_{\ca}$.
 \end{notation}
 
 \begin{rem} When $\lambda \geq \kappa$, $\mathcal{L}_{\ca}^{\lambda}\cong  \mathcal{L}_{\ca}^{\kappa} \mathcal{L}_{\kappa}^{\lambda}$ for some transition map $\mathcal{L}_{\kappa}^{\lambda}$. We did not find any application for this, thus we decided not to go in the details of this construction.
 \end{rem}
  


\begin{rem} This construction appeared for the first time in \cite{aec}[4.5] and later in \cite{LB2014}[2.5], where it is presented as the analogue of Shelah's presentation theorem for AECs \cite{LB2014}[2.6]. The reader that is not familiar with formal category theory might find the original presentation more down to earth. In \cite{LB2014}[2.5] it is also shown that under interesting circumstances the cover is faithful.
\end{rem}

\subsection{On the (non) idempotency of the Scott adjunction}\label{scottidempotency}

This subsection provides a better version of Thm. \ref{cheapidempotencyscott}. It is based on a technical notion (topological embedding) that we define and study in the Toolbox chapter (see Chap. \ref{toolbox}).

\begin{thm}  \label{enoughpoints}
A Scott topos $\cg \cong \mathsf{S}(\ca)$ has enough points.
\end{thm}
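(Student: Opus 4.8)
The plan is to exhibit a jointly conservative family of points explicitly, reusing the points that the unit of the Scott adjunction already supplies. Concretely, I would take the evaluation points $\eta_\ca(a)$ for $a$ ranging over $\ca$ and verify that their inverse images jointly detect isomorphisms; since a jointly conservative \emph{sub}family forces the whole family of points to be jointly conservative, this suffices to conclude that $\mathsf{S}(\ca)$ has enough points.

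First I would recall, from the description of the unit $\eta$ given just after Thm. \ref{scottadj}, that each object $a \in \ca$ determines a point $\eta_\ca(a) \in \mathsf{pt}\mathsf{S}(\ca)$ whose inverse image functor is evaluation at $a$,
\[
\eta_\ca(a)^* \;=\; \mathsf{ev}_a : \mathsf{S}(\ca) \longrightarrow \Set, \qquad F \longmapsto F(a).
\]
That $\mathsf{ev}_a$ is cocontinuous and preserves finite limits --- and is therefore genuinely the inverse image of a point --- is exactly the computation already performed when $\eta$ was defined, and it rests only on the fact (Rem. \ref{scottconstructionwelldefined}) that colimits and finite limits in $\mathsf{S}(\ca)$ are computed as in the ambient functor category.

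Then I would observe that the family $\{\mathsf{ev}_a\}_{a \in \ca}$ is jointly conservative. Since $\mathsf{S}(\ca) = \text{Acc}_\omega(\ca, \Set)$ is a full subcategory of $\Set^\ca$, a morphism $\phi : F \to G$ of $\mathsf{S}(\ca)$ is merely a natural transformation, and it is invertible in $\mathsf{S}(\ca)$ if and only if it is invertible in $\Set^\ca$, that is, if and only if every component $\phi_a = \mathsf{ev}_a(\phi)$ is a bijection. Hence $\mathsf{ev}_a(\phi)$ invertible for all $a$ forces $\phi$ invertible, which is precisely joint conservativity. As the inverse images of a subfamily of points are jointly conservative, so are those of the full family, and therefore $\mathsf{S}(\ca)$ has enough points. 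I do not expect any genuine obstacle: the one point that genuinely requires care has already been settled upstream, namely that evaluation at an \emph{arbitrary} object of $\ca$ (and not only at a $\lambda$-presentable one) remains cocontinuous and left exact, so that it really does arise as the inverse image of a point.
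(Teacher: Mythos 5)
Your proof is correct, but it takes a genuinely different route from the paper's. The paper never touches individual points: it takes the finitely accessible cover $\mathcal{L}^{\kappa}_{\ca}\colon \mathsf{Ind}(\ca_\kappa)\to\ca$ of Def. \ref{cover}, observes that this is a pseudo-epimorphism in Cat (it has the section $\ca(\iota-,-)$), applies Prop. \ref{surjections} to obtain a geometric surjection $\mathsf{S}(\mathcal{L}^{\kappa}_{\ca})\colon \Set^{\ca_\kappa}\twoheadrightarrow \mathsf{S}(\ca)$ whose domain is a presheaf topos by Rem. \ref{trivial}, and concludes via the criterion of \cite{elephant2}[2.2.12] that a topos admitting a geometric surjection from a presheaf topos has enough points. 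You instead verify the paper's definition of \emph{enough points} directly, and both of your steps are sound: evaluation at an arbitrary $a\in\ca$ is cocontinuous and lex exactly as asserted in the description of the unit after Thm. \ref{scottadj} (it follows from Rem. \ref{scottconstructionwelldefined} by writing $a$ as a $\lambda$-directed colimit of $\lambda$-presentables and using that directed colimits commute with finite limits in $\Set$), and a natural transformation between objects of the full subcategory $\mathsf{S}(\ca)\subseteq\Set^{\ca}$ is invertible as soon as it is so pointwise; since the paper defines enough points via joint conservativity of \emph{all} points, the fact that your family is class-indexed causes no trouble. The two arguments are really two faces of the same computation: a geometric morphism is a surjection precisely when its inverse image is conservative, and the inverse image of the paper's surjection is (up to Rem. \ref{trivial}) the restriction functor $\mathsf{S}(\ca)\to\Set^{\ca_\kappa}$, whose conservativity is your joint-conservativity claim restricted to the $\kappa$-presentable objects. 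What the paper's route buys is an explicit surjection from a named presheaf topos, which is reused later (e.g. in the proof of Thm. \ref{scottidempotency}), and it exercises the toolbox; what your route buys is elementarity (no covers, no appeal to Prop. \ref{surjections}) plus the slightly sharper observation that the points in the image of the unit $\eta_\ca$ already suffice --- and if you restrict to $a\in\ca_\lambda$ you even get a \emph{small} jointly conservative family, recovering the paper's surjection by transposition.
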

\begin{proof}
It is enough to show that $\cg$ admits a geometric surjection from a presheaf topos \cite{elephant2}[2.2.12]. Let $\kappa$ be a cardinal such that $\ca$ is $\kappa$-accessible. We claim that the $1$-cell $\mathsf{S}(\mathcal{L}^{\kappa}_{\ca})$ described in Rem. \ref{cover} is the desired geometric surjection for the Scott topos $\mathsf{S}(\ca)$. By Rem. \ref{trivial}, the domain of $\mathsf{S}(\mathcal{L}^{\kappa}_{\ca})$ is indeed a presheaf topos. By Prop. \ref{surjections}, it is enough to prove that $\mathcal{L}^{\kappa}_{\ca}$ is a pseudo epimorphism in Cat. But that is obvious, because it has even a section in Cat, namely $\ca(\iota,1)$.
\end{proof}

\begin{thm} The following are equivalent. \label{scottidempotency}
\begin{enumerate} 
    \item The counit $\epsilon: \mathsf{Spt}(\ce) \to \ce$ is an equivalence of categories.
    \item  $\ce$ has enough points and for all presentations $i^*: \Set^X \leftrightarrows \ce: i_*$, $\mathsf{pt}(i)$ is a topological embedding.
   \item $\ce$ has enough points and there exists a presentation $i^*: \Set^X \leftrightarrows \ce: i_*$ such that $\mathsf{pt}(i)$ is a topological embedding.
  
\end{enumerate} 
\end{thm}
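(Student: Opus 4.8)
The plan is to route the whole question through the (orthogonal) surjection–embedding factorization on $\text{Topoi}$. The first step is to pin down exactly when the counit is a geometric surjection. Using the description of $\epsilon$ from Sec.~\ref{promenadeproofofscott}, the inverse image is $\epsilon_\ce^*(e) = [\,p \mapsto p^*(e)\,]$, so a morphism $\phi$ of $\ce$ is sent by $\epsilon_\ce^*$ to the natural transformation with components $p^*(\phi)$. Thus $\epsilon_\ce^*$ is conservative precisely when the inverse images of the points of $\ce$ are jointly conservative, i.e. $\ce$ \emph{has enough points if and only if} $\epsilon_\ce$ \emph{is a geometric surjection}. Since a geometric morphism that is simultaneously a surjection and an embedding is an equivalence, this already rewrites condition $(1)$ as the conjunction ``$\ce$ has enough points and $\epsilon_\ce$ is a geometric embedding''.

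Next I would transport the embedding condition along a presentation. Fix a presentation $i^*:\Set^X\leftrightarrows\ce:i_*$; by definition $i:\ce\to\Set^X$ is a geometric embedding, and by Rem.~\ref{trivial} the counit $\epsilon_{\Set^X}$ is an equivalence because $\Set^X$ is of presheaf type. Pseudo-naturality of $\epsilon$ against $i$ then yields, up to this equivalence,
\[
\mathsf{S}\mathsf{pt}(i)\;\simeq\; i\circ\epsilon_\ce .
\]
On the other hand, the Toolbox (Chap.~\ref{toolbox}) characterizes topological embeddings by the property that $\mathsf{pt}(i)$ is a topological embedding if and only if $\mathsf{S}\mathsf{pt}(i)$ is a geometric embedding. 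Combining the two facts, ``$\mathsf{pt}(i)$ is a topological embedding'' is exactly ``$i\circ\epsilon_\ce$ is a geometric embedding''.

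With these reductions the three implications are short. For $(1)\Rightarrow(2)$: if $\epsilon_\ce$ is an equivalence then $\ce\simeq\mathsf{S}\mathsf{pt}(\ce)$ is a Scott topos, hence has enough points by Thm.~\ref{enoughpoints}, and for \emph{every} presentation $\mathsf{S}\mathsf{pt}(i)\simeq i\circ\epsilon_\ce\simeq i$ is an embedding, so $\mathsf{pt}(i)$ is a topological embedding. The implication $(2)\Rightarrow(3)$ is immediate, since a presentation exists by the very definition of topos. For $(3)\Rightarrow(1)$: enough points makes $\epsilon_\ce$ a surjection, while the chosen presentation makes $i\circ\epsilon_\ce$ an embedding; factoring $\epsilon_\ce$ as a surjection followed by an embedding and using uniqueness of the factorization of the embedding $i\circ\epsilon_\ce$ (together with the fact that $i$ is an embedding) forces the surjective part of $\epsilon_\ce$ to be an equivalence, so $\epsilon_\ce$ is itself an embedding. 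Being both a surjection and an embedding, $\epsilon_\ce$ is an equivalence.

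The main obstacle is the middle step: correctly matching the Toolbox notion of topological embedding with the geometric-embedding property of $\mathsf{S}\mathsf{pt}(i)$, and making the identification $\mathsf{S}\mathsf{pt}(i)\simeq i\circ\epsilon_\ce$ rigorous at the level of the $2$-category $\text{Topoi}$ (tracking the pseudo-naturality of $\epsilon$ as flagged in Rem.~\ref{ignorepseudo}). Everything else is a formal manipulation of the surjection–embedding factorization, and the observation that ``enough points'' is precisely surjectivity of $\epsilon_\ce$ is what collapses the idempotency question to a single embedding condition.
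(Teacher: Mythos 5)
Your proof is correct and follows the same skeleton as the paper's: both reduce condition $(1)$ to ``surjection $+$ embedding'', both exploit the pseudo-naturality square $\mathsf{Spt}(i) \cong \epsilon_{\Set^X}^{-1} \circ i \circ \epsilon_\ce$ together with the fact (Rem.~\ref{trivial}) that the counit is an equivalence on presheaf topoi, and both unwind the Toolbox definition of topological embedding in exactly the same way. The one genuine divergence is the surjectivity step in $(3)\Rightarrow(1)$: you prove outright that $\ce$ has enough points if and only if $\epsilon_\ce$ is a geometric surjection, by identifying conservativity of $\epsilon_\ce^*$ with joint conservativity of the inverse images $p^*$ of the points; the paper instead takes a geometric surjection $q\colon \Set^X \twoheadrightarrow \ce$ from a presheaf topos (which exists because $\ce$ has enough points), applies $\mathsf{Spt}$, and cancels through the naturality square to conclude that $\epsilon_\ce$ is surjective. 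Your variant is the sharper statement --- it is essentially the argument the paper itself uses for the Isbell counit in Thm.~\ref{idempotencyisbellcategorified} --- and it gives the converse direction (surjectivity of $\epsilon_\ce$ implies enough points) for free, at the modest cost of invoking the standard equivalence between geometric surjections and conservative inverse images. Your appeal to uniqueness of the surjection--embedding factorization to extract that $\epsilon_\ce$ is an embedding is a cosmetic difference from the paper's direct left-cancellation of the embedding $i$ against the embedding $i \circ \epsilon_\ce$; both are legitimate, and your flagged obstacle (tracking the pseudo-naturality of $\epsilon$, cf. Rem.~\ref{ignorepseudo}) is handled no more rigorously in the paper than in your sketch.
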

\begin{proof}
As expected, we follow the proof strategy hinted at by the enumeration.
\begin{itemize}

		\item[$1) \Rightarrow 2)$] By Thm. \ref{enoughpoints}, $\ce$ has enough points. We only need to show that for all presentations $i^*: \Set^X \leftrightarrows \ce: i_*$, $\mathsf{pt}(i)$ is a topological embedding. In order to do so, consider the following diagram,

		\begin{center}
		\begin{tikzcd}
\mathsf{Spt}\ce \arrow[dd, "\epsilon_\ce" description] \arrow[rr, "\mathsf{Spt}(i)" description] &  & \mathsf{Spt}\Set^X \arrow[dd, "\epsilon_{\Set^X}" description] \\
                                                                                                 &  &                                                                \\
\ce \arrow[rr, "i" description]                                                                  &  & \Set^X                                                        
\end{tikzcd}
		\end{center}
		By Rem. \ref{trivial} and the hypotheses of the theorem, one obtains that $\mathsf{Spt}(i)$ is naturally isomorphic to a composition of geometric embedding\[\mathsf{Spt}(i) \cong \epsilon_{\Set^X}^{-1} \ \circ \  i  \ \circ \  \epsilon_\ce,\]  and thus is a geometric embedding. This shows precisely that $\mathsf{pt}(i)$ is a topological embedding.

				\item[$2) \Rightarrow 3)$] Obvious.
	\item[$3) \Rightarrow 1)$] It is enough to prove that $\epsilon_\ce$ is both a surjection and a geometric embedding of topoi. $\epsilon_\ce$ is a surjection, indeed since $\ce$ has enough points, there exist a surjection $q: \Set^X \twoheadrightarrow \ce$, now we apply the comonad $\mathsf{S}\mathsf{pt}$ and we look at the following diagram,

		\begin{center}
	\begin{tikzcd}
\mathsf{S}\mathsf{pt}\Set^X \arrow[dd, "\mathsf{S}\mathsf{pt}(q)" description] \arrow[rr, "\epsilon_{\Set^X}" description] &  & \Set^X \arrow[dd, "q" description, two heads] \\
                                                                                                                           &  &                                               \\
\mathsf{S}\mathsf{pt}\ce \arrow[rr, "\epsilon_\ce" description]                                                            &  & \ce                                          
\end{tikzcd}
	\end{center}
	Now, the counit arrow on the top is an isomorphism, because $\Set^X$ is a presheaf topos. Thus $\epsilon_{\ce} \circ (\mathsf{S}\mathsf{pt})(q) $ is (essentially) a factorization of $q$. Since $q$ is a geometric surjection so must be $\epsilon_\ce$.  In order to show that $\epsilon_\ce$ is a geometric embedding, we use again the following diagram over the given presentation $i$.

	\begin{center}
	\begin{center}
		\begin{tikzcd}
\mathsf{Spt}\ce \arrow[dd, "\epsilon_\ce" description] \arrow[rr, "\mathsf{Spt}(i)" description] &  & \mathsf{Spt}\Set^X \arrow[dd, "\epsilon_{\Set^X}" description] \\
                                                                                                 &  &                                                                \\
\ce \arrow[rr, "i" description]                                                                  &  & \Set^X                                                        
\end{tikzcd}
		\end{center}
	\end{center}
This time we know that $\mathsf{Spt}(i)$ and $i$ are geometric embeddings, and thus $\epsilon_\ce$ has to be so.
\end{itemize}
\end{proof}

\begin{rem}
The version above might look like a technical but not very useful improvement of Thm. \ref{cheapidempotencyscott}. Instead, in the following Corollary we prove a non-trivial result based on a characterization (partial but useful) of topological embeddings contained in the Toolbox.
\end{rem}

\begin{cor} \label{enoughpointscocomplete}
Let $\ce$ be a topos with enough points, together with a presentation $i: \ce \to \Set^C$. If  $\mathsf{pt}(\ce)$ is complete and $\mathsf{pt}(i)$ preserves limits. Then $\epsilon_\ce$ is an equivalence of categories.
\end{cor}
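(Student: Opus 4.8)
The plan is to deduce the statement directly from the equivalence of Thm. \ref{scottidempotency}, by verifying its third condition. That condition requires two things: that $\ce$ have enough points, and that there exist a presentation $i^*: \Set^C \leftrightarrows \ce : i_*$ for which $\mathsf{pt}(i)$ is a topological embedding. The first requirement is granted by hypothesis, and the presentation $i$ is precisely the one we are handed. So the entire content of the proof is concentrated in a single point: showing that, under the present hypotheses, $\mathsf{pt}(i)$ is a topological embedding.

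To establish this I would invoke the partial characterization of topological embeddings proved in the Toolbox (Chap. \ref{toolbox}), the one announced in the remark preceding this corollary. Recall that $\mathsf{pt}(i): \mathsf{pt}(\ce) \to \mathsf{pt}(\Set^C)$ is a $1$-cell of $\text{Acc}_{\omega}$, and that (as the proof of Thm. \ref{scottidempotency} makes explicit) $\mathsf{pt}(i)$ being a topological embedding amounts to $\mathsf{S}(\mathsf{pt}(i)) = \mathsf{Spt}(i)$ being a geometric embedding of topoi. The Toolbox characterization supplies a convenient sufficient criterion for this: when the domain $\mathsf{pt}(\ce)$ is complete and the functor $\mathsf{pt}(i)$ preserves limits, the morphism is automatically a topological embedding. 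Both hypotheses are in force, so $\mathsf{pt}(i)$ is a topological embedding.

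With this in hand the argument closes immediately: $\ce$ has enough points and admits a presentation $i$ whose induced map on points is a topological embedding, so condition $(3)$ of Thm. \ref{scottidempotency} is satisfied, and therefore the counit $\epsilon_\ce: \mathsf{Spt}(\ce) \to \ce$ is an equivalence of categories.

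The main obstacle is entirely encapsulated in the Toolbox characterization of topological embeddings, namely the nontrivial implication that completeness of the category of points together with limit-preservation forces $\mathsf{S}$ of the morphism to be a geometric embedding; everything else is bookkeeping around Thm. \ref{scottidempotency}. I would expect the delicate part of that Toolbox result to be the analysis of how $\mathsf{S}$ interacts with a limit-preserving functor between complete accessible categories, presumably exploiting the fact that completeness lets one control the right adjoint $f_*$ through the $\ran$-formula discussed in Rem. \ref{f_*}, so that the induced geometric morphism is genuinely an embedding rather than merely a surjection.
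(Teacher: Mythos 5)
Your scaffolding matches the paper's exactly: both proofs verify condition (3) of Thm. \ref{scottidempotency}, and both concentrate the entire content in showing that $\mathsf{pt}(i)$ is a topological embedding. The gap is in how you discharge that step. You cite a Toolbox criterion asserting that a limit-preserving $1$-cell out of a complete accessible category is automatically a topological embedding --- but no such statement exists in the Toolbox. The ``partial but useful'' sufficient condition actually proved there is Thm. \ref{reflective}: a $1$-cell $i: \ca \to \cb$ in $\text{Acc}_\omega$ exhibiting $\ca$ as a \emph{reflective} subcategory of $\cb$ is a topological embedding. Your hypotheses do not hand you reflectivity; there is a genuine (if short) bridge to build, and that bridge is precisely the paper's proof. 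Namely: $\mathsf{pt}(i)$ is fully faithful because a presentation $i$ is a geometric embedding and $\mathsf{pt}$ sends geometric embeddings to fully faithful functors (Prop. \ref{embeddings}); it preserves limits by hypothesis, preserves directed colimits as a $1$-cell of $\text{Acc}_\omega$, and has complete domain $\mathsf{pt}(\ce)$, so the adjoint functor theorem provides a left adjoint. Only at this point is $\mathsf{pt}(\ce)$ exhibited as a reflective subcategory of $\mathsf{pt}(\Set^C) \simeq \mathsf{Ind}(C)$, and only then does Thm. \ref{reflective} apply.

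Your closing speculation in fact sketches the proof of Thm. \ref{reflective} itself --- when a left adjoint $L$ exists, $\ran_i$ becomes $(-) \circ L$ in the formula of Rem. \ref{f_*}, which is how one checks the counit $i^* i_* \Rightarrow \mathrm{id}$ is invertible --- so you were circling the right mechanism. But that mechanism takes the left adjoint as \emph{input}, and producing it from completeness, limit preservation, accessibility, and full fidelity is exactly the step your write-up leaves uncited and unproved. Note also that full fidelity of $\mathsf{pt}(i)$, which you never mention, is indispensable: the adjoint functor theorem alone yields an adjunction, not a reflective embedding, and without reflectivity Thm. \ref{reflective} does not apply.
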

\begin{proof}
We verify the condition (3) of the previous theorem. Since $\mathsf{pt}(i)$ preserve all limits, fully faithful, and $\mathsf{pt}(\ce)$ must be complete, $\mathsf{pt}(i)$ has a left adjoint by the adjoint functor theorem. This establishes $\mathsf{pt}\ce$ as a reflective subcategory of $\mathsf{pt}\Set^C$. By Prop. \ref{reflective}, $\mathsf{pt}(i)$ must be a topological embedding.
\end{proof}

\begin{rem}[Scott is not aways sober]
The previous corollary is coherent with Johnstone's observation that when a poset is (co)complete its Scott topology is generically sober, and thus the Scott adjunction should reduce to Isbell's duality.
\end{rem}

\begin{rem} Going back to the case of fields (Rem. \ref{fields}), we cannot indeed apply the corollary above, because the category of fields is not complete.
\end{rem}

\section{Interaction} \label{interaction}

In this section we shall convince the reader that the posetal version of the Scott-Isbell story \textit{embeds} in the categorical one.

\begin{center}
\begin{tikzcd}
                              & \text{Loc} \arrow[dd] \arrow[ld] \arrow[rr, "\mathsf{Sh}" description, dashed] &             & \text{Topoi} \arrow[dd] \arrow[ld] \\
\text{Top} \arrow[rr, dashed] &                                                                                & \text{BIon} &                                    \\
                              & \text{Pos}_\omega \arrow[lu] \arrow[rr, "\iota" description, dashed]           &             & \text{Acc}_\omega \arrow[lu]      
\end{tikzcd}
\end{center}
We have no applications for this observation, thus we do not provide all the details that would amount to an enormous amount of functors relating all the categories that we have mentioned. Yet, we show the easiest aspect of this phenomenon. Let us introduce and describe the following diagram,

\begin{center}

\begin{tikzcd}
\text{Loc} \arrow[rr, "\mathsf{Sh}" description] \arrow[dd, "\mathsf{pt}" description] &  & \text{Topoi} \arrow[dd, "\mathsf{pt}" description] \\
                                                                                       &  &                                                    \\
\text{Pos}_\omega \arrow[rr, "\text{i}" description]                                   &  & \text{Acc}_\omega                                 
\end{tikzcd}
\end{center}

\begin{rem}[$\mathsf{Sh}$ and $i$]
\begin{itemize}
	\item[]
	\item[$\mathsf{Sh}$] It is well known that the sheafification functor $\mathsf{Sh}: \text{Loc} \to \text{Topoi}$ establishes \text{Loc} as a full subcategory of $\text{Topoi}$ in a sense made precise in \cite{sheavesingeometry}[IX.5, Prop. 2 and 3].
	\item[$i$] This is very easy to describe. Indeed any poset with directed suprema is an accessible category with directed colimits and a function preserving directed suprema is precisely a functor preserving directed colimits.
	\end{itemize}
\end{rem}

\begin{prop}
The diagram above commutes.
\end{prop}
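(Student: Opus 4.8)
The plan is to produce, for every locale $L$, an equivalence of categories $\mathsf{pt}(\mathsf{Sh}(L)) \simeq \mathrm{i}(\mathsf{pt}(L))$ that is natural in $L$, and then to observe that it is automatically an equivalence in $\text{Acc}_\omega$. The whole argument rests on the classical computation of the points of a localic topos: \emph{the points of $\mathsf{Sh}(L)$ are exactly the points of the locale $L$}. So rather than manipulate the two functors directly, I would reduce the left-hand composite $\mathsf{pt}\circ\mathsf{Sh}$ to a hom-computation in $\text{Topoi}$ and recognise the right-hand composite $\mathrm{i}\circ\mathsf{pt}$ inside it.

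First I would recall that $\Set \simeq \mathsf{Sh}(\mathbb{T})$, where $\mathbb{T}$ denotes the terminal locale (the one whose frame of opens is $\{0<1\}$). Since $\mathsf{pt}$ on topoi is the representable $\text{Topoi}(\Set,-)$ (Rem. \ref{pt}), and since $\mathsf{pt}$ on locales is $\text{Loc}(\mathbb{T},-)$ by definition (Rem. \ref{isbellposetal}, Rem. \ref{scottposetal}), I would assemble the chain
\[ \mathsf{pt}(\mathsf{Sh}(L)) = \text{Topoi}(\Set,\mathsf{Sh}(L)) \simeq \text{Topoi}(\mathsf{Sh}(\mathbb{T}),\mathsf{Sh}(L)) \simeq \text{Loc}(\mathbb{T},L) = \mathsf{pt}(L), \]
where the middle equivalence is the $2$-fully-faithfulness of $\mathsf{Sh}$ recorded in \cite{sheavesingeometry}[IX.5, Prop. 2 and 3]: for locales $X,Y$ the hom-category $\text{Topoi}(\mathsf{Sh}(X),\mathsf{Sh}(Y))$ is a poset, equivalent to the hom-poset $\text{Loc}(X,Y)$. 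Applying $\mathrm{i}$, which merely reinterprets a poset with directed suprema as a category with directed colimits, this reads $\mathsf{pt}(\mathsf{Sh}(L)) \simeq \mathrm{i}(\mathsf{pt}(L))$.

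Next I would match the two pieces of structure carried by these objects. The specialization order on $\mathsf{pt}(\mathsf{Sh}(L))$ is given by the $2$-cells of $\text{Topoi}$, namely natural transformations between inverse images; under the correspondence above these are exactly the order relations of $\text{Loc}(\mathbb{T},L)$, i.e. the poset structure that $\mathsf{pt}(L)$ carries in $\text{Pos}_\omega$ (Rem. \ref{scottposetal}). For the colimits: directed suprema in $\mathsf{pt}(L)$ are computed pointwise (Rem. \ref{scottposetal}), and directed colimits of points of a topos are likewise computed pointwise (this is the isomorphism $(\star)$ in Con. \ref{fromtopoitobion}), so the two directed-colimit structures agree. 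Since any equivalence of categories preserves and reflects all existing (co)limits, the displayed equivalence is in particular an equivalence in $\text{Acc}_\omega$, with no further verification needed.

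Finally I would check naturality. For a locale map $g:L\to L'$, the functor $\mathsf{pt}(\mathsf{Sh}(g))$ is precomposition with the geometric morphism $\mathsf{Sh}(g)$, which under the $2$-fully-faithful correspondence of \cite{sheavesingeometry}[IX.5] is precisely precomposition of locale-points with $g$, that is $\mathrm{i}(\mathsf{pt}(g))$; hence the naturality square commutes up to the coherent isomorphism supplied by that correspondence. I expect the only genuine obstacle to be bookkeeping: one must phrase the classical full-faithfulness of $\mathsf{Sh}$ in a sufficiently $2$-categorical form to read off simultaneously the order (from $2$-cells) and the directed-join structure (from pointwise colimits). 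There is no substantive difficulty beyond assembling these standard facts, which is why the statement admits the one-line proof that follows.
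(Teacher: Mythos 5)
Your proof is correct and is essentially the paper's own argument: the paper proves commutativity via exactly the chain $\mathsf{pt}(L)=\text{Loc}(\mathbb{T},L)\cong\text{Topoi}(\mathsf{Sh}(\mathbb{T}),\mathsf{Sh}(L))\cong\mathsf{pt}(\mathsf{Sh}(L))$, citing the same $2$-fully-faithfulness of $\mathsf{Sh}$ from \cite{sheavesingeometry}[IX.5, Prop. 2 and 3]. You merely spell out the bookkeeping (order from $2$-cells, pointwise directed colimits, naturality) that the paper dismisses as ``more or less tautological.''
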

\begin{proof}
This is more or less tautological from the point of view of \cite{sheavesingeometry}[IX.5, Prop. 2 and 3]. In fact, $$\mathsf{pt}(L)=\text{Loc}(\mathbb{T}, L) \cong \text{Topoi}(\mathsf{Sh}(\mathbb{T}), \mathsf{Sh}(L)) \cong \mathsf{pt}(\mathsf{Sh}(L)).$$ 
\end{proof}

\section{The $\kappa$-case} \label{kappaIsbell}

Unsurprisingly, it is possible to generalize the whole content of this chapter to the $\kappa$-case. The notion of $\kappa$-ionad is completely straightforward and every construction lifts to infinite cardinals without any effort. For the sake of completeness, we report the $\kappa$-version of the main theorems that we saw in the chapter, but we omit the proof, which would be identical to the finitary case.

\begin{rem}
The following diagram describes the $\kappa$-version of the relevant adjunctions.
\begin{center}
\begin{tikzcd}
                                                                 & \kappa\text{-Topoi} \arrow[lddd, "\mathbbm{pt}_\kappa" description, bend left= 12] \arrow[rddd, "\mathsf{pt}_\kappa" description, bend left= 12] &                                                                                                 \\
                                                                 &                                                                                                                    &                                                                                                 \\
                                                                 &                                                                                                                    &                                                                                                 \\
\kappa\text{-BIon} \arrow[ruuu, "\mathbb{O_\kappa}" description, bend left= 12] &                                                                                                                    & \text{Acc}_{\kappa} \arrow[luuu, "\mathsf{S}_\kappa" description, bend left= 12] \arrow[ll, "\mathsf{ST}_\kappa"]
\end{tikzcd}
\end{center}
\end{rem}

 \begin{thm}[Idempotency of the categorified $\kappa$-Isbell duality]
 Let $\ce$ be a $\kappa$-topos. The following are equivalent:
 \begin{enumerate}
 	\item $\ce$ has enough $\kappa$-points;
 	\item $\rho: \mathbb{O}_\kappa\mathbbm{pt}\kappa(\ce) \to \ce$ is an equivalence of categories;
 	\item $\ce$ is of the form $\mathbb{O}_\kappa(\cx)$ for some bounded $\kappa$-ionad $\cx$.
 \end{enumerate}
 \end{thm}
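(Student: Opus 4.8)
The plan is to transpose the proof of Theorem~\ref{idempotencyisbellcategorified} verbatim to the $\kappa$-setting, since the author explicitly states that the argument is identical to the finitary case. First I would recall that in the $\kappa$-context the categorified Isbell adjunction $\mathbb{O}_\kappa \dashv \mathbbm{pt}_\kappa$ has been established (as the $\kappa$-analog of Thm.~\ref{categorified isbell adj thm}), and that the counit $\rho: \mathbb{O}_\kappa\mathbbm{pt}_\kappa(\ce) \to \ce$ is built exactly as before: it is the geometric morphism whose inverse image lifts the evaluation functor $ev^*_\kappa: \ce \to \P(\mathsf{pt}_\kappa(\ce))$ along the forgetful functor from the category of coalgebras. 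The only substantive change is that \emph{points} are now $\kappa$-points, i.e. $\kappa$-geometric morphisms $\Set \to \ce$ whose inverse images preserve $\kappa$-small limits, and \emph{enough $\kappa$-points} means that the inverse images of all $\kappa$-points are jointly conservative.

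For the cyclic chain of implications I would proceed as follows. For $(1) \Rightarrow (2)$: going back to the construction of $\rho$, it suffices to show that $ev^*_\kappa$ is comonadic. Since $ev^*_\kappa$ preserves finite (indeed $\kappa$-small) limits, by Beck's comonadicity theorem it is enough to check that it is conservative; but joint conservativity of the inverse images $p^*$ of all $\kappa$-points is precisely the statement that $ev^*_\kappa$ is conservative, which is the hypothesis of having enough $\kappa$-points. Hence $\rho$ is an equivalence. The implication $(2) \Rightarrow (3)$ is trivial, taking $\cx = \mathbbm{pt}_\kappa(\ce)$, which is a bounded $\kappa$-ionad by the $\kappa$-analog of Construction~\ref{fromtopoitobion}. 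For $(3) \Rightarrow (1)$: if $\ce \simeq \mathbb{O}_\kappa(\cx)$ for some bounded $\kappa$-ionad $\cx$, then $\ce$ is the category of coalgebras of a lex comonad on $\P(X)$, and the $\kappa$-version of the argument in \cite{ionads}[Rem. 2.5] shows that the points coming from the elements of $X$ are jointly conservative, so $\ce$ has enough $\kappa$-points.

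The only genuine care needed, and the place I would flag as the main obstacle, is the $(1) \Rightarrow (2)$ step: one must confirm that Beck's comonadicity theorem and the total-category argument producing $ev_*$ go through unchanged when \emph{finite} limits are replaced by $\kappa$-small limits. Comonadicity is insensitive to this change, since it only uses preservation of the equalizers appearing in the coalgebra presentation, and these are absolutely preserved; the existence of the right adjoint $ev_*$ still follows from totality of $\ce$ as in Construction~\ref{fromtopoitobion}, because the adjoint functor theorem for total categories requires only cocontinuity of the candidate left adjoint, which is unaffected. Thus no real difficulty arises, and the proof is a faithful $\kappa$-lift of Thm.~\ref{idempotencyisbellcategorified}; this is exactly why the author elects to omit it.
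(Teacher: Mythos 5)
Your proposal is correct and coincides with the paper's own treatment: the paper omits the proof precisely because it is identical to the finitary case, Thm.~\ref{idempotencyisbellcategorified}, whose argument you reproduce faithfully --- comonadicity of $ev^*$ via Beck's theorem for $(1)\Rightarrow(2)$, triviality of $(2)\Rightarrow(3)$, and the $\kappa$-analog of \cite{ionads}[Rem.~2.5] for $(3)\Rightarrow(1)$. Your additional verification that Beck's comonadicity criterion and the total-category argument for the existence of $ev_*$ are insensitive to replacing finite limits by $\kappa$-small ones is exactly the diligence on which the paper's ``identical proof'' claim implicitly rests.
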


\begin{thm}  Let $\ce$ be a $\kappa$-topos. The following are equivalent.
\begin{enumerate} 
    \item The counit $\epsilon: \mathsf{S}_\kappa\mathsf{pt}_\kappa(\ce) \to \ce$ is an equivalence of categories.
    \item  $\ce$ has enough $\kappa$-points and for all presentations $i^*: \Set^X \leftrightarrows \ce: i_*$, $\mathsf{pt}_\kappa(i)$ is a topological embedding.
   \item $\ce$ has enough $\kappa$-points and there exists a presentation $i^*: \Set^X \leftrightarrows \ce: i_*$ such that $\mathsf{pt}(i)$ is a topological embedding.
  
\end{enumerate} 
\end{thm}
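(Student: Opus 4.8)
The plan is to transport the proof of Theorem \ref{scottidempotency} essentially verbatim, replacing directed colimits by $\kappa$-directed colimits, finite limits by $\kappa$-small limits, and the classical surjection/embedding factorization on $\text{Topoi}$ by its $\kappa$-analogue on $\kappa\text{-Topoi}$. The three implications follow the skeleton suggested by the enumeration, so I would treat each in turn, after first securing the one auxiliary input that the finitary argument used as a black box.

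First I would record the $\kappa$-analogue of Theorem \ref{enoughpoints}: every $\kappa$-Scott topos $\mathsf{S}_\kappa(\ca)$ has enough $\kappa$-points. Replacing $\mathsf{Ind}$ by $\mathsf{Ind}_\kappa$ throughout the construction of Def. \ref{cover} produces, for $\ca$ a $\lambda$-accessible object of $\text{Acc}_\kappa$ with $\lambda\ge\kappa$, a $\kappa$-ary cover $\mathcal{L}:\mathsf{Ind}_\kappa(\ca_\lambda)\to\ca$ which is a (pseudo)epimorphism in $\text{Cat}$ split by the nerve $\ca(\iota-,-)$. Hence $\mathsf{S}_\kappa(\mathcal{L})$ is a $\kappa$-geometric surjection whose domain, by the $\kappa$-Diaconescu equivalence of Rem. \ref{kdiac} (generalizing Rem. \ref{trivial}), is a presheaf $\kappa$-topos; so $\mathsf{S}_\kappa(\ca)$ admits a geometric surjection from a presheaf topos and therefore has enough $\kappa$-points.

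For $(1)\Rightarrow(2)$ I would assume $\epsilon_\ce$ is an equivalence, so that $\ce\cong\mathsf{S}_\kappa\mathsf{pt}_\kappa(\ce)$ is a $\kappa$-Scott topos and hence has enough $\kappa$-points by the previous step. Given any presentation $i^*:\Set^X\leftrightarrows\ce:i_*$, I would form the naturality square of $\epsilon$ along $\mathsf{S}_\kappa\mathsf{pt}_\kappa(i)$; since $\Set^X$ is a presheaf $\kappa$-topos the counit $\epsilon_{\Set^X}$ is an equivalence (Rem. \ref{kdiac}), so the square exhibits
\[
\mathsf{S}_\kappa\mathsf{pt}_\kappa(i)\;\cong\;\epsilon_{\Set^X}^{-1}\circ i\circ\epsilon_\ce,
\]
a composite of geometric embeddings, hence itself a geometric embedding; by definition this says exactly that $\mathsf{pt}_\kappa(i)$ is a topological embedding. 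The implication $(2)\Rightarrow(3)$ is immediate. For $(3)\Rightarrow(1)$ I would show $\epsilon_\ce$ is both a $\kappa$-geometric surjection and a $\kappa$-geometric embedding, so that the $\kappa$-surjection/embedding factorization forces it to be an equivalence: enough $\kappa$-points supplies a surjection $q:\Set^X\twoheadrightarrow\ce$, and applying the comonad $\mathsf{S}_\kappa\mathsf{pt}_\kappa$ together with the fact that $\epsilon_{\Set^X}$ is an equivalence displays $q$ (up to isomorphism) as $\epsilon_\ce\circ\mathsf{S}_\kappa\mathsf{pt}_\kappa(q)$, whence $\epsilon_\ce$ is a surjection; while the same naturality square over the chosen presentation $i$ shows $\mathsf{S}_\kappa\mathsf{pt}_\kappa(i)$ and $i$ are embeddings, forcing $\epsilon_\ce$ to be one too.

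The main obstacle I anticipate is infrastructural rather than conceptual. The finitary argument relies silently on two structures external to the statement: the orthogonal (geometric) surjection/embedding factorization system, and the Toolbox theory of topological embeddings (Chap. \ref{toolbox}). Before the three implications can be quoted, one must verify that both survive the passage to $\kappa$-topoi and $\kappa$-geometric morphisms, and that the $\kappa$-Diaconescu statement of Rem. \ref{kdiac} genuinely fixes presheaf $\kappa$-topoi for the $\kappa$-adjunction. Once these are in place, each implication is literally the finitary proof with $\omega$ replaced by $\kappa$, which is precisely why the proof can be safely omitted.
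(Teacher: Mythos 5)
Your proposal is exactly the paper's intent: the thesis omits this proof, stating explicitly that it ``would be identical to the finitary case,'' i.e.\ the proof of Thm.~\ref{scottidempotency}, which is precisely the verbatim transport you carry out (including the $\kappa$-analogue of Thm.~\ref{enoughpoints} via $\kappa$-ary covers and Rem.~\ref{kdiac} in place of Rem.~\ref{trivial}). Your explicit flagging of the infrastructural prerequisites --- the surjection/embedding factorization and the Toolbox results on topological embeddings in the $\kappa$-setting --- is a sound and slightly more careful rendering of what the paper takes for granted.
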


\chapter{Logic}\label{logical}
The aim of this chapter is to give a logical account on the Scott adjunction. The reader will notice that once properly formulated, the statements of this chapter follow directly from the previous chapter. This should not be surprising and echoes the fact that once Stone-like dualities are proven, completeness-like theorems for propositional logic follow almost on the spot. 

\begin{notation}[Isbell and Scott topoi]
\begin{itemize}
	\item[]
	\item By an Isbell topos we mean a topos of the form $\mathbb{O}(\cx)$, for some bounded ionad $\cx$;
	\item By a Scott topos we mean a topos of the form $\mathsf{S}(\ca)$ for some accessible category $\ca$ with directed colimits.
\end{itemize}
\end{notation}

\begin{structure*}
The exposition is organized as follows:
\begin{enumerate}
	\item[Sec. \ref{logicgeneralizedaxiom}]  The first section will push the claim that the Scott topos $\mathsf{S}(\ca)$ is a kind of very weak notion of theory naturally attached to the accessible category which is a candidate geometric axiomatization of $\ca$. We will see how this traces back to the seminal works of Linton and Lawvere on algebraic theories and algebraic varieties.
	\item[Sec. \ref{logicclassifyingtopoi}] The second section inspects a very natural guess that might pop up in the mind of the topos theorist: \textit{is there any relation between Scott topoi} and \textit{classifying topoi}? The question will have a partially affirmative answer in the first subsection.  The second one subsumes these partial results. Indeed every theory $\cs$ has a category of models $\mathsf{Mod}(\cs)$, but this category does not retain enough information to recover the theory, even when the theory has enough points. That's why the Scott adjunction is not sharp enough. Nevertheless, every theory has a ionad of models $\mathbb{M}\mathbbm{od}(\cs)$, the category of opens of such a ionad $\mathbb{O}\mathbb{M}\mathbbm{od}(\cs)$ recovers theories with enough points.
	\item[Sec. \ref{logicaec}] This section describes the relation between the Scott adjunction and abstract elementary classes, providing a restriction of the Scott adjunction to one between accessible categories where every map is a monomorphism and locally decidable topoi.
	\item[Sec. \ref{logicsaturatedobjects}] In this section we give the definition of \textit{category of saturated objects} (CSO) and show that the Scott adjunction restricts to an adjunction between CSO and atomic topoi. This section can be understood as an attempt to conceptualize the main result in \cite{simon}.
	\end{enumerate}
\end{structure*}

\section{Generalized axiomatizations} \label{logicgeneralizedaxiom}

\begin{ach} The content of this section is substantially inspired by some private conversations with Jiří Rosický and he should be credited for it.
\end{ach}

\begin{rem}\label{groups}
Let $\mathsf{Grp}$ be the category of groups and  $\mathsf{U}: \mathsf{Grp} \to \Set$ be the forgetful functor. The historical starting point of a categorical understanding of universal algebra was precisely that one can recover the (a maximal presentation of) the algebraic theory of groups from $\mathsf{U}$. Consider all the natural transformations of the form \[\mu: \mathsf{U}^n \Rightarrow \mathsf{U}^m, \]
these can be seen as implicitly defined operations of groups. If we gather these operations in an equational theory $\mathbb{T}_\mathsf{U}$, we see that the functor $\mathsf{U}$ lifts to the category of models $\mathsf{Mod}(\mathbb{T}_\mathsf{U})$ as indicated by the diagram below.

\begin{center}
\begin{tikzcd}
\mathsf{Grp} \arrow[rdd, "\mathsf{U}" description] \arrow[r, dotted] & \mathsf{Mod}(\mathbb{T}_\mathsf{U}) \arrow[dd, "|-|" description] \\
                                                                     &                                                          \\
                                                                     & \Set                                                    
\end{tikzcd}
\end{center}
It is a quite classical result that the comparison functor above is fully faithful and essentially surjective, thus we have axiomatized the category of groups (probably with a non minimal family of operations).
\end{rem}

\begin{rem}\label{lawvereliterature}
The idea above was introduced in Lawvere's PhD thesis \cite{lawvere1963functorial} and later developed in great generality by Linton \cite{10.1007/978-3-642-99902-4_3,10.1007/BFb0083080}. The interested reader might find interesting \cite{adamekrosicky94}[Chap. 3] and the expository paper \cite{HYLAND2007437}.   Nowadays this is a standard technique in categorical logic and some generalizations of it were presented in \citep{infinitarylang} by Rosický and later again in \citep{LB2014}[Rem. 3.5].
\end{rem}

\begin{rem}[Rosický's remark] 
Rem. \ref{groups} ascertains that the collection of functor $\{ \mathsf{U}^n\}_{n \in \mathbb{N}}$, together with all the natural transformations between them, retains all the informations about the category of groups. Observe that in this specific case, the functors $ \mathsf{U}^n$ all preserve directed colimits, because finite limits commute with directed colimits. This means that this small category $\{ \mathsf{U}^n\}_{n \in \mathbb{N}}$ is a full subcategory of the Scott topos of the category of groups. In fact the vocabulary of the theory that we used to axiomatize the category of groups is made up of symbols coming from a full subcategory of the Scott topos.
\end{rem}

\begin{rem}[Lieberman-Rosický construction]
In  \citep{LB2014}[Rem. 3.5] given a couple $(\ca, \mathsf{U})$ where $\ca$ is an a accessible category with directed colimits together with a faithful functor $\mathsf{U}: \ca \to \Set$ preserving directed colimits, the authors form a category $\mathbb{U}$ whose objects are finitely accessible sub-functors of $\mathsf{U}^n$ and whose arrows are natural transformations between them. Of course there is a naturally attached signature $\Sigma_U$ and a naturally attached first order theory $\mathbb{T}_\mathsf{U}$.  In the same fashion as the previous remarks one finds a comparison functor $\ca \to \Sigma_\mathsf{U}\text{-Str}$. In \citep{LB2014}[Rem. 3.5] the authors stress that is the most natural candidate to axiomatize $\ca$. A model of $\mathbb{T}_\mathsf{U}$ is the same as a functor $\mathbb{U} \to \Set$ preserving products and subobjects. Of course the functor $\ca \to \Sigma_\mathsf{U}\text{-Str}$ factors through $\text{Mod}(\mathbb{U})$ (seen as a sketch) \[l: \ca \to \text{Mod}(\mathbb{U}),\] but in \citep{LB2014}[Rem. 3.5] this was not the main concern of the authors.
\end{rem}

\begin{rem}[Generalized axiomatizations]
The generalized axiomatization of Lieberman and Rosický amounts to a sketch $\mathbb{U}$. As we mentioned, there exists an obvious inclusion of $\mathbb{U}$ in the Scott topos of $\ca$, $$i: \mathbb{U} \to \mathsf{S}(\ca)$$ which is a flat functor because finite limits in $\mathsf{S}(\ca)$ are computed pointwise in $\Set^\ca$. Thus, every point $p: \Set \to \mathsf{S}(\ca)$ induces a model of the sketch $\mathbb{U}$ by  composition,
$$i^*: \mathsf{pt}(\mathsf{S}\ca) \to \text{Mod}(\mathbb{U})$$
$$p \mapsto  p^* \circ i.$$
In particular this shows that the unit of the Scott adjunction lifts the comparison functor between $\ca$ and $\text{Mod}(\mathbb{U})$ along $i^*$ and thus the Scott topos provides a \textit{sharper} axiomatization of $\mathbb{T}_\mathsf{U}$.

\begin{center}
\begin{tikzcd}
                                                         & \ca \arrow[ldd, "\eta_\ca" description, bend right] \arrow[rdd, "l" description, bend left] &                        \\
                                                         &                                                                                             &                        \\
\mathsf{pt}\mathsf{S}(\ca) \arrow[rr, "i^*" description] &                                                                                             & \text{Mod}(\mathbb{U})
\end{tikzcd}
\end{center}
\end{rem}

\begin{rem}[Faithful functors are likely to generate the Scott topos] Yet, it should be noticed that when $\mathbb{U}$ is a generator in $\mathsf{S}(\ca)$, the functor $i^*$ is an equivalence of categories. As unlikely as it may sound, in all the examples that we can think of, a generator of the Scott topos is always given by a faithful forgetful functor $\mathsf{U}: \ca \to \Set$. This phenomenon is so pervasive that the author has believed for quite some time that an object in the Scott topos $\mathsf{S}(\ca)$ is a generator if and only if it is faithful and conservative. We still lack a counterexample, or a theorem proving such a statement.
\end{rem}
 
\section{Classifying topoi} \label{logicclassifyingtopoi}

\begin{notation}[Isbell and Scott topoi]
\begin{itemize}
	\item[]
	\item By an Isbell topos we mean a topos of the form $\mathbb{O}(\cx)$, for some bounded ionad $\cx$;
	\item By a Scott topos we mean a topos of the form $\mathsf{S}(\ca)$ for some accessible category $\ca$ with directed colimits.
\end{itemize}
\end{notation}

This section is devoted to specifying the connection between Scott topoi, Isbell topoi and classifying topoi. Recall that for a geometric theory $\mathbb{T}$, a classifying topos $\Set[\mathbb{T}]$ is a topos representing the functor of models in topoi, \[ \mathsf{Mod}_{(-)}(\mathbb{T}) \cong \text{Topoi}(-, \Set[\mathbb{T}]).\]  The theory of classifying topoi allows us to internalize geometric logic in the internal logic of the $2$-category of topoi.

\subsection{Categories of models, Scott topoi and classifying topoi} 

The Scott topos $\mathsf{S}(\mathsf{Grp})$ of the category of groups is $\Set^{\mathsf{Grp}_{\omega}}$, this follow from Rem. \ref{trivial} and applies to $\mathsf{Mod}(\mathbb{T})$ for every Lawvere theory $\mathbb{T}$. It is well known that $\Set^{\mathsf{Grp}_{\omega}}$ is also the classifying topos of the theory of groups. This section is devoted to understating if this is just a coincidence, or if the Scott topos is actually related to the classifying topos.

\begin{rem}
Let $\ca$ be an accessible category with directed colimits. In order to properly ask the question \textit{is $\mathsf{S}(\ca)$ the classifying topos?}, we should answer the question \textit{the classifying topos of what?} Indeed $\ca$ is just a category, while one can compute classifying topoi of theories. Our strategy is to introduce a quite general notion of theory that fits in the following diagram,

\begin{center}
\begin{tikzcd}
\text{Acc}_\omega \arrow[rr, "\mathsf{S}" description, bend right=10] &                                                                                      & \text{Topoi} \arrow[ll, "\mathsf{pt}" description, bend right=10] \\
                                                   &                                                                                      &                                                \\
                                                   &                                                                                      &                                                \\
                                                   & \mathsf{Theories} \arrow[dotted, luuu, "\mathsf{Mod}(-)" description, bend left=20] \arrow[dotted, ruuu, "\gimel(-)" description, bend right=20] &                                               
\end{tikzcd}
\end{center}

in such a way that:

\begin{enumerate}
	\item $\gimel(\mathbb{T})$ gives the classifying topos of $\mathbb{T}$;
	\item $\mathsf{Mod}(-) \cong \mathsf{pt} \gimel (-)$.
\end{enumerate} 

In this new setting we can reformulate our previous discussion in the following mathematical question: \[\gimel(-) \stackrel{?}{\cong} \mathsf{S} \mathsf{Mod}(-).\]

\end{rem}

\begin{rem}[Geometric Sketches]
The notion of theory that we plan to use is that of geometric sketch. The category of (small) sketches was described in \cite{Makkaipare}[3.1], while a detailed study of geometric sketches was conducted in \cite{adamek_johnstone_makowsky_rosicky_1997,Admek1996OnGA}.

\begin{center}
\begin{tikzcd}
\text{Acc}_\omega \arrow[rr, "\mathsf{S}" description, bend right=10] &                                                                                      & \text{Topoi} \arrow[ll, "\mathsf{pt}" description, bend right=10] \\
                                                   &                                                                                      &                                                \\
                                                   &                                                                                      &                                                \\
                                                   & \mathsf{GSketches} \arrow[luuu, "\mathsf{Mod}(-)" description, bend left=20] \arrow[ruuu, "\gimel(-)" description, bend right=20] &                                               
\end{tikzcd}
\end{center}
\end{rem}

\begin{rem}
Following \cite{Makkaipare}, there exists a natural way to generate a sketch from any accessible category. This construction, in principle, gives even  a left adjoint for the functor $\mathsf{Mod}(-)$, but does land in large sketches. Thus it is indeed true that for each accessible category there exist a sketch (a theory) canonically associated to it. We do not follow this line because the notion of large sketch, from a philosophical perspective, is a bit unnatural. Syntax should always be very frugal. From an operational perspective, presentations should always be as small as possible. It is possible to cut down the size of the sketch, but this construction cannot be defined functorially on the whole category of accessible categories with directed colimits. Since elegance and naturality is one of the main motivations for this treatment of syntax-semantics dualities, we decided to avoid any kind of non-natural construction.
\end{rem}

\begin{rem}
Geometric sketches contain  coherent  sketches. In the dictionary between logic and geometry that is well motivated in the indicated papers (\cite{adamek_johnstone_makowsky_rosicky_1997,Admek1996OnGA}) these two classes correspond respectively to geometric and coherent theories. The latter essentially contain all first order theories via the process of Morleyzation. These observations make our choice of geometric sketches a very general notion of theory and makes us confident that it's a good notion to look at.
\end{rem}

 We now proceed to describe the two functors labeled with the name of $\mathsf{Mod}$ and $\gimel$.

\begin{rem}[Mod]
This 2-functor is very easy to describe. To each sketch $\cs$ we associate its category of Set-models, while it is quite evident that a morphism of sketches induces by composition a functor preserving directed colimits (see Sec. \ref{backgroundsketches} in the Background chapter).
\end{rem}

\begin{con}[$\gimel$]
The topos completion of a geometric sketch is a highly nontrivial object to describe. Among the possible constructions that appear in the literature, we refer to \citep{borceux_19943}[4.3]. Briefly, the idea behind this construction is the following. 

\begin{enumerate} 
	\item By {\citep{borceux_19943}[4.3.3]}, every sketch $\cs$ can be completed to a sketch $\bar{\cs}$ whose underlying category is cartesian.
	\item By {\citep{borceux_19943}[4.3.6]}, this construction is functorial and does not change the model of the sketch in any Grothendieck topos.
	\item By {\citep{borceux_19943}[4.3.8]}, the completion of the sketch has a natural topology $\bar{J}$.
	\item The correspondence $\cs \mapsto \bar{\cs} \mapsto (\bar{S}, \bar{J})$ transforms geometric sketches into sites and morphism of sketches into morphism of sites.
	\item We compute sheaves over the site $(\bar{S}, \bar{J})$. 
	\item Define $\gimel$ to be $\cs \mapsto \bar{\cs} \mapsto (\bar{S}, \bar{J}) \mapsto \mathsf{Sh}(\bar{S}, \bar{J})$.
\end{enumerate}
\end{con}

\begin{rem}
While \citep{borceux_19943}[4.3.6] proves that  $\mathsf{Mod}(-) \simeq \mathsf{pt} \gimel (-)$, and \citep{borceux_19943}[4.3.8] prove that $\gimel(\cs)$ is the classifying topos of $\cs$ among Grothendieck topoi, the main question of this section remains completely open, is $\gimel(\cs)$ isomorphic to the Scott topos $\mathsf{S} \mathsf{Mod}(-)$ of the category of Set models of $\cs$? We answer this question with the following theorem.
\end{rem}

\begin{thm}\label{classificatore}
If the counit $\epsilon_{\gimel(\cs)}$ of the Scott adjunction is an equivalence of categories on $\gimel(\cs)$, then $\gimel(\cs)$ coincides with $\mathsf{S} \mathsf{Mod}(\cs)$.
\end{thm}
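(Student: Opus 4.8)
The plan is to read off the statement as an immediate consequence of the remark preceding it, which records Borceux's identification of the points of the classifying topos with the category of models, $\mathsf{Mod}(-) \simeq \mathsf{pt}\,\gimel(-)$ (\citep{borceux_19943}[4.3.6]), together with the fact that $\mathsf{S}$ is a $2$-functor and therefore carries equivalences to equivalences. In the spirit of the opening paragraph of this chapter, all the real work has already been done in the previous chapter; here we only have to assemble a chain of equivalences.

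First I would apply $\mathsf{S}$ to Borceux's equivalence. Since both $\mathsf{Mod}(\cs)$ and $\mathsf{pt}\,\gimel(\cs)$ live in $\text{Acc}_\omega$ (the category of points of a topos being accessible with directed colimits, as recalled in Rem. \ref{pt}), the comparison is an equivalence in $\text{Acc}_\omega$, and applying the $2$-functor $\mathsf{S}$ yields
$$\mathsf{S}\,\mathsf{Mod}(\cs) \simeq \mathsf{S}\,\mathsf{pt}\,\gimel(\cs) = \mathsf{Spt}\,\gimel(\cs).$$
Next I would invoke the hypothesis of the theorem, namely that the counit of the Scott adjunction
$$\epsilon_{\gimel(\cs)} : \mathsf{Spt}\,\gimel(\cs) \longrightarrow \gimel(\cs)$$
is an equivalence of categories. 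Composing the two equivalences gives
$$\mathsf{S}\,\mathsf{Mod}(\cs) \simeq \mathsf{Spt}\,\gimel(\cs) \simeq \gimel(\cs),$$
which is precisely the asserted coincidence $\gimel(\cs) \cong \mathsf{S}\,\mathsf{Mod}(\cs)$.

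I do not expect any genuine obstacle in this argument: the entire content of the theorem is transferred into the hypothesis, which is exactly the condition studied in Thm. \ref{scottidempotency} (when the counit $\epsilon_{\gimel(\cs)}$ is an equivalence), and into Borceux's semantic identification. The only point to keep honest is that it suffices to produce an equivalence of the underlying categories, since a topos is determined up to equivalence by its underlying category and ``coincides'' is understood in this sense; the composite above does exactly that, so no further $2$-categorical bookkeeping is required.
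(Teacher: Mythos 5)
Your proposal is correct and is essentially the paper's own proof: the paper likewise combines Borceux's identification $\mathsf{Mod}(-) \simeq \mathsf{pt}\,\gimel(-)$ with the hypothesis that $\epsilon_{\gimel(\cs)}$ is an equivalence, obtaining $\gimel(\cs) \simeq \mathsf{S}\mathsf{pt}\,\gimel(\cs) \simeq \mathsf{S}\,\mathsf{Mod}(\cs)$. Your version merely makes explicit the step of applying the $2$-functor $\mathsf{S}$ to the equivalence of categories of points, which the paper leaves implicit.
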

\begin{proof}
We introduced enough technology to make this proof incredibly slick. Recall the counit \[\mathsf{S} \mathsf{pt} (\gimel(\cs)) \to \gimel(\cs) \] and assume that it is an equivalence of categories. Now, since $\mathsf{Mod}(-) \simeq \mathsf{pt} \gimel (-)$, we obtain that \[\gimel(\cs) \simeq \mathsf{S}\mathsf{Mod}(\cs),\] which indeed it our thesis.
\end{proof}

\begin{rem}
Thm. \ref{scottidempotency} characterizes those topoi for which the counit is an equivalence of categories, providing a full description of those geometric sketches for which $\gimel(\cs)$ coincides with $\mathsf{S} \mathsf{Mod}(\cs)$. Since Thm. \ref{classificatore} might not look satisfactory, in the following comment we use Cor. \ref{enoughpointscocomplete} to derive a nice looking statement.
\end{rem}

\begin{cor}
Assume $\gimel(\cs)$ has enough points and $\mathsf{Mod}(\cs)$ is complete. Let $i: \gimel(\cs) \to \Set^C$ be a presentation such that $\mathsf{pt}(i)$ preserve limits. then $\gimel(S)$ coincides with $\mathsf{S} \mathsf{Mod}(\cs)$.
\end{cor}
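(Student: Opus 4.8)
The plan is to recognize this corollary as an immediate combination of Thm. \ref{classificatore} and Cor. \ref{enoughpointscocomplete}, the only genuine bookkeeping being the translation between $\mathsf{Mod}(\cs)$ and $\mathsf{pt}(\gimel(\cs))$. I would set $\ce := \gimel(\cs)$ and recall the equivalence $\mathsf{Mod}(\cs) \simeq \mathsf{pt}(\gimel(\cs)) = \mathsf{pt}(\ce)$ established in {\citep{borceux_19943}[4.3.6]}, which is exactly the identification $\mathsf{Mod}(-) \simeq \mathsf{pt}\gimel(-)$ used throughout this section.

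First I would check that $\ce = \gimel(\cs)$ satisfies the hypotheses of Cor. \ref{enoughpointscocomplete}. By assumption $\gimel(\cs)$ has enough points. The completeness of $\mathsf{pt}(\ce)$ follows from the completeness of $\mathsf{Mod}(\cs)$ via the equivalence above, since completeness is invariant under equivalence of categories. Finally, the presentation $i: \gimel(\cs) \to \Set^C$ is precisely the one supplied in the statement, and $\mathsf{pt}(i)$ preserves limits by hypothesis. Thus Cor. \ref{enoughpointscocomplete} applies verbatim and yields that the counit
\[ \epsilon_{\ce} = \epsilon_{\gimel(\cs)}: \mathsf{S}\mathsf{pt}(\gimel(\cs)) \to \gimel(\cs) \]
is an equivalence of categories.

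Then I would feed this conclusion into Thm. \ref{classificatore}: since the counit of the Scott adjunction is an equivalence on $\gimel(\cs)$, that theorem gives $\gimel(\cs) \simeq \mathsf{S}\mathsf{Mod}(\cs)$, which is the assertion. The only point that requires a word of care—and hence the closest thing to an obstacle—is verifying that the functor $\mathsf{pt}(i)$ named in Cor. \ref{enoughpointscocomplete} coincides with the limit-preserving functor of the hypothesis under the identification $\mathsf{Mod}(\cs) \simeq \mathsf{pt}(\gimel(\cs))$; but this is immediate, as $i$ and its image $\mathsf{pt}(i)$ are literally the data named in the statement. No further computation is needed, so the proof reduces to a clean two-line chaining of the two preceding results.
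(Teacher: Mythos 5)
Your proof is correct and is exactly the paper's argument: the paper's own proof reads simply ``Apply Cor.~\ref{enoughpointscocomplete} to Thm.~\ref{classificatore},'' and your write-up just spells out the routine hypothesis-checking (including transporting completeness across $\mathsf{Mod}(\cs) \simeq \mathsf{pt}\gimel(\cs)$) that the paper leaves implicit.
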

\begin{proof}
Apply Cor. \ref{enoughpointscocomplete} to Thm. \ref{classificatore}.
\end{proof}

\subsection{Ionads of models, Isbell topoi and classifying topoi}

Indeed the main result of this section up to this point has been partially unsatisfactory. As happens sometimes, the answer is not as nice as expected because the question in the first place did not take in consideration some relevant factors. The category of models of a sketch does not retain enough information on the sketch. Fortunately, we will show that every sketch has a ionad of models (not just a category) and the category of opens of this ionad is a much better approximation of the classifying topos.
In this subsection, we switch diagram of study to the one below.

\begin{center}
\begin{tikzcd}
\text{BIon} \arrow[rr, "\mathbb{O}" description, bend right=10] &                                                                                      & \text{Topoi} \arrow[ll, "\mathbbm{pt}" description, bend right=10] \\
                                                   &                                                                                      &                                                \\
                                                   &                                                                                      &                                                \\
                                                   & \mathsf{LGSketches} \arrow[luuu, "\mathbb{M}\mathbbm{od}(-)" description, bend left=20] \arrow[ruuu, "\gimel(-)" description, bend right=20] &                                               
\end{tikzcd}
\end{center}

Of corse, in order to study it, we need to introduce all its nodes and legs. We should say what we mean by $\mathsf{LGSketches}$ and $\mathbb{M}\mathbbm{od}(-)$. Whatever they will be, the main point of the section is to show that this diagram fixes the one of the previous section, in the sense that we will obtain the following result.

 \begin{thm*} The following are equivalent:
 \begin{itemize} 
 	\item $\gimel(\cs)$ has enough points;
 	\item $\gimel(\cs)$ coincides with $\mathbb{O}\mathbb{M}\mathbbm{od}(\cs)$.
 \end{itemize}
 \end{thm*}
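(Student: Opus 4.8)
The plan is to reduce the statement entirely to the idempotency of the categorified Isbell adjunction (Thm. \ref{idempotencyisbellcategorified}), by first identifying the ionad of models $\mathbb{M}\mathbbm{od}(\cs)$ with the Isbell-point ionad $\mathbbm{pt}(\gimel(\cs))$ of the classifying topos. Once this identification is in place the theorem is essentially automatic, since the idempotency theorem already characterizes when $\mathbb{O}\mathbbm{pt}(\ce)\simeq\ce$.

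First I would prove the key lemma $\mathbb{M}\mathbbm{od}(\cs)\cong\mathbbm{pt}(\gimel(\cs))$ as bounded ionads. The underlying categories agree on the nose: by \citep{borceux_19943}[4.3.6] the $\Set$-models of $\cs$ are exactly the points of its classifying topos, so $\mathsf{Mod}(\cs)\simeq\mathsf{pt}(\gimel(\cs))$. It then remains to match the two interior operators, and both are density comonads of a base functor landing in $\P(\mathsf{Mod}(\cs))$. On the side of $\mathbbm{pt}(\gimel(\cs))$ the base is the functor $\bar{S}\to\gimel(\cs)\xrightarrow{ev^*}\P(\mathsf{pt}(\gimel(\cs)))$ produced in Con. \ref{fromtopoitobion}; on the side of $\mathbb{M}\mathbbm{od}(\cs)$ the base is the evaluation $S\to\P(\mathsf{Mod}(\cs))$ sending an object $s$ of the sketch to the functor $M\mapsto M(s)$. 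Since the generating flat functor $\bar{S}\to\gimel(\cs)$ is precisely the sheafified Yoneda embedding of the site presentation used to build $\gimel$, postcomposing it with $ev^*$ recovers exactly $s\mapsto(M\mapsto M(s))$ under the equivalence $\mathsf{Mod}(\cs)\simeq\mathsf{pt}(\gimel(\cs))$. Hence the two bases are naturally isomorphic and, by Prop. \ref{critboundedionad}(3), they generate the same lex comonad; in particular $\mathbb{M}\mathbbm{od}(\cs)$ is bounded and isomorphic to $\mathbbm{pt}(\gimel(\cs))$.

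Applying the $2$-functor $\mathbb{O}$ to this equivalence of ionads yields $\mathbb{O}\mathbb{M}\mathbbm{od}(\cs)\simeq\mathbb{O}\mathbbm{pt}(\gimel(\cs))$. Now I would simply invoke Thm. \ref{idempotencyisbellcategorified} for the topos $\ce=\gimel(\cs)$: its clauses $(1)$ and $(2)$ say that $\gimel(\cs)$ has enough points if and only if the counit $\rho_{\gimel(\cs)}\colon\mathbb{O}\mathbbm{pt}(\gimel(\cs))\to\gimel(\cs)$ is an equivalence of categories. Read through the identification of the first paragraph, the latter condition is exactly $\gimel(\cs)\simeq\mathbb{O}\mathbb{M}\mathbbm{od}(\cs)$, which is the second bullet of the statement. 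For the converse direction one may alternatively use clause $(3)$: if $\gimel(\cs)\simeq\mathbb{O}\mathbb{M}\mathbbm{od}(\cs)\simeq\mathbb{O}\mathbbm{pt}(\gimel(\cs))$ then $\gimel(\cs)$ is of the form $\mathbb{O}(\cx)$ with $\cx=\mathbbm{pt}(\gimel(\cs))$, whence it has enough points by $(3)\Rightarrow(1)$. This establishes both implications.

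The main obstacle is precisely the key lemma of the first paragraph; the rest is a formal consequence of the idempotency theorem, which does all the genuine work. The delicate point is checking that the interior operator carried by the intrinsically defined ionad of models $\mathbb{M}\mathbbm{od}(\cs)$ coincides with the one transported from the classifying topos, that is, that the density comonad of the sketch-evaluation base agrees with $ev^*ev_*$. This is where the functoriality of $\gimel$ and the fact, recorded in \citep{borceux_19943}[4.3.6], that passing to the site $(\bar{S},\bar{J})$ does not alter the category of models become essential: once the two generating bases are matched up to the equivalence $\mathsf{Mod}(\cs)\simeq\mathsf{pt}(\gimel(\cs))$, Prop. \ref{critboundedionad} forces the two comonads, and hence the two ionads, to agree.
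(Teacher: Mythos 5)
Your proposal is correct and follows essentially the same route as the paper: the paper also first proves the key lemma $\mathbbm{pt}\circ\gimel\simeq\mathbb{M}\mathbbm{od}$ by matching the two interior operators as density comonads of the same base — identifying $ev^{*}ev_{*}$ with the density comonad of $ev^{*}\circ j$ for the dense site inclusion $j\colon S\to\gimel(\cs)$, where $ev^{*}j$ is exactly the sketch evaluation $\mathsf{ev}$ — and then concludes by the idempotency theorem (Thm. \ref{idempotencyisbellcategorified}). The only cosmetic difference is that the paper justifies the comonad identification by the explicit Kan-extension chain $ev^{*}ev_{*}\cong \lan_{ev^{*}}ev^{*}\cong\lan_{ev^{*}j}(ev^{*}j)$, whereas you delegate it to Con. \ref{fromtopoitobion} and the isomorphism of bases (your citation of Prop. \ref{critboundedionad}(3) there is really only needed for boundedness; isomorphic bases have isomorphic density comonads for trivial reasons).
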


 We decided to present this theorem separately from the previous one because indeed a ionad of models is a much more complex object to study than a category of models, thus the results of the previous section are indeed very interesting, because easier to handle.

\begin{exa}[Motivating ionads of models: Ultracategories]
We are not completely used to thinking about ionads of models. Indeed a (bounded) ionad is quite complex data, and we do not completely have a logical intuition on its interior operator. \textit{In which sense does the interior operator equip a category of models with a topology?} One very interesting example, that hasn't appeared in the literature to our knowledge is the case of ultracategories. Ultracategories where introduced by Makkai in \cite{AWODEY2013319} and later simplified by Lurie in \cite{lurieultracategories}. These objects are the data of a category $\ca$ together with an ultrastructure, that is a family of functors \[\int_X:\beta(X) \times \ca^X \to \ca. \] We redirect to \cite{lurieultracategories} for the precise definition. In a nutshell, each of these functors $\int_X$ defines a way to compute the ultraproduct of an $X$-indexed family of objects along some ultrafilter. Of course there is a notion of morphism of ultracategories, namely a functor $\ca \to \cb$ which is compatible with the ultrastructure  \cite{lurieultracategories}[Def. 1.41]. Since the category of sets has a natural ultrastructure, for every ultracategory $\ca$ one can define $\text{Ult}(\ca, \Set)$ which obviously sits inside $\Set^\ca$. Lurie observes that the inclusion \[\iota: \text{Ult}(\ca, \Set) \to \Set^\ca\] preserves all colimits \cite{lurieultracategories}[War. 1.4.4], and in fact also finite limits (the proof is the same). In particular, when $\ca$ is accessible and every ultrafunctor is accessible, the inclusion $\iota: \text{Ult}(\ca, \Set) \to \Set^\ca$ factors through $\P(\ca)$ and thus the ultrastructure over $\ca$ defines a idempotent lex comonad over $\P(\ca)$ by the adjoint functor theorem. This shows that every (good enough) accessible ultracategory yields a ionad, which is also \text{compact} in the sense that its category of opens is a compact (coherent) topos. This example is really a step towards a categorified Stone duality involving compact ionads and boolean topoi.
\end{exa} 

\subsubsection{$\mathsf{LGSketches}$ and $\mathbb{M}\mathbbm{od}(-)$}

\begin{defn}
A geometric sketch $\mathcal{S}$ is lex if its underlying category has finite limits and every limiting cone is in the limit class.
\end{defn}

\begin{rem}[Lex sketches are \textit{enough}]
\cite{borceux_19943}[4.3.3] shows that every geometric sketch can be replaced with a lex geometric sketch in such a way that the underlying category of models, and even the classifying topos, does not change. In this sense this full subcategory of geometric sketches is as expressive as the whole category of geometric sketches.
\end{rem}

\begin{prop}[$\mathbb{M}\mathbbm{od}(-)$ on objects]

Every lex geometric sketch $\mathcal{S}$ induces a ionad $\mathbb{M}\mathbbm{od}(\mathcal{S})$ over its category of models $\mathsf{Mod}(\mathcal{S})$.
\end{prop}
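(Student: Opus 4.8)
The plan is to build $\mathbb{M}\mathbbm{od}(\mathcal{S})$ as the Isbell ionad of the classifying topos and then read this construction off directly from the models. First I would record the ambient facts. Since $\mathcal{S}$ is geometric, the inclusion $\mathsf{Mod}(\mathcal{S}) \hookrightarrow \Set^S$ creates directed colimits---the finite limit cones survive because finite limits commute with directed colimits in $\Set$, and the colimit cocones survive because colimits commute with colimits---so $\mathsf{Mod}(\mathcal{S})$ is an accessible category with directed colimits. In particular $\P(\mathsf{Mod}(\mathcal{S}))$ is complete by \cite{presheaves}[Exa. 3.5], so it is a legitimate ambient category for a generalized ionad.

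Next I would invoke the construction of $\gimel$ recalled above: $\gimel(\mathcal{S})$ is a Grothendieck topos and, by \citep{borceux_19943}[4.3.6, 4.3.8] together with $\mathsf{pt} = \text{Topoi}(\Set,-)$, one has $\mathsf{pt}(\gimel(\mathcal{S})) \simeq \mathsf{Mod}(\mathcal{S})$. I would then apply Construction \ref{fromtopoitobion} to the topos $\mathcal{E} = \gimel(\mathcal{S})$. This yields a bounded ionad $\mathbbm{pt}(\gimel(\mathcal{S}))$ whose underlying category is $\mathsf{pt}(\gimel(\mathcal{S})) \simeq \mathsf{Mod}(\mathcal{S})$, and I would define $\mathbb{M}\mathbbm{od}(\mathcal{S})$ to be this ionad, transported along the equivalence. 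Boundedness requires no extra hypothesis: it is exactly the content of \ref{fromtopoitobion}, which verifies Prop. \ref{critboundedionad}(3) through the lex-coreflective presheaf topos $\Set^{\mathsf{Mod}(\mathcal{S})_\lambda} \subseteq \P(\mathsf{Mod}(\mathcal{S}))$ and a site of $\gimel(\mathcal{S})$, with no appeal to enough points. This presentation also makes the companion theorem that follows an immediate instance of the idempotency of the Isbell adjunction, Thm. \ref{idempotencyisbellcategorified}.

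To justify the name \emph{ionad of models}, I would unwind the interior operator in terms of the models themselves. The evaluations $\mathsf{ev}_s : \mathsf{Mod}(\mathcal{S}) \to \Set$, $M \mapsto M(s)$, preserve directed colimits and hence assemble into a functor $e : S \to \P(\mathsf{Mod}(\mathcal{S}))$. Because $\mathcal{S}$ is lex, every finite limit cone of $S$ is specified and is therefore sent by each model to an actual limit in $\Set$; as finite limits in $\P(\mathsf{Mod}(\mathcal{S}))$ are computed pointwise, $e$ preserves finite limits and is flat. Writing $\bar e : S \to \gimel(\mathcal{S})$ for the generic model and $ev^*$ for the evaluation inverse image of \ref{fromtopoitobion}, one checks $ev^* \circ \bar e \cong e$, so $e$ is the pullback along the evaluation morphism of the base $\bar e$ of $\gimel(\mathcal{S})$; the density-comonad manipulation in the proof of Prop. \ref{critboundedionad} then exhibits the interior operator as $\text{Int} = ev^* ev_* \cong \lan_e e$, a datum attached solely to the models of $\mathcal{S}$.

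The step I expect to be the main obstacle is precisely this last identification $\text{Int} \cong \lan_e e$: it demands both that $e$ genuinely land in $\P(\mathsf{Mod}(\mathcal{S}))$ and be flat (resting on lexness of $\mathcal{S}$ and pointwise computation of limits), and that the density comonad of $e$ be matched with the abstract comonad $ev^* ev_*$, which has to be threaded through the generating property of the generic model exactly as in the implication $(3)\Rightarrow(1)$ of Prop. \ref{critboundedionad}. Everything else---accessibility of $\mathsf{Mod}(\mathcal{S})$, completeness of $\P(\mathsf{Mod}(\mathcal{S}))$, and boundedness of the resulting ionad---is formal given the results already in place.
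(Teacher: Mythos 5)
Your proof is correct, but it takes a genuinely different route from the paper's. The paper never passes through the classifying topos: it constructs the interior operator intrinsically, taking the mate $\mathsf{ev}\colon S \to \P(\mathsf{Mod}(\cs))$ of the evaluation pairing, noting it is lex because the sketch is lex, forming the left Kan extension $\lan_y \mathsf{ev}$ along Yoneda $y\colon S \to \Set^{S^\circ}$ (lex since $\P(\mathsf{Mod}(\cs))$ is an infinitary pretopos, cocontinuous by the universal property of presheaves), obtaining the right adjoint $\lan_{\mathsf{ev}} y$ from totality of $\Set^{S^\circ}$, and defining $\text{Int}_\cs$ as the resulting lex comonad, which it then observes is the density comonad of $\mathsf{ev}$. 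You instead define $\mathbb{M}\mathbbm{od}(\cs) := \mathbbm{pt}(\gimel(\cs))$ via Con. \ref{fromtopoitobion} and afterwards identify its interior with $\lan_e e$ using density of $S \to \gimel(\cs)$ and the manipulation $ev^*ev_* \cong \lan_{ev^*}(\lan_j (ev^*j)) \cong \lan_{ev^*j}(ev^*j)$ — which is, almost verbatim, the paper's proof of the \emph{subsequent} proposition $\mathbbm{pt}\circ\gimel \simeq \mathbb{M}\mathbbm{od}$. So you have not so much shortened the argument as redistributed it: in your organization that comparison proposition becomes a definition and its content migrates into your final identification, whereas in the paper's organization it is a theorem with actual content precisely because $\mathbb{M}\mathbbm{od}(\cs)$ was defined without reference to $\gimel$. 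Your route buys boundedness and functoriality in morphisms of sketches for free (both $\mathbbm{pt}$ and $\gimel$ are functors, where the paper has to lean on the later identification to define $\mathbb{M}\mathbbm{od}$ on morphisms at all); the paper's route buys independence from the classifying-topos machinery of \citep{borceux_19943}, keeping the statement that a sketch induces a ionad on its models purely syntax-driven. One small caution: your headline claim — that the ionad structure is "a datum attached solely to the models" — is only fully substantiated once $\text{Int} \cong \lan_e e$ is proved, so in your arrangement that identification is not an optional gloss but the load-bearing step, as you rightly flag at the end.
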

\begin{proof}
The underlying category of the ionad $\mathbb{M}\mathbbm{od}(\mathcal{S})$ is $\mathsf{Mod}(\mathcal{S})$. We must provide an interior operator (a lex comonad), $$\text{Int}_{\cs}: \P({\mathsf{Mod}(\mathcal{S})}) \to \P({\mathsf{Mod}(\mathcal{S})}). $$ In order to do so, we consider  the evaluation pairing $\mathsf{eval}: S \times \mathsf{Mod}(\cs) \to \Set$ mapping $(s,p) \mapsto p(s)$. Let $\mathsf{ev}: \cs \to \Set^{\mathsf{Mod}(S)}$ be its mate. Similarly to Con. \ref{fromtopoitobion}, such functor takes values in $\P(\mathsf{Mod}(S))$. Because $\mathcal{S}$ is a lex sketch, this functor must preserve finite limits. Indeed, \[\mathsf{ev}(\lim s_i)(-) \cong (-)(\lim s_i) \cong \lim ((-)(s_i)) \cong  \lim \mathsf{ev}(s_i)(-).\]
Now, the left Kan extension $\lan_y \mathsf{ev}$ (see diagram below) is left exact because $\P(\mathsf{Mod}(S))$ is an infinitary pretopos and $\mathsf{ev}$ preserves finite limits. 
\begin{center}
\begin{tikzcd}
S \arrow[rr, "\mathsf{ev}" description] \arrow[ddd, "y" description]               &  & \P(\mathsf{Mod}(S)) \\
                                                                                     &  &                          \\
                                                                                     &  &                          \\
\Set^{S^\circ} \arrow[rruuu, "\lan_y \mathsf{ev}" description, dashed, bend right] &  &                         
\end{tikzcd}
\end{center}
Moreover it is cocontinuous because of the universal property of the presheaf construction. Because $\Set^{S^\circ}$ is a total category,  $\lan_y \mathsf{ev}$ must have a right adjoint (and it must coincide with $\lan_{\mathsf{ev}} y$). The induced comonad must be left exact, because the left adjoint is left exact. Define \[\text{Int}_{\cs}:=\lan_y \mathsf{ev} \circ \lan_{\mathsf{ev}} y. \]
Observe that $\text{Int}_{\cs}$ coincides with the density comonad of $\mathsf{ev}$ by \cite{liberti2019codensity}[A.7]. Such result dates back to \cite{appelgate1969categories}.
\end{proof} 

\begin{rem}[$\mathbb{M}\mathbbm{od}(-)$ on morphism of sketches]
This definition will not be given explicitly: in fact we will use the following remark to show that the ionad above is isomorphic to the one induced by $\gimel(\cs)$, and thus there exists a natural way to define $\mathbb{M}\mathbbm{od}(-)$ on morphisms. 
\end{rem}
 
\subsubsection{Ionads of models and theories with enough points}

\begin{rem}
In the main result of the previous section, a relevant rôle was played by the fact that $\mathsf{pt}\gimel \simeq \mathsf{Mod}.$ The same must be true in this one. Thus we should show that $\mathbbm{pt}\gimel \simeq \mathbb{M}\mathbbm{od}.$ Indeed we only need to show that the interior operator is the same, because the underlying category is the same by the discussion in the previous section. 
\end{rem}

\begin{prop}
\[\mathbbm{pt} \circ \gimel \simeq \mathbb{M}\mathbbm{od}.\]
\end{prop}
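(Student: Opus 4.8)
The plan is to reduce the statement to an identity between two interior operators and then to recognise both as the \emph{same} density comonad. By the preceding remark (and \citep{borceux_19943}[4.3.6]) the underlying categories already agree, $\mathsf{pt}\gimel(\cs) \simeq \mathsf{Mod}(\cs)$, so the only thing left is to compare the two lex comonads on $\P(\mathsf{Mod}(\cs))$. First I would rewrite each interior operator as a density comonad. On the $\mathbb{M}\mathbbm{od}$ side this is already recorded in the construction: $\text{Int}_{\cs} \cong \lan_{\mathsf{ev}}\mathsf{ev}$, the density comonad of $\mathsf{ev}\colon S \to \P(\mathsf{Mod}(\cs))$. On the $\mathbbm{pt}$ side, writing $\ce := \gimel(\cs)$, the interior operator is $ev^*ev_*$, and since $ev^*$ is a left adjoint its density comonad coincides with the comonad of its adjunction (exactly as in the Background example for $U_\cx$), so $ev^*ev_* \cong \lan_{ev^*}ev^*$. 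Thus the goal becomes the isomorphism $\lan_{\mathsf{ev}}\mathsf{ev} \cong \lan_{ev^*}ev^*$.

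The bridge between the two is the canonical functor $\ell\colon S \to \ce$ arising from the site presentation $\cs \mapsto (\bar S, \bar J)$, i.e. Yoneda into $\Set^{\bar S^\circ}$ followed by sheafification. Two facts about $\ell$ will drive the proof. First, under the identification $\mathsf{pt}(\ce) \simeq \mathsf{Mod}(\cs)$ a point $p$ corresponds to a model with $p^*\ell(s) \cong p(s)$; this is precisely the content of $\mathsf{pt}\gimel \simeq \mathsf{Mod}$, and comparing the two evaluation pairings it yields $ev^* \circ \ell \cong \mathsf{ev}$. Second, $\ell$ is dense (every sheaf is a colimit of representables coming from the site), so $\Id_\ce \cong \lan_\ell \ell$. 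Combining density with the cocontinuity of $ev^*$ I would deduce $ev^* \cong ev^* \circ \lan_\ell \ell \cong \lan_\ell(ev^*\ell) \cong \lan_\ell \mathsf{ev}$, the middle isomorphism being the fact that a cocontinuous functor commutes with a pointwise (coend) left Kan extension.

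With this in hand the conclusion is a one-line manipulation of iterated Kan extensions, in the same spirit as the computation in the proof of Prop. \ref{critboundedionad}: \[\lan_{\mathsf{ev}}\mathsf{ev} \cong \lan_{ev^*\ell}(ev^*\ell) \cong \lan_{ev^*}\bigl(\lan_\ell(ev^*\ell)\bigr) \cong \lan_{ev^*}\bigl(\lan_\ell \mathsf{ev}\bigr) \cong \lan_{ev^*}(ev^*) \cong ev^*ev_*.\] This identifies the two interior operators over the common underlying category $\mathsf{Mod}(\cs)$, hence $\mathbbm{pt}\gimel(\cs) \simeq \mathbb{M}\mathbbm{od}(\cs)$; naturality in $\cs$ then follows since every functor in sight is induced by $\ell$ and the universal properties of the Kan extensions involved, which also furnishes the (so-far unspecified) action of $\mathbb{M}\mathbbm{od}(-)$ on morphisms of sketches. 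The main obstacle I expect is the careful verification of the two properties of $\ell$ — the compatibility $ev^*\ell \cong \mathsf{ev}$ with the classifying-topos equivalence, and the identity $ev^* \cong \lan_\ell \mathsf{ev}$ coming from density together with cocontinuity of $ev^*$; once these are secured, the Kan-extension bookkeeping is routine.
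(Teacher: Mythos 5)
Your proposal is correct and takes essentially the same route as the paper's own proof: both identify $\text{Int}_\cs$ and $ev^*ev_*$ as density comonads and compare them through the canonical dense functor $j\colon S \to \gimel(\cs)$ (your $\ell$), using $ev^*j \cong \mathsf{ev}$ together with the composition law $\lan_{ev^*j}(ev^*j) \cong \lan_{ev^*}\bigl(\lan_j(ev^*j)\bigr)$. The only difference is presentational: you spell out the verification of $ev^*\ell \cong \mathsf{ev}$ and the density/cocontinuity bookkeeping that the paper dismisses as evident.
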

\begin{proof}

Let $\cs$ be a lex geometric sketch. Of course there is a map $j: S \to \gimel{\cs}$, because $S$ is a site of definition of $\gimel{\cs}$. Moreover, $j$ is obviously dense. In particular the evaluation functor that defines the ionad $\mathbbm{pt} \circ \gimel$ given by $ev^*: \gimel(\cs) \to \P({\mathsf{pt} \circ \gimel(\cs)})$ is uniquely determined by its composition with $j$. This means that the comonad $ev^*ev_*$ is isomorphic to the density comonad of the composition $ev^* \circ j$. Indeed, \[ev^*ev_* \cong \lan_{ev^*} ev^* \cong \lan_{ev^*} (\lan_j(ev^*j)) \cong \lan_{ev^*j}(ev^*j).\] Yet, $ev^*j$ is evidently $\mathsf{ev}$, and thus $ev^* ev_* \cong \text{Int}_\cs$ as desired.

\end{proof}

 \begin{thm} \label{isbellclassificatore} The following are equivalent:
 \begin{itemize} 
 	\item $\gimel(\cs)$ has enough points;
 	\item $\gimel(\cs)$ coincides with $\mathbb{O}\mathbb{M}\mathbbm{od}(\cs)$.
 \end{itemize}
 \end{thm}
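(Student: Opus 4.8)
The plan is to deduce this theorem almost immediately from the idempotency of the categorified Isbell adjunction (Thm.~\ref{idempotencyisbellcategorified}) together with the identification $\mathbbm{pt}\circ\gimel\simeq\mathbb{M}\mathbbm{od}$ established in the Proposition above. The conceptual work has already been done: once we know that the ionad of models of $\cs$ is the \emph{same} ionad that the right adjoint $\mathbbm{pt}$ produces out of the classifying topos, the statement becomes a special case of the abstract idempotency result, exactly mirroring how Thm.~\ref{classificatore} was deduced from Thm.~\ref{scottidempotency} on the Scott side.

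Concretely, I would argue as follows. Since $\gimel(\cs)$ is a Grothendieck topos, I may apply Thm.~\ref{idempotencyisbellcategorified} to $\ce:=\gimel(\cs)$. The equivalence of conditions $(1)$ and $(2)$ there reads: $\gimel(\cs)$ has enough points if and only if the counit $\rho\colon\mathbb{O}\mathbbm{pt}(\gimel(\cs))\to\gimel(\cs)$ is an equivalence of categories. Now I substitute the Proposition above, $\mathbbm{pt}(\gimel(\cs))\simeq\mathbb{M}\mathbbm{od}(\cs)$, which after applying $\mathbb{O}$ yields a natural equivalence $\mathbb{O}\mathbbm{pt}(\gimel(\cs))\simeq\mathbb{O}\mathbb{M}\mathbbm{od}(\cs)$. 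Under this identification the Isbell counit $\rho$ becomes precisely the comparison geometric morphism $\mathbb{O}\mathbb{M}\mathbbm{od}(\cs)\to\gimel(\cs)$, so ``$\rho$ is an equivalence'' is exactly the assertion that $\gimel(\cs)$ coincides with $\mathbb{O}\mathbb{M}\mathbbm{od}(\cs)$. Chaining the two biconditionals gives the theorem. Note that $\mathbb{O}\mathbb{M}\mathbbm{od}(\cs)$ is genuinely a topos, since $\mathbb{M}\mathbbm{od}(\cs)\simeq\mathbbm{pt}(\gimel(\cs))$ is a bounded ionad by Con.~\ref{fromtopoitobion}.

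The only genuinely checkable point — and hence the ``hard'' part, although it is really bookkeeping — is the compatibility claim that the equivalence $\mathbbm{pt}\circ\gimel\simeq\mathbb{M}\mathbbm{od}$ transports the Isbell counit $\rho$ to the canonical comparison map witnessing coincidence. I would verify this by unwinding the construction of $\rho$ from the proof of Thm.~\ref{categorified isbell adj thm}: there $\rho$ is obtained as the unique lift of the evaluation $ev^*$ through the cofree forgetful functor of the comonad $ev^*ev_*$. Since the proof of the previous Proposition already identifies $ev^*ev_*$ with $\text{Int}_\cs$ via a density-comonad computation, the two comparison morphisms agree by the universal property of the category of coalgebras. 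Everything else is a formal consequence of the two cited results, which is why this theorem can be stated so slickly despite the ionad of models being a far more delicate object than the bare category of models.
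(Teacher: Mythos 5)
Your proposal is correct and is essentially the paper's own proof: the paper likewise deduces the theorem by applying Thm.~\ref{idempotencyisbellcategorified} to $\gimel(\cs)$ and then substituting the identification $\mathbbm{pt}\circ\gimel\simeq\mathbb{M}\mathbbm{od}$. Your extra verification that this identification transports the Isbell counit $\rho$ to the canonical comparison morphism (via the density-comonad computation) is bookkeeping the paper leaves implicit, so you have if anything been more careful.
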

 \begin{proof}
 By Thm. \ref{idempotencyisbellcategorified}, $\gimel(S)$ has enough points if and only if the counit of the categorified Isbell duality   $\rho: \mathbb{O}\mathbbm{pt}(\gimel)(\cs) \to \cs$ is an equivalence of topoi. Now, since $\mathbbm{pt} \circ \gimel \cong \mathbb{M}\mathbbm{od}$, we obtain the thesis.
 \end{proof}

 \begin{rem} Going back to the case of fields (Rem. \ref{fields}), we now understand that the correct notion of semantics to study, in order to recover the full geometric theory, is the ionad of models $\mathbb{M}\mathbbm{od}(\cs)$ for a lex geometric sketch $\cs$ such that $\gimel(\cs) \simeq \cf$. Indeed, the category of opens of this ionad is $\cf$ itself, because $\cf$ has enough points.
\end{rem}
 \section{Abstract elementary classes and locally decidable topoi} \label{logicaec}

\subsection{A general discussion}
This section is dedicated to the interaction between Abstract elementary classes and the Scott adjunction. Abstract elementary classes were introduced in the 70's by Shelah as a framework to encompass infinitary logics within the language of model theorist. In principle, an abstract elementary class $\ca$ should look like the category of models of a first order infinitary theory whose morphisms are elementary embeddings. The problem of relating abstract elementary classes and accessible categories has been tackled by Lieberman \citep{L2011}, and Beke and Rosický \cite{aec}, and lately has attracted the interest of model theorists such as Vasey, Boney and Grossberg \citep{everybody}.  There are many partial, even very convincing results, in this characterization. Let us recall at least one of them. For us, this characterization will be the definition of abstract elementary class.

 \begin{thm}[\citep{aec}(5.7)]\label{AEC}  A category $\ca$ is equivalent to an abstract elementary class if and only if it is an accessible category with directed colimits, whose morphisms are monomorphisms and which admits a full with respect to isomorphisms and nearly full embedding $U$ into a finitely accessible category preserving directed colimits and monomorphisms.
 \end{thm}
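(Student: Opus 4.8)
The statement is an equivalence, so the plan is to prove the two implications separately, matching each categorical hypothesis on $U$ to one of the structural axioms of an abstract elementary class. Before starting I would fix the dictionary that makes the whole argument run: \emph{nearly full} should be read as the categorical incarnation of Shelah's \emph{coherence axiom} (given $f\colon a\to c$ and $h\colon b\to c$ in $\ca$ and a $\ck$-morphism $g\colon Ua\to Ub$ with $Uh\circ g=Uf$, there is a necessarily unique lift $\bar g\colon a\to b$ with $U\bar g=g$ and $h\bar g=f$, uniqueness coming from faithfulness of $U$ together with $Uh$ being monic), whereas \emph{full with respect to isomorphisms} encodes closure of the class under isomorphism together with iso-invariance of the strong-substructure relation. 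Fixing this dictionary is what converts the model-theoretic axioms into diagram-chasing conditions on $U$, and it is the conceptual core of the proof.

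For the direction ``AEC $\Rightarrow$ categorical data'' I would start from the class presented as structures for a vocabulary $\tau$ with strong embeddings as morphisms. That all morphisms are monomorphisms is immediate, since strong embeddings are injective homomorphisms. Accessibility together with directed colimits is the substantial input: unions of directed systems of strong embeddings again lie in the class and are computed as in $\Str(\tau)$ by the Tarski--Vaught chain axioms, and Shelah's presentation theorem (expand $\tau$ to $\tau'$, choose a first-order $T'$ and a family of omitted types $\Gamma$) exhibits the class as an accessible category whose presentability rank is governed by the Löwenheim--Skolem number. I would then take $U$ to be the forgetful embedding into a finitely accessible structure category $\Emb(\Sigma)$ attached to the presentation (arranged, by a standard sorted encoding, to be genuinely finitely accessible); preservation of directed colimits is once more the chain axiom, preservation of monomorphisms is automatic, iso-fullness is iso-invariance of the class, and nearly fullness is coherence read through the dictionary above.

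For the converse ``categorical data $\Rightarrow$ AEC'' I would first replace the abstract target $\ck$ by a concrete one: a finitely accessible category admits a full, directed-colimit-preserving embedding into some $\Emb(\Sigma)$, built from the canonical signature having a sort for each object of $\ck_\omega$ and a function symbol for each morphism of $\ck_\omega$. Composing with $U$ realizes every object of $\ca$ as a $\Sigma$-structure and every morphism as an embedding, so I can \emph{define} the class to be the (replete) image and declare $a\prec b$ to hold exactly when the underlying $\Sigma$-embedding arises from a morphism of $\ca$. I would then verify the axioms one at a time: that $\prec$ refines substructure and is transitive from functoriality of $U$; the chain axioms from directed colimits in $\Emb(\Sigma)$ being unions and preserved by $U$; coherence from nearly fullness; closure under isomorphism and iso-invariance of $\prec$ from iso-fullness; and the Löwenheim--Skolem axiom from accessibility, using that every object of $\ca$ is a $\lambda$-directed colimit of $\lambda$-presentable subobjects whose associated structures have uniformly bounded cardinality.

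The hard part will be the accessibility of the class in the forward direction, that is, the import of Shelah's presentation theorem and the omitting-types bookkeeping that turns the Löwenheim--Skolem number into a presentability rank; this is where the genuine model theory lives, the remainder being a faithful translation between the AEC axioms and the properties of $U$. A secondary obstacle, in the converse direction, is establishing the Löwenheim--Skolem axiom with the correct cardinal bound: one must ensure that the structures assigned to $\lambda$-presentable objects of $\ca$ have size bounded by a fixed cardinal, which forces some care in the choice of $\Sigma$ and in relating presentability rank to structure cardinality.
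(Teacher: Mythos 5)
First, a point of orientation: the thesis does not prove this statement at all --- it is quoted from Beke and Rosick\'y \cite{aec}[5.7] and explicitly adopted as the \emph{definition} of an abstract elementary class, so your attempt can only be compared with the proof in the cited source, not with anything in the paper itself.

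Measured against that source, your architecture is essentially the right one, and your ``dictionary'' is exactly the intended reading: nearly-fullness is the coherence axiom (the thesis even remarks this after the definition), iso-fullness encodes isomorphism-closure of the class and iso-invariance of $\prec$, and preservation of directed colimits is the Tarski--Vaught chain axioms. Your converse direction matches theirs: reduce the abstract finitely accessible target to a concrete $\Emb(\Sigma)$ via the canonical many-sorted signature built from $\ck_\omega$ (sorts the objects, unary operations the morphisms; for a purely functional signature injective homomorphisms are embeddings, so monos land correctly), take the replete image, read $\prec$ off the $\ca$-morphisms, and verify the axioms one by one; your flagged concern about uniformly bounding the cardinality of the structures attached to $\lambda$-presentable objects is the genuine one, resolved by the standard uniformization fact that accessible functors send $\lambda$-presentables to $\mu$-presentables for some larger $\mu$.

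The one misrouted step is where you locate ``the genuine model theory'': accessibility of the AEC in the forward direction via Shelah's presentation theorem. The presentation theorem exhibits the \emph{class of models} as reducts of models of a first-order theory omitting types; the expansions are non-canonical and the statement is not functorial, so it does not directly exhibit the \emph{category} of strong embeddings as accessible --- passing from accessibility of the expanded category (itself nontrivial) back through the non-faithful reduct requires extra work that your sketch elides. Beke and Rosick\'y avoid this entirely with a direct argument from the axioms: for $\mu=\mathrm{LS}(\ck)$, the strong submodels of size at most $\mu$ of any $M$ form a $\mu^+$-directed diagram (a union of at most $\mu$ many of them lies inside some small strong submodel by L\"owenheim--Skolem, and the inclusions among them are strong by coherence) whose union is $M$; hence the class is $\mu^+$-accessible with directed colimits. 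Relatedly, no ``sorted encoding'' is needed for the target in this direction: since AEC vocabularies are finitary, $\Emb(\tau)$ for the original vocabulary is already finitely accessible, with directed colimits computed as unions, and the inclusion of $\ck$ into it has all the stated properties. With that substitution your plan becomes a faithful reconstruction of the cited proof.
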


 \begin{defn} 
 A functor $\mathsf{U} : \ca \to \cb$ is nearly full if, given a commutative diagram,
 \begin{center}
 \begin{tikzcd}
\mathsf{U}(a) \arrow[rrd, "\mathsf{U}(f)" description] \arrow[dd, "h" description] &  &  \\
 &  & \mathsf{U}(c) \\
\mathsf{U}(b) \arrow[rru, "\mathsf{U}(g)" description] &  & 
\end{tikzcd}
 \end{center}
 in $\cb$, there is a map $\bar{h}$ in $\ca$ such that $h = \mathsf{U}(\bar{h})$ and $g\bar{h}=f$. Observe that when $\mathsf{U}$ is faithful such a filling has to be unique.
 \end{defn}
 
 \begin{rem}
In some reference the notion of nearly-full functor was called coherent, referring directly to the \textit{coherence axiom} of AECs that it incarnates. The word coherent is overloaded in category theory, and thus we do not adopt this terminology, but nowadays it is getting more and more common. 
\end{rem}

 \begin{exa}[$\mathsf{pt}(\ce)$ is likely to be an AEC]\label{esempiobase}
 Let $\ce$ be a Grothendieck topos and $f^*: \Set^C \leftrightarrows \ce :f_*$ a presentation of $\ce$. Applying the functor $\mathsf{pt}$ we get a fully faithful functor  \[\mathsf{pt}(\ce) \stackrel{\ref{embeddings}}{\to} \mathsf{pt}(\Set^C) \stackrel{\ref{trivial}}{\cong} \mathsf{Ind}(C) \]
 into a finitely accessibly category. Thus when every map in $\mathsf{pt}(\ce)$ is a monomorphism we obtain that $\mathsf{pt}(\ce)$ is an AEC via Thm. \ref{AEC}. We will see in the next section (Thm. \ref{LDCAECs}) that this happens when $\ce$ is locally decidable; thus the category of points of a locally decidable topos is always an AEC.
 \end{exa}

 \begin{exa}[$\eta_\ca$ behaves nicely on AECs]
When $\ca$ is an abstract elementary class, the unit of the Scott adjunction $\eta_\ca: \ca \to \mathsf{pt}\mathsf{S}(\ca)$ is faithful and iso-full. This follows directly from Prop. \ref{ff}.
 \end{exa}

 \begin{rem}
 Even if this is the sharpest (available)  categorical characterization of AECs it is not hard to see how unsatisfactory it is. Among the most evident problems, one can see that it is hard to provide a categorical \textit{understanding} of nearly full and full with respect to isomorphisms. Of course, an other problem is that the list of requirements is pretty long and very hard to check: \textit{when does such a $U$ exist?}
 \end{rem}

It is very hard to understand when such a pseudo monomorphism exists. That is why it is very useful to have a testing lemma for its existence.

\begin{thm}[Testing lemma]
Let $\ca$ be an object in $\text{Acc}_{\omega}$ where every morphism is a monomorphism. If $\eta_\ca$ is a nearly-full pseudo monomorphism, then $\ca$ is an AEC.
\end{thm}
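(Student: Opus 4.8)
The plan is to verify the characterization of abstract elementary classes given in Thm.~\ref{AEC}. Since $\ca$ already lies in $\text{Acc}_\omega$ and all its morphisms are monic, everything reduces to exhibiting one functor $U$ from $\ca$ into a finitely accessible category that is faithful, full with respect to isomorphisms, nearly full, and preserves directed colimits and monomorphisms. The natural candidate combines the unit of the Scott adjunction with the comparison functor of Ex.~\ref{esempiobase}. Concretely, since $\mathsf{S}(\ca)$ is a Grothendieck topos (Rem.~\ref{scottconstructionwelldefined}), I would fix a presentation $i^* : \Set^{C} \leftrightarrows \mathsf{S}(\ca) : i_*$ with $C$ small, put $K := \mathsf{pt}(\Set^{C})$ — finitely accessible and equivalent to $\mathsf{Ind}(C)$ by Rem.~\ref{trivial} — and define
\[ U := \mathsf{pt}(i) \circ \eta_\ca : \ca \to K, \]
where $\mathsf{pt}(i) : \mathsf{pt}\mathsf{S}(\ca) \to \mathsf{pt}(\Set^{C})$ is fully faithful exactly as in Ex.~\ref{esempiobase} (via Thm.~\ref{embeddings}).

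I would first dispatch the clauses that are formal consequences of the factorization $U = \mathsf{pt}(i)\circ\eta_\ca$ together with the hypothesis that $\eta_\ca$ is a pseudomonomorphism (hence faithful and iso-full) which is moreover nearly full. \emph{Faithfulness} of $U$ is immediate, as a composite of a faithful and a fully faithful functor. \emph{Preservation of directed colimits} holds because $\eta_\ca$ is a $1$-cell of $\text{Acc}_\omega$ and $\mathsf{pt}(i)$ preserves directed colimits for every geometric morphism (Rem.~\ref{pt}). For \emph{iso-fullness}, an isomorphism $\psi : Ua \to Ua'$ is reflected by the fully faithful $\mathsf{pt}(i)$ to an isomorphism $\tilde\psi : \eta_\ca a \to \eta_\ca a'$, which by iso-fullness of $\eta_\ca$ equals $\eta_\ca(\chi)$ for an isomorphism $\chi : a \to a'$, so that $U(\chi)=\psi$. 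For \emph{near fullness}, given a commutative triangle $U(g)\circ h = U(f)$ in $K$, full faithfulness of $\mathsf{pt}(i)$ yields a unique $h' : \eta_\ca a \to \eta_\ca b$ with $\mathsf{pt}(i)(h')=h$ and, by faithfulness, $\eta_\ca(g)\circ h' = \eta_\ca(f)$; near fullness of $\eta_\ca$ then produces $\bar h : a \to b$ with $\eta_\ca(\bar h)=h'$ and $g\bar h = f$, whence $U(\bar h)=h$. Thus all four of these properties descend from $\eta_\ca$ through a fully faithful functor.

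The main obstacle is preservation of monomorphisms, and this is the only clause in which the standing hypothesis that every morphism of $\ca$ is monic must be used essentially. Since $K\simeq\mathsf{Ind}(C)$ is a full subcategory of $\Set^{C}$ closed under finite limits and directed colimits, its monomorphisms are precisely the pointwise injections; as $U$ preserves directed colimits, it then suffices to test injectivity of $U(m)$, for $m$ a morphism of $\ca$, after evaluation at each object of $C$. The strategy is to exploit two elementary principles — that covariant hom-functors preserve monomorphisms, and that directed colimits of injections in $\Set$ are injections — in combination with the explicit description of $\mathsf{S}(\ca)$ and of its generators as reflections of representables supplied by Rem.~\ref{f_*} and Rem.~\ref{generatorscotttopos}. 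The genuinely delicate verification, which I expect to be the crux of the whole argument, is that passing to the lex reflection defining $\mathsf{S}(\ca)$ does not destroy the injectivity coming from the monos of $\ca$: this is exactly the mono-preservation implicitly invoked in Ex.~\ref{esempiobase} for the fully faithful comparison $\mathsf{pt}(\ce)\to\mathsf{Ind}(C)$, and the assumption that all morphisms of $\ca$ are monic is what makes it available. Once this point is secured, $U$ satisfies every clause of Thm.~\ref{AEC} and $\ca$ is an AEC.
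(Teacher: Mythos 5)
Your construction coincides exactly with the paper's: the paper's proof also fixes a presentation $f^*: \Set^C \leftrightarrows \mathsf{S}(\ca) : f_*$ and considers the composite $\ca \stackrel{\eta_\ca}{\to} \mathsf{pt}\mathsf{S}(\ca) \to \mathsf{pt}(\Set^C) \simeq \mathsf{Ind}(C)$, citing Prop.~\ref{embeddings}, Rem.~\ref{trivial} and Thm.~\ref{AEC}. Your transfers of faithfulness, iso-fullness and near-fullness through the fully faithful $\mathsf{pt}(i)$ are correct, and are in fact carried out in more detail than in the paper, whose proof merely asserts that the composite is ``faithful and nearly full\dots preserving directed colimits'' before invoking Thm.~\ref{AEC}.

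The genuine issue is the clause you defer: preservation of monomorphisms. As a self-contained argument your proposal is incomplete there, since you explicitly leave the ``crux'' unsecured, and the route you sketch would not go through as stated. Ex.~\ref{esempiobase} contains no mono-preservation argument to borrow --- it, too, applies Thm.~\ref{AEC} without addressing that clause --- and full faithfulness of $\mathsf{pt}(i)$ is of no help, because fully faithful functors do not preserve monomorphisms in general (a mono in a full subcategory need not be monic in the ambient category). Likewise, $\mathsf{Ind}(C)$ is closed under finite limits in the relevant presheaf category only when $C$ has finite colimits, so that part of your description needs repair; what \emph{is} true, and salvages a piece of your plan, is that monomorphisms in $\mathsf{Ind}(C)$ are exactly the pointwise injections, since a map $g$ there is monic iff $\mathsf{Ind}(C)(p,g)$ is injective for every finitely presentable $p$, and such $p$ are retracts of representables. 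That said, measured against the paper you have lost nothing: the paper's own three-line proof passes over mono-preservation (and, for that matter, iso-fullness) in complete silence, so the step you honestly isolate as missing is precisely the step the paper does not supply either; closing it would require an explicit computation of $\mathsf{pt}(i)\circ\eta_\ca$ on arrows that neither you nor the paper provides.
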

\begin{proof}
The proof is relatively easy, choose a presentation $f^*: \Set^C \leftrightarrows \mathsf{S}(\ca): f_*$ of $\mathsf{S}(\ca)$. Now in

\[\ca \stackrel{\eta_\ca}{\to} \textsf{pt}\textsf{S}(\ca) \stackrel{\ref{embeddings}}{\to} \textsf{pt}(\Set^{C}) \stackrel{\ref{trivial}}{\cong} \mathsf{Ind}(C),\]

the composition is a faithful and nearly full functor preserving directed colimits from an accessible category to a finitely accessible category, and thus $\ca$ is an AEC because of Thm. \ref{AEC}.
\end{proof}

\subsection{Locally decidable topoi and AECs}

The main result of this subsection relates locally decidable topoi to AECs. The full subcategory of $\text{Acc}_\omega$ whose objects are AECs will be indicated by $\text{AECs}$. As in the previous chapters, let us give the precise statement and then discuss it in better detail.

\begin{thm}\label{LDCAECs} The Scott adjunction restricts to locally decidable topoi and AECs.

\[\mathsf{S}: \text{AECs} \leftrightarrows \text{LDTopoi}: \mathsf{pt}\]
\end{thm}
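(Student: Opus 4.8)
The plan is to check that the Scott biadjunction of Thm.~\ref{scottadj} carries each of the two full sub-$2$-categories into the other, after which the restriction is formal: once we know that $\mathsf{S}$ sends AECs to locally decidable topoi and $\mathsf{pt}$ sends locally decidable topoi to AECs, then for $\ca$ an AEC and $\ce$ locally decidable the equivalence $\text{Topoi}(\mathsf{S}\ca,\ce)\simeq\text{Acc}_\omega(\ca,\mathsf{pt}\ce)$ restricts verbatim, since all four objects it mentions already lie in the subcategories and these are full. So the entire content is the two containments
$$\mathsf{S}(\text{AECs})\subseteq\text{LDTopoi},\qquad \mathsf{pt}(\text{LDTopoi})\subseteq\text{AECs}.$$

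For the second containment I would argue, as anticipated in Example~\ref{esempiobase}, that it suffices to prove that every morphism of $\mathsf{pt}(\ce)$ is a monomorphism when $\ce$ is locally decidable; the conclusion then follows from Thm.~\ref{AEC} through the fully faithful, directed-colimit-preserving embedding $\mathsf{pt}(\ce)\to\mathsf{Ind}(C)$ of \ref{embeddings} and \ref{trivial}. To produce the monos, recall that a locally decidable $\ce$ has a dense generating family of decidable objects. The inverse image $p^*$ of any point preserves finite limits and finite colimits, hence preserves complemented subobjects; in particular it sends the complemented diagonal of a decidable $X$ to the genuine diagonal of $p^*(X)$ in $\Set$ together with its complement, the apartness. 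Given a morphism of points $\alpha\colon p\to q$ and a decidable $X$, applying naturality of $\alpha$ to the apartness subobject of $X\times X$ forces the component $\alpha_X$ to be injective: two elements of $p^*(X)$ identified by $\alpha_X$ cannot lie in the apartness, hence are equal. Componentwise injectivity on a dense generating family makes $\alpha$ monic in $\mathsf{pt}(\ce)$, since a natural transformation between cocontinuous functors is determined by its restriction to a dense family.

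For the first containment I would first characterise the decidable objects of $\mathsf{S}(\ca)$. Since finite limits in $\mathsf{S}(\ca)$ are computed pointwise in $\Set$, where every object is decidable, an $F\in\mathsf{S}(\ca)$ is decidable precisely when the pointwise apartness $a\mapsto\{(x,y)\in F(a)\times F(a): x\neq y\}$ again preserves directed colimits. Using that in an AEC every transition map of a directed system is a monomorphism, a short computation shows this holds exactly when $F$ sends monomorphisms to injections. The AEC hypothesis supplies a faithful, directed-colimit- and mono-preserving embedding $U\colon\ca\to\ck$ into a finitely accessible $\ck$; for each $c\in\ck_\omega$ the functor $\ck(c,U-)$ preserves directed colimits and monomorphisms, hence is a decidable object of $\mathsf{S}(\ca)$. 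These are exactly the objects obtained by pulling back the representable generators of $\mathsf{S}(\ck)\simeq\Set^{\ck_\omega}$ along the inverse image of $\mathsf{S}(U)$, and the claim to be established is that they form a generating family, witnessing local decidability of $\mathsf{S}(\ca)$.

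The main obstacle is precisely this generation step: one must show that the decidable objects separate $\mathsf{S}(\ca)$. This is delicate because $U$ is only faithful and nearly full, not a pseudo-epimorphism, so Prop.~\ref{surjections} does not apply directly to $U$ to make $\mathsf{S}(U)$ a surjection. I therefore expect to exploit instead the canonical geometric surjection $\mathsf{S}(\mathcal{L}^\kappa_\ca)$ from a presheaf topos produced in Thm.~\ref{enoughpoints}, together with the monomorphism structure carried by the AEC, to reduce separation of $\mathsf{S}(\ca)$ by the mono-preserving functors to separation inside the presheaf topos. Dually, verifying the density of the decidable generating family inside a locally decidable topos is the analogous technical point underlying the second containment.
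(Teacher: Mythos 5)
Your formal restriction argument and your second containment are fine, but note that the route you take for $\mathsf{pt}(\text{LDTopoi})\subseteq\text{AECs}$ is genuinely different from the paper's. You extract monomorphy of the maps of $\mathsf{pt}(\ce)$ directly from the apartness subobjects of a dense generating family of decidables (which exists: subobjects of decidables are decidable, so covering a generating set by decidables yields a decidable generating family, and any small generating family in a Grothendieck topos is dense). The paper instead invokes the characterization of locally decidable topoi by a localic geometric morphism $\ce\to\cb$ into a Boolean topos: every map in $\mathsf{pt}(\cb)$ is a monomorphism by \cite{elephant2}[D1.2.10], $\mathsf{pt}$ sends localic morphisms to faithful functors (Prop.~\ref{localicmaps}), and faithful functors reflect monomorphisms. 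Your version is self-contained at the level of points; the paper's is shorter because Prop.~\ref{localicmaps} is already in its toolbox. Both then conclude via Exa.~\ref{esempiobase} exactly as you say.

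The genuine gap is where you yourself flag it: the containment $\mathsf{S}(\text{AECs})\subseteq\text{LDTopoi}$ is never established. Your characterization of the decidable objects of $\mathsf{S}(\ca)$ is correct, and the functors $\ck(c,U-)$ with $c\in\ck_\omega$ are indeed decidable, but the separation step is left as an expectation, and the fallback you sketch does not close it: $\mathsf{S}(U)$ is not a geometric surjection (as you note, $U$ is not a pseudo-epimorphism), and local decidability is not known to descend along the surjection $\mathsf{S}(\mathcal{L}^\kappa_\ca)$ of Thm.~\ref{enoughpoints} --- direct images do not preserve decidability, and nothing forces the decidables you produce to separate $\mathsf{S}(\ca)$. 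The missing idea is to abandon the hunt for decidable generators altogether and use clause (2) of the characterization \cite{elephant2}[C5.4.4]: a topos is locally decidable as soon as it admits \emph{some} site of presentation in which every map is epic. By Rem.~\ref{generatorscotttopos}, $\ca_\kappa^\circ$ is a site of definition of $\mathsf{S}(\ca)$, and since every map of an AEC is a monomorphism, every map of $\ca_\kappa^\circ$ is an epimorphism; this finishes the first containment in one line, which is precisely the paper's proof. So your proposal needs either this site-theoretic input or a genuinely new argument for the generation step before it constitutes a proof of Thm.~\ref{LDCAECs}.
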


\subsubsection{Locally decidable topoi}

The definition of locally decidable topos will appear obscure at first sight.

\begin{defn}[Decidable object]
An object $e$ in a topos $\ce$ is decidable if the diagonal map $e \to e \times e$ is a complemented subobject.
\end{defn}

\begin{defn}[Locally decidable topos]
An object $e$ in a topos $\ce$ is called locally decidable iff there is an epimorphism $e' \twoheadrightarrow e$ such that $e'$ is a decidable object.  $\ce$ is locally decidable if every object is locally decidable.
\end{defn}

In order to make the definition above clear we should really define decidable objects and discuss their meaning. This is carried out in the literature and it is not our intention to recall the whole theory of locally decidable topoi. Let us instead give the following characterization, that we may take as a definition.

\begin{thm}[{\citep{elephant2}[C5.4.4]}, Characterization of loc. dec. topoi] The following are equivalent:
\begin{enumerate}
	\item $\ce$ is locally decidable;
	\item there exists a site $(C,J)$ of presentation where every map is epic;
	\item there exists a localic geometric morphism into a Boolean topos.
\end{enumerate}
\end{thm}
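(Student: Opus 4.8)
The three conditions are classical (this is essentially Johnstone's \citep{elephant2}[C5.4.4]); here is the route I would take, organised around the single structural fact that controls decidability. First I would record the two ingredients. \emph{Lemma A:} in any topos the decidable objects are closed under subobjects, finite products and arbitrary coproducts, and every inverse image functor $p^*$ preserves them --- being left exact and cocontinuous, $p^*$ preserves the decomposition $e\times e\cong \Delta_e\sqcup\neg\Delta_e$ that witnesses decidability, and $p^*\Delta_e=\Delta_{p^*e}$. \emph{Lemma B:} a topos is Boolean if and only if all its objects are decidable; I only need the easy half, that every object of a Boolean topos is decidable, since every monomorphism (in particular each diagonal) is complemented there.

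Both converging implications into (1) are then clean, and I would prove them first. For $(3)\Rightarrow(1)$: let $p:\ce\to\cb$ be localic with $\cb$ Boolean. By Lemma B every object of $\cb$ is decidable, so by Lemma A each $p^*b$ and each of its subobjects is decidable in $\ce$; since $p$ is localic these subobjects form a generating family, so every $e\in\ce$ admits an epimorphism from a coproduct of them, and that coproduct is again decidable by Lemma A --- hence $\ce$ is locally decidable. For $(2)\Rightarrow(1)$ I would run the computation that makes the whole theorem tick: if $\ce=\mathsf{Sh}(C,J)$ and every morphism of $C$ is an epimorphism in $C$, then each representable $C(-,c)$ is decidable in the presheaf topos $\Set^{C^\circ}$. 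Indeed the subfunctor $N\subseteq C(-,c)\times C(-,c)$ given by $N(d)=\{(f,g): f\neq g\}$ is well defined precisely because precomposition with any arrow of $C$ is injective (i.e. every arrow is epic), and $N$ is the complement of the diagonal. Sheafification $a:\Set^{C^\circ}\to\ce$ is left exact and cocontinuous, so by Lemma A it carries the $C(-,c)$ to decidable objects; as these generate $\ce$, local decidability follows exactly as before.

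The real content is the passage \emph{out} of (1). Given $\ce$ locally decidable, I would first extract (using that $\ce$ is a Grothendieck topos) a small full generating subcategory $C\subseteq\ce$ whose objects are all decidable and which is closed under finite limits and subobjects, so that $\ce\simeq\mathsf{Sh}(C,J_{\mathrm{can}})$. To reach (2) one must refine this choice so that every morphism of the site is epic --- the reverse of the computation above --- and to reach (3) one must exhibit a Boolean topos with a localic morphism from $\ce$, the natural candidate being the topos cut out by the decidable objects. \textbf{This is the main obstacle}: decidability of the objects of $C$ does \emph{not} by itself make the morphisms of $C$ epic (in $\Set$, which is Boolean, the maps $\emptyset\to 1$ and $1\sqcup 1\to 1$ are between decidable objects yet are not epic), so the generating family has to be pruned and reorganised --- this is the substance of Johnstone's construction in \citep{elephant2}[C5.4] --- before the epic-map site, and from it the Boolean base (via the $(2)\Rightarrow(1)$ computation read backwards together with the canonical topology on that site), can be produced. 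I would therefore establish $(2)\Rightarrow(1)$ and $(3)\Rightarrow(1)$ as above and then complete the equivalence with $(1)\Rightarrow(2)$ and $(2)\Rightarrow(3)$, spending the effort on building the epic-map site witnessing $(1)\Rightarrow(2)$ and reading off the Boolean base for $(2)\Rightarrow(3)$.
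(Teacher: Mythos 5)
The first thing to say is that the paper does not prove this statement at all: it is imported verbatim from \citep{elephant2}[C5.4.4] (the result is attributed to Freyd), and the thesis explicitly offers it as a characterization ``that we may take as a definition.'' So there is no in-paper argument to measure your proposal against; what the thesis actually uses downstream are exactly the two implications you chose to prove. In the proof of Thm.~\ref{LDCAECs}, condition (2) is invoked as the criterion for local decidability of $\mathsf{S}(\ca)$ (``it's enough to prove that $\mathsf{S}\ca$ has a site where every map is an epimorphism''), which is your $(2)\Rightarrow(1)$; and the localic morphism into a Boolean topos from (3) is what feeds Prop.~\ref{localicmaps} to make every map of $\mathsf{pt}(\ce)$ monic. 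Your decision to spell out precisely $(2)\Rightarrow(1)$ and $(3)\Rightarrow(1)$ therefore aligns with the only content of C5.4.4 the thesis actually exploits, and both arguments are correct: Lemmas A and B are standard and used soundly; in $(3)\Rightarrow(1)$ you can even drop the coproducts, since ``localic'' gives each $e$ an epimorphism from a single decidable subobject of some $p^*b$; and your pointwise complement $N(d)=\{(f,g):f\neq g\}$ of the diagonal of a representable, well defined exactly when all maps of $C$ are epic, is the precise mechanism by which Thm.~\ref{LDCAECs} converts ``every map of $\ca$ is monic'' into ``every map of $\ca_\kappa^{\circ}$ is epic.''

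Two caveats. A small factual slip: $1\sqcup 1\to 1$ \emph{is} epic in $\Set$, so of your two witnesses only $\emptyset\to 1$ actually shows that maps between decidable objects need not be epic (the point itself stands). More substantively, as you yourself concede, $(1)\Rightarrow(2)$ and $(2)\Rightarrow(3)$ are only sketched: you correctly identify that a generating family of decidable objects does not yet have epic morphisms and that the site must be reorganized, but you then defer that construction to \citep{elephant2}[C5.4] rather than carry it out. So, as a standalone argument, your proposal establishes only that (2) and (3) each imply (1); the cycle is closed by citation, not by proof. Since the paper itself treats the whole theorem as a black box, this is a defensible stopping point, but it should be presented honestly as such: the genuinely hard content of C5.4.4 --- extracting the epic-map site and the Boolean base from local decidability --- remains Johnstone's, in your write-up as in the thesis.
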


\begin{rem}
Recall that a localic topos $\ce$ is a topos of sheaves over a locale. The theorem above (which is due to Freyd \cite{Aspects}) shows that a locally decidable topos is still a topos of sheaves over a locale, but the locale is not in $\Set$. It is instead in some boolean topos. A boolean topos is the closest kind of topos we can think of to the category of sets itself. For more details, we redirect the reader to the Background chapter, where we give references to the literature.
\end{rem}

\subsubsection{Proof of Thm. \ref{LDCAECs}}

\begin{proof}[Proof of Thm. \ref{LDCAECs}]
\begin{itemize}
	\item[]
	\item Let $\ce$ be a locally decidable topos. By Exa. \ref{esempiobase}, it is enough to show that every map in $\mathsf{pt}(\ce)$ is a monomorphism. This is more or less a folklore result, let us give the shortest path to it given our technology. Recall that one of the possible characterization of a locally decidable topos is that it has a localic geometric morphism into a boolean topos $\ce \to \cb$. If $\cb$ is a boolean topos, then every map in $\mathsf{pt}(\cg)$ is a monomorphism \cite{elephant2}[D1.2.10, last paragraph]. Now, the induce morphism below,

\[ \textsf{pt}(\ce) \to \textsf{pt}(\cb),\]
is faithful by Prop. \ref{localicmaps}. Thus every map in $\textsf{pt}\ce$ must be a monomorphism.
	\item Let's show that for an accessible category with directed colimits $\ca$, its Scott topos is locally decidable. By \citep{elephant2}[C5.4.4], it's enough to prove that $\mathsf{S}\ca$ has a site where every map is an epimorphism. Using Rem. \ref{generatorscotttopos}, $\ca_\kappa^{\circ}$ is a site of definition of $\mathsf{S}\ca$, and since every map in $\ca$ is a monomorphism, every map in $\ca_\kappa^{\circ}$ is epic.
	\end{itemize}
\end{proof}

The previous theorem admits an even sharper version.

\begin{thm} Let $\ca$ be an accessible category with directed colimits and a faithful functor $\mathsf{U}: \ca \to \Set$ preserving directed colimits. If $\mathsf{S}\ca$ is locally decidable, then  every map in $\ca$ is a monomorphism.
\end{thm}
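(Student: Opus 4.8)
The plan is to reduce the statement to two facts already secured in the text: that the points of a locally decidable topos form a category in which every morphism is monic, and that the unit $\eta_\ca \colon \ca \to \mathsf{pt}\mathsf{S}(\ca)$ is faithful as soon as $\ca$ carries a faithful directed-colimit-preserving functor to $\Set$. The conclusion will then drop out of the elementary observation that faithful functors reflect monomorphisms.

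First I would record that $\mathsf{S}(\ca)$ is a Grothendieck topos (Rem. \ref{scottconstructionwelldefined}) which, by hypothesis, is locally decidable. I can then re-run verbatim the argument in the first item of the proof of Thm. \ref{LDCAECs}, now applied to the specific topos $\ce = \mathsf{S}(\ca)$: by the characterization of locally decidable topoi there is a localic geometric morphism $\mathsf{S}(\ca) \to \cb$ with $\cb$ Boolean; since in a Boolean topos every morphism of points is a monomorphism (\cite{elephant2}[D1.2.10]) and $\mathsf{pt}$ sends localic morphisms to faithful functors (Prop. \ref{localicmaps}), the induced functor $\mathsf{pt}(\mathsf{S}\ca) \to \mathsf{pt}(\cb)$ is faithful, and hence every morphism in $\mathsf{pt}(\mathsf{S}\ca)$ is a monomorphism.

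Next I would feed in the hypothesis on $\mathsf{U}$. Since $\mathsf{U} \colon \ca \to \Set$ is faithful and preserves directed colimits, these are exactly the hypotheses of Prop. \ref{ff}, so the unit $\eta_\ca \colon \ca \to \mathsf{pt}\mathsf{S}(\ca)$ is faithful. Finally I would use that a faithful functor reflects monomorphisms: given $f \colon a \to b$ in $\ca$ and a parallel pair $g, h$ with $fg = fh$, functoriality of $\eta_\ca$ together with the monicity of $\eta_\ca(f)$ in $\mathsf{pt}\mathsf{S}(\ca)$ (from the previous step) forces $\eta_\ca(g) = \eta_\ca(h)$, whence $g = h$ by faithfulness of $\eta_\ca$. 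Therefore $f$ is a monomorphism, which is exactly the claim.

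I do not anticipate a genuine obstacle; the whole content is the correct assembly of the two earlier results, so the proof should be short. The only points requiring a moment of care are checking that the hypotheses of Prop. \ref{ff} are met on the nose (the given $\mathsf{U}$ is precisely a faithful functor to $\Set$ preserving directed colimits) and that the locally-decidable argument of Thm. \ref{LDCAECs} is instantiated at $\mathsf{S}(\ca)$ rather than at an abstract topos — both of which are immediate.
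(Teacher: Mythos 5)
Your proof is correct and follows essentially the same route as the paper's: both rest on the localic morphism $\mathsf{S}(\ca)\to\cb$ into a Boolean topos from the characterization of locally decidable topoi, faithfulness of $\mathsf{pt}$ on localic morphisms (Prop.~\ref{localicmaps}), faithfulness of $\eta_\ca$ via Prop.~\ref{ff}, and the fact that faithful functors reflect monomorphisms. The only (immaterial) difference is that you reflect monomorphisms in two stages, first into $\mathsf{pt}\mathsf{S}(\ca)$ and then into $\ca$, whereas the paper composes $\eta_\ca$ with $\mathsf{pt}$ of the localic morphism into a single faithful functor and reflects once.
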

\begin{proof}

\begin{enumerate}
	\item[]
\item[Step 1]  If $\cg$ is a boolean topos, then every map in $\mathsf{pt}(\cg)$ is a monomorphism \cite{elephant2}[D1.2.10, last paragraph].
\item[Step 2] Recall that one of the possible characterization of a locally decidable topos is that it has a localic geometric morphism into a boolean topos $\mathsf{S}(\ca) \to \cg$.
\item[Step 3]  In the following diagram
\[\ca \stackrel{\eta_\ca}{\to} \textsf{pt}\textsf{S}(\ca) \stackrel{\ref{localicmaps}}{\to} \textsf{pt}(\cg),\]
the composition is a faithful functor by Prop. \ref{ff}. Thus $\ca$ has a faithful functor into a category where every map is a monomorphism. As a result every map in $\ca$ is a monomorphism.
\end{enumerate}
\end{proof}

\begin{rem}The following corollary gives a complete characterization of those continuous categories that are abstract elementary classes. Recall that continuous categories were defined in \cite{cont} in analogy with continuous posets in order to study exponentiable topoi. Among the possible characterizations, a category is continuous if and only if it is a reflective subcategory of a finitely accessible category whose right adjoint preserve directed colimits. We discussed continuous categories in the Promenade \ref{trivial} and \ref{trivial1}.
\end{rem}
 
\begin{cor}[Continuous categories and AECs]
Let $\ca$ be a continuous category. The following are equivalent:
\begin{enumerate}
	\item $\ca$ is an AEC.
	\item Every map in $\ca$ is a monomorphism.
	\item $\mathsf{S}(\ca)$ is locally decidable.
	\end{enumerate}
\end{cor}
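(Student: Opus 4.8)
The plan is to prove the two equivalences $(1)\Leftrightarrow(2)$ and $(2)\Leftrightarrow(3)$, exploiting that most of the individual implications are already available from the results proved above, so that the entire argument reduces to a single new ingredient: that a continuous category admits a faithful functor into $\Set$ preserving directed colimits.

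First I would dispatch $(1)\Leftrightarrow(2)$. The implication $(1)\Rightarrow(2)$ is immediate, since by Thm.~\ref{AEC} every AEC has all of its morphisms monomorphic. For $(2)\Rightarrow(1)$ I would use the characterization of continuity recalled just before the statement: $\ca$ is a reflective subcategory of a finitely accessible category $\ck$ whose inclusion $i\colon\ca\to\ck$ preserves directed colimits. This $i$ is fully faithful, hence both full with respect to isomorphisms and nearly full (given a triangle $\mathsf{U}(g)\circ h=\mathsf{U}(f)$, fullness lifts $h$ to some $\bar h$ and faithfulness forces $g\bar h=f$). Being a right adjoint, $i$ preserves all limits, in particular pullbacks, and therefore monomorphisms. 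Thus, assuming $(2)$, the functor $i$ meets every requirement of Thm.~\ref{AEC} and $\ca$ is an AEC.

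Next I would prove $(2)\Leftrightarrow(3)$. The implication $(2)\Rightarrow(3)$ is exactly the second half of the proof of Thm.~\ref{LDCAECs}: if every map of $\ca$ is monic, then the opposite of a dense subcategory of presentable objects (Rem.~\ref{generatorscotttopos}) provides a site for $\mathsf{S}(\ca)$ all of whose maps are epic, so $\mathsf{S}(\ca)$ is locally decidable. For $(3)\Rightarrow(2)$ I would invoke the theorem stated just before this corollary, which deduces that every map of $\ca$ is a monomorphism from the hypotheses that $\ca$ carries a faithful functor $\mathsf{U}\colon\ca\to\Set$ preserving directed colimits and that $\mathsf{S}(\ca)$ is locally decidable. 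The only thing left to supply is such a $\mathsf{U}$: on the finitely accessible $\ck\simeq\mathsf{Ind}(\ck_\omega)$ the functor $\coprod_{c\in\ck_\omega}\ck(c,-)\colon\ck\to\Set$ is faithful (the objects of $\ck_\omega$ form a generator) and preserves directed colimits (each $\ck(c,-)$ does since $c$ is finitely presentable, and coproducts commute with directed colimits in $\Set$); precomposing with the fully faithful $i$ yields the desired $\mathsf{U}$ on $\ca$.

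The only genuinely new step, and hence the main obstacle, is $(3)\Rightarrow(2)$: everything else is a repackaging of Thm.~\ref{AEC}, Thm.~\ref{LDCAECs} and the preceding theorem. Concretely, the work lies in producing the faithful directed-colimit-preserving functor $\ca\to\Set$ needed to trigger the previous theorem, together with the two soft categorical facts used along the way, namely that a fully faithful functor is nearly full and that a reflective (right-adjoint) inclusion preserves monomorphisms. Once these are in place the implications assemble into the equivalences $(1)\Leftrightarrow(2)$ and $(2)\Leftrightarrow(3)$, which proves the corollary.
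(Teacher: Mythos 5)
Your proposal is correct and follows essentially the same route as the paper: the paper's proof is a one-line appeal to the fact that a continuous category is a split (reflective) subobject in $\text{Acc}_\omega$ of a finitely accessible category, which is exactly what triggers Thm.~\ref{AEC} for $(2)\Rightarrow(1)$, the proof of Thm.~\ref{LDCAECs} for $(2)\Rightarrow(3)$, and the sharper theorem (via the coproduct-of-representables functor $\coprod_{c\in\ck_\omega}\ck(c,-)$, as in the toolbox) for $(3)\Rightarrow(2)$. You have merely made explicit the assembly that the paper leaves implicit, including the soft facts that a fully faithful right adjoint is iso-full, nearly full and preserves monomorphisms.
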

\begin{proof}
Since it's a split subobject in $\text{Acc}_{\omega}$ of a finitely accessible category, the hypotheses of \citep{aec}[5.7] are met.
\end{proof}

\section{Categories of saturated objects, atomicity and categoricity} \label{logicsaturatedobjects}

\begin{rem}
In this section we define categories of saturated objects and study their connection with atomic topoi and categoricity. The connection between atomic topoi and categoricity was pointed out in \citep{Caramelloatomic}. This section corresponds to a kind of syntax-free counterpart of \citep{Caramelloatomic}. In the definition of \textit{category of saturated objects} we axiomatize the relevant properties of the inclusion $\iota: \Set_{\kappa} \to \Set$ and we prove the following two theorems.
\begin{thm*}
\begin{enumerate}
	\item[]
	\item If $\ca$ is a category of saturated objects, then $\mathsf{S}(\ca)$ is an atomic topos.
	\item If in addition $\ca$ has  the joint embedding property, then $\mathsf{S}(\ca)$ is boolean and two valued.
	\item If in addition $\eta_\ca$ is isofull and faithful and surjective on objects, then $\ca$ is categorical in some presentability rank.
\end{enumerate}
\end{thm*}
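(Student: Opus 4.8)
The three parts will be proved in order, each feeding into the next. Throughout I fix a regular cardinal $\kappa$ for which $\ca$ is $\kappa$-accessible and work with the description of the Scott topos from Rem. \ref{generatorscotttopos}, so that $\ca_\kappa^\circ$ is a site of definition of $\mathsf{S}(\ca)$, exactly as in the proof of Thm. \ref{LDCAECs}. For part (1) the plan is to identify the topology on $\ca_\kappa^\circ$ that presents $\mathsf{S}(\ca)$ with the atomic topology $J_{at}$, and then to conclude by the very definition of atomic topos \cite{elephant2}[C3.5]. The defining axioms of a \emph{category of saturated objects}, being modelled on $\iota\colon \Set_{\kappa}\to\Set$, should give on the one hand that every morphism of $\ca$ is a monomorphism (so every arrow of $\ca_\kappa^\circ$ is epic, as in Thm. \ref{LDCAECs}), and on the other hand that $\ca_\kappa$ enjoys the amalgamation property; the latter is precisely the right Ore condition on $\ca_\kappa^\circ$ which makes $J_{at}$ a Grothendieck topology. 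The technical heart is to verify that the covering sieves arising from the $\kappa$-small $\omega$-filtered colimit cocones (those that the objects of $\mathsf{S}(\ca)$ are forced to preserve) are cofinal among the nonempty sieves, so that the presenting topology is exactly $J_{at}$; saturation is what makes every nonempty sieve refine such a cocone. Once this identification is in place, $\mathsf{S}(\ca)\simeq\mathsf{Sh}(\ca_\kappa^\circ, J_{at})$ is atomic by definition.

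For part (2), Booleanness is then automatic, since every atomic Grothendieck topos is Boolean \cite{elephant2}[C3.5]; it remains to establish two-valuedness, i.e. that $\mathrm{Sub}(1)=\{0,1\}$ in $\mathsf{S}(\ca)$. Under the atomic topology the closed subterminal sieves correspond to the connected components of the site in the span/cospan zig-zag sense, and the joint embedding property says exactly that any two objects of $\ca_\kappa$ admit a common extension. This forces $\ca_\kappa^\circ$ to be connected, so the only closed subterminal sieves are the empty and the maximal one, and $\mathsf{S}(\ca)$ is two-valued.

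For part (3), which I expect to be the main obstacle, the strategy is to transport a categoricity statement from the points of $\mathsf{S}(\ca)$ back to $\ca$ along the unit. First, the hypotheses on $\eta_\ca$ — iso-full, faithful, and surjective on objects — imply that $\eta_\ca$ induces a bijection between isomorphism classes of objects of $\ca$ and isomorphism classes of points of $\mathsf{S}(\ca)$: faithfulness and iso-fullness give that $a\cong b$ if and only if $\eta_\ca(a)\cong\eta_\ca(b)$, while surjectivity on objects gives essential surjectivity onto $\mathsf{pt}\mathsf{S}(\ca)$. It therefore suffices to show that $\mathsf{pt}(\mathsf{S}(\ca))$ is categorical in some presentability rank. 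Here I would use that $\mathsf{S}(\ca)$ is a two-valued, Boolean, atomic topos which moreover has enough points (Thm. \ref{enoughpoints}): in the spirit of \cite{Caramelloatomic} and the Galois-theoretic description of connected atomic topoi, $\mathsf{S}(\ca)$ is the topos of continuous actions of the localic automorphism group of any of its points, and the inverse-image functors of any two points of the same presentability rank are conjugate under this group, hence isomorphic. Choosing a presentability rank $\mu$ that is actually attained, all points of rank $\mu$ are then isomorphic, and pulling this back through the bijection on iso-classes shows that all objects of $\ca$ of presentability rank $\mu$ are isomorphic; that is, $\ca$ is categorical in the presentability rank $\mu$.

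The two delicate points in part (3), where the argument must be handled with care, are matching the presentability rank of a point of $\mathsf{S}(\ca)$ with that of the corresponding object of $\ca$ under $\eta_\ca$, and verifying that the Galois-theoretic transitivity genuinely yields an \emph{isomorphism} of points of equal rank rather than mere indistinguishability; this is exactly where both two-valuedness (connectedness) and atomicity are jointly indispensable.
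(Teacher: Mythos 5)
There is a genuine gap, and it sits at the very start of your part (1): you have misread where the hypotheses of a \emph{category of saturated objects} live. The paper's definition does not assert anything internal about $\ca$ — it asserts the existence of a \emph{topological embedding} $j\colon \ca \to \ck$ into a finitely accessible category $\ck$ with $j\ca \subseteq \mathrm{Sat}_\omega(\ck)$ and with the amalgamation property holding for $\ck_\omega$, the finitely presentables of the \emph{ambient} category. Your plan instead takes $\ca_\kappa^\circ$ as the site, and claims the axioms "should give" that every morphism of $\ca$ is mono, that $\ca_\kappa$ has amalgamation, and that objects of $\ca$ are injective with respect to maps between $\kappa$-presentables \emph{of $\ca$}. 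None of this is assumed, and the motivating example $\Set_{\geq\kappa} \hookrightarrow \Set$ (injections) refutes it: $\Set_{\geq\kappa}$ has no finitely presentable objects at all, and its objects are not saturated with respect to its own presentable maps — a bijection from a set of size $\kappa$ onto an object of size $\kappa$ admits no injective extension along a proper inclusion. So your identification of the induced topology on $\ca_\kappa^\circ$ with the atomic one cannot be carried out. The paper's argument uses the external data: since $j$ is a topological embedding, $\mathsf{S}(j)$ is by definition a geometric embedding, and by Rem.~\ref{trivial} it presents $\mathsf{S}(\ca)$ as a subtopos of $\Set^{\ck_\omega}$; saturation of the objects $ja$ with respect to maps in $\ck_\omega$ then makes every map $j^*yk \to j^*yk'$ epic, so every nonempty sieve on $\ck_\omega$ covers, and amalgamation of $\ck_\omega$ is exactly what makes the atomic topology a Grothendieck topology.

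Parts (2) and (3) inherit the problem and also diverge from what can be justified. For (2), your zig-zag connectedness argument is run on $\ca_\kappa$, but the relevant site is $\ck_\omega$, and the JEP of $\ca$ does not obviously connect $\ck_\omega$; the paper avoids sites entirely, deducing connectedness of $\mathsf{S}(\ca)$ from connectedness of $\ca$ (Cor.~\ref{JEPconnected}, via the lax-epi result applied to the terminal functor) and then citing Caramello for "atomic $+$ connected $\Rightarrow$ Boolean two-valued". For (3), your key step — that the automorphism group acts transitively on points of any fixed presentability rank — is precisely the statement you would need to prove, and you leave it unproven (indeed you flag it yourself); nothing in the Galois-theoretic description of connected atomic topoi gives conjugacy of arbitrary points of equal rank. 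The paper's route is narrower: by Caramello's results, the atomic two-valued topos $\mathsf{S}(\ca)$ has a countably categorical category of points with the (essentially unique) countable point saturated, and the hypotheses on $\eta_\ca$ (iso-full, faithful, surjective on objects, cf.\ Prop.~\ref{ff}) let one reflect the essential uniqueness of that single object back into $\ca$, yielding categoricity in \emph{some} presentability rank — not in every attained rank, which is what your version implicitly promises.
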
 

\begin{thm*} 
If $\ce$ is an atomic topos, then $\mathsf{pt}(\ce)$ is a \textit{candidate} category of saturated objects.
\end{thm*}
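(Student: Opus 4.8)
The plan is to verify the defining axioms of a category of saturated objects one at a time against $\mathsf{pt}(\ce)$, reading each off a known structural feature of atomic topoi, and then to isolate the single clause that forces the qualifier \emph{candidate}. First I would record the cheap facts. By the computation recalled in Rem.~\ref{pt} (Borceux), $\mathsf{pt}(\ce)$ is an accessible category with directed colimits, so it lives in $\text{Acc}_{\omega}$ with no extra work; moreover directed colimits of points are computed pointwise, which is exactly the compatibility with the ambient $\Set$-valued evaluation that the axiomatization of $\iota\colon\Set_\kappa\to\Set$ demands. Next, since an atomic Grothendieck topos is Boolean (\cite{elephant2}[C3.5]), I would reuse the mechanism of Sec.~\ref{logicaec}: in a Boolean topos every natural transformation between inverse images of points is a monomorphism (\cite{elephant2}[D1.2.10, last paragraph]), so every arrow of $\mathsf{pt}(\ce)$ is a pseudo-monomorphism. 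This secures the ``all maps are monos'' clause, matching the fact that $\iota$ lands in injective maps.

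The substantial content is the amalgamation/homogeneity clause, which I would extract from a site presentation. Choosing an atomic site $(C,J_{\mathrm{at}})$ for $\ce$ — which exists precisely because $\ce$ is atomic and hence $C$ satisfies the right Ore (amalgamation) condition — I would describe the points of $\ce$ as the flat $J_{\mathrm{at}}$-continuous functors $C\to\Set$ and translate the Ore condition on $C$ into the amalgamation property for $\mathsf{pt}(\ce)$: given two embeddings out of a point, a common extension exists because the relevant cospan in $C$ completes to a commutative square and continuity transports this upward. The back-and-forth characteristic of saturated objects should then follow from the defining feature of the atomic topology, namely that every nonempty sieve is covering, which is exactly what guarantees that a partial isomorphism between the inverse-image data of two points extends, so that $\kappa$-presentable points behave like $\kappa$-saturated, $\kappa$-homogeneous models. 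I expect this translation — atomic site $\Rightarrow$ amalgamation and homogeneity of points — to be the main obstacle, since it requires matching the lifting property packaged in the atomic topology with the model-theoretic extension axioms, and care is needed to stay inside the $\kappa$-presentable part where the comparison with $\iota\colon\Set_\kappa\to\Set$ is literal.

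Finally I would address the word \emph{candidate}. To upgrade $\mathsf{pt}(\ce)$ to a genuine category of saturated objects one must also equip it with a faithful, directed-colimit-preserving functor to $\Set$ playing the r\^ole of $\iota$, obtained by evaluation at a suitable separating object of $\ce$; this is the clause that cannot be verified unconditionally, since such a separating object with the required faithfulness need not exist for an arbitrary atomic $\ce$. Even granting it, the Scott adjunction is not idempotent (Rem.~\ref{fields}, Thm.~\ref{scottidempotency}), so $\mathsf{S}(\mathsf{pt}(\ce))$ need not return $\ce$. I would therefore conclude that all the intrinsic axioms hold while the external underlying-set datum is only available conditionally, which is precisely what licenses calling $\mathsf{pt}(\ce)$ a \emph{candidate} category of saturated objects rather than one outright.
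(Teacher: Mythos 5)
Your middle paragraph reconstructs, in spirit, exactly the content the paper outsources to Caramello: present $\ce$ by an atomic site $i\colon \ce \to \Set^C$, view $\mathsf{pt}(\ce)$ inside $\mathsf{pt}(\Set^C)\simeq \mathsf{Ind}(C)$ via $j:=\mathsf{pt}(i)$ (fully faithful by Prop.~\ref{embeddings} and Rem.~\ref{trivial}), and read $\omega$-saturation of each point off continuity for the atomic topology, i.e.\ off the fact that every nonempty sieve covers. But you aim two clauses at the wrong objects. The amalgamation clause (3) in the definition of (candidate) category of saturated objects concerns $\ck_\omega\simeq C$, not $\mathsf{pt}(\ce)$: it holds on the nose because an atomic site is precisely one whose underlying category satisfies the Ore/amalgamation condition (the paper notes this is what makes the atomic topology a topology at all), so your translation of the Ore condition into an amalgamation property \emph{of the category of points} is both unnecessary and not among the axioms you must check. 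Similarly, the ``all maps are monos'' clause you verify via Booleanness and \cite{elephant2}[D1.2.10] is true but appears nowhere in the definition, so it does no work.

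The genuine gap is your final paragraph, which misidentifies what \emph{candidate} means. In the paper's definition (Sec.~\ref{logicsaturatedobjects}, together with the notion of topological embedding in \ref{topological}), the \emph{only} difference between a category of saturated objects and a candidate one is whether the functor $j$ satisfying (1)--(3) is a topological embedding, i.e.\ whether $\mathsf{S}(j)$ is a geometric embedding. There is no clause about a faithful directed-colimit-preserving functor to $\Set$ or a separating object, so the axiom you declare ``only conditionally available'' is not an axiom, and the one that actually separates the two notions goes unaddressed. The correct justification of the qualifier is that nothing forces $\mathsf{pt}(i)$ to be a topological embedding for an arbitrary atomic $\ce$: by Thm.~\ref{scottidempotency} this is tied to the counit $\mathsf{Spt}(\ce)\to\ce$ being an equivalence, which fails in general (atomic topoi may even lack points altogether, in which case conditions (1)--(3) hold vacuously while $j$ is certainly no topological embedding). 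Your closing appeal to the non-idempotency of the Scott adjunction gestures at the right phenomenon but is attached to the wrong clause; the fix is simply to take $j=\mathsf{pt}(i)$, verify (1)--(3) as you do (citing \cite{Caramelloatomic} for saturation), and stop there, since (1)--(3) alone are exactly what ``candidate'' requires.
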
 
\end{rem}


Let us recall (or introduce) the notion of $\omega$-saturated object in an accessible category and the joint embedding property.

\begin{defn}
Let $\ca$ be an accessible category. We say that $s \in \ca$ is $\omega$-saturated if it is injective with respect to maps between finitely presentable objects. That is, given a morphism between finitely presentable objects $f: p \to p'$ and a map $p \to s$, there exists a lift as in the diagram below.
\begin{center}
\begin{tikzcd}
s                     &                       \\
p \arrow[u] \arrow[r] & p' \arrow[lu, dashed]
\end{tikzcd}
\end{center}
\end{defn}

\begin{rem}
In general, when we look at accessible categories from the perspective of model theory, every map in $\ca$ is a monomorphism, and this definition is implicitly adding the hypothesis that every morphism is \textit{injective}.
\end{rem}

\begin{rem}
A very good paper to understand the categorical approach to saturation is \cite{Rsaturated}.
\end{rem}

\begin{defn}
Let $\ca$ be a category. We say that $\ca$ has the joint embedding property if given two objects $A,B$ there exist and object $C$ and two morphisms $A \to C$, $B \to C$.
\end{defn}

\begin{rem}
In \citep{simon}, Henry proves that there are AECs that cannot appear as the category of points of a topos, which means that they cannot be axiomatized in $\text{L}_{\infty, \omega}$. This answers a question initially asked by Rosický at the conference Category Theory 2014 and makes a step towards our understanding of the connection between accessible categories with directed colimits and axiomatizable classes.
The main tool that allows him to achieve this result is called in the paper the \textit{Scott construction}; he proves the Scott topos of $\Set_{\geq \kappa}$\footnote{The category of sets of cardinality at least $\kappa$ and injective functions} is atomic. Even if we developed together the relevant rudiments of the Scott construction, the reason for which this result was true appeared to the author of this thesis enigmatic and mysterious. With this motivation in mind we\footnote{The author of this thesis.} came to the conclusion that  the Scott topos of $\Set_{\geq \kappa}$ is atomic because of the fact that $\Set_{\geq \kappa}$ appears as a subcategory of saturated objects in $\Set$. 
\end{rem}

\begin{rem}
As a direct corollary of the theorems in this section one gets back the main result of \citep{simon}, but this is not the main accomplishment of this section. Our main contribution is to present a conceptual understanding of \citep{simon} and a neat technical simplification of his proofs. We also improve our poor knowledge of the Scott adjunction, trying to collect and underline its main features. We feel that the Scott adjunction might serve as a tool to have a categorical understanding of Shelah's categoricity conjecture for accessible categories with directed colimits.
\end{rem}

\begin{rem}[What is categoricity and what about the categoricity conjecture?]
Recall that a category of models of some theory is categorical in some cardinality $\kappa$ if it has precisely one model of cardinality $\kappa$. Morley has shown in 1965 that if a category of models is categorical in some cardinal $\kappa$, then it must be categorical in any cardinal above and in any cardinal below up to $\omega_1$ (\cite{chang1990model}). We will be more precise about Morley's result in the section about open problems. When Abstract elementary classes were introduced in the 1970's, Shelah chose Morley's theorem as a sanity check result for his definition. Since then, many approximations of these results has appeared in the literature. The most updated to our knowledge is contained in \cite{vasey2019categoricity}. We recommend the paper also as an introduction to this topic.
\end{rem}

\begin{defn}[(Candidate) categories of ($\omega$-)saturated objects] Let $\ca$ be a category in $\text{Acc}_{\omega}$. We say that $\ca$ is a category of (finitely) saturated objects if there a is topological embedding $j: \ca \to \ck$ in $\text{Acc}_{\omega}$ such that:
\begin{enumerate}
\item $\ck$ is a finitely accessible category.
\item $j\ca \subset \text{Sat}_\omega(\ck)$\footnote{The full subcategory of $\omega$-saturated objects.}.
\item $\ck_{\omega}$ has the amalgamation property\footnote{A category has the amalgamation property is every span can be completed to a square.}.
\end{enumerate}
We say that $\ca$ is a candidate category of (finitely) saturated objects if there exists a functor $j$ that verifies $(1)$-$(3)$.
\end{defn}

\begin{rem}
The notion of \textit{category of saturated objects} axiomatizes the properties of the inclusion $j\Sat_\omega(\ck) \hookrightarrow \ck$, our motivating example was the inclusion of $\Set_{\geq \kappa} \hookrightarrow \Set_{\geq \omega} \hookrightarrow \Set$. The fact that every object in $\Set_{\geq \kappa}$ is injective with respect to finite sets is essentially the axiom of choice. \citep{Rsaturated} describes a direct connection between saturation and amalgamation property, which was also implied in \citep{Caramelloatomic}.
\end{rem}

In \citep{Caramelloatomic}, Caramello proves - essentially - that the category of points of an atomic topos is a category of saturated objects and she observes that it is countable categorical. This shows that there is a deep connection between categoricity, saturation and atomic topoi. We recall the last notion before going on with the exposition.

\begin{defn}[Characterization of atomic topoi, {\cite{elephant2}[C3.5]}] Let $\cg$ be a Grothendieck topos, then the following are equivalent:
\begin{enumerate}
\item $\cg$ is atomic.
\item $\cg$ is the category of sheaves over an atomic site.
\item The subobject lattice of every object is a complete atomic boolean algebra.
\item Every object can be written as a disjoint union of atoms.
\end{enumerate}
\end{defn}

\begin{thm} \label{thmcategoriesofsaturated objects}
\begin{enumerate}
	\item[]
	\item If $\ca$ is a category of saturated objects, then $\mathsf{S}(\ca)$ is an atomic topos.
	\item If in addition $\ca$ has  the joint embedding property, then $\mathsf{S}(\ca)$ is boolean and two valued.
	\item If in addition $\eta_\ca$ is iso-full, faithful and surjective on objects, then $\ca$ is categorical in some presentability rank.
\end{enumerate}
\end{thm}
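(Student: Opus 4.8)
The plan is to describe $\mathsf{S}(\ca)$ explicitly as a category of sheaves on an atomic site, and then read off the three conclusions from the geometry of that site. First I would exploit the topological embedding $j\colon\ca\to\ck$: by the very definition of topological embedding (Chap.~\ref{toolbox}), the induced morphism $\mathsf{S}(j)\colon\mathsf{S}(\ca)\to\mathsf{S}(\ck)$ is a geometric embedding, so $\mathsf{S}(\ca)$ is a subtopos of $\mathsf{S}(\ck)$. Since $\ck$ is finitely accessible, Rem.~\ref{trivial} gives $\mathsf{S}(\ck)\simeq\Set^{\ck_\omega}$, the presheaf topos on $\ck_\omega^\circ$; hence $\mathsf{S}(\ca)\simeq\mathsf{Sh}(\ck_\omega^\circ, J)$ for a unique Grothendieck topology $J$ on $\ck_\omega^\circ$. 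Condition~(3), amalgamation in $\ck_\omega$, is precisely the right Ore condition on $\ck_\omega^\circ$ (a span completing to a square in $\ck_\omega$ is a cospan completing to a square in $\ck_\omega^\circ$), so the atomic topology $J_{\mathrm{at}}$ is well defined on this site.

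For part~(1) the heart of the matter is to identify $J$ with $J_{\mathrm{at}}$. I would show that a functor $\ck_\omega\to\Set$ lies in the essential image of $\mathsf{S}(\ca)\hookrightarrow\Set^{\ck_\omega}$ exactly when it satisfies the atomic sheaf condition, i.e. that the nonempty sieves are precisely the $J$-covering ones. The crucial translation is where condition~(2) does its work: the defining property of an $\omega$-saturated object, namely injectivity with respect to morphisms between finitely presentable objects, corresponds exactly to the amalgamation of matching families over nonempty sieves once amalgamation is available. \emph{This is the step I expect to be the main obstacle}, and it is also the conceptual payoff the section advertises: it explains \emph{why} saturation forces atomicity, dissolving the ``enigmatic'' character of the argument in \cite{simon}. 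Granting $J=J_{\mathrm{at}}$, the site $(\ck_\omega^\circ, J_{\mathrm{at}})$ is atomic, and the characterization of atomic topoi (\cite{elephant2}[C3.5]) yields that $\mathsf{S}(\ca)$ is atomic.

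For part~(2), booleanness is immediate: an atomic topos has every subobject lattice a complete atomic Boolean algebra (\cite{elephant2}[C3.5]), so $\mathsf{S}(\ca)$ is Boolean with no extra hypothesis. Two-valuedness is where the joint embedding property enters. For an atomic site, two-valuedness is equivalent to connectedness of the underlying category, that is, to the terminal object having no nontrivial subobject; I would argue that a nontrivial subobject of $1$ in $\mathsf{S}(\ca)=\text{Acc}_\omega(\ca,\Set)$ would split $\ca$ into two classes admitting no joint embeddings, contradicting JEP. Transporting JEP on $\ca$ into the required connectedness of $\ck_\omega^\circ$ is routine, using again that the objects of $\ca$ are $\omega$-saturated.

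For part~(3) I would combine the two-valued atomic structure with the hypotheses on $\eta_\ca$. Since $\eta_\ca$ is faithful, iso-full and surjective on objects, it induces a bijection between the isomorphism classes of $\ca$ and those of $\mathsf{pt}\mathsf{S}(\ca)$, and by Thm.~\ref{enoughpoints} the latter category is nonempty. The points of the two-valued atomic topos $\mathsf{S}(\ca)$ are its saturated models: two-valuedness plays the role of completeness and atomicity the role of saturation, so the classical uniqueness of a saturated model of a complete theory in a fixed size forces $\mathsf{S}(\ca)$ to have, up to isomorphism, a unique point of a given presentability rank $\mu$ (the rank at which such a point exists). Pulling this back along the bijection induced by $\eta_\ca$, the category $\ca$ has a unique object of presentability rank $\mu$, i.e. is categorical in that rank. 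This final step is the syntax-free rendering of Caramello's countable-categoricity result \cite{Caramelloatomic}, and its main subtlety is identifying the precise presentability rank at which the uniqueness holds.
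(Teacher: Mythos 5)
Your overall architecture coincides with the paper's proof: use the topological embedding $j$ to present $\mathsf{S}(\ca)$ as a subtopos $j^*\colon \Set^{\ck_\omega}\leftrightarrows \mathsf{S}(\ca)\colon j_*$ (definition of topological embedding plus Rem.~\ref{trivial}), use amalgamation in $\ck_\omega$ to make the atomic topology well-posed, get two-valuedness from JEP, and get categoricity from the unit hypotheses together with Caramello's countable categoricity for atomic two-valued topoi. But the heart of part (1) --- the identification $J=J_{\mathrm{at}}$ --- is exactly the step you flag as ``the main obstacle'' and do not carry out, and the mechanism you propose for it points at the wrong place. Saturation does not enter through ``amalgamation of matching families over nonempty sieves'': existence and uniqueness of amalgamations is just the sheaf condition, which holds automatically for everything in the essential image of $j_*$ and for any induced topology whatsoever, so it cannot detect which sieves cover. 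Saturation enters by forcing covers. The missing two-line argument (the paper's, following \cite{simon}[Cor.~4.9]) is: for any map $k\to k'$ in $\ck_\omega$, the induced map $j^*yk\to j^*yk'$ is an \emph{epimorphism} in $\mathsf{S}(\ca)$, because a map $j^*yk\to G$ amounts to compatible maps $k\to ja$ into saturated objects, and $\omega$-saturation of each $ja$ says precisely that every $k\to ja$ extends along $k\to k'$. Then for any nonempty sieve $S\rightarrowtail yk$, choosing $f\in S$ the epimorphism through $j^*$ of the corresponding representable factors through $j^*S\to j^*yk$, which is monic since $j^*$ is lex; being both epi and mono in a topos it is invertible, so every nonempty sieve is covering, the empty sieve cannot cover, and $J=J_{\mathrm{at}}$. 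Without this (or an equivalent argument) your proof of (1) does not go through, and (2) and (3) rest on it.

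Two smaller repairs. In (2), your claim that a nontrivial subobject of $1$ ``splits $\ca$ into two classes admitting no joint embeddings'' is not true for a single subterminal $U$: the support of $U$ is only upward closed along morphisms, and $\emptyset\to 1$ is a perfectly good map of sets, so JEP applied to $a$ with $U(a)=1$ and $b$ with $U(b)=\emptyset$ yields no contradiction. You must first use Booleanness (available, since atomic implies Boolean) to take the complement $V$ with $U\sqcup V=1$; then the supports of $U$ and $V$ partition the objects of $\ca$, both are upward closed, and JEP produces $c$ with $U(c)=V(c)=1$, which is absurd. This done, your argument is essentially the paper's route via Cor.~\ref{JEPconnected} and the fact that atomic plus connected implies Boolean two-valued. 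In (3) your transfer along $\eta_\ca$ is the paper's use of Prop.~\ref{ff} (iso-fullness plus faithfulness makes $\eta_\ca$ injective on isomorphism classes, surjectivity on objects makes it bijective), and the uniqueness statement you want is not a generic ``unique saturated model in a fixed size'' but Caramello's theorem from \cite{Caramelloatomic} that the countable point of an atomic two-valued topos is unique up to isomorphism; cite it rather than the classical model-theoretic fact, since the latter presupposes a syntactic presentation you do not have.
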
 
\begin{proof}

\begin{enumerate}
	\item[]
	\item Let $\ca$ be a category of saturated objects $j: \ca \to \ck$. We must show that $\mathsf{S}(\ca)$ is atomic. The idea of the proof is  very simple; we will show that:
		\begin{itemize}
		\item[(a)] $\mathsf{S}j$  presents $\ca$ as  $j^*: \Set^{\ck_\omega} \leftrightarrows \mathsf{S}(\ca):  j_*$;
		\item[(b)] The induced topology on $\ck_\omega$ is atomic.
		\end{itemize}
	(a) follows directly from the definition of topological embedding and Rem. \ref{trivial}. (b) goes identically to \citep{simon}[Cor. 4.9]: note that for any map $k  \to k' \in \ck_{\omega}$, the induced map $j^*yk \to j^*yk'$ is an epimorphism: indeed any map $k \to ja$ with $a \in \ca$ can be extended along $k \to k'$ because $j$ makes $\ca$ a category of saturated objects. So the induced topology on $\ck_{\omega}$ is the atomic topology (every non-empty sieve is a cover). The fact that $\ck_{\omega}$ has the amalgamation property is needed to make the atomic topology a proper topology.
	\item Because $\ca$ has the joint embedding property, its Scott topos is connected by Cor. \ref{JEPconnected}. Then, $\mathsf{S}(\ca)$ is atomic and connected. By \cite{caramello2018theories}[4.2.17] it is boolean two-valued.
	\item This follows from Prop. \ref{ff} and \cite{Caramelloatomic}. In fact, Caramello has shown that $\mathsf{ptS}(\ca)$ must be countably categorical and the countable object is saturated (by construction). Thus, the unit of the Scott adjunction must reflect the (essential) unicity of such an object.
\end{enumerate}
\end{proof}

\begin{thm} 
If $\ce$ is an atomic topos, then $\mathsf{pt}(\ce)$ is a \textit{candidate} category of saturated objects.
\end{thm}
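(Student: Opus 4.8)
The plan is to run the argument of Thm. \ref{thmcategoriesofsaturated objects}(1) in reverse, extracting from an atomic presentation of $\ce$ exactly the data required by the definition of a candidate category of saturated objects. First I would invoke the characterization of atomic topoi recalled above (\cite{elephant2}[C3.5]) to write $\ce \simeq \mathsf{Sh}(B, J_{\mathrm{at}})$ as the category of sheaves for the atomic topology on a small category $B$, which I take to be Cauchy complete without loss of generality. The atomic topology $J_{\mathrm{at}}$ being a genuine Grothendieck topology forces $B$ to enjoy the amalgamation property (the right Ore condition on the underlying site), exactly as in the proof of Thm. \ref{thmcategoriesofsaturated objects}(1), so that condition $(3)$ holds for free. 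Setting $\ck := \mathsf{Ind}(B)$ I obtain a finitely accessible category with $\ck_\omega \simeq B$, which settles conditions $(1)$ and $(3)$ simultaneously.

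For the functor, I would take the presentation $i^\ast : \Set^{B} \leftrightarrows \ce : i_\ast$ coming from the sheaf inclusion and apply the $2$-functor $\mathsf{pt}$. By Rem. \ref{trivial} we have $\mathsf{pt}(\Set^{B}) \simeq \mathsf{Ind}(B) = \ck$, so $\mathsf{pt}(i)$ furnishes a functor
\[ j := \mathsf{pt}(i) : \mathsf{pt}(\ce) \longrightarrow \ck, \]
which preserves directed colimits by Rem. \ref{pt} and hence is a $1$-cell of $\text{Acc}_\omega$ (recall that $\mathsf{pt}(\ce)$ is itself accessible with directed colimits). Since we only need a \emph{candidate} category of saturated objects, I do not even have to check that $j$ is a topological embedding; it suffices to verify condition $(2)$.

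The heart of the proof, and the step I expect to require the most care, is showing $j(\mathsf{pt}(\ce)) \subseteq \Sat_\omega(\ck)$. Here I would unwind the identifications explicitly. An object of $\mathsf{pt}(\ce)$ is a point of $\mathsf{Sh}(B, J_{\mathrm{at}})$, that is a flat functor $M$ on the site which is moreover $J_{\mathrm{at}}$-continuous; being flat it lands in $\ck = \mathsf{Ind}(B)$. Under the Yoneda identification $\ck(b, M) \cong M(b)$ for $b \in B \simeq \ck_\omega$, a map between finitely presentable objects together with a map into $M$ amounts to a morphism $f$ of $B$ and an element of the appropriate value of $M$, and the required lift exists precisely when the transition map $M(f)$ is surjective. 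On the other hand, continuity of $M$ for the atomic topology, tested against the principal (nonempty, hence covering) sieve generated by $f$, says exactly that the family $\{M(h)\}_{h}$ over that sieve is jointly surjective, which collapses to surjectivity of $M(f)$ since every $h$ factors through $f$. Thus $J_{\mathrm{at}}$-continuity of $M$ is equivalent to surjectivity of all the $M(f)$, hence to $\omega$-saturation of $M$ in $\ck$. This is the content already isolated in \cite{Caramelloatomic}, dual to the epimorphism computation in the proof of Thm. \ref{thmcategoriesofsaturated objects}(1); assembling conditions $(1)$--$(3)$ then exhibits $\mathsf{pt}(\ce)$ as a candidate category of saturated objects. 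The main obstacle is essentially bookkeeping: keeping the variance straight between $B$, the site underlying $\Set^{B}$, and the representables in $\mathsf{Ind}(B)$, so that ``covering sieve for the atomic topology'' on the geometric side matches ``lifting against maps of finitely presentable objects'' on the model-theoretic side.
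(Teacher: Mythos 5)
Your proof is correct and takes essentially the same route as the paper's own (one-line) proof: present $\ce$ by an atomic site, apply $\mathsf{pt}$ to the presentation $i: \ce \to \Set^{B}$, identify $\mathsf{pt}(\Set^{B}) \simeq \mathsf{Ind}(B)$ via Rem. \ref{trivial}, and take $j = \mathsf{pt}(i)$. The only difference is that where the paper delegates conditions $(1)$--$(3)$ entirely to the citation of \cite{Caramelloatomic}, you verify them explicitly (Ore condition $=$ amalgamation in $\ck_\omega$, and $J_{\mathrm{at}}$-continuity of a flat functor $\Leftrightarrow$ surjectivity of transition maps $\Leftrightarrow$ $\omega$-saturation), which is a sound unwinding of that reference, variance bookkeeping included.
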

\begin{proof}
Let $\ce$ be an atomic topos and $i: \ce \to \Set^C$ be a presentation of $\ce$ by an atomic site. It follows from \cite{Caramelloatomic} that $\mathsf{pt}(i)$ presents $\mathsf{pt}(\ce)$ as a candidate category of saturated objects.
\end{proof}

\subsection{Categories of $\kappa$-saturated objects}

Obviously the previous definitions can be generalized to the $\kappa$-case of the Scott adjunction, obtaining analogous results. Let us boldly state them.

\begin{defn}[(Candidate) categories of ($\kappa$-)saturated objects] Let $\ca$ be a category in $\text{Acc}_{\kappa}$. We say that $\ca$ is a category of $\kappa$-saturated objects if there is topological embedding (for the $\mathsf{S}_\kappa$-adjunction) $j: \ca \to \ck$ in $\text{Acc}_{\kappa}$ such that:
\begin{enumerate}
\item $\ck$ is a $\kappa$-accessible category.
\item $j\ca \subset \text{Sat}_\kappa(\ck)$.
\item $\ck_{\kappa}$ has the amalgamation property.
\end{enumerate}
We say that $\ca$ is a candidate category of $\kappa$-saturated objects if there exists a functor $j$ that verifies $(1)$-$(3)$.
\end{defn}

\begin{thm} \label{kappasaturated}
\begin{enumerate}
	\item[]
	\item If $\ca$ is a category of $\kappa$-saturated objects, then $\mathsf{S}_\kappa(\ca)$ is an atomic $\kappa$-topos.
	\item If in addition $\ca$ has  the joint embedding property, then $\mathsf{S}_\kappa(\ca)$ is boolean and two valued.
	\item If in addition $\eta_\ca$\footnote{the unit of the $\kappa$-Scott adjunction.} is iso-full, faithful and surjective on objects, then $\ca$ is categorical in some presentability rank.
\end{enumerate}
\end{thm}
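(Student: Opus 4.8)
The plan is to transcribe the proof of Theorem \ref{thmcategoriesofsaturated objects} essentially verbatim, replacing $\omega$ by $\kappa$ throughout and invoking the $\kappa$-versions of every auxiliary result: the $\kappa$-Scott adjunction of Sec. \ref{kappa}, the $\kappa$-Diaconescu equivalence of Rem. \ref{kdiac}, the notion of topological embedding relative to the $\mathsf{S}_\kappa$-adjunction, and the $\kappa$-analogues of Cor. \ref{JEPconnected} and Prop. \ref{ff}. Since $\kappa$-directed colimits commute with $\kappa$-small limits in $\Set$ (the structural reason the adjunction exists at all), none of the exactness arguments is sensitive to the choice of cardinal, so the real work is only bookkeeping.

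For part (1), I would start from the topological embedding $j : \ca \to \ck$ witnessing that $\ca$ is a category of $\kappa$-saturated objects. First I would show that applying $\mathsf{S}_\kappa$ presents $\ca$ as a subtopos $j^* : \Set^{\ck_\kappa} \leftrightarrows \mathsf{S}_\kappa(\ca) : j_*$; this combines the definition of topological embedding with the identification $\mathsf{S}_\kappa(\ck) \simeq \Set^{\ck_\kappa}$ coming from Rem. \ref{kdiac}, since $\ck$ is $\kappa$-accessible. It then remains to check that the topology induced on $\ck_\kappa$ by this geometric embedding is the atomic one, exactly as in \cite{simon}[Cor. 4.9]. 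For every map $k \to k'$ in $\ck_\kappa$ the induced map $j^* y k \to j^* y k'$ is an epimorphism: any map $k \to ja$ with $a \in \ca$ extends along $k \to k'$ precisely because $ja$ is $\kappa$-saturated, i.e. injective with respect to maps between $\kappa$-presentable objects. Hence every non-empty sieve is covering, the topology is atomic, and the amalgamation property of $\ck_\kappa$ guarantees that this is a well-defined (proper) Grothendieck topology. This exhibits an atomic site of definition, so $\mathsf{S}_\kappa(\ca)$ is an atomic $\kappa$-topos.

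For part (2), if $\ca$ has the joint embedding property then the $\kappa$-version of Cor. \ref{JEPconnected} makes $\mathsf{S}_\kappa(\ca)$ connected; an atomic connected topos is boolean and two-valued by \cite{caramello2018theories}[4.2.17], which applies unchanged. For part (3), the $\kappa$-analogue of Caramello's argument \cite{Caramelloatomic} shows that $\mathsf{pt}_\kappa \mathsf{S}_\kappa(\ca)$ is categorical at the relevant presentability rank, with the distinguished model saturated by construction; since $\eta_\ca$ is iso-full, faithful and surjective on objects, the essential uniqueness of this object transfers back to $\ca$ via Prop. \ref{ff}, yielding categoricity in some presentability rank.

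The hard part will not be any single computation but confirming that each cited black box — the $\mathsf{S}_\kappa$-topological embedding, the JEP-connectedness corollary, and above all Caramello's categoricity/saturation correspondence — genuinely possesses a $\kappa$-form proved by the same argument. The one place demanding a genuine check is the atomicity step, where \emph{injective with respect to maps between finitely presentable objects} must be read as \emph{$\kappa$-presentable}, and one should verify that $\kappa$-saturation in the sense of $\text{Sat}_\kappa(\ck)$ supplies exactly the lifting used to force every non-empty sieve to cover.
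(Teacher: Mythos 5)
Your proposal matches the paper's approach exactly: the paper states Thm.~\ref{kappasaturated} without proof, explicitly on the grounds that the definitions and arguments of the $\omega$-case ``can be generalized to the $\kappa$-case \ldots obtaining analogous results,'' and your transcription of the proof of Thm.~\ref{thmcategoriesofsaturated objects} with $\kappa$ substituted throughout is precisely that intended argument. Your flagging of the genuinely $\kappa$-sensitive points --- the atomicity step reading saturation as injectivity with respect to maps between $\kappa$-presentable objects, and the $\kappa$-forms of Rem.~\ref{kdiac}, Cor.~\ref{JEPconnected} and Caramello's saturation/categoricity argument --- is exactly the bookkeeping the paper leaves implicit.
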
 

\begin{thm} 
If $\ce$ is an atomic $\kappa$-topos, then $\mathsf{pt}_\kappa(\ce)$ is a \textit{candidate} category of $\kappa$-saturated objects.
\end{thm}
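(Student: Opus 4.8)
The plan is to transcribe the argument of the preceding finitary theorem, replacing $\omega$ by $\kappa$ throughout and invoking the $\kappa$-version of Diaconescu's theorem (Rem. \ref{kdiac}) where the original used Rem. \ref{trivial}. Since $\ce$ is an atomic $\kappa$-topos, it admits a presentation $i: \ce \to \Set^C$ by an atomic site $(C,J)$, in which every nonempty sieve is covering. Applying the functor of $\kappa$-points produces
\[ j := \mathsf{pt}_\kappa(i): \mathsf{pt}_\kappa(\ce) \longrightarrow \mathsf{pt}_\kappa(\Set^C), \]
and by the $\kappa$-analog of Rem. \ref{trivial} (available by Rem. \ref{generalize}) combined with $\kappa$-Diaconescu one has $\mathsf{pt}_\kappa(\Set^C) \simeq \mathsf{Ind}_\kappa(C)$. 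I would set $\ck := \mathsf{Ind}_\kappa(C)$ and take this $j$ as the witnessing functor. Because the \emph{candidate} notion asks only for a functor satisfying the three numbered conditions --- and not for a topological embedding --- this reduction is all that is needed: the rest of the proof is a verification of conditions $(1)$--$(3)$ for $j$, with no further analysis of the embedding $\mathsf{pt}_\kappa(i)$ required.

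Conditions $(1)$ and $(3)$ are immediate. For $(1)$, $\ck = \mathsf{Ind}_\kappa(C)$ is $\kappa$-accessible by construction, being the free completion of the small category $C$ under $\kappa$-directed colimits, and its full subcategory $\ck_\kappa$ of $\kappa$-presentable objects is the Cauchy completion of $C$. For $(3)$, the amalgamation property of $\ck_\kappa \simeq C$ is exactly the site condition making the atomic topology $J$ a genuine Grothendieck topology: as recalled in the proof of Thm. \ref{thmcategoriesofsaturated objects}$(1)$, amalgamation on the site is precisely what is needed for every nonempty sieve to constitute a well-defined covering. One also records, via the $\kappa$-analog of Rem. \ref{pt}, that $\mathsf{pt}_\kappa(\ce)$ is an object of $\text{Acc}_\kappa$, so that the conclusion is well-posed.

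The substance of the argument, and the step I expect to be the main obstacle, is condition $(2)$: that $j$ carries every $\kappa$-point of $\ce$ into $\text{Sat}_\kappa(\ck)$. A $\kappa$-point $M$ of $\ce = \mathsf{Sh}(C,J)$ is a $\kappa$-flat functor on $C$ that is continuous for the atomic topology, and $j$ sends it to its underlying $\kappa$-flat functor, an object of $\mathsf{Ind}_\kappa(C)$. Saturation of $M$ in $\ck$ asks that, for every arrow $f: c \to c'$ between $\kappa$-presentable objects (that is, between objects of $C$), any map $c \to M$ extend along $f$ to a map $c' \to M$. This is exactly where atomicity enters: the sieve generated by $f$ is nonempty, hence covering, so the continuity of $M$ for $J$ forces the required lift. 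This is the same computation as in Thm. \ref{thmcategoriesofsaturated objects}$(1)$ --- there producing epimorphisms $j^*yk \to j^*yk'$ --- read in the converse direction, and it is the content of Caramello's analysis in \cite{Caramelloatomic}. The only genuine work is to confirm that Caramello's correspondence between points of an atomic topos and saturated (homogeneous) models survives the passage from finite to $\kappa$-small data; since the lifting property is checked one morphism at a time and never invokes finiteness of $C$, the argument should go through verbatim, establishing $(2)$ and hence the theorem.
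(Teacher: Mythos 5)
Your proposal is correct and takes essentially the same route as the paper: the paper states the $\kappa$-version without proof as a ``bold'' generalization of the finitary theorem, whose proof is exactly your reduction --- present $\ce$ by an atomic site, apply the points functor to land in $\mathsf{Ind}_\kappa(C)$ via ($\kappa$-)Diaconescu, and invoke Caramello's correspondence for the saturation condition. Your explicit check that the covering-sieve lifting argument for condition $(2)$ and the amalgamation/Ore condition for $(3)$ nowhere use finiteness is precisely the verification the paper leaves implicit.
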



\chapter{Category theory}

This chapter is dedicated to a $2$-categorical perspective on the Scott adjunction and its main characters. We provide an overview of the categorical properties of $\text{Acc}_\omega$ and Topoi. Mainly, we show that the $2$-category of topoi is enriched over $\text{Acc}_\omega$ and has copowers. We show that this observation generalizes the Scott adjunction in a precise sense. We discuss the $2$-categorical properties of both the $2$-categories, but this work is not original. We will provide references within the discussion.

\section{$2$-categorical properties of $\text{Acc}_\omega$}
\subsection{(co)Limits in $\text{Acc}_\omega$} \label{colimitsinacc}

The literature contains a variety of results on the $2$-dimensional structure of the $2$-category $\text{Acc}$ of accessible categories and accessible functors. Among these, one should mention \citep{Makkaipare} for lax and pseudo-limits in $\text{Acc}$ and \citep{colimitacc} for colimits. Our main object of study, namely $\text{Acc}_\omega$, is a (non-full) subcategory of $\text{Acc}$, and thus it is a bit tricky to infer its properties from the existing literature. Most of the work was successfully accomplished in \cite{lieberman2015limits}. Let us list the main results of these references that are related to $\text{Acc}_\omega$.

\begin{prop}[{\cite{lieberman2015limits}[2.2]}]
$\text{Acc}_\omega$ is closed under pie-limits\footnote{These are those limits can be reduced to products, inserters and equifiers.} in $\text{Acc}$ (and thus in the illegitimate $2$-category of locally small categories).
\end{prop}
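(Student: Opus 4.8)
The plan is to reduce the statement to the three generating flexible limits and check each by hand, the crucial point being that directed colimits in the limit are \emph{created} by the canonical projection functor(s), so that both the limit legs and any mediating $1$-cell automatically preserve them. Recall that every pie-limit can be assembled from products, inserters, and equifiers (the \textbf{P}roducts--\textbf{I}nserters--\textbf{E}quifiers of the acronym); since each of these exists in $\text{Acc}$ by the limit theorems cited above, and a fortiori in the illegitimate $2$-category of locally small categories, it suffices to prove that when the diagram data lie in $\text{Acc}_\omega$ the resulting limit category has directed colimits and that the relevant structure functors preserve them. Throughout, the accessibility of the limit is already guaranteed by the ambient result for $\text{Acc}$, so the only genuinely new content is the bookkeeping with directed colimits.

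For a product $\prod_i \ca_i$ there is essentially nothing to do: directed colimits are computed componentwise, the projections create them, and each $\ca_i$ has directed colimits by hypothesis. The first real computation is the inserter. Given $1$-cells $F, G : \ca \rightrightarrows \cb$ in $\text{Acc}_\omega$, the inserter $\Ins(F,G)$ has as objects pairs $(a, \phi)$ with $a \in \ca$ and $\phi : Fa \to Ga$ in $\cb$, and carries a forgetful functor $U : \Ins(F,G) \to \ca$. Given a directed diagram $\{(a_j, \phi_j)\}$, I would set $a := \colim_j a_j$ in $\ca$; because $F$ and $G$ preserve directed colimits, one may identify $Fa \cong \colim_j Fa_j$ and $Ga \cong \colim_j Ga_j$, so the cocone $\{\phi_j\}$ induces a unique comparison $\phi : Fa \to Ga$, and $(a,\phi)$ is then checked to be the colimit in $\Ins(F,G)$. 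Thus $U$ creates directed colimits, which shows both that $\Ins(F,G) \in \text{Acc}_\omega$ and that $U$ preserves directed colimits.

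For the equifier of $2$-cells $\alpha, \beta : F \Rightarrow G : \ca \to \cb$, realized as the full subcategory $E \hookrightarrow \ca$ on those objects $a$ with $\alpha_a = \beta_a$, I would show $E$ is closed under directed colimits in $\ca$. If $a = \colim_j a_j$ with each $a_j \in E$, then naturality gives $\alpha_a \circ F(\iota_j) = G(\iota_j) \circ \alpha_{a_j} = G(\iota_j)\circ \beta_{a_j} = \beta_a \circ F(\iota_j)$ for every coprojection $\iota_j : a_j \to a$; since $F$ preserves the directed colimit, the maps $F(\iota_j)$ are jointly epimorphic, forcing $\alpha_a = \beta_a$, so $a \in E$. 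Hence the inclusion creates directed colimits and $E \in \text{Acc}_\omega$.

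Finally I would record that in all three cases the canonical leg functor(s) jointly create directed colimits, which settles the universal property inside $\text{Acc}_\omega$: given a competing cone of directed-colimit-preserving functors, the induced mediating $1$-cell $H$ into the limit has the property that each leg composed with $H$ preserves directed colimits, whence $H$ itself does so by joint creation. The main obstacle is thus concentrated in the inserter step, where one must use \emph{both} that $F$ and $G$ preserve directed colimits (to compute $F(\colim_j a_j)$ and $G(\colim_j a_j)$ correctly) and the functoriality of colimits in $\cb$ (to produce the comparison $\phi$); once this is in place, the equifier case and the transfer of the universal property are routine diagram chases.
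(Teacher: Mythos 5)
Your proposal is correct, but the thesis itself contains no proof of this statement --- it is recalled verbatim from \cite{lieberman2015limits}[2.2] --- and your argument is essentially the one given in that cited source: reduce a pie-limit to products, inserters and equifiers, and check in each case that the canonical legs create (hence preserve and jointly reflect) directed colimits, which simultaneously yields accessibility of the limit via the ambient result for $\text{Acc}$ and the universal property inside $\text{Acc}_\omega$. One small sharpening worth noting: in both the inserter and equifier steps only the preservation of directed colimits by $F$ is actually used --- the comparison $\phi$ is induced by the cocone $G(\iota_j)\circ\phi_j$ out of $Fa\cong\colim_j Fa_j$, and the joint epimorphicity argument involves only the maps $F(\iota_j)$ --- though since every $1$-cell of $\text{Acc}_\omega$ preserves directed colimits, this refinement costs nothing here.
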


\begin{prop}[Slight refinement of {\cite{colimitacc}[2.1]}]
Every directed diagram of accessible categories and full embeddings preserving directed colimits has colimit in Cat, and is in fact the colimit in $\text{Acc}_\omega$.
\end{prop}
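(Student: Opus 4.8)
The plan is to describe the colimit in $\text{Cat}$ by hand, check that it already belongs to $\text{Acc}_\omega$, and then verify that its $\text{Cat}$-level universal property refines to one in $\text{Acc}_\omega$. Write the system as $(\ca_i, f_{ij})_{i \leq j}$ with $I$ directed and each $f_{ij} \colon \ca_i \to \ca_j$ a full embedding preserving directed colimits. Since the transition functors are fully faithful and injective on objects, the colimit $\ca_\infty$ in $\text{Cat}$ is the naive directed union: its objects are $\colim_i \text{Ob}(\ca_i)$, and the hom-set between two objects presented at a common stage $k$ is computed in $\ca_k$, independently of the chosen $k$ by full faithfulness. In particular the cocone legs $f_i \colon \ca_i \to \ca_\infty$ are again full embeddings, and every object and morphism of $\ca_\infty$ lies in the image of some $f_i$.

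For accessibility I would first uniformize the ranks. Because $I$ is small and every $f_{ij}$, preserving directed colimits, is accessible, there is (by the standard sharpening of the uniformization lemma, \cite{adamekrosicky94}[2.19]) an arbitrarily large regular cardinal $\lambda$ for which each $\ca_i$ is $\lambda$-accessible and each $f_{ij}$ preserves $\lambda$-presentable objects. Fixing such a $\lambda$, the transition functors restrict to full embeddings $(\ca_i)_\lambda \to (\ca_j)_\lambda$ of essentially small categories, one identifies the $\lambda$-presentable objects of $\ca_\infty$ with $\colim_i (\ca_i)_\lambda$, and so $\ca_\infty \simeq \mathsf{Ind}_\lambda(\colim_i (\ca_i)_\lambda)$ is $\lambda$-accessible. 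This existence-and-accessibility statement is precisely \cite{colimitacc}[2.1], which I would cite; together with it one also extracts that $\ca_\infty$ has directed colimits and that the legs $f_i$ preserve them, the point being that a directed diagram drawn from a single stage $\ca_i$ lifts uniquely along the full embedding $f_i$, is computed there, and is reflected back into $\ca_\infty$ since a fully faithful functor reflects the colimits it preserves.

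The remaining, genuinely new, content is the \emph{refinement} to $\text{Acc}_\omega$: that $\ca_\infty$ with the legs $f_i$ is the colimit in $\text{Acc}_\omega$ and not merely in $\text{Cat}$. Given $\cb \in \text{Acc}_\omega$ and a cocone $g_i \colon \ca_i \to \cb$ of directed-colimit-preserving functors, the $\text{Cat}$-level universal property produces a unique functor $g \colon \ca_\infty \to \cb$ with $g f_i \cong g_i$. I would then check that $g$ preserves directed colimits: every directed colimit in $\ca_\infty$ is, by the previous paragraph, created from one in some stage $\ca_i$, on which $g$ agrees with the directed-colimit-preserving $g_i$; hence $g$ is a $1$-cell of $\text{Acc}_\omega$. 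Uniqueness and the $2$-cell bookkeeping descend verbatim from the $\text{Cat}$-level universal property, since $\text{Acc}_\omega$ sits (non-fully) inside $\text{Cat}$.

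The main obstacle is the closure under \emph{all} directed colimits and the preservation claim for $g$. The delicate point is that a directed diagram in $\ca_\infty$ need not factor through a single stage $\ca_i$, so one cannot naively compute it downstairs; the argument must interleave the $\lambda$-directed colimits coming from $\lambda$-accessibility with the stage-wise directed colimits coming from the full embeddings, and this is exactly where the uniform choice of $\lambda$ and the hypothesis that the $f_{ij}$ preserve \emph{directed} (not merely $\lambda$-directed) colimits are indispensable. Once this orchestration — carried out in \cite{colimitacc}[2.1] — is in place, the refinement to $\text{Acc}_\omega$ is the short universal-property verification above.
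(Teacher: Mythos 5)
Your overall architecture---describe the Cat-colimit as a directed union, uniformize $\lambda$, cite \cite{colimitacc}[2.1] for accessibility, then verify the universal property in $\text{Acc}_\omega$---is the route the paper itself intends (the thesis states the proposition as a ``slight refinement'' of \cite{colimitacc}[2.1] and gives no further argument). But your execution has a genuine gap, and you flag it yourself without resolving it. In your third paragraph you justify that the induced functor $g$ preserves directed colimits by asserting that every directed colimit in $\ca_\infty$ is \emph{created from one in some stage} $\ca_i$; in your fourth paragraph you correctly concede that a directed diagram in $\ca_\infty$ need not factor through any stage. The concession defeats the assertion: a diagonal chain $x_0 \to x_1 \to \cdots$ in which $x_n$ first appears at stage $i_n$, with $(i_n)$ cofinal in the index poset, factors through no leg $f_i$. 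This is not a pathology one can wave away: take $\ca_n = \Set^n$ with transition functors extending by the initial object in the new coordinate; each stage is locally finitely presentable, each transition functor is a full embedding preserving all colimits, yet the diagonal chain of objects $x_n$ (singleton in the first $n$ coordinates, initial elsewhere) admits no cocone at all inside the naive union, so no stage-wise reasoning can manufacture its colimit. The same example shows that your identification $\ca_\infty \simeq \mathsf{Ind}_\lambda(\colim_i (\ca_i)_\lambda)$ cannot be read off from the uniformization lemma alone: for a cardinal $\lambda$ merely satisfying your stated requirements, $\mathsf{Ind}_\lambda(\colim_i (\ca_i)_\lambda)$ contains the formal colimit of a diagonal chain while the Cat-colimit does not, so the equivalence is exactly where the difficulty lives rather than a consequence of bookkeeping.

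The attempted repair---that the ``orchestration'' is ``carried out in \cite{colimitacc}[2.1]''---is a misattribution: the thesis labels the $\text{Acc}_\omega$ statement a \emph{refinement of} \cite{colimitacc}[2.1], i.e.\ that result supplies existence and accessibility of the Cat-colimit (hence $\lambda$-directed colimits for a suitable uniform $\lambda$), not membership in $\text{Acc}_\omega$, not preservation of \emph{all} directed colimits by the legs and by $g$, and not the universal property in $\text{Acc}_\omega$. Those claims are precisely the new content, and they require real interleaving of the two colimit structures: for instance, reducing directed colimits to $\mu$-chains, handling regular $\mu \geq \lambda$ through $\lambda$-accessibility, and handling $\mu < \lambda$ using that in each stage the $\lambda$-presentables are closed under $\lambda$-small directed colimits (a $\mu$-small limit of hom-functors preserving $\lambda$-directed colimits still preserves them, since $\lambda$-small limits commute with $\lambda$-directed colimits in $\Set$), together with an analysis of the map sending an index of the diagram to the stage where its image lives. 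What you have actually proved is the easy part: the description of the Cat-colimit, the fact that the legs are full embeddings, and---after replacing ``is reflected back'' by the correct argument, namely testing a stage-wise colimit cocone against an arbitrary $X \in \ca_\infty$ by choosing $j$ above both the given stage and a stage containing $X$, where homs are computed and $f_{ij}$ preserves the colimit---that the legs preserve stage-wise directed colimits. The central step of the proposition is deferred to a reference that does not contain it.
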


\subsection{$\text{Acc}_\omega$ is monoidal closed}

This subsection discusses a monoidal closed structure on $\text{Acc}_\omega$. The reader should keep in mind the monoidal product of modules over a ring, because the construction is similar in spirit, at the end of the subsection we will provide an hopefully convincing argument in order to show that the construction is similar for a quite quantitative reason. The main result of the section should be seen as a slight variation of \cite{kelly1982basic}[6.5] where the enrichment base is obviously the category of Sets and $\mathcal{F}$-cocontinuity is replaces by preservation of directed colimits. Our result doesn't technically follows from Kelly's one because of size issues, but the general idea of the proof is in that spirit. Moreover, we found it clearer to provide an explicit construction of the tensor product in our specific case. The reader is encouraged to check \cite{350869}, where Brandenburg provides a concise presentations of Kelly's construction. For a treatment of how the bilinear tensor product on categories with certain colimits gives you a monoidal bicategory we refer to \cite{bourke2017skew,HYLAND2002141,LOPEZFRANCO20112557}. 

\begin{rem}[A natural internal hom] \label{internalhom} Given two accessible categories $\ca, \cb $ in $\text{Acc}_\omega$, the category of functors preserving directed colimits $\text{Acc}_\omega(\ca, \cb)$ has directed colimits and they are computed pointwise. Moreover it is easy to show that it is sketchable and thus accessible. Indeed $\text{Acc}_\omega(\ca, \cb)$ is accessibly embedded in $\cb^{\ca_\lambda}$ and coincides with those functors $\ca_\lambda \to \cb$ preserving $\lambda$-small directed colimits, which makes it clearly sketchable. Thus we obtain a $2$-functor, \[[-,-]:\text{Acc}_\omega^\circ \times \text{Acc}_\omega \to \text{Acc}_\omega.\] In our analogy, this corresponds to the fact that the set of morphisms between two modules over a ring $\mathsf{Mod}(M,N)$ has a (pointwise) structure of module.
\end{rem}

\begin{rem}[Looking for a tensor product: the universal property] \label{univproptensor} 
Assume for a moment that the tensorial structure that we are looking for exists, then we would obtain a family of (pseudo)natural equivalences of categories, $$\text{Acc}_\omega(\ca \otimes \cb, \cc) \simeq \text{Acc}_\omega (\ca, [\cb, \cc]) \simeq \omega\text{-Bicocont}(\ca \times \cb, \cc).$$ In the display we wrote $\omega\text{-Bicocont}(\ca \times \cb, \cc)$ to mean the category of those functors that preserve directed colimits in each variable. The equation gives us the universal property that should define $\ca \otimes \cb$ up to equivalence of categories and is consistent with our ongoing analogy of modules over a ring, indeed the tensor product classifies \textit{bilinear maps}. 
\end{rem}

\begin{con}[Looking for a tensor product: the construction] \label{constructiontensor} 
Let $\yo: \ca \times \cb \to \cp(\ca \times \cb)$ be the Yoneda embedding of $\ca \times \cb$ corestricted to the full subcategory of small presheaves \cite{presheaves}. Let $B(\ca,\cb)$ be the full subcategory of $\cp(\ca \times \cb)$ of those functors that preserve cofiltered limits in both variables\footnote{i.e., send filtered colimits in $\ca$ or $\cb$ to limits in $\Set$.}. It is easy to show that $B(\ca,\cb)$ is sketched by a limit theory, and thus is locally presentable. The inclusion $i: B(\ca,\cb) \hookrightarrow \cp(\ca \times \cb)$ defines a small-orthogonality class\footnote{Here we are using that $\ca$ and $\cb$ are accessible in order to cut down the size of the orthogonality.} in $\cp(\ca \times \cb)$ and is thus reflective \cite{adamekrosicky94}[1.37, 1.38]. Let $L$ be the left adjoint of the inclusion, as a result we obtain an adjunction, $$L: \cp(\ca \times \cb) \leftrightarrows B(\ca,\cb): i. $$
Now define $\ca \otimes \cb$ to be the smallest full subcategory of $B(\ca,\cb)$ closed under directed colimits and containing the image of $L \circ \yo$. Thm. \cite{kelly1982basic}[6.23] in Kelly ensures that $\ca \otimes \cb$ has the universal property described in Rem. \ref{univproptensor} and thus is our tensor product. It might be a bit hard to see but this construction still follows our analogy, the tensor product of two modules is indeed built from free module on the product and  the \textit{bilinear} relations.
\end{con}

\begin{thm}\label{accmonoidalclosed}
$\text{Acc}_\omega$, together with the tensor product $\otimes$ defined in Con. \ref{constructiontensor} and the internal hom defined in Rem. \ref{internalhom} is a monoidal biclosed bicategory\footnote{This is not strictly true, because the definition of monoidal closed category does not allow for equivalence of categories. We did not find a precise terminology in the literature and we felt non-useful to introduce a new concept for such a small discrepancy.} in the sense that there is are pseudo-equivalences of categories $$\text{Acc}_\omega(\ca \otimes \cb, \cc) \simeq \text{Acc}_\omega (\ca, [\cb, \cc]),$$ which are natural in $\cc$.
\end{thm}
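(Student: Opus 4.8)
The plan is to prove the stated pseudo-equivalence by factoring it through the category $\omega\text{-Bicocont}(\ca \times \cb, \cc)$ of functors preserving directed colimits in each variable separately, exactly as anticipated in Rem. \ref{univproptensor}. Thus I would establish two pseudo-equivalences,
$$\text{Acc}_\omega(\ca \otimes \cb, \cc) \simeq \omega\text{-Bicocont}(\ca \times \cb, \cc) \simeq \text{Acc}_\omega(\ca, [\cb, \cc]),$$
each pseudonatural in $\cc$, and then compose them.

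The second equivalence is the elementary half, and I would treat it first by a currying argument. Given $G \colon \ca \to [\cb, \cc] = \text{Acc}_\omega(\cb, \cc)$ preserving directed colimits, define its transpose $\tilde{G}(a, b) := G(a)(b)$. Since each $G(a)$ lies in $\text{Acc}_\omega(\cb, \cc)$, the functor $\tilde{G}$ preserves directed colimits in the second variable; since $G$ preserves directed colimits and these are computed pointwise in $[\cb, \cc]$ by Rem. \ref{internalhom}, $\tilde{G}$ preserves them in the first variable. Conversely, any bi-cocontinuous $F$ curries to a functor $\ca \to [\cb,\cc]$ that lands in $\text{Acc}_\omega(\cb,\cc)$ and itself preserves directed colimits. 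Checking that these assignments are mutually quasi-inverse and $2$-functorial is routine and yields the equivalence together with its pseudonaturality in $\cc$.

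The first equivalence is where the construction of Con. \ref{constructiontensor} does the real work, and it is the technical heart of the argument. The point to make is that $L \circ \yo \colon \ca \times \cb \to \ca \otimes \cb$ is the \emph{universal} bi-cocontinuous functor: the reflection $L$ onto $B(\ca, \cb)$ forces the representables to become bilinear, the defining condition of $B(\ca,\cb)$—that its objects send filtered colimits in either variable to limits in $\Set$—being the presheaf-theoretic shadow of bi-cocontinuity. I would then invoke \cite{kelly1982basic}[6.23] to conclude that restriction along $L \circ \yo$ induces an equivalence $\text{Acc}_\omega(\ca \otimes \cb, \cc) \simeq \omega\text{-Bicocont}(\ca \times \cb, \cc)$, again pseudonatural in $\cc$.

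The main obstacle is not conceptual but lies in legitimizing this last step against the size constraints, which is precisely why the construction is routed through small presheaves. Concretely I must verify: that $B(\ca,\cb)$ is locally presentable, via its presentation by a limit sketch; that accessibility of $\ca$ and $\cb$ cuts the inclusion $i \colon B(\ca,\cb) \hookrightarrow \cp(\ca \times \cb)$ down to a small-orthogonality class, so that the reflection $L$ exists by \cite{adamekrosicky94}[1.37, 1.38]; and that $\ca \otimes \cb$, defined as the closure of the image of $L \circ \yo$ under directed colimits, genuinely lands in $\text{Acc}_\omega$ and carries the universal property demanded by Kelly's theorem. Granting these, composing the two equivalences gives the desired pseudonatural equivalence; the manifest symmetry of the construction in $\ca$ and $\cb$ then yields biclosedness, and the remaining associativity and unit coherences follow formally from the universal properties, up to the equivalence-versus-isomorphism caveat already flagged in the footnote to the statement.
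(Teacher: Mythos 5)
Your proposal is correct and follows essentially the same route as the paper: the paper's proof is the one-line ``follows directly from the discussion above,'' where that discussion is exactly your decomposition --- the currying equivalence of Rem.~\ref{univproptensor} through $\omega\text{-Bicocont}(\ca \times \cb, \cc)$, plus the small-presheaf construction of Con.~\ref{constructiontensor} with the universal property supplied by \cite{kelly1982basic}[6.23]. You have merely made explicit the size-bookkeeping (local presentability of $B(\ca,\cb)$ via a limit sketch, the small-orthogonality reflection from \cite{adamekrosicky94}[1.37, 1.38]) that the paper delegates to its preceding remarks.
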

\begin{proof}
Follows directly from the discussion above.
\end{proof}

\begin{rem}[Up to iso/up to equivalence]
As in \cite{kelly1982basic}[6.5], we will not distinguish between the properties of this monoidal structure (where everything is true up to equivalence of categories) and a usual one, where everything is true up to isomorphism. In our study this distinction never plays a rôle, thus we will use the usual terminology about monoidal structures. 
\end{rem}

\begin{rem}[The unit] \label{unitmonoidalstructure}
The unit of the above-mentioned monoidal structure is the terminal category in Cat, which is also terminal in $\text{Acc}_\omega$.
\end{rem}

\begin{rem}[Looking for a tensor product: an abstract overview] \label{tensorabstract} 
In this subsection we have used the case of modules over a ring as a kind of analogy/motivating example. In this remark we shall convince the reader that the analogy can be pushed much further. Let's start by the observation that $R\text{-}\mathsf{Mod}$ is the category of algebras for the monad $R[-]: \Set \to \Set$. The monoidal closed structure of $\mathsf{Mod}$ can be recovered from the one of the category of sets $(\Set, 1, \times, [-,-])$ via a classical theorem proved by Kock in the seventies.  It would not make the tractation more readable to cite all the papers that are involved in this story, thus we mention the PhD thesis of Brandenburg \cite{brandenburg2014tensor}[Chap. 6] which provides a very coherent and elegant presentation of the literature. 

\begin{thm}(Seal, 6.5.1 in \cite{brandenburg2014tensor}) Let $T$ be a coherent (symmetric) monoidal monad on a (symmetric) monoidal category C with reflexive coequalizers. Then $\mathsf{Mod}(T)$ becomes a (symmetric) monoidal category.
\end{thm}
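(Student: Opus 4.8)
The plan is to equip $\mathsf{Mod}(T)=C^T$ with a monoidal structure transported from $C$ along the free--forgetful adjunction $F\dashv U$, exploiting the monoidal data of the monad to turn $F$ into a strong (symmetric) monoidal functor. Write $(T,\mu,\eta)$ for the monad and $T_{A,B}\colon TA\otimes TB\to T(A\otimes B)$, $T_0\colon I\to TI$ for its monoidal constraints. First I would define the tensor of two algebras $(A,a)$ and $(B,b)$ as the coequalizer in $C$ of the parallel pair
\[
T(TA\otimes TB)\rightrightarrows T(A\otimes B),
\]
whose first leg is $\mu_{A\otimes B}\circ T(T_{A,B})$ and whose second leg is $T(a\otimes b)$, and declare the unit object to be the free algebra $(TI,\mu_I)=F(I)$. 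The map $T(\eta_A\otimes\eta_B)$ is a common section of both legs (using the unit law $a\eta_A=\mathrm{id}$ on one side and the compatibility $T_{A,B}\circ(\eta_A\otimes\eta_B)=\eta_{A\otimes B}$ on the other), so the pair is reflexive and the coequalizer exists because $C$ has reflexive coequalizers.

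The first technical step is to promote this coequalizer to $C^T$. Since both legs are morphisms of \emph{free} algebras, and since I read the coherence hypothesis as guaranteeing that $T$ (and each $-\otimes X$) preserves reflexive coequalizers, the forgetful functor $U\colon C^T\to C$ \emph{creates} the coequalizer of this reflexive pair; hence $A\otimes_T B$ carries a canonical $T$-algebra structure and is the coequalizer computed in $C^T$. From the defining coequalizer one reads off immediately that $F$ is strong monoidal, i.e. $F(A)\otimes_T F(B)\cong F(A\otimes B)$ naturally and $F(I)$ is the unit, because on free algebras the second leg becomes an isomorphism and the coequalizer collapses. This is the conceptual heart: the monoidal structure on $C^T$ is the essentially unique one for which $F$ is strong monoidal.

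Next I would establish functoriality and extend the coherence data to arbitrary algebras by reduction to free ones. The key device is the canonical presentation: every algebra $(A,a)$ is the reflexive coequalizer of free algebras
\[
(TTA,\mu_{TA})\rightrightarrows(TA,\mu_A)\xrightarrow{\ a\ }(A,a),
\]
with legs $\mu_A$ and $Ta$ and section $\eta_{TA}$. Using that $-\otimes_T X$ preserves reflexive coequalizers (again the coherence hypothesis), every instance of the associator, the left and right unitors, and the symmetry is obtained by coequalizing the corresponding constraint between free algebras, where it is forced by strong monoidality of $F$ and by the already-available coherence isomorphisms of $(C,\otimes,I)$. Naturality follows likewise by the universal property of the coequalizers.

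The main obstacle, and the place where I expect the real work to sit, is verifying the coherence axioms -- the pentagon, the unit triangle, and (in the symmetric case) the hexagon -- for these transported constraints. The strategy is to reduce each axiom, which a priori involves triple tensor products $A\otimes_T B\otimes_T C$, to the corresponding diagram on free algebras by writing all three arguments via their canonical presentations and invoking a Fubini-type interchange of reflexive coequalizers in the three variables simultaneously. This interchange is exactly what the coherence hypothesis on $T$ (preservation of reflexive coequalizers by $T$ and by the tensor in each variable) is designed to license; once it is in force, each large coherence diagram for $C^T$ becomes the image under a joint coequalizer of the corresponding diagram for $C$, which commutes by Mac Lane's coherence theorem in $C$ together with the monoidal-monad axioms. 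The symmetric case adds only the bookkeeping of checking that the commutativity (equivalently, the symmetry of $T_{A,B}$) survives the coequalizer, which it does by naturality of the braiding of $C$.
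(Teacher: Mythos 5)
The paper does not actually prove this statement: it is quoted verbatim from Brandenburg's thesis \cite{brandenburg2014tensor}[6.5.1], where it is attributed to Seal, and serves purely as motivation for the monoidal structure on $\text{Acc}_\omega$. Your sketch reproduces the standard Seal--Brandenburg proof faithfully: the tensor of algebras as the reflexive coequalizer of the pair $T(TA\otimes TB)\rightrightarrows T(A\otimes B)$ with legs $\mu_{A\otimes B}\circ T(T_{A,B})$ and $T(a\otimes b)$ and common section $T(\eta_A\otimes\eta_B)$; unit object $F(I)$; creation of the coequalizer in $\mathsf{Mod}(T)$ because both legs are morphisms of free algebras and coherence lets $T$ preserve the coequalizer; and transport of the associator, unitors and symmetry along the canonical free presentations $(TTA,\mu_{TA})\rightrightarrows(TA,\mu_A)\to(A,a)$ via a Fubini-type interchange of reflexive coequalizers. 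One local justification is wrong as stated, though: for free algebras the second leg $T(\mu_A\otimes\mu_B)$ is \emph{not} an isomorphism, so the coequalizer does not ``collapse'' for that reason. The correct argument is that the fork $T(TTA\otimes TTB)\rightrightarrows T(TA\otimes TB)\xrightarrow{\ \mu_{A\otimes B}\circ T(T_{A,B})\ } T(A\otimes B)$ is a \emph{split} coequalizer, the splittings being assembled from the unit $\eta$ and the monoidal constraint; this is what yields $F(A)\otimes_T F(B)\cong F(A\otimes B)$ and hence strong monoidality of $F$. With that repair, your proposal coincides with the proof in the cited source, and the remaining steps (reduction of pentagon, triangle and hexagon to $C$ by joint coequalizers) are exactly how Seal and Brandenburg conclude.
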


Now similarly to $\mathsf{Mod}(R)$ the $2$-category of categories with directed colimits and functors preserving them is the category of (pseudo)algebras for the KZ monad of the Ind-completion over locally small categories $$\mathsf{Ind}: \text{Cat} \to \text{Cat}. $$ \cite{bourke2017skew}[6.7] provides a version of Seal's theorem for monads over Cat. While it's quite easy to show that the completion under directed colimits meets many of Bourke's hypotheses, we do not believe that it meets all of them, thus we did not manage to apply a Kock-like result to derive Thm. \ref{accmonoidalclosed}. Yet, we think we have provided enough evidence that the analogy is not just motivational.
\end{rem}

\section{$2$-categorical properties of $\text{Topoi}$}
\subsection{(co)Limits in $\text{Topoi}$}

The $2$-categorical properties of the category of topoi have been studied in detail in the literature. We mention \citep{elephant2}[B3.4] and \citep{Lurie} as a main reference.

\subsection{Enrichment over $\text{Acc}_\omega$, tensor and powers} \label{topoienriched}

\subsubsection{The enrichment}
The main content of this sub-subsection will be to show that the category of topoi and geometric morphisms (in the direction of the right adjoint) is enriched over $\text{Acc}_\omega$. Notice that formally speaking, we are enriching over a monoidal bicategory, thus the usual theory of enrichment is not sufficient. As Garner pointed out to us, the theory of bicategories enriched in a monoidal bicategory is originally due to Bozapalides in the 1970s, though he was working without the appropriate technical notion; more precise definitions are in the PhD theses of Camordy and Lack; and everything is worked out in excruciating detail in \cite{garner2016enriched}.

\begin{rem} Recall that to provide such an enrichment means to

\begin{enumerate}
	\item show that given two topoi $\ce, \cf$, the set of geometric morphisms $\text{Topoi}(\ce, \cf)$ admits a structure of accessible category with directed colimits.
	\item provide, for each triple of topoi $\ce, \cf, \cg$, a functor preserving directed colimits $$\circ: \text{Topoi}(\ce, \cf) \otimes \text{Topoi}(\cf, \cg) \to  \text{Topoi}(\ce, \cg), $$ making the relevant diagrams commute.
\end{enumerate} 
(1) will be shown in Prop. \ref{topoihomacc}, while (2) will be shown in Prop. \ref{topoicomposition}.
\end{rem}

\begin{prop}\label{topoihomacc}
 Let $\ce,\cf$ be two topoi. Then the category of geometric morphisms $\text{Cocontlex}(\ce, \cf)$, whose objects are cocontinuous left exact functors and morphisms are natural transformations is an accessible category with directed colimits.
\end{prop}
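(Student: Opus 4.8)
The plan is to realize $\text{Cocontlex}(\ce,\cf)$ as an accessible, accessibly embedded, directed-colimit-closed subcategory of an ordinary functor category, following the same pattern as the proof that $\mathsf{S}(\ca)$ is a topos (Rem. \ref{scottconstructionwelldefined}) and the description of the internal hom (Rem. \ref{internalhom}). Both $\ce$ and $\cf$ are Grothendieck topoi, hence locally presentable, so I would first fix a regular cardinal $\lambda$ large enough that $\ce$ and $\cf$ are both locally $\lambda$-presentable and that the full subcategory $\ce_\lambda$ of $\lambda$-presentable objects is closed under finite limits (such $\lambda$ exists by \cite{adamekrosicky94}). Since $\ce\simeq\mathsf{Ind}_\lambda(\ce_\lambda)$ with $\ce_\lambda$ essentially small and closed under $\lambda$-small colimits, restriction along $\iota\colon \ce_\lambda\hookrightarrow\ce$ is fully faithful on cocontinuous functors, with inverse the left Kan extension $\lan_\iota$; this gives a fully faithful $\mathrm{res}\colon \text{Cocontlex}(\ce,\cf)\to[\ce_\lambda,\cf]$.

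The crux is to identify the essential image of $\mathrm{res}$ with the full subcategory $\mathcal{G}$ of functors $G\colon\ce_\lambda\to\cf$ that preserve $\lambda$-small colimits and finite limits. One inclusion is immediate: if $F$ is cocontinuous and left exact then $F\iota$ preserves $\lambda$-small colimits and, since $\ce_\lambda$ is closed under finite limits, also finite limits. For the converse I would run the standard flat-functor (Diaconescu) argument: writing $X=\colim_i x_i$ and $Y=\colim_j y_j$ as $\lambda$-filtered colimits of objects of $\ce_\lambda$, one has $X\times Y\cong\colim_{i,j}x_i\times y_j$ with $x_i\times y_j\in\ce_\lambda$, whence
\[\lan_\iota G(X\times Y)\cong\colim_{i,j}G(x_i\times y_j)\cong\colim_{i,j}(Gx_i\times Gy_j)\cong\lan_\iota G(X)\times\lan_\iota G(Y),\]
using that $G$ is left exact and that finite limits commute with $\lambda$-filtered colimits in the topos $\cf$; the terminal object and equalizers are handled identically, so $\lan_\iota G$ is left exact. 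Checking that the left Kan extension of a finite-limit-preserving $G$ remains left exact is the main obstacle, and it is precisely where the topos structure of $\cf$ and the closure of $\ce_\lambda$ under finite limits are indispensable.

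It then remains to establish the two defining properties of an object of $\text{Acc}_\omega$ on $\mathcal{G}$. For directed colimits, I would observe that they are computed pointwise in $\cf$: a pointwise directed colimit of functors preserving $\lambda$-small colimits again preserves them (colimits commute with colimits), and again preserves finite limits, since in the topos $\cf$ finite limits commute with directed colimits. Hence $\mathcal{G}$ is closed under directed colimits in $[\ce_\lambda,\cf]$, and the equivalence $\text{Cocontlex}(\ce,\cf)\simeq\mathcal{G}$ transports them. For accessibility, $[\ce_\lambda,\cf]$ is locally presentable ($\ce_\lambda$ essentially small, $\cf$ locally presentable), and $\mathcal{G}$ is cut out by a set-indexed family of preservation conditions, namely preservation of the finite limit cones and of the $\lambda$-small colimit cocones of $\ce_\lambda$. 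Each such condition defines an accessible, accessibly embedded subcategory, and $\mathcal{G}$ is their intersection, hence accessible and accessibly embedded by the closure of $\text{Acc}$ under pie-limits \cite{lieberman2015limits,adamekrosicky94}. Combining the two paragraphs, $\mathcal{G}\simeq\text{Cocontlex}(\ce,\cf)$ is an accessible category with directed colimits, as required.
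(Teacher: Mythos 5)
Your proposal is correct and follows essentially the same route as the paper, whose proof simply defers to \cite{borceux_19943}[Cor. 4.3.2] with the remark that $\Set$ plays no rôle and that all that matters is the commutation of finite limits with directed colimits in the topos $\cf$. What you have written is precisely that argument unpacked — restricting along $\ce_\lambda \hookrightarrow \ce$, identifying $\text{Cocontlex}(\ce,\cf)$ with the (sketchable, accessibly embedded) subcategory of $[\ce_\lambda,\cf]$ of functors preserving $\lambda$-small colimits and finite limits, and closing under pointwise directed colimits via the same commutation fact — matching the pattern the paper itself uses in Rem. \ref{scottconstructionwelldefined} and Rem. \ref{internalhom}.
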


\begin{proof}
The proof goes as in \cite{borceux_19943}[Cor.4.3.2], $\Set$ plays no rôle in the proof. What matters is that finite limits commute with directed colimits in a topos.
\end{proof}

\begin{prop}\label{topoicomposition}
For each triple of topoi $\ce, \cf, \cg$, there exists a functor preserving directed colimits $$\circ: \text{Topoi}(\ce, \cf) \otimes \text{Topoi}(\cf, \cg) \to  \text{Topoi}(\ce, \cg), $$ making the relevant diagrams commute.
\end{prop}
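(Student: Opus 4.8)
The plan is to reduce the construction of $\circ$ to ordinary composition of geometric morphisms and then to invoke the universal property of the tensor product established in Thm. \ref{accmonoidalclosed}. Concretely, composition of geometric morphisms $f \colon \ce \to \cf$ and $g \colon \cf \to \cg$ is computed on inverse images by $(gf)^* = f^* \circ g^*$, and since the composite of two cocontinuous left exact functors is again cocontinuous and left exact, this assignment is well defined on objects. On $2$-cells one takes the horizontal (Godement) composite of the natural transformations between inverse images; the interchange law guarantees functoriality. This produces a functor
\[ \bar{\circ} \colon \text{Topoi}(\ce, \cf) \times \text{Topoi}(\cf, \cg) \to \text{Topoi}(\ce, \cg). \]

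First I would check that $\bar{\circ}$ preserves directed colimits in each variable separately. Recall from (the proof of) Prop. \ref{topoihomacc} that directed colimits in these hom-categories are computed pointwise, precisely because directed colimits commute with finite limits in a topos. Fixing $g$ and letting $f = \colim_i f_i$ be a directed colimit, the identification $(\colim_i f_i^*) \circ g^* \cong \colim_i (f_i^* \circ g^*)$ is immediate by evaluating both sides at an object of $\cg$ and using pointwise-ness. Fixing $f$ and letting $g = \colim_i g_i$, the analogous identity $f^* \circ (\colim_i g_i^*) \cong \colim_i (f^* \circ g_i^*)$ holds because the inverse image $f^*$, being cocontinuous, preserves the (pointwise) directed colimit. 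Thus $\bar{\circ}$ is separately cocontinuous on directed colimits, i.e. it lies in $\omega\text{-Bicocont}(\text{Topoi}(\ce, \cf) \times \text{Topoi}(\cf, \cg), \text{Topoi}(\ce, \cg))$.

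By the universal property of the tensor product (Rem. \ref{univproptensor} and Thm. \ref{accmonoidalclosed}), such a separately-cocontinuous functor corresponds to a functor $\circ \colon \text{Topoi}(\ce, \cf) \otimes \text{Topoi}(\cf, \cg) \to \text{Topoi}(\ce, \cg)$ preserving directed colimits, unique up to equivalence, which is the desired composition. Finally I would verify that the relevant diagrams commute: associativity and unitality reduce to the corresponding identities for composition of inverse image functors, the unit being supplied by the functor from the terminal category (Rem. \ref{unitmonoidalstructure}) selecting the identity geometric morphism.

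The genuinely delicate point is bookkeeping rather than content: since we work with a monoidal \emph{bi}category and an enrichment in the sense of \cite{garner2016enriched}, the equations above hold only up to coherent invertible $2$-cell, so the associativity and unit constraints must be produced as natural isomorphisms and shown to satisfy the pentagon and triangle coherence conditions. I expect this coherence verification---not the cocontinuity, which is essentially immediate---to be the main technical burden, and it is exactly the part handled abstractly by the theory of bicategories enriched in a monoidal bicategory.
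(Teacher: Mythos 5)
Your proposal is correct and follows essentially the same route as the paper: define composition as ordinary composition of inverse image functors, verify preservation of directed colimits in each variable via their pointwise computation (the paper's proof of Prop.~\ref{topoicomposition} does exactly this, though it is terser and dismisses the coherence diagrams as trivial), and transport through the universal property of $\otimes$ from Rem.~\ref{univproptensor}. Your added care---noting that the second variable additionally needs cocontinuity of the fixed $f^*$, and that coherence must be checked up to invertible $2$-cell in the enriched-bicategory sense of \cite{garner2016enriched}---is a faithful elaboration of what the paper leaves implicit, not a different argument.
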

\begin{proof}
We will only provide the composition. The relevant diagrams commute trivially from the presentation of the composition. Recall that by \ref{univproptensor} a map of the form $\circ: \text{Topoi}(\ce, \cf) \otimes \text{Topoi}(\cf, \cg) \to  \text{Topoi}(\ce, \cg), $ preserving directed colimits is the same of a functor $$ \circ: \text{Topoi}(\ce, \cf) \times \text{Topoi}(\cf, \cg) \to  \text{Topoi}(\ce, \cg) $$ preserving directed colimits in each variables. Obviously, since left adjoints can be composed in Cat, we already have such a composition. It's enough to show that it preserves directed colimits in each variable. Indeed this is the case, because directed colimits in these categories are computed pointwise.
\end{proof}

\begin{thm} \label{enrichment}
The category of topoi is enriched over $\text{Acc}_\omega$.
\end{thm}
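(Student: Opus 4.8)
The plan is simply to assemble the pieces that Propositions \ref{topoihomacc} and \ref{topoicomposition} have already put in place and then to check that the coherence data demanded by an enrichment over the monoidal bicategory $\text{Acc}_\omega$ is furnished canonically. Recall from \cite{garner2016enriched} that to enrich $\text{Topoi}$ over $(\text{Acc}_\omega, \otimes, \mathbf{1})$ one must provide a hom-object $\underline{\text{Topoi}}(\ce,\cf) \in \text{Acc}_\omega$ for every pair of topoi, a composition $1$-cell and a unit $1$-cell, together with associativity and unitality coherence $2$-cells, and finally a verification that these satisfy the pentagon and triangle coherence axioms.

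First I would take the hom-object to be $\underline{\text{Topoi}}(\ce,\cf) := \text{Cocontlex}(\cf, \ce)$, the category of inverse-image functors; by Proposition \ref{topoihomacc} this is an accessible category with directed colimits, hence an object of $\text{Acc}_\omega$. The composition $1$-cells are exactly the functors $\circ$ produced in Proposition \ref{topoicomposition}, which preserve directed colimits in each variable and therefore factor through the tensor $\otimes$ by the universal property recorded in Remark \ref{univproptensor}. For the units I would use that the monoidal unit of $\text{Acc}_\omega$ is the terminal category (Remark \ref{unitmonoidalstructure}): the functor $u_\ce : \mathbf{1} \to \underline{\text{Topoi}}(\ce,\ce)$ selecting the identity inverse-image functor $\id_\ce$ trivially preserves directed colimits and so is a legitimate $1$-cell.

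The one genuine point to check --- and the only place any real argument is needed --- is coherence. Here I would exploit the fact that the composition $\circ$ is literally induced by composition of cocontinuous left-exact functors in $\text{Cat}$, which is strictly associative and strictly unital. Consequently the associativity and unit constraints of the prospective enrichment may be taken to be precisely the associator and unitors of the monoidal structure $\otimes$ on $\text{Acc}_\omega$ transported along these strict equalities, and the pentagon and triangle axioms for the enrichment then reduce to the corresponding coherence diagrams of the monoidal bicategory $(\text{Acc}_\omega, \otimes, \mathbf{1})$, which hold by Theorem \ref{accmonoidalclosed}. The hard part is thus purely bookkeeping: matching the coherence $2$-cells of the enrichment with those of the tensor product and invoking the coherence theorem for monoidal bicategories. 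Since the strictness of functor composition collapses all the \emph{genuinely} enrichment-specific coherence onto the already-established coherence of $\otimes$, no further computation is required, and the theorem follows.
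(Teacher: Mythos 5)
Your proof is correct and takes essentially the same route as the paper, which simply declares the theorem ``trivial from the previous discussion'' after Propositions \ref{topoihomacc} and \ref{topoicomposition}; you assemble exactly those two ingredients, with the same hom-objects and the same composition. In fact you supply more than the paper does --- the unit $1$-cells out of the terminal category and the reduction of the enrichment coherence to the coherence of $(\text{Acc}_\omega,\otimes)$ via strictness of functor composition --- and these additions are sound.
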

\begin{proof}
Trivial from the previous discussion.
\end{proof}

\subsection{Tensors} \label{tensored}
In this subsection we show that the $2$-category of topoi has tensors (copowers) with respect to the enrichment of the previous section.

\begin{rem}
Let us recall what means to have tensors for a category $\mathsf{K}$ enriched over sets (that is, just a locally small category). To have tensors in this case means that we can define a functor $\boxtimes: \Set \times \mathsf{K} \to \mathsf{K}$ in such a way that, $$\mathsf{K}(S \boxtimes k, h) \cong \Set(S,\mathsf{K}( k, h)). $$ For example, the category of modules over a ring has tensors given by the formula $S \boxtimes M := \oplus_{S}M$; indeed it is straightforward to observe that $$R\text{-}\mathsf{Mod}(\oplus_S M, N) \cong \Set(S,R\text{-}\mathsf{Mod}(M, N)). $$ In this case, this follows from the universal property of the coproduct.
\end{rem}

\begin{rem}[The construction of tensors] \label{boxtimesdef}
We shall define a $2$-functor $\boxtimes: \text{Acc}_\omega \times \text{Topoi} \to \text{Topoi}.$ Our construction is reminiscent of the Scott adjunction, and we will see that there is an extremely tight connection between the two. Given a topos $\ce$ and an accessible category with directed colimits $\ca$ we define, $$\ca \boxtimes \ce := \text{Acc}_\omega(\ca, \ce).$$ In order to make this construction meaningful we need to accomplish two tasks:

\begin{enumerate}
	\item show that the construction is well defined (on the level of objects), that is, show that $\text{Acc}_\omega(\ca, \ce)$ is a topos.
	\item describe the action of $\boxtimes$ on functors.
\end{enumerate}

We split these two tasks into two different remarks.
\end{rem}

\begin{rem}[$\text{Acc}_\omega(\ca, \ce)$ is a topos] \label{tensorscottconstructionwelldefined}

By definition $\ca$ must be $\lambda$-accessible for some $\lambda$. Obviously $\text{Acc}_\omega(\ca, \ce)$ sits inside $\lambda\text{-Acc}(\ca, \ce)$. Recall that $\lambda\text{-Acc}(\ca, \ce)$ is equivalent to $\ce^{\ca_\lambda}$ by the restriction-Kan extension paradigm and the universal property of $\mathsf{Ind}_\lambda$-completion.  The inclusion $i: \text{Acc}_\omega(\ca, \ce) \hookrightarrow \ce^{\ca_\lambda}$, preserves all colimits and finite limits, this is easy to show and depends on the one hand on how colimits are computed in this category of functors, and on the other hand on the fact that in a topos directed colimits commute with finite limits. Thus $\text{Acc}_\omega(\ca, \ce)$ amounts to a coreflective subcategory of a topos whose associated comonad is left exact. So by \cite{sheavesingeometry}[V.8 Thm.4], it is a topos.
\end{rem}

\begin{rem}[Action of $\boxtimes$ on functors] \label{tensor1cells}
Let $f: \ca \to \ce$ be in $\text{Acc}_\omega(\ca, \ce)$ and let $g:\ce \to \cf$ be a geometric morphism. We must define a geometric morphism $$f \boxtimes g: \text{Acc}_\omega(\ca, \ce) \to \text{Acc}_\omega(\cb, \cf).  $$
We shall describe the left adjoint $(f \boxtimes g)^*$ (which goes in the opposite direction ($f \boxtimes g)^*:  \text{Acc}_\omega(\cb, \cf) \to \text{Acc}_\omega(\ca, \ce)$) by the following equation: $$(f \boxtimes g)^*(s)= g^* \circ s \circ f.$$
\end{rem}

\begin{prop} \label{tensored}
Topoi has tensors over $\text{Acc}_\omega$.
\end{prop}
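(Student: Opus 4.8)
The construction of the candidate tensor is already essentially in place: on objects $\ca \boxtimes \ce := \text{Acc}_\omega(\ca, \ce)$ is a topos by Rem. \ref{tensorscottconstructionwelldefined}, and its action on $1$-cells is recorded in Rem. \ref{tensor1cells}. Thus the plan is to verify the defining universal property of copowers, namely an equivalence
$$\text{Topoi}(\ca \boxtimes \ce, \cf) \simeq \text{Acc}_\omega(\ca, \text{Topoi}(\ce, \cf)),$$
which is a morphism of $\text{Acc}_\omega$ and is $2$-natural in $\cf$ (and, for the $2$-functoriality of $\boxtimes$, pseudonatural in $\ca$ and $\ce$). I would emphasize at the outset that this statement is a direct generalization of the Scott adjunction Thm. \ref{scottadj}: taking $\ce = \Set$ recovers $\ca \boxtimes \Set = \mathsf{S}(\ca)$ and $\text{Topoi}(\Set, \cf) = \mathsf{pt}(\cf)$, so the argument should be a $\Set \rightsquigarrow \ce$ upgrade of the Fubini chain used there, with the inner hom $\text{Topoi}(\ce,\cf) = \text{Cocontlex}(\cf,\ce)$ now an arbitrary accessible category with directed colimits instead of $\mathsf{pt}(\cf)$.

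Concretely, I would unwind the left-hand side to $\text{Cocontlex}(\cf, \text{Acc}_\omega(\ca, \ce))$ and prove the chain
\begin{align*}
\text{Cocontlex}(\cf, \text{Acc}_\omega(\ca, \ce)) &\simeq \text{Cat}_{\text{cocontlex},\text{acc}_\omega}(\cf \times \ca, \ce) \\
&\simeq \text{Acc}_\omega(\ca, \text{Cocontlex}(\cf, \ce)),
\end{align*}
where the middle term denotes the category of functors $\cf \times \ca \to \ce$ that are cocontinuous and left exact in the $\cf$-variable and preserve directed colimits in the $\ca$-variable. Both equivalences are instances of currying: the first transposes a functor $F \colon \cf \to \text{Acc}_\omega(\ca, \ce)$, the second transposes a functor $G \colon \ca \to \text{Cocontlex}(\cf, \ce)$. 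The rightmost category is by definition $\text{Acc}_\omega(\ca, \text{Topoi}(\ce, \cf))$.

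The load-bearing verifications are the two preservation dictionaries. For the first equivalence I would use that colimits and finite limits in $\text{Acc}_\omega(\ca, \ce)$ are computed pointwise --- this is the content underlying Rem. \ref{internalhom} and Rem. \ref{tensorscottconstructionwelldefined}, and it rests on finite limits commuting with directed colimits in the topos $\ce$ --- so that $F$ is cocontinuous and lex precisely when its transpose is so in the $\cf$-variable, while $F$ landing in $\text{Acc}_\omega(\ca, \ce)$ rather than in all of $\ce^{\ca}$ is exactly the requirement that the transpose preserve directed colimits in the $\ca$-variable. The second equivalence is the symmetric statement for $\text{Cocontlex}(\cf, \ce)$, whose (co)limits are again pointwise (Prop. \ref{topoihomacc}). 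As in Rem. \ref{tensorscottconstructionwelldefined}, size issues are handled by fixing $\lambda$ with $\ca$ being $\lambda$-accessible and replacing $\ca$ by $\ca_\lambda$ via the $\mathsf{Ind}_\lambda$-universal property, which keeps the intermediate functor category legitimate.

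The main obstacle I anticipate is not any single currying step --- those are formal --- but the bookkeeping that the displayed equivalence is genuinely $\text{Acc}_\omega$-enriched and $2$-natural in $\cf$, so that it exhibits an honest copower rather than a mere levelwise equivalence of categories; here one checks that each transposition is itself a functor preserving directed colimits and that it is compatible with the composition functors of Prop. \ref{topoicomposition}. Combining this universal property with the functoriality of Rem. \ref{tensor1cells} then yields the $2$-functor $\boxtimes \colon \text{Acc}_\omega \times \text{Topoi} \to \text{Topoi}$ and establishes that $\text{Topoi}$ has tensors over $\text{Acc}_\omega$.
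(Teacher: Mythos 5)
Your proposal is correct and follows essentially the same route as the paper's proof: the same currying chain $\text{Topoi}(\ca \boxtimes \ce, \cf) \simeq \text{Cocontlex}(\cf, \text{Acc}_\omega(\ca,\ce)) \simeq \text{Cat}_{\text{cocontlex},\text{acc}_\omega}(\cf \times \ca, \ce) \simeq \text{Acc}_\omega(\ca, \text{Cocontlex}(\cf,\ce))$, resting on the well-posedness established in Rems.~\ref{boxtimesdef}, \ref{tensorscottconstructionwelldefined} and \ref{tensor1cells}. You merely spell out the pointwise-(co)limit dictionaries and the enrichment bookkeeping that the paper leaves implicit, which is a faithful elaboration rather than a different argument.
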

\begin{proof}
Putting together the content of \ref{boxtimesdef}, \ref{tensorscottconstructionwelldefined} and \ref{tensor1cells}, we only need to show that $\boxtimes$ has the correct universal property, that is: $$\text{Topoi}(\ca \boxtimes \ce, \cf) \cong \text{Acc}_\omega(\ca, \text{Topoi}(\ce, \cf)). $$ When we spell out the actual meaning of the equation above, we discover that we did all the relevant job in the previous remarks. Indeed the biggest obstruction was the well-posedness of the definition.

\begin{align*} 
\text{Topoi}(\ca \boxtimes \ce, \cf) \cong &  \text{Cocontlex}(\cf, \ca \boxtimes \ce)  \\ 
 \cong & \text{Cocontlex}(\cf, \text{Acc}_\omega(\ca, \ce))  \\
\cong & \text{Cat}_{\text{cocontlex,}\text{acc}_\omega}(\cf \times \ca, \ce) \\
\cong &  \text{Acc}_\omega(\ca, \text{Cocontlex}(\cf,\ce))  \\
 \cong &  \text{Acc}_\omega(\ca, \text{Topoi}(\ce,\cf)).
\end{align*}
\end{proof}

\section{The Scott adjunction revisited}

\begin{rem}[Yet another proof of the Scott adjunction]
Let us start by mentioning that we can re-obtain the Scott adjunction directly from the fact that Topoi is tensored over $\text{Acc}_\omega$. Indeed if we evaluate the equation in \ref{tensored} when $\ce$ is the terminal topos Set, $$\text{Topoi}(\ca \boxtimes \Set, \cf) \cong \text{Acc}_\omega(\ca, \text{Topoi}(\Set, \cf)) $$  we obtain precisely the statement of the Scott adjunction, $$\text{Topoi}(\mathsf{S}(\ca), \cf) \cong \text{Acc}_\omega(\ca, \mathsf{pt}(\cf)). $$ Being tensored over $\text{Acc}_\omega$ means in a way to have a relative version of the Scott adjunction.
\end{rem}

\begin{rem}\label{idiot}
Among natural numbers, we find extremely familiar the following formula, \[(30 \times 5) \times 6 = 30 \times (5 \times 6).\]
Yet, this formula yields an important property of the category of sets. Indeed $\Set$ is tensored over itself and the tensorial structure is given by the product. The formula above tells us that the tensorial structure of $\Set$ associates over its product.
\end{rem}

\begin{rem}[Associativity of $\boxtimes$ with respect to $\times$]
Recall that the category of topoi has products, but they are very far from being computed as in Cat. Pitts has shown \cite{pitts1985product} that $\ce \times \cf \cong \text{Cont}(\ce^\circ, \cf)$. This description, later rediscovered by Lurie, is crucial to get a slick proof of the statement below. 
\end{rem}

\begin{prop} Let $\ca$ be a finitely accessinble category. Then,
\[\ca \boxtimes (\ce \times \cf) \simeq ( \ca \boxtimes \ce ) \times \cf.\]
\end{prop}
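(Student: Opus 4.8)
The plan is to reduce the asserted mixed associativity to the ordinary associativity of the categorical product $\times$ in $\text{Topoi}$. The whole proof rests on two lemmas that isolate the use of finite accessibility and of Pitts' formula $\ce \times \cf \cong \text{Cont}(\ce^\circ, \cf)$. First, for a finitely accessible $\ca$, write $\ca \simeq \mathsf{Ind}(C)$ with $C = \ca_\omega$ small; I would show
\[ \ca \boxtimes \cd = \text{Acc}_\omega(\ca, \cd) \simeq \cd^{C} \qquad (\text{Lemma i}) \]
for every topos $\cd$, where $\cd^C$ is the functor topos. Second, I would show
\[ \cd^{C} \simeq \Set^{C} \times \cd \simeq \mathsf{S}(\ca) \times \cd \qquad (\text{Lemma ii}), \]
using that $\mathsf{S}(\ca) = \ca \boxtimes \Set \simeq \Set^{C}$ by Rem. \ref{trivial}. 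Granting these, the statement follows by a short chain ending in associativity of $\times$.

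\textbf{Lemma (i).} Since $\ca \simeq \mathsf{Ind}(C)$ is the free completion of the small category $C = \ca_\omega$ under directed colimits, restriction along the inclusion $C \hookrightarrow \mathsf{Ind}(C)$ induces an equivalence $\text{Acc}_\omega(\mathsf{Ind}(C), \cd) \simeq \Cat(C, \cd) = \cd^{C}$, because a directed-colimit-preserving functor out of $\mathsf{Ind}(C)$ is determined, up to isomorphism, by an arbitrary functor $C \to \cd$ and conversely every such functor extends. This is precisely the step where \emph{finite} accessibility is essential: for a merely $\lambda$-accessible $\ca$ the same argument only identifies $\ca \boxtimes \cd$ with the full subcategory of $\cd^{\ca_\lambda}$ of functors preserving $\lambda$-small directed colimits (compare Rem. \ref{tensorscottconstructionwelldefined} and Rem. \ref{generatorscotttopos}), which is not a functor topos, and the reduction below would break.

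\textbf{Lemma (ii).} Here I would invoke Pitts. By $\ce_1 \times \ce_2 \cong \text{Cont}(\ce_1^\circ, \ce_2)$ applied with $\ce_1 = \Set^{C}$ we get $\Set^{C} \times \cd \cong \text{Cont}((\Set^{C})^\circ, \cd)$. Now $\Set^{C}$ is the free cocompletion of $C^\circ$ under colimits, so $(\Set^{C})^\circ$ is the free completion of $C$ under limits; since $\cd$ is complete, the universal property of that free completion identifies the limit-preserving functors $(\Set^{C})^\circ \to \cd$ with all functors $C \to \cd$, i.e. $\text{Cont}((\Set^{C})^\circ, \cd) \simeq \Cat(C, \cd) = \cd^{C}$. (A small point to nail down is that Pitts' $\text{Cont}$ is indeed the limit-preserving functors, so that the free-completion argument applies verbatim.)

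\textbf{Assembly.} Combining the two lemmas and associativity of the categorical product:
\[
\ca \boxtimes (\ce \times \cf) \simeq (\ce \times \cf)^{C} \simeq \Set^{C} \times (\ce \times \cf) \simeq (\Set^{C} \times \ce) \times \cf \simeq \ce^{C} \times \cf \simeq (\ca \boxtimes \ce) \times \cf,
\]
the central equivalence $(\Set^{C} \times \ce) \times \cf \simeq \Set^{C} \times (\ce \times \cf)$ being the associativity that the whole reduction is designed to invoke. The \textbf{main obstacle} I anticipate is not any single calculation but the coherence bookkeeping: every equivalence above is only a pseudo-equivalence, and to deduce the displayed equivalence of the theorem one should check that the identifications in Lemmas (i) and (ii) are pseudonatural in $\cd$ (so that they transport the product projections correctly), rather than merely pointwise. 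A secondary technical care point is confirming the interpretation of Pitts' $\text{Cont}$ and the size control in $(\Set^C)^\circ$, both harmless since $C$ is small.
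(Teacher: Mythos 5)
Your proof is correct, but it runs in the opposite logical direction from the paper's. The paper proves the proposition by a single direct computation: apply Pitts' formula to $\ce \times \cf$, use the universal property of the $\mathsf{Ind}$-completion to replace $\text{Acc}_\omega(\ca, -)$ by $\text{Cat}(\ca_\omega, -)$, commute $\text{Cat}(\ca_\omega,-)$ past $\text{Cont}(\ce^\circ,-)$ (legitimate because limits in functor categories are computed pointwise), and then apply the same two identifications backwards — landing on $(\ca \boxtimes \cf) \times \ce$, which is the stated right-hand side up to the symmetry of $\times$. You instead first establish the pointwise formula $\ca \boxtimes \cd \simeq \cd^{\ca_\omega} \simeq \mathsf{S}(\ca) \times \cd$ (your Lemmas (i) and (ii)) and then deduce the proposition from the associativity of the genuine $2$-categorical product. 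Note that $\ca \boxtimes (-) \simeq \mathsf{S}(\ca) \times (-)$ is exactly the paper's \emph{next} proposition, which the paper derives \emph{from} the present one via $\ca \boxtimes (-) \cong \ca \boxtimes (\Set \times -)$; you have inverted the dependency, and there is no circularity, since your Lemma (ii) rests only on Pitts' formula together with the identification of $(\Set^{C})^\circ$ as the free limit-completion of the small category $C$. What your route buys: modularity, an explicit localization of where finite accessibility enters (your Lemma (i) indeed fails for merely $\lambda$-accessible $\ca$, where $\ca \boxtimes \cd$ is only a full subcategory of $\cd^{\ca_\lambda}$), and the stronger interaction formula obtained along the way. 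What the paper's route buys: brevity — it needs only the $\mathsf{Ind}$ universal property and Pitts, avoiding your extra free-completion argument and the attendant check that Pitts' $\text{Cont}$ consists of small-limit-preserving functors, although that reading is also what the paper's own interchange step silently uses. Your flagged coherence worry is real but mild: since the statement asserts a bare equivalence of topoi, stepwise equivalences of categories suffice (an equivalence is automatically cocontinuous and left exact), and pseudonaturality in $\cd$ would only be needed to make the equivalence natural in $\ce$ and $\cf$, which the proposition does not claim.
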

\begin{proof}
We show it by direct computation.
\begin{align*} 
\ca \boxtimes (\ce \times \cf) \simeq &  \text{Acc}_\omega (\ca, \text{Cont}(\ce^\circ, \cf) )  \\ 
\simeq & \text{Cat}(\ca_\omega, \text{Cont}(\ce^\circ, \cf) ) \\
\simeq & \text{Cont}(\ce^\circ, \text{Cat}(\ca_\omega,\cf)) \\
\simeq & \text{Cont}(\ce^\circ, \text{Acc}_\omega (\ca,\cf)) \\
\simeq & ( \ca \boxtimes \cf ) \times \ce.
\end{align*}
\end{proof}

\begin{rem}
Similarly to Rem. \ref{idiot}, the following display will appear completely trivial,
\[(30 \times 1) \times 6 = 30 \times (1 \times 6).\]
Yet, we can get inspiration from it, to unveil an important simplification of the tensor $\ca \boxtimes \ce$. We will show that it is enough to know the Scott topos $\mathsf{S}(\ca)$ to compute $\ca \boxtimes \ce$, at least when $\ca$ is finitely accessible.
\end{rem}

\begin{prop}[Interaction between $\boxtimes$ and Scott]
Let $\ca$ be a finitely accessinble category. Then,
$$\ca \boxtimes (-) \cong \mathsf{S}(\ca) \times (-).$$
\end{prop}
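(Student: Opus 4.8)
The plan is to deduce the statement formally from the associativity of $\boxtimes$ with respect to the product of topoi (the preceding proposition) together with two elementary observations. First I would record the tautological identity $\mathsf{S}(\ca) \cong \ca \boxtimes \Set$: unwinding the definition of the tensor from Rem. \ref{boxtimesdef} gives $\ca \boxtimes \Set = \text{Acc}_\omega(\ca, \Set)$, which is precisely the Scott construction of Con. \ref{defnS}. Thus the Scott topos is nothing but the tensor of $\ca$ with the terminal topos. Second, since $\Set$ is the terminal object of Topoi (Sec. \ref{backgroundtopoi}), the product with $\Set$ is neutral: $\Set \times \ce \cong \ce$ for every topos $\ce$, naturally in $\ce$.

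With these in hand the computation is immediate. For an arbitrary topos $\ce$ I would write
$$\ca \boxtimes \ce \ \cong\ \ca \boxtimes (\Set \times \ce) \ \cong\ (\ca \boxtimes \Set) \times \ce \ \cong\ \mathsf{S}(\ca) \times \ce,$$
where the first isomorphism uses $\Set \times \ce \cong \ce$, the second is an instance of the associativity proposition (applied with $\Set$ and $\ce$ in the roles of $\ce$ and $\cf$, which is legitimate exactly because $\ca$ is finitely accessible), and the last is the identity of the previous paragraph. This makes precise the ``$(30 \times 1)\times 6 = 30 \times (1 \times 6)$'' slogan: tensoring with $\ce$ amounts to first forming the Scott topos and then taking the product with $\ce$.

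What remains is to upgrade this pointwise equivalence to a pseudonatural isomorphism of $2$-functors $\ca \boxtimes (-) \cong \mathsf{S}(\ca) \times (-)$ on Topoi, and this is where I expect the only genuine care to be needed. Each of the three equivalences above is pseudonatural in $\ce$ — the neutrality of $\Set$ because it is terminal, and the associativity equivalence because it was established through a chain of natural hom-category equivalences via Pitts' description $\ce \times \cf \cong \text{Cont}(\ce^\circ, \cf)$ — so their composite is pseudonatural as well. The main obstacle is therefore bookkeeping rather than conceptual: one must check that the coherence data of $\boxtimes$ and of the product interact so that the composite equivalence respects the action on geometric morphisms described in Rem. \ref{tensor1cells}. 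Granting the naturality already present in the associativity proposition, no new difficulty arises, and the finitely accessible hypothesis is used exactly once, precisely through that proposition.
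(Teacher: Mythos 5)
Your proof is correct and is essentially the paper's own argument: the paper proves this proposition exactly by the chain $\ca \boxtimes (-) \cong \ca \boxtimes (\Set \times -) \cong (\ca \boxtimes \Set) \times (-)$, using that $\Set$ is terminal, the associativity proposition, and the tautology $\ca \boxtimes \Set = \text{Acc}_\omega(\ca,\Set) = \mathsf{S}(\ca)$. Your added remarks on pseudonaturality only make explicit bookkeeping the paper leaves implicit.
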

\begin{proof}
$$\ca \boxtimes (-) \cong \ca \boxtimes (\Set \times -) \cong (\ca \boxtimes \Set) \times (-).$$
\end{proof}

\begin{prop}[Powers and exponentiable Scott topoi]
Let $\ca$ be a finitely accessible category. Then Topoi has powers with respect to $\ca$.
Moreover, $\ce^\ca$ is given by the exponential topos $\ce^{\mathsf{S}(\ca)}.$
\end{prop}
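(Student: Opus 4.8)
The plan is to exhibit the power $\ce^\ca$ explicitly as the exponential topos $\ce^{\mathsf{S}(\ca)}$, and then verify by a chain of natural equivalences that it carries the defining universal property of a cotensor. Recall that in an $\text{Acc}_\omega$-enriched setting a power of a topos $\ce$ by the enriching object $\ca$ should be an object $\ce^\ca$ equipped with equivalences
\[\text{Topoi}(\cf, \ce^\ca) \simeq \text{Acc}_\omega(\ca, \text{Topoi}(\cf, \ce))\]
pseudonatural in $\cf$, matching the style of Prop. \ref{tensored}. So the whole task reduces to producing a topos whose geometric morphisms from $\cf$ are computed by the right-hand side.

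First I would reduce the right-hand side to a tensor. By the universal property of $\boxtimes$ established in Prop. \ref{tensored}, the right-hand side is nothing but $\text{Topoi}(\ca \boxtimes \cf, \ce)$. Now the hypothesis that $\ca$ is finitely accessible is exactly what licenses the preceding proposition (the interaction between $\boxtimes$ and Scott), which gives $\ca \boxtimes \cf \simeq \mathsf{S}(\ca) \times \cf$ (using that the product of topoi is symmetric). Assembling these:
\begin{align*}
\text{Acc}_\omega(\ca, \text{Topoi}(\cf, \ce)) & \simeq \text{Topoi}(\ca \boxtimes \cf, \ce) \\
& \simeq \text{Topoi}(\mathsf{S}(\ca) \times \cf, \ce).
\end{align*}
It then remains to turn the product $\mathsf{S}(\ca) \times \cf$ appearing on the inside into an exponential on the outside, i.e. to pass from $\text{Topoi}(\mathsf{S}(\ca) \times \cf, \ce)$ to $\text{Topoi}(\cf, \ce^{\mathsf{S}(\ca)})$.

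The hard part, and the genuine content of the statement, is precisely this last step: it requires $\mathsf{S}(\ca)$ to be an \emph{exponentiable} object of $\text{Topoi}$, so that $\ce^{\mathsf{S}(\ca)}$ exists and satisfies $\text{Topoi}(\mathsf{S}(\ca) \times \cf, \ce) \simeq \text{Topoi}(\cf, \ce^{\mathsf{S}(\ca)})$. Here I would invoke that, since $\ca$ is finitely accessible, Rem. \ref{trivial} identifies $\mathsf{S}(\ca)$ with the presheaf topos $\Set^{\ca_\omega}$, and that presheaf topoi are exponentiable: they are continuous categories in the sense of Johnstone--Joyal \cite{cont}, which is exactly the characterization of exponentiable Grothendieck topoi. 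Feeding this into the chain above yields $\text{Topoi}(\cf, \ce^{\mathsf{S}(\ca)}) \simeq \text{Acc}_\omega(\ca, \text{Topoi}(\cf, \ce))$ pseudonaturally in $\cf$, so that setting $\ce^\ca := \ce^{\mathsf{S}(\ca)}$ furnishes the desired power and establishes the formula. The only points demanding care are the coherence and pseudonaturality of all these equivalences in $\cf$ --- which, consistent with the $1$-categorical convention of Rem. \ref{ignorepseudo}, I would treat up to equivalence --- and the confirmation that exponentiability of $\mathsf{S}(\ca)$ genuinely holds for \emph{every} finitely accessible $\ca$ rather than merely in special cases.
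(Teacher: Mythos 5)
Your proposal is correct and follows essentially the same route as the paper's own proof: reduce the power's universal property via the tensor $\boxtimes$ to $\text{Topoi}(\ca \boxtimes \cf, \ce)$, rewrite $\ca \boxtimes \cf \simeq \mathsf{S}(\ca) \times \cf$ using the preceding proposition on the interaction between $\boxtimes$ and $\mathsf{S}$ (which is where finite accessibility enters), and conclude by exponentiability of $\mathsf{S}(\ca) \simeq \Set^{\ca_\omega}$, a presheaf topos, via the continuity characterization of exponentiable topoi. Your added remarks on pseudonaturality and on checking exponentiability are sound points of care but do not change the argument.
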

\begin{proof}
The universal property of the power object $\ce^\ca$ is expressed by the following equation,
$$\text{Topoi}(\cf, \ce^\ca) \cong \text{Acc}_\omega(\ca, \text{Topoi}(\cf, \ce)).$$ Now, because we have tensors, this is saying that $\text{Topoi}(\cf, \ce^\ca) \cong \text{Topoi}(\ca \boxtimes \cf, \ce)).$ Because of the previous proposition, we can  gather this observation in the following equation.
$$\text{Topoi}(\cf, \ce^\ca) \cong  \text{Topoi}(\mathsf{S}(\ca) \times \cf, \ce)).$$ This means that $\ce^\ca$ has the same universal property of the topos $\ce^{\mathsf{S}(\ca)}$ and thus exists if and only if the latter exists. By the well known characterization of exponentiable topoi, this happens if and only if $\mathsf{S}(\ca)$ is continuous which is of course true for presheaf categories.
\end{proof}

\chapter{Toolbox}\label{toolbox}

This chapter contains technical results on the Scott adjunction that will be extensively employed for more qualitative results. We study the behavior of $\mathsf{pt}$, $\mathsf{S}$ and $\eta$, trying to discern all their relevant properties. 
Before continuing, we briefly list the main results that we will prove in order to facilitate the consultation.
\begin{enumerate}
\item[\ref{embeddings}] $\mathsf{pt}$ transforms geometric embeddings in fully faithful functors.
\item[\ref{localicmaps}] $\mathsf{pt}$ transforms localic morphisms in faithful functors.
\item[\ref{surjections}] $\mathsf{S}$ maps pseudo-epis (of Cat) to geometric surjections.
\item[\ref{reflective}] $\mathsf{S}$ maps reflections to geometric embeddings.
\item[\ref{topological}] Introduces and studies the notion of topological embeddings between accessible categories.
\item[\ref{ff}] $\eta$ is faithful (and iso-full) if and only if $\ca$ has a faithful (and iso-full) functor into a finitely accessible category.
\end{enumerate}

\section{Embeddings \& surjections}

\begin{rem}
Observe that since $\mathsf{S}$ is a left adjoint, it preserve pseudo epimorphisms, analogously $\mathsf{pt}$ preserves pseudo monomorphisms. Props. \ref{embeddings} and \ref{surjections} might be consequences of this observation, but we lack an explicit description of pseudo monomorphisms and pseudo epimorphisms in both categories. Notice that, instead, \ref{reflective} represents a non-trivial behavior of $\mathsf{S}$.
\end{rem}

\subsection{On the behavior of $\mathsf{pt}$}
The functor $\mathsf{pt}$ behaves nicely with various notions of \textit{injective} or \textit{locally injective} geometric morphism.
\newline

\subsubsection{Geometric embeddings} 
Geometric embeddings between topoi are a key object in topos theory. Intuitively, they represent the proper notion of subtopos. The most common example of geometric embedding is the one of presentation of a topos in the sense of \ref{presentation of topos}. It is a well known fact that subtopoi of a presheaf topos $\Set^C$ correspond to Grothendieck topologies on $C$ bijectively.

\begin{prop}  \label{embeddings}
 $\mathsf{pt}$ sends geometric embeddings to fully faithful functors.
\end{prop}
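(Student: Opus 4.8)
The plan is to compute the action of $\mathsf{pt}(i)$ on hom-categories explicitly and reduce the statement to a purely $2$-categorical fact about whiskering along a reflector. Recall that a geometric embedding $i \colon \cf \to \ce$ is by definition a geometric morphism whose direct image $i_*$ is fully faithful; equivalently the counit $\varepsilon \colon i^* i_* \Rightarrow \id_{\cf}$ of the adjunction $i^* \dashv i_*$ is an isomorphism, so that $i^*$ exhibits $\cf$ as a left exact localization of $\ce$. On objects, $\mathsf{pt}(i)$ sends a point $p \colon \Set \to \cf$ to $i \circ p$, whose inverse image is $p^* \circ i^*$ (cf. Rem. \ref{pt}). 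On $2$-cells, a morphism $p \to q$ in $\mathsf{pt}(\cf)$ is a natural transformation $\alpha \colon p^* \Rightarrow q^*$, and $\mathsf{pt}(i)$ sends it to the whiskered transformation $\alpha i^* \colon p^* i^* \Rightarrow q^* i^*$. Thus, writing $F = p^*$, $G = q^*$ and $L = i^*$, $R = i_*$, the claim reduces to showing that the whiskering map
\[ W \colon \mathrm{Nat}(F, G) \longrightarrow \mathrm{Nat}(FL, GL), \qquad \alpha \mapsto \alpha L, \]
is a bijection for every pair of points $p,q$.

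First I would construct an explicit inverse $V$. Given $\beta \colon FL \Rightarrow GL$, whisker on the right with $R$ to obtain $\beta R \colon FLR \Rightarrow GLR$ and then conjugate by the counit isomorphism, setting $V(\beta) = G\varepsilon \circ \beta R \circ (F\varepsilon)^{-1}$. The identity $VW = \id$ is immediate from naturality of $\alpha$ applied to $\varepsilon \colon LR \Rightarrow \id_{\cf}$, which gives $G\varepsilon \circ \alpha LR = \alpha \circ F\varepsilon$; here only the fact that $i_*$ is fully faithful, i.e. that $\varepsilon$ is invertible, is used.

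The main work — and the step I expect to be the only delicate one — is the other triangle $WV = \id$. For this I would bring in the unit $\eta \colon \id_{\ce} \Rightarrow RL$ of the adjunction $i^* \dashv i_*$ and use naturality of $\beta$ along $\eta$ to rewrite $\beta RL$, then collapse the resulting composite by means of the triangle identity $\varepsilon L \circ L\eta = \id_L$. Since $\varepsilon$ is an isomorphism, $\varepsilon L$ and $L\eta$ are mutually inverse isomorphisms, and a short diagram chase shows that all the whiskered units and counits cancel, leaving $WV(\beta) = \beta$. None of these steps requires anything specific to topoi beyond the adjunction $i^* \dashv i_*$ with $i_*$ fully faithful, so the whole argument is a formal consequence of $i$ being a geometric embedding, and in particular it is insensitive to the (potentially large) size of the categories involved.

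Assembling, $W$ is a bijection, so $\mathsf{pt}(i)$ is fully faithful on each hom-category, which is exactly the assertion. I would close by noting that this is the categorification of the elementary fact that an inclusion of locales induces an order-embedding on formal points, consistent with the analogy developed in Chap. \ref{geometric}; from this $2$-categorical viewpoint, ``$\mathsf{pt}$ sends embeddings to full embeddings'' is the natural lift of that observation.
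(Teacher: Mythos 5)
Your proof is correct and follows essentially the same route as the paper's: $\mathsf{pt}(i)$ acts on $2$-cells by whiskering with the inverse image $i^*$, and the invertible counit $i^*i_* \cong \id$ lets one whisker back with $i_*$ to recover the original transformation. The only difference is one of completeness: you make the inverse to whiskering explicit and verify both triangle identities (your $WV=\id$ step, via the unit and $\varepsilon L \circ L\eta = \id_L$, is exactly the fullness half that the paper compresses into ``a similar argument shows that $\mathsf{pt}(i)$ is full'').
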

\begin{proof}
This is a relatively trivial consequence of the fact that the direct image functor is fully faithful but we shall include the proof in order to show a standard way of thinking. Let $i^*: \cg \leftrightarrows \ce: i_*$ be a geometric embedding. Recall, this means precisely that the $\ce$ is reflective in $\cg$ via this adjunction, i.e. the direct image is fully faithful. Let $p,q: \Set \rightrightarrows \ce$ be two points, or equivalently let $p^*, q^*: \ce \rightrightarrows \Set$ be two cocontinuous functors preserving finite limits. And let $\mu, \nu: p^* \Rightarrow q^*$ be two natural transformation between the points. 

\begin{center}
\begin{tikzcd}[column sep=2cm]
{\bf Set} &\ar[l, shift left=\dist, "q^*"{name=q}]\ar[l, shift right=\dist, "p^*"'{name=p}] \mathcal E \ar[r, hook, shift right=\dist, "i_*"']&\ar[l, shift right=\dist, "i^*"'] \mathcal G
\ar[from=p, to=q, Rightarrow, shorten <=3pt, shorten >=3pt, shift left=1em, "\nu"]
\ar[from=p, to=q, Rightarrow, shorten <=3pt, shorten >=3pt, shift right=1em, "\eta"']
\end{tikzcd}
\end{center}

The action of $\mathsf{pt}(i)$ on this data is the following. It maps $p^*$ to $p^*i^*$ while $\mathsf{pt}(i)(\mu)$ is defined by whiskering $\mu$ with $i^*$ as pictured by the diagram below.

\begin{center}
\begin{tikzcd}[row sep=2cm]
p^*\ar[d, bend left, Rightarrow, "\nu"]\ar[d, bend right, Rightarrow, "\mu"'] & p^*i^*\ar[d, bend left, Rightarrow, "\mu_{i^*}"]\ar[d, bend right, Rightarrow, "\nu_{i^*}"]\\
q^* & q^*i^*
\end{tikzcd}
\end{center}

Now, observe that $\mu \cong \mu_{i^*i_*}$ because $i^*i_*$ is isomorphic to the identity. This proves that $\mathsf{pt}(i)$ is faithful, in fact  $\mathsf{pt}(i)(\mu) =  \mathsf{pt}(i)(\nu)$ means that $\mu_{i^*} = \nu_{i^*}$, this implies that $\mu_{i^*i_*} = \nu_{i^*i_*}$, and so $\mu = \nu$. A similar argument shows that $\mathsf{pt}(i)$ is full (using that $i_*$ is full).
\end{proof}

\subsubsection{Localic morphisms} Localic topoi are those topoi that appear as the category of sheaves over a locale. Those topoi have a clear topological meaning and represent a quite concrete notion of generalized space. Localic morphisms are used to generalize the notion of localic topos; a localic morphism $f: \cg \to \ce$ attests that there exist an internal locale $L$ in $\ce$ such that $\cg \simeq \mathsf{Sh}(L, \ce)$. In accordance with this observation, a topos $\cg$ is localic if and only if the essentially unique geometric morphism $ \cg \to \Set$ is localic.

\begin{defn} A morphism of topoi $f: \cg \to \ce$ is localic if every object in $\cg$ is a subquotient of an object in the inverse image of $f$.
\end{defn} 

\begin{prop}  \label{localicmaps}
$\mathsf{pt}$ sends localic geometric morphisms to faithful functors.
\end{prop}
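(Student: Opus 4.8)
The plan is to mimic the proof of Prop. \ref{embeddings}, exploiting the same explicit description of how $\mathsf{pt}(f)$ acts on $2$-cells. Recall that a $2$-cell between two points $p, q \colon \Set \to \cg$ is a natural transformation $\mu \colon p^* \Rightarrow q^*$ between their inverse images, and that $\mathsf{pt}(f)$ sends it to the whiskering $\mu f^* \colon p^* f^* \Rightarrow q^* f^*$, whose component at $e \in \ce$ is $\mu_{f^* e}$. Thus, to prove faithfulness I must show: if $\mu, \nu \colon p^* \Rightarrow q^*$ satisfy $\mu_{f^* e} = \nu_{f^* e}$ for every $e \in \ce$, then $\mu_g = \nu_g$ for every $g \in \cg$. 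Here is where the localic hypothesis enters: by definition every object $g$ of $\cg$ is a subquotient of some $f^* e$, i.e. there is an object $m$ together with a mono $s \colon m \rightarrowtail f^* e$ and an epi $t \colon m \twoheadrightarrow g$.

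The argument then splits into two cancellations. First I descend from $f^* e$ to the subobject $m$: naturality of $\mu$ and $\nu$ against $s$ gives $q^*(s) \circ \mu_m = \mu_{f^* e} \circ p^*(s)$ and likewise for $\nu$; since by hypothesis $\mu_{f^* e} = \nu_{f^* e}$, the right-hand sides agree, so $q^*(s) \circ \mu_m = q^*(s) \circ \nu_m$. Because $q^*$ is left exact it preserves the monomorphism $s$, so $q^*(s)$ is monic and may be cancelled on the left, yielding $\mu_m = \nu_m$. Second, I push this equality along the epi $t$: naturality against $t$ gives $\mu_g \circ p^*(t) = q^*(t) \circ \mu_m$ and similarly for $\nu$, and $\mu_m = \nu_m$ forces $\mu_g \circ p^*(t) = \nu_g \circ p^*(t)$. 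Because $p^*$ is cocontinuous it preserves the epimorphism $t$ (epis in a topos being effective), so $p^*(t)$ is epic and may be cancelled on the right, giving $\mu_g = \nu_g$, as desired.

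I do not expect any serious obstacle here: the only points requiring care are bookkeeping ones, namely recalling the correct variance of the whiskering (so that testing on objects of the form $f^* e$ is exactly what $\mathsf{pt}(f)$ records on $2$-cells) and invoking that inverse-image functors preserve both monos (via left exactness) and epis (via cocontinuity). Once these are in place, the two naturality squares and the two cancellations assemble immediately into the claim. The symmetry of the subquotient notion means the dual presentation of $g$ (a subobject of a quotient of $f^* e$) would run through the very same two cancellations with the roles of $s$ and $t$ interchanged, so the choice of convention is immaterial.
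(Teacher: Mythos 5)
Your proof is correct and follows essentially the same strategy as the paper's: test the $2$-cells on objects of the form $f^*e$, transport the equality along a presentation of a general object via naturality squares, and cancel morphisms that the inverse images $p^*, q^*$ preserve. In fact you do slightly better than the paper, which only treats the case of an epimorphism $f^*(e')\twoheadrightarrow e$ and relegates the general subquotient case to a footnote calling it ``messier''; your two-step argument --- cancelling the mono $q^*(s)$ on the left (via left exactness of $q^*$) and then the epi $p^*(t)$ on the right (via cocontinuity of $p^*$, epis in a topos being regular) --- shows the full case goes through just as cleanly.
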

\begin{proof}
Consider a localic geometric morphism $f: \cg \to \ce$. We shall prove that $\mathsf{pt}f$ is faithful on points. In order to do so,  let $p,q: \Set \rightrightarrows \ce$ be two points, or equivalently let $p^*, q^*: \ce \rightrightarrows \Set$ be two cocontinuous functors preserving finite limits. And let $\mu, \nu: p^* \Rightarrow q^*$ be two natural transformation between the points. 

\begin{center}
\begin{tikzcd}[column sep=2cm]
{\bf Set} &\ar[l, shift left=\dist, "q^*"{name=q}]\ar[l, shift right=\dist, "p^*"'{name=p}] \mathcal E \ar[r, hook, shift right=\dist, "f_*"']&\ar[l, shift right=\dist, "f^*"'] \mathcal G
\ar[from=p, to=q, Rightarrow, shorten <=3pt, shorten >=3pt, shift left=1em, "\nu"]
\ar[from=p, to=q, Rightarrow, shorten <=3pt, shorten >=3pt, shift right=1em, "\mu"']
\end{tikzcd}
\end{center}

We need to prove that if $\mu_{f^*} = \nu_{f^*}$, then $\mu = \nu$. In order to do so, let $e$ be an object in $\ce$. Since $f$ is a localic morphism, there is an object $g \in \cg$ and an epimorphism $l: f^*(g) \twoheadrightarrow e$\footnote{This is not quite true, we know that $e$ is a subquotient of $f^*(g)$, in the general case the proof gets a bit messier to follow, for this reason we will cover in detail just this case.}.

\begin{center}
\begin{tikzcd}[row sep=2cm]
p^*(e)\ar[d, bend left, Rightarrow, "\nu"]\ar[d, bend right, Rightarrow, "\mu"'] & p^*i^*(g)\ar[d, bend left, Rightarrow, "\mu_{i^*g}"]\ar[d, bend right, Rightarrow, "\nu_{i^*g}"] \ar[l, "p^*(l)"]\\
q^*(e) & q^*i^*(g) \ar[l, "q^*(l)"]
\end{tikzcd}
\end{center}

Now, we know that $\mu \circ p^*(l) = q^*(l) \circ \mu_{i^*g}$ and $\nu \circ p^*(l) = q^*(l) \circ \nu_{i^*g}$, because of the naturality of $\mu$ and $\nu$. Since  $\mu_{i^*} = \nu_{i^*}$, we get \[\mu \circ p^*(l) = q^*(l) \circ \mu_{i^*g} = q^*(l) \circ \nu_{i^*g} = \nu \circ p^*(l) .\]

Finally observe that $p^*(l)$ is an epi, because $p^*$ preserves epis, and thus we can cancel it, obtaining the thesis.

\end{proof}

\subsubsection{Geometric surjections}

\begin{prop} Let $f: \cg \to \ce$ be a geometric morphism. The following are equivalent.
\begin{itemize}
\item For every point $j: \Set \to \ce$ the pullback $\cg \times_{\ce} \Set$ has a point.
\item $\mathsf{pt}(f)$ is surjective on objects.
\end{itemize}
\end{prop}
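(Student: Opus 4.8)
The plan is to exploit the representability of $\mathsf{pt}$. Recall from Rem. \ref{pt} that $\mathsf{pt} = \text{Topoi}(\Set, -)$; being a represented $2$-functor it preserves all pseudo-limits that exist in $\text{Topoi}$, and in particular the (bicategorical) pullback $\cg \times_\ce \Set$ formed along $f$ and the chosen point $j : \Set \to \ce$. Hence I would begin by recording the equivalence
\[ \mathsf{pt}(\cg \times_\ce \Set) \simeq \mathsf{pt}(\cg) \times_{\mathsf{pt}(\ce)} \mathsf{pt}(\Set), \]
where the right-hand side is the iso-comma (pseudo-pullback) of $\mathsf{pt}(f)$ along the functor $1 \to \mathsf{pt}(\ce)$ selecting $j$.

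Next I would compute $\mathsf{pt}(\Set)$. A point of $\Set$ is a geometric morphism $\Set \to \Set$, i.e. a cocontinuous, finite-limit-preserving functor $\Set \to \Set$. Every cocontinuous endofunctor of $\Set$ is of the form $X \mapsto X \times A$ (it is determined by its value $A$ on the terminal object, via copowers), and left exactness forces $A \cong 1$; thus the inverse image is isomorphic to the identity and $\mathsf{pt}(\Set) \simeq 1$, the terminal category. Substituting, the objects of $\mathsf{pt}(\cg \times_\ce \Set)$ are identified with pairs $(p, \vartheta)$ where $p : \Set \to \cg$ is a point and $\vartheta : \mathsf{pt}(f)(p) = f \circ p \cong j$ is an isomorphism of points of $\ce$. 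The same description is reached directly from the universal property of the pullback: a geometric morphism $\Set \to \cg \times_\ce \Set$ amounts to a triple $(p, b, \theta)$ with $p : \Set \to \cg$, $b : \Set \to \Set$ and $\theta : f \circ p \cong j \circ b$, and $b$ is forced to be the (essentially unique) identity point.

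With this identification the equivalence is immediate and I would spell it out as follows. The pullback $\cg \times_\ce \Set$ has a point precisely when the category $\mathsf{pt}(\cg \times_\ce \Set)$ is nonempty, that is, precisely when there exists a point $p$ of $\cg$ with $f \circ p \cong j$; and this is exactly the assertion that $j$ lies in the essential image of $\mathsf{pt}(f)$. Quantifying over all points $j$ of $\ce$, the first item (every such pullback has a point) holds if and only if every object of $\mathsf{pt}(\ce)$ is isomorphic to one in the image of $\mathsf{pt}(f)$, which — adopting the pseudo-conventions of Rem. \ref{ignorepseudo} — is precisely the statement that $\mathsf{pt}(f)$ is surjective on objects. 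This settles both implications at once.

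The only genuinely delicate point is the bookkeeping around pseudo-limits: one must be sure that ``$\cg \times_\ce \Set$'' denotes the bicategorical pullback in $\text{Topoi}$ (which exists, cf. the references on limits in $\text{Topoi}$) and that $\mathsf{pt}$, being representable, sends it to the corresponding iso-comma category rather than a strict pullback, so that the fibre really consists of pairs $(p,\vartheta)$ up to isomorphism. Once this is granted, the computation $\mathsf{pt}(\Set) \simeq 1$ and the description of the fibre are routine, so the proof is short.
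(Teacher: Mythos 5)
Your proof is correct and is exactly the argument the paper has in mind: its own proof reads simply ``Trivial,'' and your write-up is the routine unpacking of that triviality, via representability of $\mathsf{pt} = \text{Topoi}(\Set,-)$ preserving bipullbacks, the computation $\mathsf{pt}(\Set)\simeq 1$, and the identification of points of $\cg \times_\ce \Set$ with pairs $(p,\vartheta\colon f\circ p \cong j)$. Your care about bipullbacks versus strict pullbacks and essential surjectivity matches the paper's stated pseudo-conventions (Rem.~\ref{ignorepseudo}), so nothing is missing.
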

\begin{proof}
Trivial.
\end{proof}

\subsection{On the behavior of $\mathsf{S}$}
The functor $\mathsf{S}$ behaves nicely with respect to epis, as expected. It does not behave nicely with any notion of monomorphism. In the next section we study those accessible functors $f$ such that $\mathsf{S}(f)$ is a geometric embedding.
\newline

\begin{prop}\label{surjections} $\mathsf{S}$ maps pseudo-epis (of Cat) to geometric surjections.
\end{prop}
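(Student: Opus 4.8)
The plan is to unwind the definitions and reduce the whole statement to a bookkeeping remark about whiskering. Recall from Construction \ref{defnS} that for a functor $f : \ca \to \cb$ preserving directed colimits, the inverse image of the geometric morphism $\mathsf{S}(f)$ is the precomposition functor $f^* : \mathsf{S}(\cb) \to \mathsf{S}(\ca)$, $g \mapsto g \circ f$. Moreover, a geometric morphism of topoi is a surjection exactly when its inverse image functor is faithful (see, e.g., \cite{sheavesingeometry}[VII.4] or \cite{elephant2}). So the entire statement reduces to showing that $f^*$ is faithful whenever $f$ is a pseudo-epimorphism in $\Cat$.

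First I would recall that a pseudo-epimorphism in $\Cat$ is a $1$-cell $f$ for which the precomposition functor $(-) \circ f : \Cat(\cb, \cd) \to \Cat(\ca, \cd)$ is faithful for every $\cd$; equivalently, in $\Cat$, $f$ is essentially surjective on objects. In particular, instantiating $\cd = \Set$, the precomposition functor $(-) \circ f : \Set^{\cb} \to \Set^{\ca}$ is faithful. (This is where the section in the application of Thm. \ref{enoughpoints} is used: a functor admitting a section is split epic, hence essentially surjective.)

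The key observation is then that $\mathsf{S}(\cb) = \text{Acc}_\omega(\cb, \Set)$ is a \emph{full} subcategory of $\Set^{\cb}$ (Rem. \ref{scottconstructionwelldefined}), and that $f^*$ is precisely the restriction of $(-) \circ f$ along the inclusion $\mathsf{S}(\cb) \hookrightarrow \Set^{\cb}$: both send an object $g$ to $g \circ f$ and a natural transformation $\alpha$ to its whiskering $\alpha \ast f$. Since faithfulness is inherited by the restriction of a functor to a full subcategory, $f^*$ is faithful, and hence $\mathsf{S}(f)$ is a geometric surjection.

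If one prefers to avoid routing through the precomposition formulation, I would argue from essential surjectivity directly: given $\alpha, \beta : g \Rightarrow h$ in $\mathsf{S}(\cb)$ with $\alpha \ast f = \beta \ast f$, that is $\alpha_{f(a)} = \beta_{f(a)}$ for all $a \in \ca$, one recovers $\alpha_b = \beta_b$ for an arbitrary $b \in \cb$ by choosing $a$ together with an isomorphism $\varphi : f(a) \xrightarrow{\sim} b$ and transporting along the naturality squares of $\alpha$ and $\beta$ at $\varphi$. The only delicate point is terminological rather than mathematical: one must pin down the precise definitions of \emph{pseudo-epimorphism} and of \emph{geometric surjection} so that the reduction is transparent. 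Once these are fixed, the argument is essentially immediate, which is exactly the sense in which $\mathsf{S}$ is expected to behave well with respect to epimorphisms.
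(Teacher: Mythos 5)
Your main argument is correct, and it is in substance the content that the paper outsources: the paper's entire proof of Prop.~\ref{surjections} is the citation \cite{laxepi}[4.2], so your self-contained route --- geometric surjections are exactly the morphisms with faithful inverse image, the inverse image of $\mathsf{S}(f)$ is precomposition by Con.~\ref{defnS}, and faithfulness of $(-)\circ f:\Set^\cb\to\Set^\ca$ passes to its restriction along the full inclusion $\mathsf{S}(\cb)\hookrightarrow\Set^\cb$ of Rem.~\ref{scottconstructionwelldefined} --- is exactly the right way to make the proposition standalone. Note that all your argument uses is that $\Cat(f,\Set)$ is faithful; this holds under the standard $2$-categorical definition of pseudo-epimorphism (dual to the paper's implicit convention for pseudo-monos in Prop.~\ref{ff}: $\Cat(f,\cd)$ is pseudomonic, i.e.\ faithful \emph{and} iso-full, for every $\cd$), since pseudomonic functors are in particular faithful. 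So your first argument proves the proposition, and in fact a slightly more general statement.

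The parenthetical characterization ``equivalently, in $\Cat$, $f$ is essentially surjective on objects'' is, however, false in both directions, and the fallback argument built on it does not prove the proposition. In one direction: the inclusion $C\to\overline{C}$ of a category into its Cauchy (idempotent) completion has \emph{fully} faithful precomposition into every $\cd$, hence is a pseudo-epimorphism (indeed a lax epimorphism, cf.\ \cite{laxepi}), yet it is not essentially surjective whenever $C$ has non-split idempotents. In the other direction: the unique functor $f:\bullet\to B\mathbb{Z}/2$ is surjective on objects, but the identity and the trivial endofunctor of $B\mathbb{Z}/2$ become \emph{equal} after precomposition with $f$ while not being isomorphic (an isomorphism $\delta$ would have to satisfy $\delta = \sigma\delta$ for the nontrivial $\sigma$), so $f$ fails every reasonable pseudo-epi condition, including iso-fullness of $\Cat(f,-)$. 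What essential surjectivity actually buys --- via exactly your transport-along-isomorphisms computation, which is itself valid, and more generally under the weaker hypothesis that every object of $\cb$ is a retract of an object in the image of $f$ --- is faithfulness of precomposition. Hence your second argument proves that $\mathsf{S}$ sends essentially surjective functors to geometric surjections, a class that (under the standard definition) neither contains nor is contained in the pseudo-epimorphisms, so it cannot replace the first argument. Keep the first argument, delete the ``equivalently,'' and the proof is complete.
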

\begin{proof}
See \citep{laxepi}[4.2].
\end{proof}


%
%
%
%

\subsection{Topological embeddings}\label{topological}

\begin{defn}
Let $f: \ca \to \cb$ be a $1$-cell in $\text{Acc}_{\omega}$. We say that $f$ is a topological embedding if $\mathsf{S}(f)$ is a geometric embedding.
\end{defn}

This subsection is devoted to describe topological embeddings between accessible categories with directed colimits. The reader should expect this description to be highly nontrivial and rather technical, because $\mathsf{S}$ is a left adjoint and is not expected to have nice behavior on any kind of monomorphism.

Fortunately we will manage to provide some useful partial results. Let us list the lemmas that we are going to prove.
\begin{enumerate}
	\item[\ref{necessary}] a necessary condition for a functor to admit a topological embedding into a finitely accessible category
	\item[\ref{sufficient}] a sufficient and quite easy to check criterion for a functor to be a topological embedding
	\item[\ref{intofinitely}]  a full description of topological embeddings into finitely accessible categories

\end{enumerate}

\begin{rem}
Topological embeddings into finitely accessible categories $i: \ca \to \mathsf{Ind}(C)$ are very important because $\mathsf{S}(i)$ will describe, by definition, a subtopos of $\Set^C$. This means that there exist a topology $J$ on $C$ such that $\mathsf{S}(\ca)$ is equivalent to $\mathsf{Sh}(C,J)$, this leads to concrete presentations of the Scott topos.
\end{rem}

\subsubsection{A necessary condition}\label{necessary}

\begin{lem}[A necessary condition]
If $\ca$ has a fully faithful topological embedding $f: \ca \to \mathsf{Ind}(C)$ into a finitely accessible category, then $\eta_A : \ca \to \mathsf{ptS}(\ca)$ is fully faithful.
\end{lem}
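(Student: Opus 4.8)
The plan is to exploit the naturality of the unit $\eta$, combined with the good behaviour of $\mathsf{pt}$ on geometric embeddings (Prop. \ref{embeddings}) and the triviality of the Scott adjunction on finitely accessible categories (Rem. \ref{trivial}). First I would write down the naturality square of $\eta$ at the morphism $f \colon \ca \to \mathsf{Ind}(C)$, which commutes up to natural isomorphism:
\begin{center}
\begin{tikzcd}
\ca \arrow[r, "f"] \arrow[d, "\eta_\ca"'] & \mathsf{Ind}(C) \arrow[d, "\eta_{\mathsf{Ind}(C)}"] \\
\mathsf{pt}\mathsf{S}(\ca) \arrow[r, "\mathsf{pt}\mathsf{S}(f)"'] & \mathsf{pt}\mathsf{S}(\mathsf{Ind}(C))
\end{tikzcd}
\end{center}
so that $\eta_{\mathsf{Ind}(C)} \circ f \cong \mathsf{pt}\mathsf{S}(f) \circ \eta_\ca$. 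The whole argument then consists of reading off full faithfulness of $\eta_\ca$ from this square.

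Next I would identify the pieces appearing in the square. Since $\mathsf{Ind}(C)$ is finitely accessible, Rem. \ref{trivial} tells us that $\mathsf{S}(\mathsf{Ind}(C)) \simeq \Set^C$ and, crucially, that the unit $\eta_{\mathsf{Ind}(C)} \colon \mathsf{Ind}(C) \to \mathsf{pt}\mathsf{S}(\mathsf{Ind}(C))$ is an equivalence of categories (it is, up to the Diaconescu identification, precisely the unit displayed in the biequivalence $\omega\text{-Acc} \simeq \text{Presh}$). In particular $\eta_{\mathsf{Ind}(C)}$ is fully faithful, and since $f$ is fully faithful by hypothesis, the composite $\eta_{\mathsf{Ind}(C)} \circ f$ is fully faithful; by commutativity of the square, $\mathsf{pt}\mathsf{S}(f) \circ \eta_\ca$ is therefore fully faithful as well. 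On the other hand, because $f$ is a topological embedding, $\mathsf{S}(f)$ is a geometric embedding by definition, hence $\mathsf{pt}\mathsf{S}(f)$ is fully faithful by Prop. \ref{embeddings}. Thus I have both $\mathsf{pt}\mathsf{S}(f) \circ \eta_\ca$ and $\mathsf{pt}\mathsf{S}(f)$ fully faithful.

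The final step is a purely formal cancellation: whenever $g \circ h$ and $g$ are both fully faithful, so is $h$. Indeed, on each pair of objects the hom-set map induced by $g\circ h$ factors as the map induced by $h$ followed by the (bijective) map induced by $g$; since the overall composite and the $g$-part are bijections, the $h$-part is forced to be a bijection too. Applying this with $g = \mathsf{pt}\mathsf{S}(f)$ and $h = \eta_\ca$ yields that $\eta_\ca$ is fully faithful, which is the claim. I do not expect any serious obstacle: the only points needing care are verifying that the naturality square genuinely commutes up to isomorphism (it does, being an instance of the naturality of $\eta$) and observing that full faithfulness is transported across that natural isomorphism, which holds because it is a property of the induced maps on hom-sets.
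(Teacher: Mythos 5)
Your proposal is correct and matches the paper's own proof: the same naturality square for $\eta$ at $f$, with $\eta_{\mathsf{Ind}(C)}$ an equivalence by Rem.~\ref{trivial} and $\mathsf{pt}\mathsf{S}(f)$ fully faithful by Prop.~\ref{embeddings}, followed by the two-out-of-three cancellation for fully faithful functors. The paper leaves that final cancellation implicit (``$\eta_\ca$ is forced to be fully faithful''), whereas you spell it out, along with the transport of full faithfulness across the natural isomorphism filling the square; both are sound.
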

\begin{proof}
Assume that $\ca$ has a topological embedding $f: \ca \to \mathsf{Ind}(C)$ into a finitely accessible category. This means that $\mathsf{S}(f)$ is a geometric embedding. Now, we look at the following diagram.
	\begin{center}
	\begin{tikzcd}
\ca \arrow[rr, "f" description] \arrow[dd, "\eta_\ca"]              &  & \mathsf{Ind}(C) \arrow[dd, "\eta_{\mathsf{Ind}(C)}" description, two heads, hook] \\
                                                                    &  &                                                                                   \\
\mathsf{pt} \mathsf{S}(\ca) \arrow[rr, "\mathsf{ptS}f" description] &  & \mathsf{pt} \mathsf{S}(\mathsf{Ind}(C))                                          
\end{tikzcd}
	\end{center}
\ref{trivial} implies that $\eta_{\mathsf{Ind}(C)}$ is an equivalence of categories, while \ref{embeddings} implies that $\mathsf{ptS}(f)$ is fully faithful. Since also $f$ is fully faithful, $\eta_\ca$ is forced to be fully faithful.
\end{proof}

\subsubsection{A sufficient condition}\label{sufficient}


\begin{thm}\label{reflective}
Let $i: \ca \to \cb$ be a $1$-cell in $\text{Acc}_\omega$ exhibiting $\ca$ as a reflective subcategory of $\cb$ \[L: \cb \leftrightarrows \ca: i. \] Then $i$ is a topological embedding.
\end{thm}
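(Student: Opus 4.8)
The plan is to identify the direct image of the geometric morphism $\mathsf{S}(i)$ explicitly and to show it is fully faithful, which is exactly the condition for $\mathsf{S}(i)$ to be a geometric embedding. Recall from Con.~\ref{defnS} that the inverse image $(\mathsf{S}i)^{*}=i^{*}\colon \mathsf{S}(\cb)\to\mathsf{S}(\ca)$ is precomposition with $i$, that is $i^{*}(g)=g\circ i$; this is well defined because $i$ preserves directed colimits. The key observation is that the reflector also yields a precomposition functor $L^{*}\colon\mathsf{S}(\ca)\to\mathsf{S}(\cb)$, $L^{*}(h)=h\circ L$, again well defined since $L$ is a $1$-cell of $\text{Acc}_{\omega}$ and hence preserves directed colimits.

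First I would show that the adjunction $L\dashv i$ in $\text{Acc}_{\omega}$ produces, by whiskering, an adjunction $i^{*}\dashv L^{*}$ between these inverse-image functors. Writing $\eta\colon\id_{\cb}\Rightarrow iL$ and $\epsilon\colon Li\Rightarrow\id_{\ca}$ for the unit and counit of $L\dashv i$, whiskering with $g\in\mathsf{S}(\cb)$ gives $g\ast\eta\colon g\Rightarrow g\circ iL=L^{*}i^{*}(g)$, and whiskering with $h\in\mathsf{S}(\ca)$ gives $h\ast\epsilon\colon i^{*}L^{*}(h)=h\circ Li\Rightarrow h$. These assemble into a unit $\id\Rightarrow L^{*}i^{*}$ and a counit $i^{*}L^{*}\Rightarrow\id$ whose triangle identities are the whiskered triangle identities of $L\dashv i$. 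By uniqueness of adjoints this identifies the direct image, $i_{*}\cong L^{*}$; in particular $\mathsf{S}(i)$ is a genuine geometric morphism whose direct image we now control explicitly, bypassing the adjoint functor theorem used in Con.~\ref{defnS}.

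Finally I would use that $\ca$ is reflective, so $i$ is fully faithful and hence the counit $\epsilon\colon Li\Rightarrow\id_{\ca}$ is a natural isomorphism. Since whiskering preserves isomorphisms, the counit $i^{*}L^{*}\Rightarrow\id_{\mathsf{S}(\ca)}$ of the adjunction $i^{*}\dashv L^{*}\cong i_{*}$ is an isomorphism. For any adjunction the invertibility of the counit is equivalent to fully faithfulness of the right adjoint; thus $i_{*}$ is fully faithful, which is precisely the statement that the geometric morphism $\mathsf{S}(i)$ is a geometric embedding.

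I expect the only real subtlety to be bookkeeping: fixing the variance of precomposition so that $L\dashv i$ becomes $i^{*}\dashv L^{*}$ rather than the reverse, and confirming that the $L^{*}$ produced this way genuinely coincides (up to canonical isomorphism) with the right adjoint $i_{*}$ of $i^{*}$. Once the direction is settled the argument is purely formal: it simply transports the fully-faithfulness of $i$ through $\mathsf{S}$ to the fully-faithfulness of the direct image.
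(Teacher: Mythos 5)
Your proof is correct. The paper establishes the same statement by checking directly that the counit $i^*i_*\Rightarrow \id$ is an isomorphism, computing $i_*$ from the general formula $i_*\cong r_\cb\circ \ran_i\circ \iota_\ca$ of Rem.~\ref{f_*}: since $L\dashv i$, the right Kan extension $\ran_i$ coincides with precomposition by $L$; the functor $(-)\circ L$ already preserves directed colimits and is therefore fixed by the coreflector $r_\cb$; and finally $i^*i_*(-)\cong(-)\circ L\circ i\cong(-)$ because $Li\cong\id_\ca$. You reach the same key identification $i_*\cong(-)\circ L$ by a different and more formal route: precomposition into $\Set$ is a contravariant $2$-functor, so whiskering transports the adjunction $L\dashv i$ (note that $L$, being a left adjoint, preserves directed colimits, so $L^*$ indeed lands in $\mathsf{S}(\cb)$) to an adjunction $i^*\dashv L^*$, and uniqueness of right adjoints gives $i_*\cong L^*$; full fidelity of $i$ makes the whiskered counit invertible. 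Your variance worry resolves exactly as you suspect: contravariance turns $L\dashv i$ into $i^*\dashv L^*$, not the reverse. What your route buys is self-containedness: it bypasses the only partially defined $\ran_i$, the coreflectivity of $\mathsf{S}(\cb)$ in $\P(\cb)$ (Rem.~\ref{coreflective}), and the adjoint functor theorem, replacing them with the purely formal fact that $2$-functors preserve adjunctions, and it exhibits $i_*$ explicitly as $(-)\circ L$. What the paper's route buys is reusability: the formula for $f_*$ is set up once in the Toolbox and is exploited again where no left adjoint to $f$ is available, e.g.\ in the characterization of topological embeddings into finitely accessible categories, whereas your whiskering argument is special to the reflective situation.
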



 \begin{proof}
 We want to show that $\mathsf{S}(i)$ is a geometric embedding. This is equivalent to show that the counit $i^*i_*(-) \Rightarrow (-)$ is an isomorphism. Going back to \ref{f_*}, we write down the obvious computations, \[i^*i_*(-) \cong (i^* \circ r_\cb \circ \ran_i \circ \iota_\ca) (-).\] Now, observe that since $i$ has a left adjoint $L$ the operator $\ran_i$ just coincides with $(-) \circ L$, thus we can elaborate the previous equation as follows.
 \[ (i^* \circ r_\cb \circ \ran_i \circ  \iota_\ca)  (-) \cong (i^*\circ r_\cb)(- \circ L),\]
 Now, $(- \circ L)$ will preserve directed colimits because is the composition of a cocontinuous functor with a functor preserving directed colimits. This means that it is a fixed point of $r_\cb$.
 \[ (i^*\circ r_\cb)((-) \circ L) \cong i^*((-) \circ L) \cong  (-) \circ L \circ i \cong (-).\] The latter isomorphism is just the definition of reflective subcategory. This concludes the proof.
 \end{proof}

\subsubsection{Into finitely accessible categories}\label{intofinitely}

\begin{thm}
$f: \ca \to \mathsf{Ind}(C)$ is a topological embedding into a finitely accessible category if and only if, for all $p: \ca \to \Set$ preserving directed colimits, the following equation holds (whenever well defined), \[\lan_i(\ran_f(p) \circ i) \circ f \cong p.\]
\end{thm}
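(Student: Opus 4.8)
The plan is to unwind the definition of topological embedding and compute the counit of the geometric morphism $\mathsf{S}(f)$ explicitly, using the description of the direct image from Rem. \ref{f_*}. First I would recall that $\mathsf{S}(f) = (f^* \dashv f_*) : \mathsf{S}(\ca) \to \mathsf{S}(\mathsf{Ind}(C))$ is a geometric embedding if and only if its direct image $f_*$ is fully faithful, equivalently if and only if the counit $\varepsilon : f^* f_* \Rightarrow \id_{\mathsf{S}(\ca)}$ is a natural isomorphism. Thus the whole statement reduces to identifying, for each $p \in \mathsf{S}(\ca)$, the object $f^* f_*(p)$ together with the comparison map $\varepsilon_p$, and deciding when it is invertible; quantifying over all $p$ then gives the biconditional.

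Next I would compute $f_*(p)$. By Rem. \ref{f_*} the direct image is $f_* \cong r_{\mathsf{Ind}(C)} \circ \ran_f \circ \iota_\ca$ wherever $\ran_f$ is defined (this is the source of the \emph{whenever well defined} caveat, since $\ran_f(p)$ need not exist for every $p$). The key intermediate lemma is a description of the coreflection $r_{\mathsf{Ind}(C)} : \P(\mathsf{Ind}(C)) \to \mathsf{S}(\mathsf{Ind}(C))$ after restriction along the canonical inclusion $i : C \to \mathsf{Ind}(C)$. Using the identification $\mathsf{S}(\mathsf{Ind}(C)) \simeq \Set^C$ of Rem. \ref{trivial} (via restriction along $i$, with inverse $\lan_i$) together with the adjunction $\lan_i \dashv (- \circ i)$, a short Yoneda argument shows that $(r_{\mathsf{Ind}(C)} q) \circ i \cong q \circ i$ for every $q \in \P(\mathsf{Ind}(C))$: indeed for $G \in \mathsf{S}(\mathsf{Ind}(C))$ with $F = G \circ i$ one has $\mathsf{S}(\mathsf{Ind}(C))(G, r_{\mathsf{Ind}(C)} q) \cong \P(\mathsf{Ind}(C))(\lan_i F, q) \cong \Set^C(F, q \circ i)$, so Yoneda forces $(r_{\mathsf{Ind}(C)} q) \circ i \cong q \circ i$. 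Applying this with $q = \ran_f(p)$ shows that $f_*(p)$, viewed as a directed-colimit-preserving functor $\mathsf{Ind}(C) \to \Set$, is exactly $\lan_i(\ran_f(p) \circ i)$.

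Finally, since $f^*$ is precomposition with $f$, I obtain $f^* f_*(p) = \lan_i(\ran_f(p) \circ i) \circ f$, and I would identify $\varepsilon_p$ with the canonical natural map, namely the composite of the counit $\lan_i(\ran_f(p) \circ i) \to \ran_f(p)$ of $\lan_i \dashv (- \circ i)$ whiskered by $f$, followed by the counit $\ran_f(p) \circ f \to p$ of $\ran_f \dashv f^*$. Then $\varepsilon_p$ is an isomorphism precisely when $\lan_i(\ran_f(p) \circ i) \circ f \cong p$, which is the displayed equation, and letting $p$ range over all of $\mathsf{S}(\ca)$ yields the theorem. I expect the main obstacle to be the last identification: pinning down the coreflection $r_{\mathsf{Ind}(C)}$ sharply enough on $C$ and, above all, ensuring that the abstract isomorphism $f^* f_*(p) \cong p$ is genuinely witnessed by the counit $\varepsilon_p$ rather than by some unrelated equivalence, since only the former is equivalent to $\mathsf{S}(f)$ being a geometric embedding.
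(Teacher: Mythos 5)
Your proposal follows essentially the same route as the paper: it reduces the statement to invertibility of the counit of $\mathsf{S}(f)$, uses the formula $f_* \cong r_{\mathsf{Ind}(C)} \circ \ran_f \circ \iota_\ca$ from Rem.~\ref{f_*}, and identifies the coreflection $r_{\mathsf{Ind}(C)}$ with $\lan_i \circ (- \circ i)$ via Rem.~\ref{trivial}, which is exactly the paper's ``discussion below'' the theorem. If anything, your version is slightly more careful than the paper's, since you make the Yoneda argument for the coreflection explicit and note that the isomorphism in the displayed equation must be witnessed by the canonical counit map, a point the paper leaves implicit.
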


\begin{proof}
The result follows from the discussion below.
\end{proof}

\begin{rem}[$f_*$ and finitely accessible categories]

Given a $1$-cell $f: \ca \to \cb$ in $\text{Acc}_{\omega}$, we experienced that it can be quite painful to give an explicit formula for the direct image functor $f_*$. In this remark we improve the formula provided in \ref{f_*} in the special case that the codomain if finitely accessible. In order to do so we study the diagram of \ref{f_*}. To settle the notation, call $f: \ca \to \mathsf{Ind}(C)$ our object of study and $i: C \to \mathsf{Ind}(C)$ the obvious inclusion.

\begin{center}
\begin{tikzcd}
\mathsf{S}\ca \arrow[ddd, "\iota_\ca"] \arrow[rr, "f_*" description, bend right] &  & \mathsf{S}\mathsf{Ind}(C) \arrow[ddd, "\iota_{\mathsf{Ind}(C)}"] \arrow[ll, "f^*"'] \\
 &  &  \\
 &  &  \\
{\P(\ca)} \arrow[rr, "\ran_f" description, bend right] \arrow[rr, "\lan_f" description, bend left] &  & {\P(\mathsf{Ind}(C))} \arrow[ll, "f^*" description]
\end{tikzcd}
\end{center}

We are use to this diagram from \ref{f_*}, where we learnt also the following formula \[f_* \cong r_\cb \circ \ran_f \circ \iota_\ca.\] We now use the following diagram to give a better description of the previous equation.

\begin{center}
\begin{tikzcd}
\mathsf{S}\ca \arrow[ddd, "\iota_\ca"] \arrow[rr, "f_*" description, bend right]                       &  & \mathsf{S}\mathsf{Ind}(C) \arrow[ddd, "\iota_{\mathsf{Ind}(C)}"] \arrow[ll, "f^*"'] \arrow[rr, "i^*" description, two heads, tail, bend right] &  & \Set^C \arrow[ll, "\lan_i" description, two heads, tail] \arrow[llddd, "\lan_i" description] \\
                                                                                                       &  &                                                                                                                                                &  &                                                                                              \\
                                                                                                       &  &                                                                                                                                                &  &                                                                                              \\
{\P(\ca)} \arrow[rr, "\ran_f" description, bend right] \arrow[rr, "\lan_f" description, bend left] &  & {\P(\mathsf{Ind}(C))} \arrow[ll, "f^*" description] \arrow[rruuu, "i^*" description, bend right]                                           &  &                                                                                             
\end{tikzcd}
\end{center}

We claim that in the notations of the diagram above, we can describe the direct image $f_*$ by the following formula, \[f_* \cong \lan_i \circ  i^* \circ \ran_f \circ \iota_\ca,\]

this follows from the observation that $r_{\mathsf{Ind}(C)}$ coincides with $\lan_i \circ  i^*$ in the diagram about.
\end{rem}

\section{A study of the unit $\eta_\ca$} 

This section is devoted to a focus on the unit of the Scott adjunction. We will show that good properties of $\eta_\ca$ are related to the existence of \textit{finitely accessible representations} of $\ca$.
A weaker version of the following proposition appeared in \citep{simon}[2.6]. Here we give a different proof, that we find more elegant and provide a stronger statement.

\begin{prop}\label{ff} The following are equivalent:
\begin{enumerate}
	\item  The unit at $\ca$ of the Scott adjunction $\ca \to \mathsf{pt}  \mathsf{S} \ca$ is faithful (and iso-full);
	\item  $\ca$ admits a faithful (and iso-full) functor $f: \ca \to \mathsf{Ind}(C)$ preserving directed colimits;
\end{enumerate}
\end{prop}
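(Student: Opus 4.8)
The statement is a biconditional carrying two parallel versions, a ``faithful'' one and a ``faithful and iso-full'' one; I would prove both implications while carrying the iso-full decoration along, since the same functorial facts govern the two properties. The whole argument rests on the naturality of the unit $\eta$ together with the fact (Rem.~\ref{trivial}) that $\eta_{\mathsf{Ind}(C)}$ is an equivalence and $\mathsf{pt}(\Set^{C})\simeq\mathsf{Ind}(C)$.

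For $(1)\Rightarrow(2)$ the plan is to replace the abstract codomain $\mathsf{pt}\mathsf{S}(\ca)$ by a genuinely finitely accessible category by presenting the topos $\mathsf{S}(\ca)$. Since $\mathsf{S}(\ca)$ is a topos (Rem.~\ref{scottconstructionwelldefined}), choose a geometric embedding $i^{*}:\Set^{C}\leftrightarrows\mathsf{S}(\ca):i_{*}$. Applying $\mathsf{pt}$ yields $\mathsf{pt}(i):\mathsf{pt}\mathsf{S}(\ca)\to\mathsf{pt}(\Set^{C})$, which is fully faithful by Prop.~\ref{embeddings}, while Rem.~\ref{trivial} identifies $\mathsf{pt}(\Set^{C})\simeq\mathsf{Ind}(C)$. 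Both $\mathsf{pt}(i)$ and $\eta_\ca$ preserve directed colimits (the former by Rem.~\ref{pt}, the latter as a $1$-cell of $\text{Acc}_\omega$), so $f:=\mathsf{pt}(i)\circ\eta_\ca:\ca\to\mathsf{Ind}(C)$ preserves directed colimits. As a fully faithful functor preserves faithfulness and iso-fullness under post-composition, $f$ inherits whichever of the two properties $\eta_\ca$ enjoys.

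For $(2)\Rightarrow(1)$ I would use the naturality square of the unit:
\begin{center}
\begin{tikzcd}
\ca \arrow[r, "f"] \arrow[d, "\eta_\ca"'] & \mathsf{Ind}(C) \arrow[d, "\eta_{\mathsf{Ind}(C)}"] \\
\mathsf{pt}\mathsf{S}(\ca) \arrow[r, "\mathsf{pt}\mathsf{S}(f)"'] & \mathsf{pt}\mathsf{S}(\mathsf{Ind}(C))
\end{tikzcd}
\end{center}
By Rem.~\ref{trivial} the right-hand vertical is an equivalence, so $\mathsf{pt}\mathsf{S}(f)\circ\eta_\ca=\eta_{\mathsf{Ind}(C)}\circ f$ is faithful (resp. iso-full) as soon as $f$ is. Faithfulness of $\eta_\ca$ is then immediate, a left factor of a faithful composite being faithful; this already settles the ``faithful'' version.

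The delicate point is the iso-full half of $(2)\Rightarrow(1)$. Using the elementary fact that if $G\circ F$ is iso-full and $G$ is faithful then $F$ is iso-full, it suffices to show that $G=\mathsf{pt}\mathsf{S}(f)$ is faithful: given an isomorphism $\phi:\eta_\ca(a)\to\eta_\ca(b)$ one transports it to $\theta:=\eta_{\mathsf{Ind}(C)}^{-1}\bigl(\mathsf{pt}\mathsf{S}(f)(\phi)\bigr):f(a)\to f(b)$, iso-fullness of $f$ produces an isomorphism $\psi:a\to b$ with $f(\psi)=\theta$, whence $\mathsf{pt}\mathsf{S}(f)(\phi)=\mathsf{pt}\mathsf{S}(f)(\eta_\ca(\psi))$, and faithfulness of $\mathsf{pt}\mathsf{S}(f)$ forces $\phi=\eta_\ca(\psi)$. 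So everything reduces to faithfulness of $\mathsf{pt}\mathsf{S}(f)$, and this is the main obstacle. Unwinding it, the inverse image of $\mathsf{S}(f)$ is precomposition $f^{*}=(-\circ f)$ and $\mathsf{pt}\mathsf{S}(f)$ acts on a $2$-cell $\mu$ by whiskering, $(\mathsf{pt}\mathsf{S}(f)\mu)_{h}=\mu_{h\circ f}$; hence faithfulness means that the functors $\{\,\mathsf{Ind}(C)(c,-)\circ f\,\}_{c}$ jointly detect natural transformations between the cocontinuous evaluation points $\eta_\ca(a),\eta_\ca(b)$. I would establish this by showing $\mathsf{S}(f)$ is localic and then invoking Prop.~\ref{localicmaps}, deriving localicity from the faithfulness of $f$ together with the density of $C$ in $\mathsf{Ind}(C)$ (equivalently, that the representable pullbacks above are colimit-dense in $\mathsf{S}(\ca)$, which suffices precisely because each evaluation functor preserves the pointwise colimits of $\mathsf{S}(\ca)$). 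Verifying this density/localicity step is where essentially all the content of the iso-full strengthening sits; the purely faithful statement needs none of it.
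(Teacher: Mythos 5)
Your treatment of $(1)\Rightarrow(2)$ and of the purely faithful half of $(2)\Rightarrow(1)$ is correct and essentially identical to the paper's proof: present the Scott topos by a geometric embedding, apply Prop.~\ref{embeddings} together with Rem.~\ref{trivial}, and post-compose with $\eta_\ca$; conversely, read faithfulness of $\eta_\ca$ off the naturality square, in which $\mathsf{pt}\mathsf{S}(f)\circ\eta_\ca\simeq\eta_{\mathsf{Ind}(C)}\circ f$ is faithful. Up to that point there is nothing to object to.

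The iso-full half of $(2)\Rightarrow(1)$ is where your proposal has a genuine gap. Your reduction itself is sound: if $\mathsf{pt}\mathsf{S}(f)$ were faithful, your transport of the isomorphism $\phi$ through the naturality square would indeed produce $\psi$ with $\eta_\ca(\psi)=\phi$. But the route you propose for that faithfulness --- show that $\mathsf{S}(f)$ is localic whenever $f$ is faithful, then invoke Prop.~\ref{localicmaps} --- is not available: whether faithfulness of $f$ forces $\mathsf{S}(f)$ to be localic is stated, verbatim, as an open question in the final chapter of the thesis (``If $f$ is faithful, is $\mathsf{S}(f)$ localic?''), and you offer no argument for the ``density/localicity step'' beyond naming it. Since $\mathsf{S}$ is a left adjoint, there is no a priori reason it should behave well on any kind of monomorphism-like functor, which is exactly why the Toolbox develops topological embeddings separately. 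So as written the iso-full implication rests on an unproven (indeed open) claim. The paper argues differently and never needs $\mathsf{pt}\mathsf{S}(f)$ to be faithful: it observes that iso-full and faithful functors are precisely the pseudo-monomorphisms in Cat (and, by direct verification, in $\text{Acc}_\omega$), notes that $\mathsf{pt}\mathsf{S}(f)\circ\eta_\ca$ is such a pseudo-monomorphism since it factors $f$ up to the equivalence $\eta_{\mathsf{Ind}(C)}$, and cancels to conclude that the first factor $\eta_\ca$ is one. Your instinct that some care is needed here is actually well placed --- naive cancellation of ``iso-full and faithful'' fails for arbitrary functors, as the composite $1\to B(\mathbb{Z}/2)\to 1$ through the one-object groupoid with automorphism group $\mathbb{Z}/2$ shows, and this is presumably what pushed you toward requiring faithfulness of the second factor --- but note that this counterexample does not live in $\text{Acc}_\omega$ (the middle category lacks directed colimits), whereas your repair imports an open problem wholesale; to salvage your write-up you would either have to prove the pseudo-monomorphism cancellation in the ambient $2$-category, as the paper does, or actually settle the localicity question.
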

\begin{proof}
\begin{itemize}
	\item[$1) \Rightarrow 2)$] Assume that $\eta_\ca$ is faithful. Recall that any topos admits a geometric embedding in a presheaf category, this is true in particular for $\mathsf{S}(\ca)$. Let us call $\iota$ some such geometric embedding $\iota: \mathsf{S}(\ca) \to \Set^X$. Following \ref{embeddings} and \ref{trivial}, $\mathsf{pt}(\iota)$ is a fully faithful functor into a finitely accessible category $\mathsf{pt}(\iota): \mathsf{ptS}(\ca) \to \mathsf{Ind}(X)$. Thus the composition $\mathsf{pt}(\iota) \circ \eta_A$ is a faithful functor into a finitely accessible category \[\ca \to \mathsf{pt}  \mathsf{S} \ca \to \mathsf{Ind}(X).\] Obverse that if $\eta_\ca$ is iso-full, so is the composition $\mathsf{pt}(\iota) \circ \eta_A$.
	\item[$2) \Rightarrow 1)$] Assume that $\ca$ admits a faithful functor $f: \ca \to \mathsf{Ind}(C)$ preserving directed colimits. Now we apply the monad $\mathsf{pt}\mathsf{S}$ obtaining the following diagram.
	\begin{center}
	\begin{tikzcd}
\ca \arrow[rr, "f" description] \arrow[dd, "\eta_\ca"]              &  & \mathsf{Ind}(C) \arrow[dd, "\eta_{\mathsf{Ind}(C)}" description, two heads, hook] \\
                                                                    &  &                                                                                   \\
\mathsf{pt} \mathsf{S}(\ca) \arrow[rr, "\mathsf{ptS}f" description] &  & \mathsf{pt} \mathsf{S}(\mathsf{Ind}(C))                                          
\end{tikzcd}
	\end{center}
  \ref{trivial} implies that $\eta_{\mathsf{Ind}(C)}$ is an equivalence of categories, thus $\mathsf{ptS}(f) \circ \eta_\ca$ is (essentially) a factorization of $f$. In particular, if $f$ is faithful, so has to be $\eta_\ca$. Moreover, if $f$ is iso-full and faithful, so must be $\eta_\ca$, because this characterizes pseudo-monomorphisms in Cat (and by direct verification also in $\text{Acc}_\omega$).
\end{itemize}
\end{proof}

 \begin{rem}
 If we remove iso-fullness from the statement we can reduce the range of $f$ from any finitely accessible category to the category of sets.

\begin{prop}\label{ff} The following are equivalent:
\begin{enumerate}
	\item $\ca$ admits a faithful functor $f: \ca \to \Set$ preserving directed colimits.
	\item  $\ca$ admits a faithful functor $f: \ca \to \mathsf{Ind}(C)$ preserving directed colimits;
\end{enumerate}
\end{prop}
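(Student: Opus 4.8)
The plan is to dispatch $(1) \Rightarrow (2)$ in one line and concentrate on the reverse implication. For the easy direction, observe that $\Set$ is itself finitely accessible: it is the free completion of finite sets under directed colimits, so $\Set \simeq \mathsf{Ind}(C)$ with $C$ a (small skeleton of the) category of finite sets. Hence a faithful functor $\ca \to \Set$ preserving directed colimits is \emph{already} a faithful directed-colimit-preserving functor into a finitely accessible category, and $(1) \Rightarrow (2)$ holds trivially.

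For $(2) \Rightarrow (1)$, suppose $f: \ca \to \mathsf{Ind}(C)$ is faithful and preserves directed colimits. The strategy is to manufacture a single faithful, directed-colimit-preserving functor $U: \mathsf{Ind}(C) \to \Set$ and then precompose with $f$. Writing $i: C \to \mathsf{Ind}(C)$ for the canonical embedding (and recalling that $C$ is essentially small, since $\mathsf{Ind}(C)$ is finitely accessible), I would set
\[U(x) := \coprod_{c \in C} \mathsf{Ind}(C)(i(c), x),\]
the disjoint union, indexed by the small set of objects of $C$, of all the corepresentable functors attached to objects of $C$.

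Two verifications then remain. First, $U$ preserves directed colimits: each $i(c)$ is finitely presentable in $\mathsf{Ind}(C)$, so each summand $\mathsf{Ind}(C)(i(c), -)$ preserves directed colimits; and since coproducts commute with directed colimits in $\Set$ (colimits commute with colimits), the coproduct $U$ of these functors preserves directed colimits as well. Second, $U$ is faithful: given distinct parallel arrows $g, h: x \to y$, the objects $i(c)$ form a dense — hence separating — generator of $\mathsf{Ind}(C)$, every object being a directed colimit of such $i(c)$; thus there exist some $c$ and some $\alpha: i(c) \to x$ with $g \circ \alpha \neq h \circ \alpha$. This $\alpha$ sits in the $c$-th summand of $U(x)$ and is sent to different elements by $U(g)$ and $U(h)$, whence $U(g) \neq U(h)$.

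Finally, $U \circ f: \ca \to \Set$ is a composite of faithful functors, hence faithful, and a composite of directed-colimit-preserving functors, hence directed-colimit-preserving, which is exactly $(1)$. The one point I would present with care — and the genuine crux — is the faithfulness of $U$, which rests entirely on the generating property of $\{i(c)\}_{c \in C}$ in $\mathsf{Ind}(C)$; the colimit-interchange underlying preservation of directed colimits is routine. This construction also makes transparent \emph{why} the iso-full version cannot land in $\Set$: packaging a jointly faithful family into a single functor by a coproduct preserves faithfulness, but offers no control whatsoever over fullness on isomorphisms.
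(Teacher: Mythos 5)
Your proof is correct and takes essentially the same route as the paper: the paper also treats $(1)\Rightarrow(2)$ as immediate and proves $(2)\Rightarrow(1)$ by composing $f$ with the faithful, directed-colimit-preserving functor $\coprod_{p\in C}\mathsf{Ind}(C)(p,-)$, which is exactly your $U$. Your explicit verifications (each summand corepresented by a finitely presentable object, coproducts commuting with directed colimits in $\Set$, and faithfulness via the dense generator $\{i(c)\}_{c\in C}$) simply spell out what the paper leaves implicit.
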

\begin{proof}
The proof is very simple. $1) \Rightarrow 2)$ is completely evident. In order to prove $2) \Rightarrow 1)$, obverse that since $\mathsf{Ind}(C)$ is finitely accessible, there is a faithful functor $ \cy :  \mathsf{Ind}(C) \to \Set$ preserving directed colimits given by \[ \cy := \coprod_{p \in C} \mathsf{Ind}(C)(p, -). \]    The composition $g:= \cy \circ f$ is the desired functor into $\Set$.
\end{proof}
\end{rem}

\chapter{Final remarks and Open Problems}

\begin{rem}[The initial project]
The original intention of this thesis was to exploit the Scott adjunction to obtain a better understanding of categorical model theory, that is the abstract study of accessible categories with directed colimits from a logical perspective. We believe to have partially contributed to this general plan, yet some observations must be made. Indeed, after some time, we came to the conclusion that a lot of foundational issue about our approach needed to be discussed in order to understand what kind of tools we were developing. For example, in the early days of our work, we naively confused the Scott adjunction with the categorified Isbell duality.
\end{rem}
\begin{rem}
 As a result, we mainly devoted our project to the creation of a foundational framework, where it would be possible to organize categorical model theory. In particular, this means that a large part of our original plan is still to be pursued, and of course we tried to do so. Unfortunately, a family of technical results needs to be improved in order to approach sharp and sophisticated problems like \textit{Shelah's categoricity conjecture}.
\end{rem}

\begin{rem}[The rôle of geometry]
 Also, moving to a more foundational framework has raised the importance of the geometric intuition on our work. The interplay between the geometric and logic aspects of our quest has proven to be unavoidable. Ionads, for example, offer a perfect ground to study semantics with a syntax-free approach, and yet the intuition that we had studying them was completely geometric.
 \end{rem}

 \begin{rem}[A new point of view: ionads of models]
 This might be, in a way, one of our main contribution: to shift (at least partially) the object of study from accessible categories with directed colimits to ionad-like objects. They offer a new ground where categorical model theory finds its natural environment. There were already evidences that a similar framework could be relevant. An accessible category with directed colimits comes very often equipped with a forgetful functor $U: \ca \to \Set$ preserving directed colimits. $U$ can be seen as a point \[\bullet \stackrel{U}{\to} \P(\ca)\] in the category of small copresheaves over $\ca$, and its density comonad offer a ionad-like object that might be worthy of study in its own right. 
 \end{rem}

\begin{rem}
 For these reasons, we think that it is time to disclose our work to the community and offer to everybody the chance of improving our results. This chapter is dedicated to listing some of the possible further directions of our work. Obviously, some open problems are just curiosities that haven't found an answers, some others instead are quite relevant and rather technical open problems that would lead to a better understanding of categorical model theory if properly solved.
\end{rem}

\section{Geometry}

\subsection{Connected topoi}

\begin{defn}
A topos $\ce$ is connected if the inverse image of the terminal geometric morphism $\ce \to \Set$ is fully faithful.
\end{defn}

\begin{rem} 
 \cite{elephant2}[C1.5.7] describes the general and relevant properties of connected topoi and connected geometric morphisms. For the sake of this subsection, we can think of a connected topos as the locale of opens of a connected topological space. Indeed, the definition can be reduced to this intuition.
\end{rem}

\begin{question}
\begin{enumerate}
\item[]
\item What kind of Scott topoi are connected?
\item What kind of Isbell topoi are connected?
\end{enumerate}
\end{question}

In the direction of the first question, we can offer a first approximation of the result.

\begin{thm} If $\ca$ is connected then its Scott topos $\mathsf{S}(\ca)$ is connected.
\end{thm}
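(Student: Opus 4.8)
The plan is to unwind the definition of a connected topos at $\mathsf{S}(\ca)$: I must show that the inverse image $\gamma^*\colon \Set \to \mathsf{S}(\ca)$ of the terminal geometric morphism $\gamma\colon\mathsf{S}(\ca)\to\Set$ is fully faithful. The first step is to identify $\gamma^*$ explicitly as the \emph{constant functor} embedding $\Delta\colon\Set\to\mathsf{S}(\ca)$, $S\mapsto\Delta S$ (where $\Delta S$ sends every object of $\ca$ to $S$ and every morphism to $\mathrm{id}_S$). Two facts make this identification immediate. First, each $\Delta S$ really lies in $\mathsf{S}(\ca)$: a directed colimit is a colimit over a directed --- hence connected and nonempty --- poset, and the colimit of a constant diagram over a connected nonempty category is its value, so $\Delta S$ preserves directed colimits. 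Second, by Rem.~\ref{scottconstructionwelldefined} the inclusion $\mathsf{S}(\ca)\hookrightarrow\Set^{\ca_\lambda}$ preserves colimits, so colimits in $\mathsf{S}(\ca)$ are computed pointwise; in particular the terminal object is $\Delta 1$ and coproducts are pointwise. Since $\gamma^*$ is cocontinuous and preserves the terminal object, $\gamma^*(S)=\coprod_{S}\gamma^*(1)=\coprod_S\Delta 1=\Delta S$, so $\gamma^*=\Delta$.

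The second step is the hom-set computation. A morphism $\Delta S\to\Delta T$ in $\mathsf{S}(\ca)$ is a natural transformation $\alpha\colon\Delta S\Rightarrow\Delta T$, i.e. a family of functions $\alpha_a\colon S\to T$ indexed by $a\in\ca$. For a morphism $f\colon a\to b$ in $\ca$ the naturality square reads $\Delta T(f)\circ\alpha_a=\alpha_b\circ\Delta S(f)$, and since $\Delta S(f)=\mathrm{id}_S$ and $\Delta T(f)=\mathrm{id}_T$ this collapses to $\alpha_a=\alpha_b$. Thus the components are constant along every morphism of $\ca$. Now I invoke the hypothesis: $\ca$ is a connected category, so it is nonempty and any two objects are joined by a finite zig-zag of morphisms; propagating the equalities $\alpha_a=\alpha_b$ along such zig-zags forces all components to agree with a single function $S\to T$. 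This yields the bijection \[\mathsf{S}(\ca)(\Delta S,\Delta T)=\mathrm{Nat}(\Delta S,\Delta T)\;\cong\;\Set(S,T),\] which is clearly the identity on the underlying maps, so $\Delta=\gamma^*$ is fully faithful and $\mathsf{S}(\ca)$ is connected.

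I expect no genuinely hard step here; the content is entirely in the bookkeeping of the first paragraph. The only point that must be handled with care is the claim that the relevant objects and colimits in $\mathsf{S}(\ca)$ are computed pointwise --- in particular that $\coprod_S\Delta 1$ is the constant functor $\Delta S$ rather than something larger --- and the dual remark that $\Delta S$ does preserve directed colimits (so that $\gamma^*$ lands in $\mathsf{S}(\ca)$ at all). Both reduce to the pointwise computation of colimits guaranteed by Rem.~\ref{scottconstructionwelldefined} together with the fact that directed index posets are connected. Once this is in place, connectedness of $\mathsf{S}(\ca)$ is simply the statement that the set of connected components of $\ca$ is a singleton, transported through $\Delta$.
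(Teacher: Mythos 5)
Your proof is correct, but it takes a more elementary route than the paper. The paper's proof is essentially one line: the terminal geometric morphism $\mathsf{S}(\ca)\to\Set$ is recognized as $\mathsf{S}(\tau)$ for the terminal $1$-cell $\tau:\ca\to\cdot$ (note $\mathsf{S}(\cdot)\simeq\Set$), so by Con.~\ref{defnS} its inverse image is precomposition $\tau^*$, i.e.\ exactly your constant-diagram functor $\Delta$; full fidelity of $\tau^*$ is then outsourced to the theory of lax epimorphisms, citing the fact that a connected $\ca$ makes $\tau$ a lax epi (this is also how the paper immediately harvests Cor.~\ref{JEPconnected} on the joint embedding property). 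Your zig-zag computation of $\mathrm{Nat}(\Delta S,\Delta T)\cong\Set(S,T)$ is precisely a hands-on proof of the special case of that lax-epi lemma, so the two arguments share the same mathematical core. What your version buys is self-containedness and rigor at the points the paper leaves implicit: you verify that $\Delta S$ actually lands in $\mathsf{S}(\ca)$ (directed posets being nonempty and connected), and that $\gamma^*\cong\Delta$ via cocontinuity plus preservation of the terminal object, using the pointwise computation of (co)limits from Rem.~\ref{scottconstructionwelldefined}; one could quibble that the pointwise description there is a priori over $\ca_\lambda$ rather than all of $\ca$, but since every object of $\ca$ is a ($\lambda$-)directed colimit of $\lambda$-presentables and the coproduct $\coprod_S\Delta 1$ preserves directed colimits, your identification with $\Delta S$ on all of $\ca$ goes through as you indicate. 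What the paper's route buys is brevity and generality: deducing everything from functoriality of $\mathsf{S}$ and a reusable $2$-categorical principle rather than a bespoke calculation.
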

\begin{proof}
The terminal map $t: \mathsf{S}(\ca) \to \Set$ appears as $\mathsf{S}(\tau)$, where $\tau$ is the terminal map $\tau: \ca \to \cdot$. When $\ca$ is connected $\tau$ is a lax-epi, and $\tau^*$ is fully faithful, \cite{laxepi}.
\end{proof}

\begin{cor}[The JEP implies connectedness of the Scott topos] \label{JEPconnected}
Let $\ca$ be an accessible category where every map is a monomorphism. If $\ca$ has the joint embedding property, then $\mathsf{S}(\ca)$ is connected.
\end{cor}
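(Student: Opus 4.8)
The plan is to reduce the statement to the preceding theorem, which asserts that if $\ca$ is connected \emph{as a category} then $\mathsf{S}(\ca)$ is a connected topos. That theorem identifies the terminal geometric morphism $\mathsf{S}(\ca) \to \Set \simeq \mathsf{S}(\mathbf{1})$ with $\mathsf{S}(\tau)$, where $\tau \colon \ca \to \mathbf{1}$ is the terminal functor, and observes that $\tau$ is a lax epimorphism exactly when $\ca$ is connected, so that $\mathsf{S}(\tau)^*$ --- which is the constant-functor embedding $\Set \to \mathsf{S}(\ca)$ --- is fully faithful. Thus it suffices to verify that the hypotheses of the corollary force $\ca$ to be a connected category, i.e. nonempty and zigzag-connected.

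First I would record nonemptiness: an accessible category carries a nonempty set of presentable generators, so $\ca$ has objects. Then I would extract connectedness directly from the joint embedding property: given any two objects $A, B \in \ca$, the JEP furnishes an object $C$ together with morphisms $A \to C \leftarrow B$, which is a zigzag of length two joining $A$ and $B$ in the underlying graph of $\ca$. Since $A, B$ were arbitrary, $\ca$ is connected. Feeding this into the preceding theorem then yields that $\mathsf{S}(\ca)$ is connected, which is the claim.

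The argument is short, so the work is conceptual rather than computational. The main point to get right is to confirm that the author's notion of ``connected'' in the preceding theorem is precisely category-theoretic connectedness, equivalently the condition that the terminal functor be a lax epimorphism in the sense of \cite{laxepi}; once this matching is made, the cospan supplied by the JEP is visibly enough. I would also remark that the hypothesis that every map is a monomorphism is \emph{not} used to establish connectedness: it is a standing assumption inherited from the surrounding discussion of categories of saturated objects (where, via Thm.~\ref{AEC}, it places $\ca$ in the AEC framework), and it becomes relevant only when the corollary is combined with atomicity in Thm.~\ref{thmcategoriesofsaturated objects} to force $\mathsf{S}(\ca)$ to be boolean and two-valued. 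The one genuine edge case to flag is the empty category: were one to permit an empty accessible $\ca$, the JEP would hold vacuously while connectedness fails, so the JEP should be read as presupposing the existence of objects --- harmless here, since $\ca$ is nonempty.
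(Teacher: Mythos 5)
Your proposal is correct and follows exactly the paper's route: the paper's entire proof is the one-liner ``Obviously if $\ca$ has the JEP, it is connected,'' feeding into the preceding theorem that connectedness of $\ca$ (via the terminal functor $\tau$ being a lax epimorphism, so $\mathsf{S}(\tau)^*$ is fully faithful) gives connectedness of $\mathsf{S}(\ca)$ --- you have simply spelled out the cospan-to-zigzag step left implicit there. Your side remarks are also apt, including the caveat about the empty category (which, being sketchable, is in fact accessible, so nonemptiness really does come from reading the JEP as presupposing objects rather than from accessibility alone) and the observation that the monomorphism hypothesis is idle here.
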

\begin{proof}
Obviously if $\ca$ has the JEP, it is connected.
\end{proof}

\subsection{Closed sets}

\begin{rem}
A fascinating aspect of general topology is the duality between closed and open sets. Indeed, given a topological space $X$, its poset of opens is dually equivalent to its poset of closed sets \[(-)^c: \mathcal{O}(X) \leftrightarrows \cc(X)^\circ :(-)^c.\]
This resembles the fact that, given a topos $\ce$, its opposite category is monadic over $\ce$ via the subobject classifier, \[\Omega^{(-)}: \ce \leftrightarrows \ce^\circ : \Omega^{(-)}.\]
Indeed, this is compatible with the theory of $n$-topoi, where the generalized universal bundle unifies the role played by $\Omega$ in a Grothendieck topos and by $1$ in a locale. Indeed,  $(-)^c = (-) \Rightarrow 1$ and thus the two results are somewhat the same. Of course, in poset theory the monadicity is much tighter.
\end{rem}

\begin{question}
If the theory of (bounded, generalized) ionads offers us a clear way to picture topoi as categories of open sets with respect to an interior operator. Is there a notion of closure operator that nicely interacts with this theory?
\end{question}

\begin{rem}
Of course, there are natural attempts to answer this question. One could say that a closure operator over a category $X$ is a colex monad $\text{cl} : \P(X) \to \P(X)$. In the classical case of topological spaces there is a bijection between closure and interior operator given by the formula \[ \text{cl} = \text{Int}((-)^c)^c.\]
As discussed in the previous remark, in this case one has that the adjunction $\mathsf{Alg}(\text{cl})^\circ \leftrightarrows \mathsf{coAlg}(\text{Int})$ yields an equivalence of categories.
Assuming that $X$ is a small category, (so that $\P(X)$ has a subobject classifier) one has a similar correspondence between monads and comonads, \[\Omega^{\Omega^{(-)}}: \mathsf{Mnd}(\Set^X) \leftrightarrows \mathsf{coMnd}(\Set^X) : \Omega^{\Omega^{(-)}} \] but such correspondence might not send interior operator to closure ones. Since $\Omega$ is injective, the double dualization monad is somewhat exact, but such an exactness property is not enough for our purposes. We still get an adjunction, \[ \mathsf{Alg}(\Omega^{\Omega^{\text{Int}}})^\circ \leftrightarrows \mathsf{coAlg}(\text{Int}).\] 
\end{rem}

\section{Logic}

\subsection{Categoricity spectra and Shelah's conjecture}

We have already mentioned in Chap. \ref{logical} that we believe it is possible to use our technology to approach Shelah's categoricity conjecture. In the 60's Morley proved the following two theorems \cite{chang1990model}.

\begin{thm}[Morley $(\downarrow)$] Let $\mathbb{T}$ be a complete first order theory in a countable language with infinite models. If $\mathbb{T}$ is categorical in some cardinal $\kappa$, then $\mathbb{T}$ is categorical in any cardinal $\omega_1 \leq \mu \leq \kappa.$
\end{thm}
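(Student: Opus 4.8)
The plan is to route through the stability-theoretic proof of Morley's theorem, establishing in fact the stronger statement that $\mathbb{T}$ is categorical in \emph{every} uncountable cardinal, from which the claimed interval $\omega_1 \le \mu \le \kappa$ follows at once. The skeleton is the Baldwin--Lachlan analysis: I would show that categoricity in the single uncountable cardinal $\kappa$ forces $\mathbb{T}$ to be $\omega$-stable and to omit Vaughtian pairs, and then that these two structural properties alone pin down each uncountable model up to isomorphism by a single cardinal invariant.

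First I would prove that categoricity in an uncountable $\kappa$ implies $\omega$-stability. Arguing by contraposition, if $\mathbb{T}$ is not $\omega$-stable there is a countable model over which uncountably many complete types are consistent; feeding this binary tree of types into a standard Ehrenfeucht--Mostowski and omitting-types construction produces two models of cardinality $\kappa$ realizing visibly different families of types (one saturated-like, one omitting a type), contradicting $I(\kappa,\mathbb{T})=1$. Once $\omega$-stability is in hand the whole toolkit follows: a well-defined Morley rank and degree on definable sets, prime models over arbitrary parameter sets, saturated models in every infinite cardinality, and --- crucially --- the existence of a \emph{strongly minimal} formula $\varphi$ (rank one, degree one).

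Next I would show categoricity rules out Vaughtian pairs. If $M \prec N$ were a Vaughtian pair witnessed by a formula with infinitely many but no new solutions, the Vaught two-cardinal theorem (available once the rank machinery is present) would manufacture a model of size $\kappa$ omitting a type realized in the saturated model of size $\kappa$; these two models of the same cardinality cannot be isomorphic, again contradicting $\kappa$-categoricity. With no Vaughtian pairs, the strongly minimal set $\varphi(M)$ carries a pregeometry via algebraic closure, hence a well-defined dimension, and the absence of Vaughtian pairs guarantees that every model is prime and minimal over $\varphi(M)$ and that for uncountable models $|M|$ equals $\dim \varphi(M)$. Two models of the same uncountable cardinality $\mu$ therefore have strongly minimal sets of equal dimension and are each prime over them, so they are isomorphic; this yields categoricity in every uncountable cardinal, in particular on the whole interval $\omega_1 \le \mu \le \kappa$.

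The main obstacle, I expect, is the Vaughtian-pair elimination together with the final dimension analysis: the two-cardinal theorem and the proof that uncountable models are exactly the prime-and-minimal extensions of a strongly minimal set, with cardinality read off as the geometric dimension, form the technical heart of the argument and the place where $\omega$-stability and the combinatorics of algebraic closure must be combined with genuine care. The $\omega$-stability step is comparatively routine, but it still requires the many-models construction to be carried out honestly rather than waved at.
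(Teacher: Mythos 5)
You have proved the statement, but there is nothing in the thesis to compare against: the theorem appears in the closing chapter purely as quoted classical background, with a citation to Chang--Keisler, to motivate the discussion of Shelah's categoricity conjecture; no proof is given or intended there. What you have reconstructed is the Baldwin--Lachlan proof, and as an outline it is correct: categoricity in an uncountable $\kappa$ forces $\omega$-stability (Ehrenfeucht--Mostowski models over well-ordered index sets realize only countably many types over countable subsets, while non-$\omega$-stability yields a model of size $\kappa$ realizing uncountably many over some countable set), Vaughtian pairs are then excluded by playing the two-cardinal theorem against the saturated model of size $\kappa$, and the strongly minimal dimension analysis shows every model is prime and minimal over its strongly minimal part, with cardinality equal to dimension in the uncountable case. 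This establishes the stronger conclusion --- categoricity in \emph{every} uncountable cardinal --- which subsumes the stated interval $\omega_1 \leq \mu \leq \kappa$; note also that if $\kappa = \omega$ the interval is empty, so you may assume $\kappa$ uncountable, as you implicitly do. By contrast, Morley's original argument, the one behind the reference the thesis cites, runs through transcendence rank and totally transcendental theories and obtains downward categoricity from the lemma that every uncountable model of a $\kappa$-categorical theory is saturated, together with uniqueness of saturated models of a given cardinality; it never isolates strongly minimal sets. Your route costs more machinery (prime models over sets, the pregeometry of algebraic closure) but buys the sharper structure theory: a single cardinal invariant classifying all uncountable models. Two presentational caveats, neither a gap: the strongly minimal formula in general requires parameters from the prime model, so the dimension analysis must be carried out over it; and your phrase ``one saturated-like, one omitting a type'' in the $\omega$-stability step should be tightened to the standard counting statement about how many types the two models realize over countable subsets, since that is what actually distinguishes them up to isomorphism.
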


\begin{thm}[Morley $(\uparrow)$]
Let $\mathbb{T}$ be a complete first order theory in a countable language with infinite models. If $\mathbb{T}$ is categorical in some cardinal $\kappa$, then $\mathbb{T}$ is categorical in any cardinal above $\kappa$.
\end{thm}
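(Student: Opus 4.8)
The plan is to prove the upward transfer by the classical route through stability theory, organised so as to dovetail with the machinery of \emph{categories of saturated objects} developed in Thm.~\ref{thmcategoriesofsaturated objects}. View $\mathbb{T}$ through its accessible category of models $\mathsf{Mod}(\mathbb{T})$ with elementary embeddings, which is accessible with directed colimits and in which every arrow is a monomorphism. Categoricity in a cardinal $\kappa$ asserts that $\mathsf{Mod}(\mathbb{T})$ has, up to isomorphism, a single object of cardinality $\kappa$. The heart of the argument is a single structural dichotomy: \emph{if $\mathbb{T}$ is categorical in some uncountable $\kappa$, then every model of cardinality $\ge \aleph_1$ is saturated}. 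Granting this, the theorem is immediate, since for each uncountable $\mu$ the saturated model of cardinality $\mu$ is unique up to isomorphism (a back-and-forth argument, \cite{chang1990model}); applying this to every $\mu > \kappa$ yields categoricity above $\kappa$. In the language of the thesis, the claim is that categoricity collapses $\mathsf{Mod}(\mathbb{T})$, in its large cardinalities, onto a category of saturated objects, the regime in which Thm.~\ref{thmcategoriesofsaturated objects} already predicts rigidity.

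First I would establish that categoricity in an uncountable $\kappa$ forces $\omega$-stability (total transcendentality). The contrapositive is the workhorse: were some countable parameter set to carry uncountably many complete types, one could, via Ehrenfeucht--Mostowski models, build a model of cardinality $\kappa$ realising only few types over countable sets and, separately, one realising uncountably many, and these two cannot be isomorphic. I would import this and the attendant rank theory from \cite{chang1990model}. From $\omega$-stability one obtains for free the two existence results that make the rest of the argument possible: saturated models exist in every infinite cardinality, and prime models exist over arbitrary parameter sets.

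The technical core, and the step I expect to be the main obstacle, is ruling out \emph{Vaughtian pairs}. A Vaughtian pair is a proper elementary extension $M \prec N$ together with a formula $\phi$ whose (infinite) solution set is unchanged in passing from $M$ to $N$. The plan is to show that the existence of such a pair, combined with the two-cardinal transfer theorem of Vaught, would manufacture two models of cardinality $\kappa$ with different saturation behaviour --- one omitting a type that the other realises --- contradicting categoricity. Assembling the two-cardinal machinery (the $(\kappa,\lambda)$ transfer and the omitting-types apparatus it rests on) is the delicate part; everything upstream of it is bookkeeping, and everything downstream is formal.

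Finally I would close the loop. Absence of Vaughtian pairs is exactly the statement that no infinite definable set can be pinned across an elementary extension, and a standard argument then shows that in an $\omega$-stable theory without Vaughtian pairs every model of cardinality $\ge \aleph_1$ realises every type over every smaller parameter set, i.e.\ is saturated. This is the dichotomy promised in the first paragraph, so categoricity propagates to all cardinals above $\kappa$. I would remark that this last passage is precisely the classical shadow of the thesis's observation (Thm.~\ref{thmcategoriesofsaturated objects} and Prop.~\ref{ff}) that a category of saturated objects with the joint embedding property and a well-behaved unit $\eta_\ca$ is categorical in its presentability rank; the upward Morley theorem is the assertion that, for countable first-order $\mathbb{T}$, categoricity in one uncountable cardinal already forces the saturated-objects hypothesis in every larger cardinality.
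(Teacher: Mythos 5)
Your sketch is correct in outline, but there is nothing in the paper to compare it against: the thesis does not prove this theorem at all. It is quoted in the final chapter as classical background motivating Shelah's conjecture, with the proof delegated wholesale to \cite{chang1990model}. What you have written is precisely the standard route found in that reference and its descendants (Morley as refined by Baldwin--Lachlan): categoricity in an uncountable cardinal forces $\omega$-stability via Ehrenfeucht--Mostowski models; $\omega$-stability yields saturated and prime models; the two-cardinal machinery excludes Vaughtian pairs; and $\omega$-stability without Vaughtian pairs makes every model of cardinality $\geq \aleph_1$ saturated, whence uniqueness of saturated models in each cardinality gives upward transfer. All four steps are the accepted ones, and deferring the two-cardinal transfer and rank theory to the cited literature is reasonable for a sketch at this level of granularity. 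The framing that connects this to Thm.~\ref{thmcategoriesofsaturated objects} is apt and in the spirit of the chapter, though it does no logical work in your argument.

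One caveat deserves to be made explicit. The theorem as printed in the thesis says ``categorical in \emph{some} cardinal $\kappa$,'' but taken literally with $\kappa = \aleph_0$ this is false: dense linear orders without endpoints are $\aleph_0$-categorical yet categorical in no uncountable cardinal. Your proof tacitly and necessarily assumes $\kappa$ uncountable --- the very first step, categoricity implies $\omega$-stability, fails for $\kappa = \aleph_0$ --- which is the correct reading of Morley's theorem (note the parallel downward statement in the thesis restricts to $\mu \geq \omega_1$ for the same reason). State this hypothesis up front; with it, your outline is the standard sound proof.
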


Adapting the theory of  Abstract elementary classes \cite{baldwin2009categoricity} to accessible categories, Shelah's categoricity conjecture has generically the following form.

\begin{conj}
Let $\ca$ be a \textit{nice} accessible category with directed colimits having just one model of presentability rank $\kappa$; then $\ca$ must have at most one model of presentability rank $\lambda$ for every regular cardinal $\lambda \geq \kappa$.
\end{conj}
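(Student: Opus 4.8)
The plan is to run the categorical analogue of Shelah's upward transfer by routing categoricity through the atomicity of the Scott topos, exploiting that the Scott construction produces a cardinal-free invariant of $\ca$ while categoricity is an intrinsically cardinal-indexed phenomenon. First I would pin down the meaning of \textbf{nice}: I would demand that $\ca$ be an abstract elementary class in the sense of Thm.~\ref{AEC} (in particular every map is a monomorphism), that it enjoy the amalgamation and joint embedding properties, and that the unit $\eta_\ca$ be iso-full, faithful and surjective on objects. These are exactly the hypotheses that activate Thm.~\ref{thmcategoriesofsaturated objects} and its $\kappa$-refinement Thm.~\ref{kappasaturated}, so the reduction is less a loss of generality than a specification of what ``nice'' has to mean for the present machinery to bite.

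The first genuine step is to convert categoricity at rank $\kappa$ into a statement about saturation. Here I would import the standard model-theoretic fact that, under amalgamation, the unique model of a prescribed presentability rank is $\kappa$-saturated, together with the observation that categoricity forces every rank-$\kappa$ model to coincide with it. This should exhibit $\ca$ as a category of $\kappa$-saturated objects relative to the $\kappa$-accessible cover $\ck := \mathsf{Ind}_\kappa(\ca_\kappa)$ of Def.~\ref{cover}, whose subcategory of $\kappa$-presentable objects inherits the amalgamation property. Feeding this into Thm.~\ref{kappasaturated} yields that $\mathsf{S}_\kappa(\ca)$ is an atomic $\kappa$-topos, and the joint embedding property upgrades it to a boolean, two-valued one (compare Cor.~\ref{JEPconnected}). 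The payoff is a purely topos-theoretic certificate of categoricity living inside $\mathsf{S}_\kappa(\ca)$.

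The crux is the upward transfer: from the boolean two-valued atomic $\kappa$-topos $\mathsf{S}_\kappa(\ca)$ I would try to read off uniqueness of the $\lambda$-saturated model for every regular $\lambda \geq \kappa$, thereby closing the loop back to categoricity at $\lambda$ via the partial converse Thm.~\ref{thmcategoriesofsaturated objects}(3) applied at level $\lambda$. The strategy would be to realise $\lambda$-saturated models again as generic points of an atomic topos obtained from $\mathsf{S}_\kappa(\ca)$ along the transition $i_\kappa^\lambda$ of the filtration, so that two-valuedness --- hence essential uniqueness of the generic point --- is preserved upwards.

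This final step is exactly where I expect the main obstacle to sit. As emphasised in the remark that \emph{the diagram does not commute}, one has $\mathsf{S}_\lambda \circ \iota_\kappa^\lambda \not\simeq i_\kappa^\lambda \circ \mathsf{S}_\kappa$ in general, so atomicity and two-valuedness cannot simply be pushed from the $\kappa$-layer to the $\lambda$-layer along the filtration; what is missing is a bespoke transfer principle for atomicity under $i_\kappa^\lambda$, and establishing it is precisely the ``family of technical results [that] needs to be improved'' flagged in the final remarks. A secondary, more foundational difficulty is that converting categoricity into genuine saturation presupposes an amalgamation-and-saturation theory for accessible categories of the strength available for AECs, which is only partially internalised in the present framework. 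I therefore anticipate that a complete argument will hinge on first proving a commutation-up-to-atomicity lemma for the Scott filtration, and only then assembling the transfer.
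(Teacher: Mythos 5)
You have not proved the statement, and neither does the paper: this is stated as a \emph{conjecture}, and the author explicitly leaves it open. What you have written is, almost point for point, the paper's own proposed programme (the remark ``A possible strategy'' and the ``Very cheap version of Shelah's categoricity conjecture,'' whose entire proof is ``Apply \ref{kappasaturated}''), together with an honest list of the same obstructions the author flags in the final chapter. That self-awareness is to your credit, but the two steps you defer are not finishing touches; they are the entire content of the conjecture, and at least one of them fails in the form you state it.

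Concretely: (a) your conversion of categoricity into saturation is too strong. Being a ``category of $\kappa$-saturated objects'' in the sense of the paper requires \emph{every} object of $\ca$ to land in $\mathrm{Sat}_\kappa(\ck)$ under the embedding, whereas categoricity in rank $\kappa$ at best yields a single saturated model at that one rank --- objects of other ranks need not be $\kappa$-saturated at all (the motivating example is $\Set_{\geq\kappa}\hookrightarrow\Set$, not $\Set$ itself). This is precisely why the paper's strategy passes to the truncation $\ca_{\geq\lambda}$, and the ``missing brick'' it identifies is the claim that categoricity at $\kappa$ forces $\ca_{\geq\lambda}$ to be a category of $\mu$-saturated objects for some $\lambda$; moreover, even for the truncation one must show the non-reflective inclusion $\ca_{\geq\lambda}\hookrightarrow\ca$ is a topological embedding, which is exactly the open refinement of Thm.~\ref{reflective} the author requests. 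Your use of the cover $\mathsf{Ind}_\kappa(\ca_\kappa)$ handles the topological-embedding clause (via the reflection $\lan_\alpha\iota\dashv\ca(\iota-,-)$ and the $\kappa$-version of Thm.~\ref{reflective}), but not the saturation clause. (b) Even granting (a), Thm.~\ref{kappasaturated}(3) concludes categoricity in \emph{some} presentability rank, with no control over which one; so running your loop at each level $\lambda$ cannot deliver ``at most one model in every regular $\lambda\geq\kappa$'' and at best targets the paper's explicitly weaker ``unboundedly many cardinals'' variant. (c) The ``commutation-up-to-atomicity lemma'' along $i_\kappa^\lambda$ that your transfer step hinges on is nowhere established --- the paper proves only that the filtration diagram does \emph{not} commute. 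So your proposal is a faithful reconstruction of the paper's programme, correctly annotated with its gaps, but it should be presented as such: a strategy for an open conjecture, not a proof.
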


Indeed it is very non-trivial to prove this statement as such and one can provide weaker versions of the statement above as follows.

\begin{conj}[Weaker conjecture]
Let $\ca$ be a \textit{nice} accessible category with directed colimits having just one model of presentability rank $\kappa$; then $\ca$ must be categorical in unboundedly many cardinals higher than $\kappa$.
\end{conj}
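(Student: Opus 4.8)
The plan is to route the conjecture through the atomicity--categoricity dictionary established in Thm.~\ref{thmcategoriesofsaturated objects} and its $\kappa$-analogue Thm.~\ref{kappasaturated}, and then to supply the missing \emph{upward transfer}. First I would pin down what \emph{nice} should mean: $\ca$ is accessible with directed colimits, every morphism is a monomorphism, and the subcategory of presentables enjoys both the amalgamation and the joint embedding property. Assuming $\ca$ has a unique model of presentability rank $\kappa$, the first step is to exhibit the subcategory $\text{Sat}_\kappa(\ca)$ of $\kappa$-saturated objects as a category of $\kappa$-saturated objects in the technical sense, using a cover $\mathcal{L}^\kappa_\ca$ (Def.~\ref{cover}) to embed it into a $\kappa$-accessible category and invoking amalgamation to make the induced atomic topology legitimate.

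Granting this, Thm.~\ref{kappasaturated}(1)--(2) immediately yield that the associated Scott $\kappa$-topos is atomic, boolean and two-valued. The conceptual payoff, following Caramello, is that such a topos behaves like a complete atomic theory with a single homogeneous--universal model; the unit $\eta_\ca$ then reflects the essential uniqueness of the saturated object at rank $\kappa$, recovering categoricity \emph{at} $\kappa$ exactly as in Thm.~\ref{thmcategoriesofsaturated objects}(3). The delicate point is verifying the hypotheses of part~(3), namely that $\eta_\ca$ is iso-full, faithful and surjective on objects: faithfulness and iso-fullness I would extract from Prop.~\ref{ff} by producing a faithful, iso-full functor into a finitely accessible category, while surjectivity on objects is subtler and is where the \emph{nice} hypotheses must be spent.

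The genuinely hard part --- and the reason this remains a conjecture --- is the \textbf{upward transfer} from categoricity at $\kappa$ to categoricity in unboundedly many $\lambda > \kappa$. My strategy would be to show that the atomic, boolean, two-valued structure of the Scott topos persists cofinally as one varies the index along the filtration of $\kappa$-Scott adjunctions, so that for unboundedly many $\lambda$ the category of $\lambda$-saturated objects again gives an atomic two-valued $\lambda$-topos with a unique point of presentability rank $\lambda$. Concretely this splits into (i) using amalgamation and JEP to \emph{build} a $\lambda$-saturated object at each such $\lambda$, and (ii) using atomicity --- i.e. homogeneity and universality of saturated models --- to force that object to be \emph{unique}; step (ii) should follow formally from the boolean two-valued atomic structure once (i) is in hand.

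The main obstacle I expect lies in the interaction between \emph{presentability rank} and the cardinal arithmetic governing saturation: there is no a priori reason the atomic topology survives the transition functors of the filtration, and the already-noted non-commutativity ($\mathsf{S}_\lambda \circ \iota_\kappa^\lambda \not\simeq i_\kappa^\lambda \circ \mathsf{S}_\kappa$) warns that the Scott construction is not stable along it. Controlling this discrepancy, and securing the existence of arbitrarily saturated objects without recourse to a full first-order syntax, is the crux; it is essentially a categorical reincarnation of the Morley~$(\uparrow)$ argument, and I would not expect it to go through without an additional tameness- or stability-type hypothesis folded into \emph{nice}.
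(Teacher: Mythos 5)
You should first be clear about the status of the statement: it is a \emph{conjecture}, and the paper contains no proof of it. What the paper offers is a partial result (the ``Very cheap version of Shelah's categoricity conjecture'', proved by a one-line application of Thm.~\ref{kappasaturated}) together with a strategy remark that explicitly names the ``missing brick'': showing that categoricity at rank $\kappa$ forces some tail $\ca_{\geq\lambda}\hookrightarrow\ca$ to be a category of $\mu$-saturated objects. Your proposal is, in substance, a reconstruction of exactly that strategy --- route everything through categories of saturated objects, JEP, and the unit conditions of Thm.~\ref{thmcategoriesofsaturated objects}/\ref{kappasaturated}, and isolate the upward transfer as the open core. So you have not proved the conjecture (you say so yourself), but your diagnosis of where the difficulty sits coincides with the paper's own: your step (i)/(ii) decomposition of the upward transfer is the missing brick of the paper's remark, and your worry about the non-commutativity $\mathsf{S}_\lambda\circ\iota_\kappa^\lambda\not\simeq i_\kappa^\lambda\circ\mathsf{S}_\kappa$ matches the paper's closing observation that what is really needed is a refinement of Thm.~\ref{reflective} beyond the reflective case.

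Two concrete missteps in your sketch are worth flagging. First, you propose to exhibit $\text{Sat}_\kappa(\ca)$ as a category of saturated objects ``using a cover $\mathcal{L}^\kappa_\ca$ (Def.~\ref{cover}) to embed it''. This runs the machinery backwards: a cover is a map $\mathsf{Ind}(\ca_\kappa)\to\ca$ \emph{onto} $\ca$, and by Prop.~\ref{surjections} the functor $\mathsf{S}$ sends it to a geometric \emph{surjection}, whereas the definition of a category of ($\kappa$-)saturated objects demands a \emph{topological embedding} $j:\ca\to\ck$, i.e.\ a functor whose image under $\mathsf{S}$ is a geometric embedding. The fully faithful nerve $\ca(\iota-,-)$ going the other way is only $\kappa$-accessible and need not live in the right $2$-category, so neither leg of the cover adjunction hands you the required $j$; producing it is part of the open problem, not a preliminary step. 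Second, you hope to extract surjectivity on objects of $\eta_\ca$ from the \emph{nice} hypotheses: Prop.~\ref{ff} controls faithfulness and iso-fullness only, and the paper deliberately leaves surjectivity on objects as an unanalyzed hypothesis of Thm.~\ref{thmcategoriesofsaturated objects}(3); nothing in the paper (or in your sketch) derives it. With these two points repaired into explicit hypotheses, your proposal collapses precisely onto the paper's ``Very cheap version'', which is the honest state of the art for this conjecture.
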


\begin{rem}
One can even prove such a weaker version of the conjecture via the Hanf number \cite{baldwin2009categoricity}[Thm. 4.18], but it is in general very hard to find the Hanf number of an abstract elementary category. For this reason, we will describe a totally different path.
\end{rem}

\begin{rem}
We should mention that several approximations of this results have appeared in the literature. Unfortunately, none of these proofs is deeply categorical, or appears natural to us. Our initial aim was not only to prove Shelah's categoricity conjecture, but also to accommodate it in a framework in which the statement and the proof could look natural.
\end{rem}

\begin{rem}[A possible strategy]
Given an accessible category $\ca$, a possible strategy to prove the conjecture could involve the full subcategory $\ca_{\geq \lambda}$ of objects of cardinality at least $\lambda$. In fact, if the inclusion $\ca_{\geq \lambda} \to \ca$ is a topological embedding (for some $\gamma$-Scott adjunction such that $\ca$ is $\gamma$-accessible) in the hypotheses of Thm \ref{kappasaturated}, then $\ca_{\geq \lambda}$ must be categorical in some presentability rank.
\end{rem}

\begin{prop}[Very cheap version of Shelah's categoricity conjecture]
Let $\ca$ be an $\mu$-accessible category and let $\ca_{\geq \lambda}$ be its full subcategory $i: \ca_{\geq \lambda} \hookrightarrow \ca$ of objects of presentability rank at least $\lambda$. If the following holds:
\begin{enumerate}
	\item $i$ presents $\ca_{\geq \lambda}$ as a category of $\mu$-saturated objects;
	\item $\ca_\mu$ has the joint embedding property;
	\item $\eta_{\ca_{\geq \lambda}}$\footnote{The unit of the $\mu$-Scott adjunction.} is iso-full, faithful and surjective on objects.

\end{enumerate}
Then $\ca$ must be categorical in some presentability rank higher than $\lambda$.
\end{prop}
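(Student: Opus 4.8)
The statement is designed to be an essentially immediate consequence of the machinery of the previous section, namely the $\kappa$-version Thm.~\ref{kappasaturated} of Thm.~\ref{thmcategoriesofsaturated objects}, instantiated at $\kappa=\mu$ and at the category $\ca_{\geq\lambda}$. The plan is therefore to verify that the three hypotheses of the proposition supply, one by one, exactly the three cumulative hypotheses of Thm.~\ref{kappasaturated}(1)--(3) for $\ca_{\geq\lambda}$, and then to transport the resulting categoricity statement from $\ca_{\geq\lambda}$ back to $\ca$. First I would observe that hypothesis (1) is literally the assertion that $\ca_{\geq\lambda}$, together with $i$, is a category of $\mu$-saturated objects, so Thm.~\ref{kappasaturated}(1) applies and $\mathsf{S}_\mu(\ca_{\geq\lambda})$ is an atomic $\mu$-topos. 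Here the witnessing $\mu$-accessible category of the definition is $\ca$ itself, so the site presenting the Scott topos is $\ca_\mu^\circ$ with its atomic topology (as in the proof of Thm.~\ref{thmcategoriesofsaturated objects}(1)), and the amalgamation property of $\ca_\mu$ needed to make that topology legitimate is part of hypothesis (1).

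Next I would treat connectedness, which is the one place where the bookkeeping is not completely verbatim: Thm.~\ref{kappasaturated}(2) is phrased through the joint embedding property of the ambient category, whereas hypothesis (2) gives the JEP of the generating subcategory $\ca_\mu$. I expect this to be the main (though mild) obstacle. The cleanest route is to argue connectedness, indeed two-valuedness, at the level of the site: since $\mathsf{S}_\mu(\ca_{\geq\lambda})$ is sheaves on the atomic site $\ca_\mu^\circ$, the JEP of $\ca_\mu$ is precisely the site-level incarnation of Cor.~\ref{JEPconnected} (cf.\ \cite{Caramelloatomic}), making this atomic topos connected; being atomic and connected, it is then boolean and two-valued by \cite{caramello2018theories}[4.2.17]. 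An alternative, parallel to the proof of Thm.~\ref{thmcategoriesofsaturated objects}(2), is to show that the JEP together with the amalgamation of $\ca_\mu$ propagates, via a zig-zag of $\mu$-directed colimits, to the JEP of $\ca_{\geq\lambda}$ itself, which then feeds Cor.~\ref{JEPconnected} directly; I would present the site-level argument as primary, since it uses exactly the hypothesis as stated.

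Finally, hypothesis (3) coincides with the extra hypothesis of Thm.~\ref{kappasaturated}(3) on the unit $\eta_{\ca_{\geq\lambda}}$ of the $\mu$-Scott adjunction, so that theorem yields that $\ca_{\geq\lambda}$ is categorical in some presentability rank. Since every object of $\ca_{\geq\lambda}$ has presentability rank at least $\lambda$, this rank is at least $\lambda$; and because $\ca_{\geq\lambda}$ is the full subcategory of $\ca$ on the objects of rank $\geq\lambda$ and (in the saturated-objects setting, where the embedding lives in $\text{Acc}_\mu$) is closed under the relevant $\mu$-directed colimits, presentability ranks computed in $\ca_{\geq\lambda}$ agree with those computed in $\ca$. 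Hence $\ca$ has, up to isomorphism, a unique model of that rank, which is the desired conclusion. The only residual care is in this rank-matching and in making the JEP step precise; neither is deep, and both are of the kind already handled in the proof of Thm.~\ref{thmcategoriesofsaturated objects}.
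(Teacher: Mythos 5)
Your proposal is correct and is essentially the paper's own proof, which consists of the single line ``Apply Thm.~\ref{kappasaturated}'' with the three hypotheses matched verbatim to those of that theorem at $\kappa=\mu$ for $\ca_{\geq\lambda}$. The extra care you take --- bridging the mismatch between the JEP of $\ca_\mu$ and the theorem's stated hypothesis (JEP of the category of saturated objects itself) via the atomic site $\ca_\mu^\circ$, and checking that presentability ranks in $\ca_{\geq\lambda}$ agree with those in $\ca$ --- fills in bookkeeping the paper leaves implicit, and does so correctly.
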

\begin{proof}
Apply \ref{kappasaturated}.
\end{proof}

\begin{rem}[A comment on the strategy above]
Somehow the missing brick in our proposition above is to show that when $\ca$ is categorical in a presentability rank $\kappa$, then there must exists a cardinal $\lambda > \kappa$ such that $i: \ca_{\geq \lambda} \hookrightarrow \ca$ is a category of $\mu$-saturated objects. Indeed many model theorists will recognize this as a relatively convincing statement. Of course, this strategy shouldn't be taken too strictly; if $\ca_{\geq \lambda}$ is not a category of $\mu$-saturated objects, one can try and change it with a sharper subcategory.
\end{rem}

\begin{rem}
The previous remark shows one of the most interesting open problems of the thesis, which is to provide a refinement of Thm. \ref{reflective} in the case in which the inclusion $i: \ca \to \cb$ is not weakly algebraically reflective.
\end{rem}

\subsection{Cosimplicial sets and Indiscernibles}

Indiscernibles are a classical tool in classical model theory and were introduced by Morley while working on what now is knows under the name of Morley's categoricity theorem. Makkai rephrased this result in \citep{Makkaipare} in the following way.

\begin{thm} Let $\mathsf{pt}(\ce)$ be a large category of points of some topos $\ce$. Then there is a faithful functor $\mathsf{U} :\text{Lin} \to \mathsf{pt}(\ce)$, where $\text{Lin}$ is the category of linear orders and monotone maps.
\end{thm}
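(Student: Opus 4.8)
The plan is to reproduce, in the categorical language assembled in this thesis, the classical Ehrenfeucht--Mostowski construction of indiscernibles. First I would fix a presentation $i^*:\Set^C \leftrightarrows \ce : i_*$ and apply $\mathsf{pt}$: by Prop. \ref{embeddings} together with Rem. \ref{trivial} this yields a fully faithful functor $\mathsf{pt}(\ce)\hookrightarrow \mathsf{pt}(\Set^C)\simeq \mathsf{Ind}(C)$, so I may regard the category of points as a full subcategory $\cm\subseteq\mathsf{Ind}(C)$ closed under directed colimits. Composing with the canonical faithful, directed-colimit-preserving functor $\cy:\mathsf{Ind}(C)\to\Set$ from the proof of Prop. \ref{ff} equips every point with an \emph{underlying set}, and I would read the hypothesis that $\mathsf{pt}(\ce)$ is \emph{large} as saying precisely that these underlying sets are of unbounded cardinality. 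Finally I would invoke the cover $\mathcal{L}^{\kappa}_{\cm}:\mathsf{Ind}(\cm_\kappa)\to\cm$ of Def. \ref{cover} --- the categorical avatar of Shelah's presentation theorem --- to present each point as generated by a $\kappa$-ind-system of finitely presentable pieces; this furnishes a workable notion of \emph{term} and of the \emph{quantifier-free type} of a finite tuple of generators.

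With this scaffolding in place, the indiscernibles are produced by a Ramsey/Erd\H{o}s--Rado argument. For each $n$ the type of an increasing $n$-tuple of generators is a small datum (a morphism out of a finitely presentable object, recorded up to the finitely many relations encoded in $C$), so the colourings to which the partition calculus applies have a bounded number of colours. Largeness supplies points whose underlying sets are as big as Erd\H{o}s--Rado demands, and from them I would extract, for every linear order $I$, an $I$-indexed family $(a_i)_{i\in I}$ of generators all of whose finite increasing subtuples realize one and the same type. A directed-colimit argument --- serving as the surrogate for the compactness theorem --- upgrades the finite indiscernible sequences to arbitrary $I$. I then set $\mathsf{U}(I)\in\cm$ to be the submodel generated by such an $I$-indexed indiscernible family.

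To promote this assignment to a functor, note that a monotone map $f:I\to J$ carries $(a_i)_{i\in I}$ into $(a_{f(i)})_{i\in I}\subseteq (a_j)_{j\in J}$ preserving the order, hence the common type; indiscernibility then guarantees that the map on generators extends to a \emph{unique} morphism of points $\mathsf{U}(f):\mathsf{U}(I)\to\mathsf{U}(J)$, and this uniqueness makes $\mathsf{U}$ a functor with $\mathsf{U}(g\circ f)\cong\mathsf{U}(g)\circ\mathsf{U}(f)$. For faithfulness, if $f\neq g$ then $f(i)\neq g(i)$ for some $i$, and since distinct indiscernible elements have distinct images under the underlying-set functor $\cy$, the morphisms $\mathsf{U}(f)$ and $\mathsf{U}(g)$ already differ on the generator $a_i$; thus $\text{Lin}(I,J)\to\mathsf{pt}(\ce)(\mathsf{U}I,\mathsf{U}J)$ is injective.

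The hard part will be carrying out the partition-calculus step intrinsically: making the ``type of a tuple'' precise through the cover $\mathcal{L}^{\kappa}_{\cm}$, and then checking that the extracted order-preserving maps between generators genuinely assemble into \emph{morphisms of points} (natural transformations between inverse-image functors), not merely maps of underlying sets. Two subsidiary points also demand care --- that the indiscernible families exist for \emph{every} linear order $I$, which rests on the directed-colimit surrogate for compactness together with largeness, and that each generated submodel $\mathsf{U}(I)$ actually lands in $\cm$, which is exactly where I would use that $\cm$ is closed under directed colimits inside $\mathsf{Ind}(C)$. Once these are secured, functoriality and faithfulness follow from the uniqueness clause as indicated.
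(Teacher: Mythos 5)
Before anything else, note that the paper does not prove this statement at all: it is quoted from Makkai--Par\'e \cite{Makkaipare} as a rephrasing of Morley's theorem on indiscernibles, so there is no internal proof to compare against. Your proposal is therefore an attempt to reconstruct the Ehrenfeucht--Mostowski argument inside the thesis's framework, and its skeleton is the right one and essentially the one behind the cited result: embed $\mathsf{pt}(\ce)$ fully faithfully into $\mathsf{Ind}(C)$ via a presentation (Prop.~\ref{embeddings} plus Rem.~\ref{trivial}), read largeness as unboundedness of presentability ranks (correct: in an accessible category a uniform bound on ranks forces essential smallness), run a partition-calculus argument, define $\mathsf{U}$ on finite linear orders, and extend by directed colimits along $\Delta \hookrightarrow \mathrm{Lin} \simeq \mathsf{Ind}(\Delta)$ --- the same observation the paper itself exploits when computing $\mathsf{S}(\mathrm{Lin}) = \Set^\Delta$. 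Two small inaccuracies: the cover $\mathcal{L}^\kappa_\cm$ of Def.~\ref{cover} presents a point via $\kappa$-presentable pieces, not finitely presentable ones; and since the ambient logic is infinitary, Ramsey must be replaced throughout by iterated Erd\H{o}s--Rado on models of size given by suitable $\beth$-numbers, which is exactly where largeness is consumed.

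The genuine gap is the sentence ``indiscernibility then guarantees that the map on generators extends to a \emph{unique} morphism of points.'' In $\mathsf{pt}(\ce)$ there is no Skolem hull: a morphism of points is a natural transformation between inverse image functors, and a function defined on an indiscernible family of generators induces no such thing; geometric theories cannot be Skolemized without changing their homomorphisms, and compactness is unavailable since the logic is $L_{\infty,\omega}$-like, so the classical justification for existence and uniqueness of the extension evaporates. Relatedly, for an arbitrary large $I$ you cannot ``extract an $I$-indexed indiscernible family'' inside any single model of the category; $\mathsf{U}(I)$ must be \emph{constructed}, not found. The repair --- and the actual content of the Makkai--Par\'e proof --- is to strengthen the partition argument so that it stabilizes not merely the types of increasing tuples but the chosen $\kappa$-presentable subobjects \emph{together with the connecting morphisms between them}, yielding a genuine functor from finite linear orders to diagrams of $\kappa$-presentable objects (an EM blueprint); one then defines $\mathsf{U}(I) := \colim_{F \subseteq I \text{ finite}} A_F$ and obtains $\mathsf{U}(f)$ from the universal property of the colimit, closure of $\mathsf{pt}(\ce)$ under directed colimits guaranteeing the values land in the right category. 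As written, your colouring step only produces order-indiscernible \emph{objects}, and the functoriality and uniqueness claims fail without the coherence-of-morphisms refinement; once that refinement is in place, faithfulness does follow along the lines you indicate.
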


A categorical \textit{understanding} of this result was firstly attempted in an unpublished work by Beke and Rosický. We wish we could fit this topic in our framework. The starting point of this process is the following observation.

\begin{thm}
$\mathsf{S}(\text{Lin})$ is $\Set^{\Delta}$.
\end{thm}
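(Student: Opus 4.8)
The plan is to reduce the statement to Rem.~\ref{trivial}, which computes the Scott topos of any finitely accessible category $\ck$ as the presheaf topos $\Set^{\ck_\omega}$ on its finitely presentable objects. Thus the whole content is to recognize $\text{Lin}$ (taken to consist of the \emph{nonempty} linear orders) as a finitely accessible category whose full subcategory of finitely presentable objects is precisely the simplex category $\Delta$ of nonempty finite linear orders and monotone maps. Granting this, one gets $\text{Lin} \simeq \mathsf{Ind}(\Delta)$ and hence
\[
\mathsf{S}(\text{Lin}) = \text{Acc}_\omega(\text{Lin}, \Set) \simeq \text{Acc}_\omega(\mathsf{Ind}(\Delta), \Set) \simeq \Set^{\Delta},
\]
the last step being the universal property of the Ind-completion, and $\Set^\Delta$ is by definition the category of cosimplicial sets.

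First I would verify that $\text{Lin}$ has directed colimits and that they are computed as in $\Set$. Given a directed diagram $(L_i, f_{ij})$ of linear orders and monotone maps, I would equip the set-colimit $\mathrm{colim}\, L_i$ with the relation obtained by comparing representatives at a common stage; the delicate point — and the main thing to check carefully — is that this relation is well defined (independent of the chosen stage, using monotonicity of the transition maps), \emph{total}, and antisymmetric, so that the colimit is again a linear order. Totality follows because any two elements can be represented in a single $L_k$, which is itself totally ordered; antisymmetry follows from the defining equality in the colimit. This shows that the forgetful functor $\text{Lin} \to \Set$ creates directed colimits.

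Next I would show that every linear order $L$ is the directed colimit of its finite linear suborders, indexed by the directed poset of finite subsets of $L$ under inclusion; this exhibits the finite linear orders as a generating family under directed colimits. To pin down $\text{Lin}_\omega$ I would argue both inclusions: a finite linear order $[n]$ is finitely presentable because a monotone map out of it into a directed colimit selects finitely many elements whose images, together with the finitely many order relations among them, are all witnessed at a single stage of the diagram; conversely, if $L$ is finitely presentable, then $\mathrm{id}_L$ factors through one of the finite suborders in the colimit presentation above, making $L$ a retract of a finite linear order and hence finite. Therefore $\text{Lin}_\omega$ is exactly the category of nonempty finite linear orders and monotone maps, which is $\Delta$.

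The main obstacle is purely the bookkeeping of the middle paragraphs: ensuring that the induced order on a directed colimit really is total (not merely a preorder) and that finite presentability is not accidentally enjoyed by some infinite order. Both reduce to the fact that finitely much data about a monotone map is detected at a single stage of a filtered diagram, together with the totality of each $L_k$; once this is in place, the identification $\text{Lin} \simeq \mathsf{Ind}(\Delta)$ and the final equivalence $\mathsf{S}(\text{Lin}) \simeq \Set^\Delta$ are immediate from Rem.~\ref{trivial}. I would also remark that the convention $\text{Lin} = $ nonempty linear orders is what makes the answer exactly $\Set^\Delta$ rather than a presheaf category on the augmented simplex category $\Delta_+$ (which would arise if one admitted the empty order, itself finitely presentable).
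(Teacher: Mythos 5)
Your proof is correct and follows exactly the paper's route: the paper likewise observes that $\text{Lin}$ is finitely accessible with $\text{Lin}_\omega = \Delta$ and concludes by Rem.~\ref{trivial}, leaving implicit the verifications you spell out (creation of directed colimits, finite presentability of finite chains, retract argument). Your closing remark that $\text{Lin}$ must mean \emph{nonempty} linear orders — else one gets $\Set^{\Delta_+}$ — is a genuine precision the paper glosses over.
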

\begin{proof}
There is not much to prove, $\text{Lin}$ is finitely accessible and $\Delta$ (the simplex category) coincides with its full subcategory of finitely presentable objects, thus the theorem is a consequence of Rem. \ref{trivial}.
\end{proof}

\begin{rem}
By the Scott adjunction, any functor of the form $\text{Lin} \to \mathsf{pt}(\ce)$ corresponds to a geometric morphism $\Set^{\Delta} \to \ce$.  
\end{rem}

\begin{question} 
\begin{enumerate}
	\item[]
	\item Is it any easier to show the existence of $\mathsf{U}$ by providing a geometric morphism  $\Set^{\Delta} \to \ce$? 
	\item Is it true that every such $\mathsf{U}$ is of the form $\mathsf{pt}(f)$ for a localic geometric morphism $f: \Set^{\Delta} \to \ce$.
	\end{enumerate}
\end{question}

\section{Category Theory}

\subsection{The $\ce$-Scott adjunction}\label{relative}

It might be possible to develop $\ce$-relative a version of the Scott adjunction, where $\ce$ is a Grothendieck topos. The relevant technical setting should be contained in   \cite{borceux1996enriched} and \cite{borceux1998theory}.

\begin{conj}There is a $2$-adjunction

\[\mathsf{S}_{\ce}: \ce\text{-Acc}_{\omega} \leftrightarrows \text{Topoi}_{/\ce}: \mathsf{pt}_\ce\]

Moreover, if $\ce$ is a $\kappa$-topos, the relative version of $\lambda$-Scott adjunction holds for every $\omega \leq \lambda \leq \kappa$.
\end{conj}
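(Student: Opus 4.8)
The plan is to transport the proof of Theorem \ref{scottadj} into the $\ce$-enriched setting, replacing the dualizing object $\Set$ by the base topos $\ce$ throughout. First I would fix the foundations: following \cite{borceux1996enriched} and \cite{borceux1998theory}, I take $\ce\text{-Acc}_\omega$ to be the $2$-category of $\ce$-enriched accessible categories with $\ce$-enriched directed colimits and $\ce$-functors preserving them, and $\text{Topoi}_{/\ce}$ to be the slice $2$-category of Grothendieck topoi equipped with a geometric morphism to $\ce$. The two functors are then defined exactly as in the absolute case but with $\ce$ as the object inhabiting both worlds:
\[ \mathsf{S}_\ce(\ca) := \ce\text{-Acc}_\omega(\ca, \ce), \qquad \mathsf{pt}_\ce(\cf) := \text{Topoi}_{/\ce}(\ce, \cf), \]
the latter being the category of sections of the structure morphism $\cf \to \ce$.

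The first substantive step is to check that $\mathsf{S}_\ce(\ca)$ is genuinely a topos over $\ce$. This is the relative analogue of Rem. \ref{tensorscottconstructionwelldefined}: writing $\ca$ as $\lambda$-accessible, the inclusion $\ce\text{-Acc}_\omega(\ca, \ce) \hookrightarrow \ce^{\ca_\lambda}$ preserves colimits and finite limits because directed colimits commute with finite limits in $\ce$, and so exhibits $\mathsf{S}_\ce(\ca)$ as a left-exact-coreflective subcategory, hence a topos by \cite{sheavesingeometry}[V.8 Thm.4]; the structure map to $\ce$ is induced by restricting along the constant data. Dually, $\mathsf{pt}_\ce(\cf)$ is an $\ce$-accessible category with directed colimits by the relativization of Prop. \ref{topoihomacc} and \cite{borceux_19943}[Cor. 4.3.2], whose proofs use only that finite limits commute with directed colimits and so go through verbatim over $\ce$.

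With both functors well defined, I would prove the adjunction by the relative version of the equivalence chain in the proof of Theorem \ref{scottadj}:
\begin{align*}
\text{Topoi}_{/\ce}(\mathsf{S}_\ce(\ca), \cf) \cong & \ \text{Cocontlex}_{/\ce}(\cf, \mathsf{S}_\ce(\ca)) \\
\cong & \ \text{Cocontlex}_{/\ce}(\cf, \ce\text{-Acc}_\omega(\ca, \ce)) \\
\cong & \ \ce\text{-Acc}_\omega(\ca, \text{Cocontlex}_{/\ce}(\cf, \ce)) \\
\cong & \ \ce\text{-Acc}_\omega(\ca, \mathsf{pt}_\ce(\cf)),
\end{align*}
where the middle isomorphism is the $\ce$-enriched \emph{bilinearity} step, factoring an $\ce$-functor out of the fibred product $\cf \times_\ce \ca$ in each variable separately. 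Here the Pitts–Lurie description of products of topoi used in the Category Theory chapter should be replaced by its fibred analogue over $\ce$, and this is where I expect the main obstacle to lie: making the intermediate category of separately cocontinuous-left-exact and directed-colimit-preserving $\ce$-functors precise and checking that it is computed correctly in the enriched setting, together with the coherence of the pseudo/$2$-categorical bookkeeping. Finally, the $\kappa$-graded statement follows by tracking the cardinal. A $\kappa$-topos guarantees that $\lambda$-small limits commute with $\lambda$-directed colimits only for $\lambda \leq \kappa$, which is exactly the commutation invoked in the first step and in the left-exactness of the bilinear pairing; running the whole argument at level $\lambda$ therefore produces the relative $\lambda$-Scott adjunction precisely in the range $\omega \leq \lambda \leq \kappa$, mirroring the absolute $\kappa$-Scott adjunction.
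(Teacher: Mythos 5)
First, a caveat: the statement you set out to prove is presented in the paper as a \emph{conjecture} in the chapter of final remarks and open problems; the paper contains no proof, only a clarifying remark listing what is understood and what is not. So there is no proof of record to compare against, and your proposal must be judged as an attack on an open problem. As such it is a sensible outline --- it is exactly the relativization the author envisages, with $\ce$ replacing $\Set$ as the dualizing object and the hom-equivalence chain of Thm. \ref{scottadj} transported to the slice --- but it leaves open precisely the gaps that the paper itself flags as the obstruction.

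The central gap: you assert that $\mathsf{pt}_\ce(\cf) = \text{Topoi}_{/\ce}(\ce, \cf)$ is an $\ce$-accessible category with directed colimits because the absolute proofs ``go through verbatim over $\ce$''. They do not, and cannot as stated: $\text{Topoi}_{/\ce}(\ce,\cf)$ is, as it stands, an ordinary category, and for the adjunction to typecheck against $\ce\text{-Acc}_\omega$ it must carry an $\ce$-enrichment (or an $\ce$-indexed/fibred structure); no construction of this is given, and the paper explicitly records this as unresolved (``maybe it is fibered over it?''). Relatedly, your chain silently replaces $\text{Topoi}_{/\ce}(\mathsf{S}_\ce(\ca),\cf)$ by a plain $\text{Cocontlex}$ hom, but a $1$-cell in the slice is a geometric morphism \emph{together with} a compatibility $2$-cell over $\ce$, so every step is really a computation in a pseudo-slice, and $\text{Cocontlex}_{/\ce}(\cf,\ce)$ is a category of sections cut out by a pseudo-equation, not a hom-category of the kind your bilinearity step manipulates. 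Three further holes: you never actually construct the structure morphism $\mathsf{S}_\ce(\ca) \to \ce$ (``induced by restricting along the constant data'' is not a construction); you never verify boundedness, which the paper's clarifying remark makes part of the intended statement (the target is $\text{BTopoi}_{/\ce}$, and over a general base the generator argument of Rem. \ref{scottconstructionwelldefined} has no automatic analogue); and your inclusion $\ce\text{-Acc}_\omega(\ca,\ce) \hookrightarrow \ce^{\ca_\lambda}$ presupposes an enriched restriction--Kan-extension equivalence, i.e.\ a theory of $\ce$-enriched $\mathsf{Ind}$-completions and of conical directed colimits over a topos base, which \cite{borceux1996enriched} and \cite{borceux1998theory} supply only in the locally presentable setting, not for accessible categories with directed colimits. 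Your final observation --- that the range $\omega \leq \lambda \leq \kappa$ tracks exactly the commutation of $\lambda$-small limits with $\lambda$-directed colimits available in a $\kappa$-topos --- is correct and identifies the right mechanism for that clause; but taken as a whole the proposal is a plan whose hard steps are the open ones, not a proof.
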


\begin{rem}
Let us clarify what is understood and what is to be understood of the previous conjecture.
\begin{enumerate}
	\item By  $\text{BTopoi}_{/\ce}$ we mean the full $2$-subcategory of $\text{Topoi}_{/\ce}$ consisting of those topoi bounded over $\ce$. Recall that $\text{Topoi}$ coincides with $\text{BTopoi}_{/\Set}$.
	\item By $\mathsf{pt}_\ce$ we mean the relative points functor $\text{Topoi}_{/\ce}(\ce, -).$
	\item It is a bit unclear how $\text{Topoi}_{/\ce}(\ce, -)$ should be enriched over $\ce$, maybe it is fibered over it?
\end{enumerate}
\end{rem}

\section{Sparse questions}

\begin{question}
\begin{enumerate}
	\item[]
\item If $f$ is faithful, is $\mathsf{S}(f)$ localic?
\item Is $\eta$ always a topological embedding?
\end{enumerate}
\end{question}

\bibliography{thebib}
\bibliographystyle{alpha}

\end{document}